\newtheorem{theorem}{Theorem}[section]
\newtheorem{corollary}[theorem]{Corollary}
\newtheorem{lemma}[theorem]{Lemma}
\newtheorem{proposition}[theorem]{Proposition}
\theoremstyle{definition}
\newtheorem{definition}[theorem]{Definition}
\newtheorem{remark}{Remark}
\def\N{\mathbb{N}}
\def\R{\mathbb{R}}
\let\e=\varepsilon
\let\vp=\varphi
\let\vt=\vartheta
\let\t=\tilde
\let\ol=\overline
\let\ul=\underline
\let\mc=\mathcal
\def\1{\mathbbm{1}}
\def\SMP{strong maximum principle}
\def\be{\begin{equation}}
\def\ee{\end{equation}}
\def\m{\noalign{\medskip}}
\def\rife#1{\eqref{#1}}
\def\vep{\varepsilon}
\def\de{\delta}
\def\vfi{\varphi}
\def\thm#1{Theorem~\ref{thm:#1}}
\def\seq#1{(#1_n)_{n\in\N}}
\def\limn{\lim_{n\to\infty}}
\def\as{\quad\text{as }\;}
\newenvironment{formula}[1]{\begin{equation}\label{#1}}
             {\end{equation}\noindent}
\def\Fi#1{\begin{formula}{#1}}
\def\Ff{\end{formula}\noindent}
\title{\bf Traveling waves for a nonlocal KPP equation and mean-field game models of knowledge diffusion}
\begin{document}

\author{Alessio Porretta\footnote{Dipartimento di Matematica, Universit\`a di Roma Tor Vergata, Via della Ricerca Scientifica 1, 00133 Roma, email: porretta@mat.uniroma2.it.  Partially supported by Indam Gnampa projects 2018 and 2019 and by Fondation Sciences Math\'ematiques de Paris} \quad 
and  \quad Luca Rossi\footnote{Ecole des Hautes Etudes en Sciences Sociales, PSL Research University,  
Centre d'Analyse et Math\'ematiques Sociales, 54 boulevard Raspail, 75006 Paris, France, email: rossi@ehess.fr}}
%
%

\maketitle


\begin{abstract}
{\footnotesize
We analyze  a mean-field game model proposed by economists R.E.~Lucas and B.~Moll \cite{LuMo} to describe economic systems where production is based on knowledge growth and diffusion. This model reduces to  a  PDE system where  a backward Hamilton-Jacobi-Bellman equation is coupled with a forward KPP-type equation with nonlocal reaction term. 
We  study the existence of  traveling waves for this mean-field game system, obtaining the existence of  both critical and supercritical waves.
In particular we prove a conjecture raised by economists on the existence of a critical balanced growth path for the described economy, supposed to be  the expected  stable  growth in the long run.  We also provide nonexistence results which clarify the role of parameters in the economic model.  

In order to prove these results, we build fixed point arguments on the sets of  critical waves for the forced speed problem arising from the coupling in the KPP-type equation. To this purpose, we provide a full characterization of the whole family of traveling waves for a new class of KPP-type equations with nonlocal and nonhomogeneous reaction terms. This latter analysis has an independent interest  since it shows new phenomena induced by the nonlocal effects and a different picture of critical waves, compared to the classical literature on Fisher-KPP equations.}
\end{abstract}


\tableofcontents

\section{Introduction}

There is  a huge literature in macroeconomics devoted to the analysis of knowledge-based economic systems, where production and learning play key roles. As a sample reference, we only  cite here  \cite{arrow}  among the pioneering papers on this topic. The most recent contributions in this field have renovated the interest in the quantitative analysis of this kind of models, see  e.g.~\cite{alvarez+al}, \cite{konig+al}, \cite{Luttmer}, \cite{Perla}. On the lines of this research, in 2014 R.E. Lucas and B. Moll introduced  a new refined model to describe an economy of knowledge growth and diffusion (\cite{LuMo}). This model, resulting in a  system of PDEs, proved to be  a source of many  interesting mathematical questions which are the object of this work.  

Compared to other previous  models in macroeconomics,  R.E. Lucas and B. Moll  put new emphasis on the interaction between the individual optimization and the evolution of the economic environment which results from individual behaviors. In their model, the agents are characterized by their level   of productivity-related  knowledge (or technology) and split their time between producing and meeting other people in order to exchange ideas and  improve their knowledge. The evolution of this economy is globally described by the productivity distribution function, which is driven by people's choices. Conversely, the individual strategies search for an optimal equilibrium between  the time devoted to producing and the time spent to increase the technological level of production; this choice obviously depends itself on the global status of the economic environment. 

This kind of interaction  is  typical in mean-field game models, which aim at studying the interplay (and the occurrence of  Nash equilibria)  between individual decisions and collective behavior.  So far, mean field game theory, introduced by 
J.-M.~Lasry and P.-L.~Lions (see \cite{GLL,LL-jap}), has been rapidly spreading in many fields of applications and currently leads to 
new interesting problems in the theory of PDEs. Nowadays, mean-field game theory attracts more and more interest among economists, since~it provides support to develop models for heterogeneous agents. 
We~refer the reader to \cite{Achdou+al} for  a discussion of several mean-field game models in macroeconomics and we borrow from this paper the following short presentation of the Lucas-Moll model. 
 
In this model, any single agent has some level of knowledge/productivity $z$
and decides to allocate  a fraction $s\in [0,1]$ of his/her time (a unit of labor per year) to search for new ideas or technologies (in order to increase the productivity level) by interacting with other people, with  $\alpha(s)$ Poisson rate of probability to meet another
agent. 
As a result of a meeting, the productivity associated to knowledge level
becomes the maximum of the productivity of the two agents.  
Thus,  the  individual dynamics is  described through  the stochastic process $x_t:= \log (z_t)$
(here $z_t$ is the productivity level), which 
is governed by the SDE 
 $$
 dx_t= \sqrt2\, \kappa \, dB_t + dJ_t
 $$
 where $B_t$ is a standard $1$-dimensional Brownian motion, $\kappa>0$, and $J_t$ is a Poisson process with intensity $\alpha(s_t)$ that jumps when individuals meet someone with a higher level of productivity during the time $s_t$. 
 The Brownian motion accounts for fluctuations in the individual productivity. 
 In the language of economists,  the Brownian noise may also represent the individual process of experimentation and innovation, whereas the learning process is referred to as imitation.
 
The agents' goal is to maximize their production,  which is of course proportional to the time devoted to produce (that is $(1-s)$) and to the current level of  productivity~$z$. Then,  the value function of  a single agent, conditionally to the initial condition $x_t= x$,  is given by
 $$
 v(t,x)= \sup_{s_\tau\in [0,1]} {\mathbb E}_{t,x} \int_t^{+\infty} e^{-\rho(\tau-t)} (1-s_\tau) e^{x_\tau} d\tau.
 $$
Here  $\rho$ is the discount factor, while the control strategy of the agent is the process $\{s_t\}$ taking values in $[0,1]$ (recall that $s_t$ also enters in the Poisson process $J_t$
involved in the dynamics of $x_t$, which has intensity $\alpha(s_t)$). 

The Bellman equation of dynamic programming yields the following Hamilton-Jacobi equation for $v$:
$$
-\partial_t v -  {\kappa^2} \partial_{xx} v + \rho v = \max_{s\in [0,1]}\left\{(1-s)e^x + \alpha(s) \int_x^{+\infty} (v(y,t)-v(x,t)) f(t,y)dy \right\} \,,
$$
 where $f(t,x)$ is the density of the log-productivity distribution function at time $t$ (i.e. the law of $x_t$). 
 As derived by Lucas and Moll, the equation for $f$ reads as
 \be\label{fp}
 \partial_t f -\kappa^2  \partial_{xx} f = f(t,x) \int_{-\infty}^x \alpha(s^*(t,y)) f(t,y)dy - \alpha(s^*(t,x)) f(t,x) \int_x^{+\infty} f(t,y)dy,
 \ee
where $s^*(t,x)$ is the optimal feedback strategy of the  agents.  

It is not difficult to understand the equation of $f$ as a balance of mass.  Indeed, the  density $f(t,x)$ changes according not only  to the individual noise of the agents, but also to the exchange of  knowledge among the population. To this respect,  the right-hand side should be understood as  a balance (at time $t$)  between 
new people who  upgrade  their knowledge  up to level $x$ by meeting someone with such level, and people who 
leave the level $x$ because they increase their knowledge by learning from someone with higher technology.  
In particular, the $L^1(\R)$ norm of $f$ is preserved. {We point out that the above description applies to an equilibrium configuration, in the spirit of Nash equilibria: indeed, the density $f$ appears {a priori} as an exogenous  datum in the optimization of the agents,  and the equilibrium is  realized a posteriori by assuming that $f$ is  actually driven by the optimal  strategy used by the  agents.}
 
Summing up, the mean-field game system proposed by  Lucas and Moll in their knowledge-production model can be stated as follows:
\be\label{system}
\begin{cases}
\displaystyle
-\partial_t v -\kappa^2\partial_{xx} v + \rho \, v = \max_{s\in [0,1]} \left\{ (1-s)e^x+\alpha(s)\int_x^{+\infty} [v(y)-v(x)]f(y)dy\right\}
\\
\m
\displaystyle \partial_t f -\kappa^2 \partial_{xx} f = f(x) \int_{-\infty}^x \alpha(s^*)f(y)\, dy
 -f(x) \alpha(s^*)\int_x^{+\infty}f(y)dy \\
\m
\displaystyle s^*= {\rm argmax} \,  \left\{ (1-s)e^x+\alpha(s)\int_x^{+\infty} [v(y)-v(x)]f(y)dy\right\} \\
\m
f(0)=f_0
\end{cases}
\ee 
which is set for $t>0\,,\, x\in \R$ and the normalization condition $\int_{\R} f(t,y)dy = 1$.
\vskip1em

Among the most important questions raised by Lucas and Moll in the analysis of this model, they addressed the  problem of existence of traveling waves solutions for system \rife{system}.  These solutions, which are called {\it balanced growth paths}  in the language of economics, are solutions of the type
\be\label{twz}
v (t,x)= e^{ct}\nu(x-ct)\,,\quad  f(t,x) =\varphi(x-ct).
\ee
As explained in \cite{LuMo},  this kind of solutions (usually rephrased in terms of the productivity variable $z$) plays a very crucial role to understand  the behavior of the economy in the long run  and the existence of sustainable growth strategies. See also Remark~\ref{interp} where we discuss the interpretation of  our results in terms of the original model. 

In \cite{LuMo}, Lucas and Moll introduced a numerical algorithm to show the existence of balanced growth paths in the case that the agents are not affected by individual noise, which can be called the deterministic case ($\kappa=0$) for system \rife{system}. Further results for the case without diffusion were given in 
\cite{Wolfram1,Wolfram2}.  

On one hand, introducing a diffusion term in the form of individual noise for the agents looks very natural for the model, since it allows one to consider  fluctuations in the individual productivity and  prevents some additional constraint for balance growth paths (like an a priori prescription of  a Pareto tail for the initial distribution), see e.g.~the discussion in \cite{Luttmer}. 

On another hand,  in the diffusive case  the analysis of  traveling waves for system~\rife{system}  looks more challenging and intriguing. 
In the case of constant learning technology function $\alpha(s)= \alpha_0$ (new ideas arrive without 
the need of going in search for other people) the cumulative distribution function $F(t,x)= \int_{-\infty}^x f(t,y)dy$
satisfies the classical Fisher-KPP equation (\cite{KPP}). This case was extensively discussed in \cite{Luttmer}.  


In the case of variable learning technology function $\alpha(s)$, it was conjectured in~\cite{Achdou+al,LuMo} that system~\rife{system} admits balanced growth paths and, in particular, the  limiting profile distribution  in the long time  should be a solution of the form \rife{twz} satisfying 
\be\label{ccri}
c= 2\kappa \, \sqrt{\int_{\R} \alpha(s^*(y))\varphi(y)dy}.
\ee
This question is very relevant for the economic model because this would identify a critical growth rate in the long run for balanced growth paths. 
\vskip1em
The purpose of this article is to prove, under fairly general assumptions,  the existence of such a critical traveling wave for system \rife{system}. From a PDEs viewpoint, this is especially interesting because it involves both a nontrivial  extension  of the standard analysis of Fisher-KPP equations and the construction of critical equilibria for the mean-field game system,  namely a fixed point argument  on  a family of traveling~waves.

Results in this direction were given in \cite{cinesi} for the case of a linear function $\alpha(s)=\alpha s$.
By contrast, in the original model  suggested by Lucas and Moll, $\alpha(\cdot)$ is supposed to be a  strictly concave, increasing function such that $\alpha'(0)=+\infty$ and $\alpha'(1)>0$. 
According to \cite{LuMo}, this setting of assumptions seems to match  real situations on account of experimental data, and power type functions like $\alpha(s)= s^\eta$, $\eta\in (0,1)$ are typical examples. 

In the economic interpretation, assuming $\alpha'(0)=+\infty$ implies that people will never stop searching for new ideas
and a possibly small but not trivial fraction of time is devoted to search for new technology, even at a large productivity level $z$. This results into the condition that the optimal policy $s^*$ in \rife{system} satisfies
$$
s^*(t,x)>0\,, \,\, x\in \R\,, 
$$ 
but of course $s^*(t,x) \to 0$ as $x\to \infty$. The second condition  $\alpha'(1)>0$ also has a clear interpretation in the model, namely that people with a sufficiently low  level of knowledge should devote all their time to go in search for new technology; this means that there exists a threshold $z_0>0$ such that it is not convenient (or not possible) to start producing if the knowledge level is smaller than $ z_0$
(this typically happens for new producers). In the logarithmic variable $x= \log z$, this implies that there exists $x_0\in \R$ such that 
$$
s^*(t,x) \equiv 1 \qquad \forall x \leq x_0\,.
$$ 
Under the above constitutive assumptions on the learning technology  function $\alpha(\cdot)$, in this paper we derive the following  results:
\begin{itemize}

\item if $\rho\geq 2\kappa \sqrt{\alpha(1)}$ and $\alpha(1)>\kappa^2$, there exists a balanced growth path (i.e.~a solution of  \rife{system} in the form \rife{twz}) with a growth rate $c$ satisfying the critical identity \rife{ccri}.  
Moreover there holds that $2\kappa^2<c<2\kappa \sqrt{\alpha(1)}$.

\item for every $c$ such that $2\kappa\sqrt{\alpha(1)}\leq c < \alpha(1)+\kappa^2$ and $c<\rho$, there exist  balanced growth paths with growth rate $c$ (which are not critical).

\item there are no balanced growth paths with growth rate  $c\leq 2\kappa^2$ nor  $c\geq \alpha(1)+\kappa^2$.

\end{itemize}

The first item above is our main contribution and proves the conjecture in \cite{Achdou+al} about the existence of traveling waves with critical growth. 
We refer to Theorem~\ref{main} for a precise statement, where we also   discuss   the optimality of the conditions 
on~$\rho,\kappa,\alpha$.
Let us mention that the existence of a critical traveling wave for system \rife{system} is also  proved independently in the very recent paper \cite{ryzhik} under the assumption
	that the discount factor $\rho$ and the intensity $\alpha$ of the technology function are sufficiently~large.

In the second item we show that there is a whole family of other traveling waves with supercritical speed.
This proves to be consistent with the typical behavior of KPP-type equations.
However, the existence of an upper bound ($\alpha(1)+\kappa^2$) for the velocities,  which is optimal owing to the third item,
is not an intrinsic feature of KPP equations and it is rather an outcome of the coupling with the value function $v$ through the
optimal feedback strategy $s^*$.

Unfortunately, the picture of all possible waves of system~\rife{system} is not  yet completely understood, as we will discuss later. However, even if many questions remain open for the system, we believe that our analysis makes a significant advance towards the study of the long time convergence to a stable profile. 


\vskip1em
As it is very typical in mean-field game systems, the construction of equilibria is a consequence of some fixed point argument.  In this context, this leads us to  a careful analysis of traveling waves for  a nonlocal KPP-type equation. Indeed, if $F(t,x):= \int_{-\infty}^x f(t,y)dy$ is the cumulative distribution function, a direct computation
(which we postpone to Section~\ref{results}) reveals that \rife{fp} rewrites for $W:=1-F$ as
\be\label{rd}
\partial_t W- \kappa^2 \partial_{xx} W= W \int_{-\infty}^x A(t,y)(-\partial_x W) dy\,,\qquad
\text{with }\;A:= \alpha\circ s^*,
\ee
 together with the limiting conditions 
 $$W(t,-\infty)=1\,,\qquad W(t,+\infty)=0.$$
 This is a nonlocal reaction-diffusion equation which, in the case $A$ constant, reduces to the 
 classical Fisher-KPP equation
 $$\partial_t W- \kappa^2 \partial_{xx} W= AW(1-W).$$ 
 Traveling waves for the system \rife{system} yield the special case $A=A(x-ct)$ (see Section~\ref{sec:derivation}),
  hence we are led to consider 
 solutions of the form $W(t,x)=w(x-ct)$,~i.e. 
\be\label{forced}
\begin{cases}
- \kappa^2 w''- c w' = w \int_{-\infty}^x A(y)(-w'(y))dy\,,\quad x\in \R\,, \\
w(-\infty)=1\,, \quad w(+\infty)=0\,, \quad w'<0\,.
\end{cases}
\ee
We point out that $w(x-ct)$ is not  just a wave for the equation~\eqref{rd}
because we additionally assume that the nonlocal kernel $A$ is also moving with an imposed velocity~$c$; 
this is why~\eqref{forced}  has to be understood as a {\it forced speed} problem.

 A major part of our work consists in the analysis of solutions to \rife{forced}. 
This corresponds to the traveling wave problem for the cumulative distribution function with a given imposed policy $s$ 
 (and $A=\alpha\circ s$).
 Despite the large literature about nonlocal KPP equations, problem~\rife{forced} presents some peculiar features 
 which had not appeared in previous models.  
  To this respect, we give several new contributions, of independent interest, 
  to the study of forced speed waves for nonlocal KPP equations. 
  
  Assuming that $A(\cdot)$ is a nonnegative nonincreasing  function satisfying 
  $\bar A:= A(-\infty)> A(+\infty)=:\underline A$, 
  we can summarize as follows our  results, to be compared with  what is known for the standard local KPP case:
\begin{itemize}

\item problem \rife{forced} admits waves for all $c> 2\kappa \sqrt{\underline A}$. If $c\geq 2\kappa\sqrt{\bar A}$, there exist waves with speed $c$ and arbitrary normalization at  any point $x_0\in \R$. By contrast, if $2\kappa \sqrt{\underline A}<c<2\kappa \sqrt{\bar A}$, for any  given point $x_0$ there is a minimal height $\theta=\theta(x_0,c)$  such that waves with velocity $c$ only exist with $w(x_0)\geq \theta$.

\item For fixed speed $c\in (2\kappa \sqrt{\underline A}, 2\kappa\sqrt{\bar A})$, all possible waves are an ordered foliation indexed by the value  $\int_{\R} A(y)(-w'(y))dy$, whose maximum is given by 
\be\label{criti}
\frac{c^2}4= \kappa^2 \int_{\R} A(y)(-w'(y))dy.
\ee
The unique wave which satisfies \rife{criti} is called {\em critical};
this is the wave of velocity~$c$ which, at any point, runs at the possible lowest height.

\end{itemize} 

Let us  point out how  the analysis of \rife{forced} proves to be crucial in the study of system~\rife{system}.  In fact, 
our approach is built on a fixed point argument which 
requires the understanding of the full picture of possible waves for the single nonlocal KPP equation.
Then imposing condition \rife{criti} will lead us to a wave for \rife{system}
satisfying the criticality condition~\eqref{ccri}. 
To~this respect, our construction of the critical wave for the mean-field game system~\rife{system} looks completely different from the method employed in~\cite{ryzhik}, where the authors use a  topological degree argument and a suitable approximation procedure which automatically provides a wave for~\rife{system}
satisfying the criticality condition~\eqref{ccri}, 
assuming the parameters $\rho$,$\alpha$ to be sufficiently large.  
The essential difference of the two approaches even raises    
the question of whether the obtained critical waves~coincide.

\medskip

The organization of this paper runs as follows. We leave to the next Section \ref{results} 
the derivation of the traveling wave system and a more precise statement of our main results. As we mentioned, they involve both the single nonlocal  KPP equation and the mean-field game system. Further comments on the optimality of our results are also given below.  Then, Section \ref{nonlocalKPP}   is devoted to the detailed analysis of  solutions to~\rife{forced}.
In Section \ref{mfg-proofs} we come back to the system \rife{system} and we prove the results on the mean-field game model. 


\section{Assumptions and  main results}\label{results}



We come back to the mean field game system \rife{system} in order to make precise the setting of our assumptions.
We assume  that  the learning technology function $\alpha(s)$ satisfies
\be\label{alpha1}
\begin{split}
& \hbox{$\alpha\in C^0([0,1]) \cap C^2((0,1])$ is  increasing,  strictly concave,}
\end{split}
\ee
together with
\Fi{alpha0}
\alpha(0)=0,
\Ff
\Fi{alpha1'}
\alpha(1)>\kappa^2,
\Ff
\be\label{alpha2}
\lim_{s\to 0^+}\alpha'(s)= +\infty\,, 
\ee
\be\label{alpha3}
\alpha'(1) >0\,.
\ee
We already explained in the Introduction the interpretation of conditions \rife{alpha2} and~\rife{alpha3}  in terms of the knowledge diffusion-growth model.  
Besides, condition~\rife{alpha1'}  will turn out to be necessary in order for balanced growh paths to exist (see Proposition~\ref{pro:NC} and Theorem \ref{main}). 
A natural interpretation is that there should be enough probability to meet people and enhance the individual level of knowledge in  order for this model of  economy to reach a significant balanced growth.

Another necessary assumption involves the  discount rate $\rho$, that is,
\be\label{ro}
\rho>\kappa^2\,.
\ee
It will soon appear clear that this is a minimal condition even for the existence of solutions to \rife{system}. Further conditions will be needed  on the discount rate in order to
guarantee the existence of balanced growth paths, which  we will discuss after Theorem \ref{main}.


\subsection{Balanced growth paths and  traveling waves}\label{sec:derivation}

Here we derive the system of traveling waves which is associated to  balanced growth paths for the Lucas-Moll model. Before giving a proper definition of admissible 
solutions, we start by making a few heuristic remarks on the 
solutions of system  \rife{system}.

First of all, we stress that the Hamiltonian function  
\be\label{Ham}
H(t,x;v):=   \max_{s\in[0,1]} \big[(1-s)e^x+ \alpha(s) \int_{x}^{+\infty} [v(t,y)-v(t,x)]f(t,y)dy\big]
\ee
requires the condition $v(t)\in L^1(f(t) dx)$ in order to be finite. Since 
$$
-\partial_t v - \kappa^2\partial_{xx} v + \rho v \geq e^x
$$
by comparison (and the condition $\rho> \kappa^2$) we have $v\geq \frac{e^x}{(\rho-\kappa^2)}$, hence we are led to require $f(t)e^x \in L^1(\R)$. This is to point out that a  natural functional setting for the system \rife{system} should require
\be\label{fex}
  f(1+e^x) \in C^0([0,\infty);L^1(\R))\,,\quad \frac v{1+e^x} \in C^0([0,\infty);  L^\infty(\R))\,
\ee
plus the natural condition that $f(t)$ be a probability density for all $t$.

It is also natural to guess that $v$ be monotone with respect to $x$. This can be observed by differentiating the Hamilton-Jacobi equation.  In fact, 
by standard parabolic regularity, locally bounded solutions $(v,f)$ are at least of class $C^{(1+\theta)/2, 1+\theta}$, in particular $v$ is $C^1$ in the $x$ variable.
Then, the strict concavity assumption on $\alpha$ allows us to use some form of the envelope theorem  (see e.g. \cite[Lemma 1]{CP-CIME})  which implies that $H(t,x;v)$ is differentiable in $x$. Differentiating the Bellman equation we 
deduce that~$v_x$ solves the equation 
\be\label{vxeq}
-\partial_t v_x- \kappa^2\partial_{xx} v_x + \rho v_x = (1-s^*) e^x - \alpha(s^*) v_x\, (1-F)
\ee
where 
$$F(t,x):= \int_{-\infty}^x f(t,y) dy$$
is the cumulative distribution function, and $s^*$ is given in  \rife{system}. Heuristically, this equation yields $v_x\geq 0$ and $v_xe^{-x} \in L^\infty((0,\infty)\times \R)$.  

The monotonicity of $v$  also implies that  $ \int_{x}^{+\infty} [v(y)-v(x)]f(y)dy\geq 0$ and that the function $s^*$ defined in \rife{system}  is a nonincreasing function of $x$. Indeed, since $\alpha(\cdot)$ is concave,  the function 
	$$
	g(s;x):= (1-s)e^x+\alpha(s)\int_x^{+\infty} (v(t,y)-v(t,x))f dy
	$$
	is also concave with respect to $s$ and so either $g(\cdot ;x)$ is decreasing in $[0,1]$ or $s^*(t,x):= \sup\{ \tau\in [0,1]\,:\, g(\cdot;x) \,\, \hbox{is increasing in} \, [0,\tau)\}$. But one can readily check that
	\begin{align*}
	& \{ \tau\in [0,1]\,:\, g(\cdot ; x_2) \,\, \hbox{is increasing in} \, [0,\tau)\} \\
	& \qquad \quad \subset \{ \tau\in [0,1]\,:\, g(\cdot ; x_1) \,\, \hbox{is increasing in} \, [0,\tau)\}  \quad \forall x_1<x_2
	\end{align*}
	hence $s^*(t,x_2)\leq s^*(t,x_1)$.

In fact,  it is possible  to build a solution  $(v,f)$ of \rife{system} satisfying the above properties, provided the discount rate is sufficiently large; however we postpone to a forthcoming article a more detailed analysis about the existence of solutions to the system, which depends both on conditions on initial data and on the range of the discount factor. 

Here, we only concentrate on balanced growth path solutions. The above discussion eventually  leads us to the following definition. 
\vskip0.5em

\begin{definition}\label{BGP} A {\it balanced growth path} (BGP) solution of \rife{system} with growth
	rate~$c>0$ is a triple $(f,v, s^*)$ such that 
$$
f= \vfi(x-ct),\qquad v= e^{ct} \nu(x-ct), \qquad s^*= \sigma(x-ct),
$$
and the following properties are satisfied:
\begin{itemize}

\item $\vfi, \nu \in C^2(\R)$, $\sigma\in W^{1,\infty}_{loc}(\R)$

\item 
$\vfi (1+e^x)\in L^1(\R)$, 
$ e^x \int_x^{\infty}  \vfi(y)dy \in L^1(\R)$

\item $\nu$ is increasing, nonnegative and  $\nu' e^{-x} \in  L^\infty(\R)$

\item $f,v$ are classical solutions of the MFG system \rife{system} (with $f_0= \vfi(x)$) and $ s^*(t,x)= {\rm argmax} \,  \left\{ (1-s)e^x+\alpha(s)\int_x^{+\infty} [v(t,y)-v(t,x)]f(t,y)dy\right\}$.
\end{itemize}

\end{definition}

We now proceed by showing that BGP solutions   can be conveniently reformulated in terms of $v_x$ and the CDF function $F$, and this formulation is well suited for traveling waves. 
Let $(f,v)$ be a solution to~\rife{system}.
We first observe that,  integrating by parts,  we can rewrite (omitting the $t$ variable) 
\be\label{hamvx}
\begin{split}
\int_{x}^{+\infty} [v(y)-v(x)]f(y)dy & =  - \lim\limits_{y\to \infty} (1-F(y))(v(y)-v(x))  + \int_{x}^{+\infty} v_x(1-F) dy
\\
&  = \int_{x}^{+\infty} v_x(1-F) dy\,,
\end{split}
\ee
because $(1-F(y)) v(y)\leq \int_y^{\infty} f(s)v(s) ds \to 0$ as $y\to \infty$.    Due to \rife{vxeq}, and using \rife{hamvx} in the definition of $s^*$, we see that 
%
%
the function $\zeta:=  v_xe^{-x}$  is a bounded solution~to the equation
\be\label{exvxeq}
\begin{cases} 
-\partial_t \zeta- \kappa^2\partial_{xx} \zeta - 2 \kappa^2 \partial_{x}\zeta + (\rho-\kappa^2) \zeta = 
(1-s^*)   - \alpha(s^*)\zeta\, (1-F) & \\
  s^*(t,x)=  \mathop{{\rm argmax}}\limits_{s\in [0,1]} \left\{(1-s)e^x + \alpha(s) \int_x^{\infty} 
  \zeta(t,y)e^{y}(1-F(t,y))dy \right\}.  & 
\end{cases}
\ee
%
The equation of $F$ is also readily found. Integrating the equation of $f$ (and neglecting the terms at infinity),  we have
\begin{align*}
\partial_t F-\kappa^2\partial_{xx} F &=    \int_{-\infty}^x f(\xi) \int_{-\infty}^\xi \alpha(s^*)f(y)\, dy - 
\int_{-\infty}^x \alpha(s^*(\xi))f(\xi) \int_\xi^{+\infty} f(y)dy \\
& = - \big[(1-F(\xi)\int_{-\infty}^\xi \alpha(s^*)f(y)\, dy\big]^x_{-\infty} \\
& = - (1-F(x)) \int_{-\infty}^x \alpha(s^*)f(y)\, dy
\end{align*}
where we just used integration by parts. Therefore, the function $F$ solves the nonlocal KPP equation
\be\label{Feq}
 \partial_t F- \kappa^2 \partial_{xx} F+ (1-F)   \int_{-\infty}^x \alpha(s^*(y))(\partial_xF(y))\, dy=0\,,
\ee  
that is, $W:=1-F$ satisfies~\eqref{rd}.  Now, if $W$ is a traveling wave, i.e.~it is of the form
$W(t,x)= w(x-ct)$,
one can  look for  $\zeta$ and $s^*$ in the form of traveling waves too. This is consistent because if 
$ \zeta(t,x)\!:=\!v_x(t,x)e^{-x}= z(x-ct) $ for some function~$z$,~then
\be\label{strav}
\begin{split}
s^*(t,x) & = \mathop{{\rm argmax}}_{s\in [0,1]} \left[(1-s)e^x + \alpha(s) \int_x^{+\infty}  v_x(t,y)(1-F(t,y))dy \right] 
\\ & = \mathop{{\rm argmax}}_{s\in [0,1]}\, e^{ct}\left[(1-s)e^{x-ct} + \alpha(s) \int_{x-ct}^{+\infty}  z(y)e^yw(y)dy \right] 
\end{split}
\ee
which implies that $s^*$ is a function of $x-ct$, i.e., it is itself a traveling wave. 

Summing up, in the case of BGP solutions, we have that
$$W(t,x)=1-\int_{-\infty}^x \vp(y-ct) dy,\qquad
\zeta(t,x)=v_x(t,x)e^{-x}=e^{ct-x}\nu'(x-ct),$$
are traveling wave solutions of~\eqref{rd}, \rife{exvxeq}, that is, 
%
%
%
$W=w(x-ct)$, $\zeta=z(x-ct)$ and $s^*= \sigma(x-ct)$ are solutions to
\be\label{zw-waves}
\begin{cases}
	\displaystyle
	\kappa^2 \, w''+cw'+w\int_{-\infty}^x A(y)(-w'(y))dy=0,\quad x\in \R
	\\
	w\geq 0,\quad w(-\infty)=1,\quad w(+\infty)=0\,,
	\\
	\displaystyle
	-\kappa^2\, z''+ (c-2\kappa^2) z' + (\rho-\kappa^2) z+ A(x)w\, z= 1-\sigma(x),\quad x\in \R \phantom{\int_{-\infty}^x} 
	\\
	z\geq 0, 
	\quad z\text{ is bounded},
	\quad { z e^x w\in L^1(\R)},\\ 
	 \displaystyle \sigma(x)= \mathop{{\rm argmax}}_{s\in [0,1]} \,  \left\{ (1-s)e^x+\alpha(s)\int_x^{+\infty} z(y) e^y\, w(y)dy\right\} \,,\quad A:= \alpha\circ\sigma\,.
\end{cases}
\ee
This will be the framework where traveling waves will be sought for. 
Let us notice that the conditions at infinity for $w$ are induced by mass conservation in the original system, with the normalization condition   $\int_{-\infty}^{+\infty} f =1$.  The conditions for $z$ follow from  the  conditions on $v_x$ discussed  above.  {The condition $e^x w \in L^1(\R)$ is necessary to give proper sense to $\sigma$ in \rife{zw-waves}, this is why this condition is required in the definition of  BGP solutions; we stress that, using elliptic estimates and Harnack inequality,  the first equation in \rife{zw-waves} implies  $|w'(x)| \leq C w(x)$, so the condition  $e^x w \in L^1(\R)$ implies itself a similar condition for $w'$ (which is  the requirement $\vfi\, e^x\in L^1(\R)$ appearing in Definition \ref{BGP}).}
Let us further recall that $\sigma$ and $A$ are nonincreasing and, as we will see in Proposition~\ref{pro:NC} below,
they are also locally Lipschitz-continuous on $\R$. 

The connection between  BGP solutions of \rife{system} and traveling wave solutions of \rife{zw-waves} will be rigorously analyzed in Proposition \ref{bgp-tw}.
We only stress here that  a one-to-one correspondence is easily given, following the above derivation,  between the solutions $(z,w)$ of \rife{zw-waves} and  the couple $(v_x,f)$. However, an extra condition ($\rho>c$) will be needed in order to build the {\it balanced growth} value function $v$. This specifically comes from the requirement that $v$ be positive, and is  a natural condition in  the described model,  see also Remark \ref{interp}.

\subsection{Statement of the main results}

We now state the main result of the~paper.

\begin{theorem}\label{main} Assume that hypotheses \rife{alpha1}--\rife{alpha3} and \rife{ro} hold true. Then we have:

\begin{enumerate}[$(i)$]

\item  If there exists a BGP solution (with growth $c$) of \rife{system}, then necessarily  
$$
 2\kappa^2<c< \alpha(1)+\kappa^2\qquad \hbox{and } \qquad c<\rho
 $$
(hence \rife{alpha1'} and \rife{ro} are necessary for a BGP to exist)

\item   If $\rho \geq 2\kappa\sqrt{\alpha(1)}$, there exists  a BGP solution   of \rife{system} with growth $c\in (2\kappa^2, 2\kappa\sqrt{\alpha(1)})$ and such that
\rife{ccri} is satisfied.

\item For every $c\in [2\kappa\sqrt{\alpha(1)}, \alpha(1)+\kappa^2)$ such that $c<\rho$, there exist BGP solutions of \rife{system} with growth $c$   (which do not satisfy \rife{ccri}).

\end{enumerate}


\end{theorem}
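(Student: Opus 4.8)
The plan is to prove Theorem \ref{main} by reducing the mean-field game system to the traveling-wave system \eqref{zw-waves}, and then to construct a solution of \eqref{zw-waves} by a fixed-point argument over the space of admissible policies $\sigma$. Concretely, for each nonincreasing locally Lipschitz profile $\sigma:\R\to[0,1]$ satisfying the structural conditions from Proposition~\ref{pro:NC} (in particular $\sigma\equiv 1$ on $(-\infty,x_0]$ and $\sigma>0$ everywhere, with $A:=\alpha\circ\sigma$ nonincreasing, $\bar A=\alpha(1)>\underline A=\lim_{x\to\infty}\alpha(\sigma(x))$), I would: (a) invoke the analysis of Section~\ref{nonlocalKPP} on the forced-speed nonlocal KPP problem \eqref{forced} to produce, for the appropriate speed $c$, the \emph{critical} wave $w=w_\sigma$ satisfying $c^2/4=\kappa^2\int_\R A(y)(-w'(y))\,dy$; (b) with $w_\sigma$ in hand, solve the linear-in-$z$ elliptic equation (the third line of \eqref{zw-waves}) for $z=z_\sigma$, using $\rho>\kappa^2$ and a comparison argument to get $0\le z_\sigma$ bounded with $z_\sigma e^x w_\sigma\in L^1$; (c) recompute the argmax with $(z_\sigma,w_\sigma)$ to obtain a new policy $\tilde\sigma=\Phi(\sigma)$, again nonincreasing and locally Lipschitz by the concavity of $\alpha$ and the envelope lemma \cite{CP-CIME}; and (d) show $\Phi$ has a fixed point. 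Because the speed $c$ is itself determined by the criticality constraint \eqref{ccri}, which reads $c=2\kappa\sqrt{\int_\R A\,\varphi}$ with $\varphi=-w'$, the speed must be treated as an additional unknown in the fixed point (or one fixes $c$ and adjusts the normalization point $x_0$ of $w$).

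The three items are then obtained as follows. For item $(i)$ — the necessary conditions — I would argue directly on \eqref{zw-waves}: integrating the $w$-equation against a suitable test function and using $w(-\infty)=1$, $w(+\infty)=0$ together with the exponential decay $|w'|\le Cw$ gives algebraic relations among $c,\kappa$ and $\int_\R A(-w')$; the bound $\underline A\ge 0$ and $\bar A=\alpha(1)$ force $2\kappa^2<c<\alpha(1)+\kappa^2$ (the lower bound coming from the requirement that the critical exponent of the linearization at $+\infty$, where $A\to\underline A$ and the reaction vanishes to leading order, be compatible with $w'<0$ and integrability; the upper bound $\alpha(1)+\kappa^2$ coming from the behaviour at $-\infty$ where $A\equiv\alpha(1)$ and $w\to1$, i.e. from the tail equation $\kappa^2 w''+cw'+\alpha(1)(1-w)\cdot(\text{something})$ — this is exactly the non-KPP ceiling flagged in the introduction and is an output of the coupling). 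The condition $c<\rho$ comes from demanding that $v=e^{ct}\nu(x-ct)$ be positive and that the $z$-equation, which contains the zero-order coefficient $\rho-\kappa^2$ shifted by the transport term $c-2\kappa^2$, have the right sign structure; this should be extracted along the lines of the derivation in Section~\ref{sec:derivation}, via Proposition~\ref{bgp-tw}. For item $(ii)$, under $\rho\ge 2\kappa\sqrt{\alpha(1)}$ one runs the fixed-point scheme above imposing criticality, and the a priori bounds $2\kappa^2<c<2\kappa\sqrt{\alpha(1)}$ follow from item $(i)$ together with the fact that a critical wave has $\int_\R A(-w')\le\bar A=\alpha(1)$ strictly (since $A<\bar A$ on a set of positive $w'$-measure because $\sigma\not\equiv1$), giving $c=2\kappa\sqrt{\int A(-w')}<2\kappa\sqrt{\alpha(1)}$, while $c>2\kappa^2$ is the compatibility bound; the hypothesis $\rho\ge 2\kappa\sqrt{\alpha(1)}$ guarantees $c<\rho$ a posteriori and hence that the $z$-equation is well-posed and $v$ is positive. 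For item $(iii)$, one fixes any $c\in[2\kappa\sqrt{\alpha(1)},\alpha(1)+\kappa^2)$ with $c<\rho$ and repeats the fixed-point argument but now picking, for each $\sigma$, a \emph{supercritical} wave of speed $c$ of \eqref{forced} (which exists for all such $c$ with arbitrary normalization, by the first bullet of the nonlocal-KPP results), so no criticality constraint is imposed; the normalization freedom is used to close the compactness.

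The main obstacle, I expect, is step (d): verifying the hypotheses of a fixed-point theorem (Schauder or Leray–Schauder) for $\Phi$. The difficulties are (1) \emph{compactness}: the natural domain of $\sigma$'s is a set of monotone functions valued in $[0,1]$ with a uniform local Lipschitz bound — compact for local uniform convergence by Helly/Arzelà–Ascoli — but one must check that $\Phi$ maps this set into itself, which requires uniform control of $x_0$ (the point where $\sigma$ hits $1$) and of the decay of $\sigma$ at $+\infty$, i.e. uniform-in-$\sigma$ estimates on the tails of $w_\sigma$ and $z_\sigma$; (2) \emph{continuity} of $\sigma\mapsto w_\sigma$: the selection of the critical wave is a delicate limiting object (it is the lowest wave of speed $c$), and one needs stability of the criticality relation \eqref{criti} under $A_n\to A$ locally uniformly, which presumably rests on the foliation structure and the strict monotonicity of $\int_\R A(-w'_{A,h})\,dy$ in the normalization $h$ established in Section~\ref{nonlocalKPP}; and (3) handling the coupled unknown $c$: one must show the map $\sigma\mapsto c(\sigma)$ is continuous and stays in the admissible window, and that the joint fixed point $(\sigma,c)$ indeed satisfies all of \eqref{zw-waves}. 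Once $\Phi$ has a fixed point, translating back through Proposition~\ref{bgp-tw} to a genuine BGP of \eqref{system}, including the reconstruction of $v$ from $z$ via $v_x=ze^x$ and the check that $v$ is positive, increasing, and in the right weighted spaces, is routine given the conditions $\rho>\kappa^2$ and $c<\rho$.
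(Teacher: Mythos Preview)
Your overall architecture matches the paper's: reduce to the traveling-wave system~\eqref{zw-waves}, run a Schauder fixed point on the policy $\sigma$ (with $A=\alpha\circ\sigma$), select the critical wave $w$ for~(ii) and a normalized supercritical wave for~(iii), solve the linear $z$-equation, and recompute $\sigma$ via the argmax. The equivalence with BGP solutions and the condition $c<\rho$ are handled exactly as you say, through Proposition~\ref{bgp-tw}. Two points, however, deserve correction or sharpening.

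\textbf{The necessary bounds in (i).} Your description (``integrating the $w$-equation against a test function'', lower bound from linearization at $+\infty$, upper bound from behaviour at $-\infty$) does not match the actual mechanism. In the paper both bounds come from the \emph{same} argument: Proposition~\ref{implicit-speed}$(iv)$ gives $w(x)\ge w(0)e^{-\lambda x}$ with $\lambda=\kappa^{-2}\big(\tfrac{c}{2}-\sqrt{\tfrac{c^2}{4}-\kappa^2\mc I(w)}\big)$, and since $z$ has a positive limit at $+\infty$, the requirement $ze^xw\in L^1$ forces $\lambda>1$. One then checks directly that $c\le 2\kappa^2$ implies $\lambda\le c/(2\kappa^2)\le 1$, while $c\ge\alpha(1)+\kappa^2$ together with $\mc I(w)\le\alpha(1)$ also gives $\lambda\le 1$. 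So the integrability constraint in~\eqref{zw-waves}, not a test-function computation, is what drives both inequalities.

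\textbf{The speed selection in (ii).} You correctly flag this as the main obstacle but leave it open. The paper's resolution is nontrivial and worth knowing: since it is unknown whether critical waves $w_c$ are ordered in $c$, one cannot simply normalize $w_c(0)=\vt$ and hope for monotonicity. Instead the paper sets $\ul w_c(x):=\inf_{0<c'\le c}w_{c'}(x)$, which \emph{is} nonincreasing in $c$, and selects $c$ uniquely via $\int_{-\infty}^0 e^y\,\ul w_c(y)\,dy=\tfrac12$. This gives a continuous map $\sigma\mapsto c(\sigma)$ and makes the Schauder argument close. Also, the fixed point is not run directly on $\R$: the paper first truncates to $[-n,n]$ (with a cut-off $\chi_n$ in the argmax integral to force $\sigma(+\infty)=0$), obtains $(c_n,w_n,z_n)$ by Schauder, and then passes $n\to\infty$ using the uniform decay estimate~\eqref{decay_crit} for critical waves to control the tails.
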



\begin{remark}\label{main-rem} Several comments are in order to describe the above statement.

\begin{itemize}

\item[(a)] The condition  $\alpha(1)>\kappa^2$ proves to be necessary to leave room for the existence of {\it some}  traveling wave, solution of \rife{zw-waves}, hence for BGP solutions as well.   By contrast, the restriction $\rho>c$ is not needed for  the solutions $(z,w)$ of \rife{zw-waves} to exist. But this restriction is necessary for BGP solutions. In particular this is needed to build a consistent value function $v$ once the traveling waves $v_xe^{-x}$ and $f$ are proved to exist; we refer the reader to Proposition \ref{bgp-tw} for a better comprehension. 

It is interesting to notice that this necessary condition $\rho>c$, linking the discount rate to the possible balanced growth, is very common in the economic literature. Indeed,  as  pointed out to us by B. Moll, this condition  is usually needed both for neoclassical growth models (see e.g. \cite{Ace}) and for balanced growth equilibria induced by endogenous growth (see e.g. \cite{Romer}).


\item[(b)] As already mentioned in the previous item, there is a small gap between the pure analysis of system \rife{zw-waves} and the BGP solutions of \rife{system}. For example, without the requirement $z e^x w\in L^1(\R)$,   
solutions of~\rife{zw-waves} may be found with $\sigma\equiv1$ in the (larger) range of parameters 
$$
\frac 34 \kappa^2\leq {\rho}\leq \kappa^2,\qquad
\kappa^2-\rho\leq \alpha(1)<\kappa^2,
$$
and velocities $c\in[2\kappa\sqrt{\alpha(1)},2\kappa^2)$. These solutions however do not correspond to balanced growth paths because the derivation of
the equations in~\rife{zw-waves} from~\rife{system} crucially relies on the condition $z e^x w\in L^1(\R)$.
	
\item[(c)]	The most important output of Theorem \ref{main} is  the existence
	of at least one wave with velocity   $c\in (2\kappa^2, 2\kappa\sqrt {\alpha(1)})$, which in addition is 
	{\em critical}, in the sense that it fulfills~\eqref{ccri}. 
	
	Notice that  the speed  $c$ of this critical wave   is not precisely known, and since $c<\rho$ is necessary for a BGP to exist, we have to assume $\rho\geq 2\kappa\sqrt {\alpha(1)}$ in order to guarantee the existence of at least one critical wave.  	
	
	Unfortunately, not only we do not know whether this is the unique critical wave, but we also do not know yet  if there are other traveling waves in this range of velocities (but we conjecture that  other noncritical waves exist for $c$ in this range). 
	
By contrast, we know much better what happens for $c\geq 2\kappa\sqrt{\alpha(1)}$; indeed, for every $c\in [2\kappa\sqrt{\alpha(1)}, \alpha(1)+\kappa^2)$ there are traveling waves with speed $c$, and they can have arbitrary normalization at any given point $x_0\in \R$. This is  a whole family of traveling waves with {\it supercritical speed}, because they cannot satisfy condition \rife{ccri}, since this latter condition implies $c<2\kappa\sqrt{\alpha(1)}$.

 \item[(d)] The critical wave found in Theorem \ref{main}-$(ii)$ also satisfies the expected decay as $x\to \infty$, namely that $\frac{-w'}w \to \frac c{2\kappa^2}$.
 
  \end{itemize}
\end{remark}

\begin{remark}\label{interp} 
	Let us recall, from \cite{LuMo}, that the solutions constructed in Theorem \ref{main} have a clear interpretation in terms of the productivity variable $z=e^x$. Indeed, for a balanced growth path solution, the  cumulative distribution function $F$, given in terms of $z$, takes the form
$$
F= \Phi(e^{-ct} z)
$$
for some increasing function $\Phi$. This implies that all level sets of $F$ (the $q-$th quantiles of the CDF function) grow with the same exponential rate, because 
$$
\{z\,: F(t,z)=q\} = \{ z= e^{ct} \Phi^{-1}(q)\}\,.
$$
The fact that all   level sets of $F$  have   the same exponential growth rate justifies the name of {\it balanced growth path} solutions, in terms of the economy.

Let us also mention that the decay rate of the critical wave, mentioned in Remark \ref{main-rem}(d), is also significant for the economic model. This is usually interpreted by 
economists in terms of the Pareto tail of the CDF function; indeed, if  $\frac{-w'}w \to \frac c{2\kappa^2}$, this means that $F(t,z)$ has a tail which decays (in polynomial scale)
as $z^{- \frac c{2\kappa^2}}$ (the precise behavior for the KPP equation
would actually suggest $F= O\left( z^{- \frac c{2\kappa^2}}\, \log z\right)$). In the language of economists, the value $\frac{2\kappa^2}c$
is called the {\it tail inequality} associated to the Pareto-like distribution.  To this respect, our result also proves  the conjecture in~\cite{Achdou+al}  that the critical balanced growth path for system \rife{system} should have tail inequality equal to $\kappa \left( \int_{\R} \alpha(s^*(y)) \vfi(y)dy\right)^{-1/2}$.
\end{remark}

As it is typical in mean field game systems, the solutions we find in  Theorem~\ref{main} arise from a fixed point argument. To this purpose, we first develop a deep study of traveling waves for the single nonlocal KPP equation 
\Fi{nlKPP}
\begin{cases}
\displaystyle
w''+cw'+w\int_{-\infty}^x A(y)(-w'(y))dy=0,\quad x\in\R\\
0\leq w\leq1,\quad w(-\infty)=1,\quad w(+\infty)=0 .
\end{cases}
\Ff
Here we have set the diffusion coefficient $\kappa=1$; this is no loss of generality, up to rescaling $c$ and $A$ 
by $1/\kappa^2$. 
The main difficulties we have to face, compared with the classical KPP equation,
come from the facts that this equation is inhomogeneous and nonlocal in the reaction term, 
which entail, respectively, 
that it is not translation invariant and that the comparison principle fails. 

Equation \rife{nlKPP} is obtained from the mean field game system with $A:=\alpha\circ\sigma$. 
This motivates the setting of assumptions we are interested in, namely,
$A$ is bounded and nonincreasing. We also exclude the case $A$ constant because this reduces to
the standard Fisher-KPP equation (for which basically everything is known).

In our analysis of problem \rife{nlKPP}, we completely characterize   the whole family of traveling waves.

\begin{theorem}\label{thm:nlKPP}
	Assume that $A\in W^{1,\infty}_{loc}(\R)$ is bounded and nonincreasing and that
	$$
	\bar A:= \lim_{s\to -\infty} A(s)\,>\, \underline A:= \lim_{s\to +\infty} A(s)\geq0.
	$$
	The traveling wave problem~\eqref{nlKPP} admits solution if and only if $c>2\sqrt{\ul{A}}$. 
	For any $c>2\sqrt{\ul{A}}$ the family of solutions is given by 
	$$\mc{F}:=(w_\vt)_{\vt\in\Theta},$$
	with $w_\vt$ satisfying $w_\vt(0)=\vt$ and
	$$\Theta=\begin{cases} 
		\displaystyle [\vt_c,1) & \text{ if }\;c\in\big(2\sqrt{\ul{A}},2\sqrt{\bar A}\,\big)\\
		\\
		(0,1) & \text{ if }\;c\in[ 2\sqrt{\bar A},+\infty).
	\end{cases}$$
	The $(w_\vt)_{\vt\in\Theta}$ are strictly ordered and $\vt\mapsto w_\vt$ is a continuous 
	bijection from $\Theta$ to $\mc{F}$ equipped with the $L^\infty(\R)$ norm.
	
	Finally, the ``critical'' waves $w_{\vt_c}$ depend continuously on $c\in(2\sqrt{\ul{A}},2\sqrt{\bar A})$ 
	with respect to the~$L^\infty(\R)$ norm, and the values $\vt_c=w_{\vt_c}(0)$ satisfy
	\Fi{vtclimits}
		\vt_c\nearrow1 \as c\searrow2\sqrt{\ul{A}},\qquad
	\vt_c\searrow0 \as c\nearrow 2\sqrt{\bar A}.
	\Ff
\end{theorem}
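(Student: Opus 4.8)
The plan is to treat~\eqref{nlKPP} not as a nonlinear equation but as a \emph{linear} one, $w''+cw'+J_w\,w=0$, with coefficient $J_w(x):=\int_{-\infty}^x A(y)(-w'(y))\,dy$. First I would record the basic structure of an arbitrary solution: a phase‑plane/maximum‑principle analysis, using the built‑in bound $0\le w\le1$, shows that $w$ is smooth, positive and strictly decreasing, so that $J_w$ is nondecreasing with $J_w(-\infty)=0$ and $J_w(+\infty)=\int_\R A(-w')=:\mu(w)$, and the elliptic Harnack inequality applied to the linear equation gives $|w'|\le Cw$. Two elementary facts will be used repeatedly: the pointwise sandwich $\ul A(1-w)\le J_w\le\bar A(1-w)$, and — after an integration by parts, $J_w(x)=\bar A-A(x)w(x)-\int_{-\infty}^x|A'|\,w$ — the \emph{order‑reversing} property $w_1\le w_2\Rightarrow J_{w_1}\ge J_{w_2}$, which is meant to compensate for the failure of the comparison principle. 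Finally, linearising at $-\infty$ (where $J_w\sim\bar A(1-w)$) gives $1-w(x)\sim Ce^{\lambda_0 x}$ with the rate $\lambda_0=\tfrac12(-c+\sqrt{c^2+4\bar A})>0$ depending only on $c,\bar A$, while linearising at $+\infty$ (where $J_w\to\mu(w)$) produces the classical KPP alternative; in particular $w$ cannot remain positive unless $c^2\ge4\mu(w)$.

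Next I would extract the necessary condition. From the $+\infty$ behaviour one gets $\mu(w)\le c^2/4$ for every solution — this is exactly \eqref{criti} with ``$\le$'' — the inequality being obtained by a Sturm comparison of $e^{cx/2}w$ with a sine function on a half‑line $[X,+\infty)$. On the other hand $\mu(w)=\bar A-\int_\R|A'|\,w>\bar A-\int_\R|A'|=\ul A$, strictly, since $w<1$ on $\R$ and $A$ is not constant. Combining the two bounds gives $c^2/4>\ul A$, i.e.\ $c>2\sqrt{\ul A}$ is necessary — the KPP counterpart of Proposition~\ref{pro:NC}. The critical wave will be precisely the one saturating $\mu(w)=c^2/4$, and for it the $+\infty$ linearisation has a double root, forcing the decay $-w'/w\to c/2$ claimed in Remark~\ref{main-rem}(d).

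The heart of the proof is the rigidity + existence step. I would first show that two solutions with $w_1(0)\le w_2(0)$ are globally ordered, and that $w_1(0)=w_2(0)$ forces $w_1\equiv w_2$. Since translation invariance (lost to the inhomogeneity) and the comparison principle (lost to the nonlocality) are both unavailable, this is done by a sliding argument exploiting the common $-\infty$ rate $\lambda_0$ — so that contact near $-\infty$ is decided by the prefactor only — together with the order‑reversing property of $w\mapsto J_w$ and the strong maximum principle for the linear equation; the same analysis yields that $\mu$ is monotone along the family, $w_1\le w_2\Rightarrow\mu(w_1)\ge\mu(w_2)$. Existence with a prescribed normalisation $w(0)=\vt\in(0,1)$ is then obtained by a limiting procedure on truncated intervals $[-n,n]$ with boundary values $1$ and $0$, the a priori bounds above preventing degeneration and the normalisation preventing collapse to $0$ or $1$. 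The admissible set of heights is dictated by $\mu(w)\le c^2/4$: if $c\ge2\sqrt{\bar A}$ then $\mu(w)\le\bar A\le c^2/4$ unconditionally and every $\vt\in(0,1)$ occurs; if $2\sqrt{\ul A}<c<2\sqrt{\bar A}$, lowering $\vt$ raises $\mu(w_\vt)$, the profile can be lowered precisely until $\mu=c^2/4$ — which produces the critical wave $w_{\vt_c}$ as the monotone limit of the $w_\vt$ — and nothing exists below $\vt_c$, since it would violate $\mu\le c^2/4$. Together with rigidity this makes $\vt\mapsto w_\vt$ a strictly ordered bijection from $\Theta$ onto $\mc F$.

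It then remains to address continuity and the endpoint limits \eqref{vtclimits}, and here a compactness argument plus the rigidity statement gives $L^\infty$‑continuity of $\vt\mapsto w_\vt$ and of $c\mapsto w_{\vt_c}$. For \eqref{vtclimits}, along $c\searrow2\sqrt{\ul A}$ one has $\mu(w_{\vt_c})=c^2/4\to\ul A$, hence $\int_\R|A'|(1-w_{\vt_c})\to0$; any locally uniform limit of a subsequence would be either a genuine solution with $\mu=\ul A$ (impossible, by the second step) or degenerate by having its front escape to $+\infty$, and in the latter case $w_{\vt_c}(0)=\vt_c\to1$; symmetrically $c\nearrow2\sqrt{\bar A}$ gives $\int_\R|A'|\,w_{\vt_c}\to0$, the front escapes to $-\infty$, and $\vt_c\to0$. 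The main obstacle, in my view, is the third step: producing not merely \emph{some} waves but the exact shape of $\Theta$ — in particular the rigidity (uniqueness given a height) and the monotonicity $\vt\mapsto\mu(w_\vt)$ that locate $\vt_c$ and yield the critical wave — without either translation invariance or a comparison principle; making the order‑reversing structure of $w\mapsto J_w$ and the rigidity of the $-\infty$ exponential rate carry this load is where the real work lies.
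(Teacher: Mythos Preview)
Your overall architecture is sound and matches the paper's: the functional $\mu(w)=\mc I(w)=\int_\R A(-w')$ is the right invariant, the necessary condition $\mu(w)\le c^2/4$ (hence $c>2\sqrt{\ul A}$) is exactly Proposition~\ref{implicit-speed}, and the critical wave is indeed characterised by saturation $\mu=c^2/4$. The order-reversing identity $J_w(x)=\bar A-A(x)w(x)-\int_{-\infty}^x|A'|w$ and the universal $-\infty$ rate $\lambda_0$ are correct observations.

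The gap is precisely where you locate it: the rigidity step. Your proposed mechanism---``sliding from $-\infty$ using the common rate $\lambda_0$, the order-reversing property of $w\mapsto J_w$, and the strong maximum principle''---does not close. Suppose $w_1<w_2$ on $(-\infty,\xi)$ with first contact at $\xi$. Your order-reversing property gives $J_{w_1}>J_{w_2}$ on $(-\infty,\xi)$, and Hopf indeed yields $w_1'(\xi)>w_2'(\xi)$, so $w_1>w_2$ just past $\xi$. But now, for $x>\xi$, the sign of $J_{w_1}(x)-J_{w_2}(x)$ depends on the \emph{history} on all of $(-\infty,x)$, where $w_1-w_2$ has changed sign; the order-reversing property is no longer available, and you cannot iterate the argument to rule out further crossings. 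The nonlocal memory is exactly what breaks the sliding scheme. A second, subtler issue: two distinct solutions could in principle share the same prefactor in the $-\infty$ asymptotics $1-w\sim Ce^{\lambda_0 x}$, in which case your ``contact is decided by the prefactor'' premise fails at the outset.

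The paper circumvents this by reversing the logic: it first proves uniqueness for a given value of $\mc I$ (Proposition~\ref{pro:uniquenessI}), through an argument that assumes two solutions with $\mc I(w_1)=\mc I(w_2)$ intersect, passes to the \emph{backward} formulation $\psi(x)=w(-x)$---which, crucially, uses the constraint $\mc I=j$ to rewrite the nonlocal term from $+\infty$ as one from $-\infty$---and derives a contradiction via a ratio argument. Only then does it deduce ordering of arbitrary pairs (Proposition~\ref{pro:uniqueness}), not by comparing $w_1,w_2$ directly but by manufacturing, via Corollary~\ref{cor:normalizations} and Lemma~\ref{lem:I<j}, a third wave $\tilde w\ge w_1$ with $\mc I(\tilde w)=\mc I(w_2)$, which the uniqueness result then forces to equal $w_2$. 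The existence of waves with every admissible normalisation (your ``exact shape of $\Theta$'') likewise does not come from the truncated problems alone but from Proposition~\ref{pro:normalizations}, which starts from one wave and produces another above it by a shooting argument combined with Lemma~\ref{lem:CP}. Your compactness/endpoint arguments for continuity and~\eqref{vtclimits} are fine in spirit; the paper makes~\eqref{vtclimits} quantitative via the explicit sandwich~\eqref{<phi<}.
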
	

Of course, the choice of the point $0$ for parametrizing $\mc{F}$ is purely arbitrary.

The fact that the waves with equal speed are ordered seems remarkable, because this 
property is typically out of reach
for nonlocal, inhomogeneous problems, due to the lack of comparison principle.
We stress out that the main interest of Theorem~\ref{thm:nlKPP} lies in the range of velocities
$\big(2\sqrt{\ul{A}},2\sqrt{\bar A}\,\big)$, which reduces to the empty set when $A$ is constant. 
So this is the range of traveling waves which come from the genuinely inhomogeneous (and nonlocal) forced speed term $A$.
Outside this range, the picture is similar to the classical KPP equation: for any $c\geq 2\sqrt{\bar A}$ 
the graphs of the family of waves (which in the classical case are simply translations of the same profile) foliate the whole strip $\R\times(0,1)$. By contrast, for $c\in\big(2\sqrt{\ul{A}},2\sqrt{\bar A}\,\big)$, 
the foliation does not fill the whole strip, but only the region to the right of the 
``critical'' wave.
The situation is depicted in Figure~\ref{fig:KPPwaves}.
\begin{figure}[H]
	\centering
	\subfigure[$c<2\sqrt{\bar A}$]
	{\includegraphics[width=6.5cm]{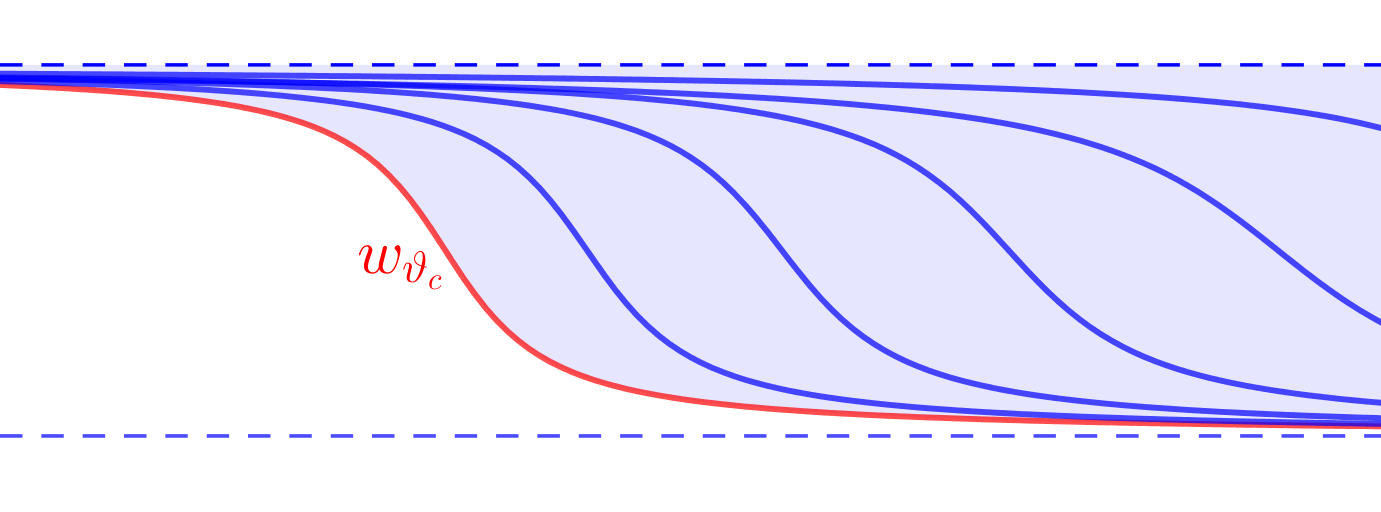}}
	\hspace{1cm}
	\subfigure[$c\geq 2\sqrt{\bar A}$]
	{\includegraphics[width=6.5cm]{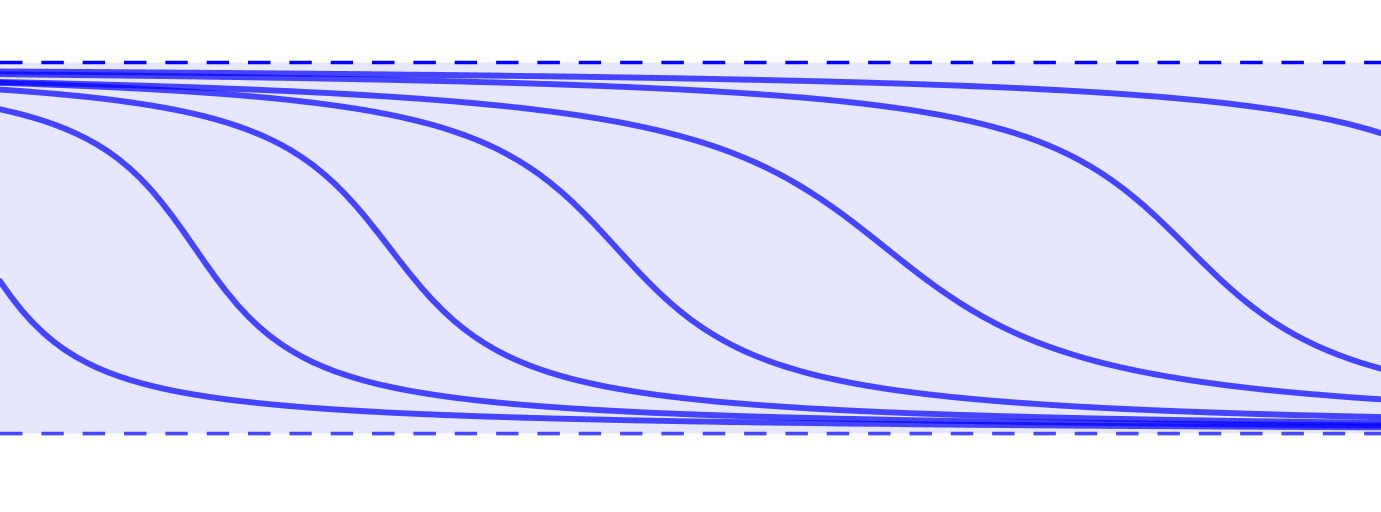}}
	\caption{The two different types of foliation.}\label{fig:KPPwaves}
\end{figure}	 

The two different scenarios can be heuristically explained as follows:
on one hand, if the transition of the wave from $1$ to $0$
takes place (for its main part) far to the right then it would be faced to values of $A$ close to $\underline A$, 
and for such value the range of admissible speeds is classically $c\geq 2\sqrt{\ul{A}}$; this is why 
fronts can be found for any $c>2\sqrt{\ul{A}}$, and they converge pointwise to $1$ as $c\searrow 2\sqrt{\ul{A}}$.
Conversely, if the transition occurs far to the left then $A$ would be close to $\bar A$ and then necessarily 
$c\gtrsim2\sqrt{\bar A}$; hence, for a given speed 
$c<2\sqrt{\bar A}$, the transition cannot occur too much to the left, or, equivalently,
there must exist a pointwise lower bound for the wave. 

In order to understand what happens for $c$ in the range $\big(2\sqrt{\ul{A}},2\sqrt{\bar A}\,\big)$, 
the following operator
will be of crucial importance:
$$
\mc{I}(w):=\int_{\R}A(y)(-w'(y))dy.
$$
From the modeling point of view, $\mc{I}(w)$ is related to the total expectation of meetings 
for the given policy $A:=\alpha\circ s$.
Because of the condition $w(-\infty)=1$, it can be equivalently written as
$$
\mc{I}(w)=\bar A+\int_{\R}A'(y)w(y)dy.
$$
This formulation enlightens the continuity and monotonicity of $\mc{I}$.
The value of $\mc{I}$ on the critical wave $w_{\vt_c}$ turns out to encode the speed in
a very transparent way:
$\mc{I}(w_{\vt_c})=c^2/4$. This immediately
shows that there exist no traveling waves with speed $c\leq2\sqrt{\ul{A}}$. The relationship between 
the waves and the functional $\mc{I}$ is summarized in the following result.  

\begin{theorem}\label{thm:I}
The mapping $\vt\to\mc{I}(w_\vt)$ is a decreasing homeomorphism between
$\Theta$ and $J$, where
$$
J=\begin{cases} 
\displaystyle (\ul A,c^2/4] & \text{if }\;2\sqrt{\ul{A}}<c<2\sqrt{\bar A}\\
\\
(\ul A,\bar A) & \text{if }\;c\geq 2\sqrt{\bar A}.
\end{cases}
$$
\end{theorem}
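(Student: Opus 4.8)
The plan is to exploit the two structural facts already proved in Theorem~\ref{thm:nlKPP}, namely that $\vt\mapsto w_\vt$ is a continuous bijection from $\Theta$ onto $(\mc{F},\|\cdot\|_{L^\infty})$ and that the family $(w_\vt)_{\vt\in\Theta}$ is strictly ordered, together with a convenient rewriting of $\mc{I}$. Since $A$ is bounded and nonincreasing, its distributional derivative satisfies $A'\le0$ a.e.\ and $\|A'\|_{L^1(\R)}=\int_\R(-A')=\bar A-\ul A<\infty$; hence, integrating by parts and using $w(-\infty)=1$, $w(+\infty)=0$, we may write
$$
\mc{I}(w)=\bar A+\int_\R A'(y)\,w(y)\,dy\qquad\text{for every solution }w\text{ of~\eqref{nlKPP}}.
$$
From this, $\mc{I}$ is Lipschitz on bounded subsets of $L^\infty(\R)$, with $|\mc{I}(w_1)-\mc{I}(w_2)|\le(\bar A-\ul A)\|w_1-w_2\|_{L^\infty(\R)}$, so composing with the continuity of $\vt\mapsto w_\vt$ yields that $\vt\mapsto\mc{I}(w_\vt)$ is continuous on $\Theta$.

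Next I would prove strict monotonicity. Because $w_\vt(0)=\vt$ and the waves are strictly ordered, $\vt_1<\vt_2$ forces $w_{\vt_1}<w_{\vt_2}$ everywhere on $\R$. Since $A'\le0$ a.e.\ and $-A'>0$ on a set of positive Lebesgue measure (this is exactly where $\bar A>\ul A$ is used), the representation above gives $\mc{I}(w_{\vt_1})>\mc{I}(w_{\vt_2})$. Thus $\vt\mapsto\mc{I}(w_\vt)$ is a continuous, strictly decreasing injection of the interval $\Theta$; its image $J$ is therefore an interval and, being a continuous strictly monotone bijection between intervals, the map is automatically a homeomorphism onto $J$. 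Moreover, since $w'<0$ and $A$ is strictly between $\ul A$ and $\bar A$ on some nondegenerate interval, one always has $\ul A<\mc{I}(w_\vt)<\bar A$, so $J$ cannot contain either of these values.

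It then remains to identify $J$, which reduces to computing $\mc{I}(w_\vt)$ at the two ends of $\Theta$. At the right end $\vt\to1^-$, I claim $w_\vt\to1$ locally uniformly; dominated convergence (with $|A'|\in L^1(\R)$) in the representation then gives $\mc{I}(w_\vt)\to\bar A+\int_\R A'=\ul A$. At the left end, there are two subcases: if $c\in(2\sqrt{\ul A},2\sqrt{\bar A})$, then $\vt_c\in\Theta$ and $\mc{I}(w_{\vt_c})=c^2/4$ — the identity~\eqref{criti} characterising the critical wave (which is proved in Section~\ref{nonlocalKPP} via the degenerate decay rate $-w'/w\to c/2$ at $+\infty$, and which is there shown to be the maximum of $\mc{I}$ over $\mc{F}$, consistently with $\vt_c=\min\Theta$); if instead $c\ge2\sqrt{\bar A}$, then $\Theta=(0,1)$ and I claim $w_\vt\to0$ locally uniformly as $\vt\to0^+$, whence $\mc{I}(w_\vt)\to\bar A$. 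Feeding these endpoint data into the continuous strictly decreasing map, and matching the (half-)openness of $J$ to that of $\Theta$, yields $J=(\ul A,c^2/4]$ when $c<2\sqrt{\bar A}$ and $J=(\ul A,\bar A)$ when $c\ge2\sqrt{\bar A}$, as claimed.

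The only genuinely delicate step — and the one where the lack of a comparison principle bites — is the pair of pointwise limits $w_\vt\to1$ as $\vt\to1^-$ and $w_\vt\to0$ as $\vt\to0^+$. These are in fact already contained in the construction of the foliation $\mc{F}$ in the proof of Theorem~\ref{thm:nlKPP} (the graphs of $\mc{F}$ exhaust the strip $\R\times(0,1)$, respectively the region between the critical wave and the line $w\equiv1$). Alternatively one argues directly: $w_\vt$ is monotone in $\vt$ and valued in $[0,1]$, uniform interior estimates for the equation in~\eqref{nlKPP} (whose nonlocal term is nonnegative and bounded by $\bar A$) give precompactness in $C^1_{loc}(\R)$, the limit $\ell$ solves that equation with $\ell'\le0$ and $\ell(0)\in\{0,1\}$, monotonicity and nonnegativity force $\ell\equiv\ell(0)$ on a half-line, and uniqueness for the Cauchy problem of the associated integro-differential equation then propagates $\ell\equiv\ell(0)$ to all of $\R$. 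Granting this and the identity $\mc{I}(w_{\vt_c})=c^2/4$, the proof is complete; everything else (continuity, strict monotonicity, the homeomorphism property) is routine.
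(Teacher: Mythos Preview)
Your proposal is correct and follows essentially the same route as the paper: continuity and strict monotonicity of $\vt\mapsto\mc{I}(w_\vt)$ come from the representation $\mc{I}(w)=\bar A+\int_\R A'w$ combined with the ordering and $L^\infty$-continuity in Theorem~\ref{thm:nlKPP}, and the endpoints of $J$ are identified by analysing what happens at the extremes of~$\Theta$.

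The one substantive difference is at the \emph{lower} endpoint $\ul A$. The paper does not argue via $w_\vt\to1$; instead it invokes Lemma~\ref{lem:I<j}, which directly constructs, for any $j\in(\ul A,\mc{I}(w))$, a wave $\t w\geq w$ with $\mc{I}(\t w)=j$. Your argument via $w_\vt\to1$ locally uniformly and dominated convergence in $\int_\R A'w_\vt$ is an equally valid (and arguably more elementary) way to reach the same conclusion. For the upper endpoint in the case $c\geq2\sqrt{\bar A}$, the paper uses the Harnack inequality of Proposition~\ref{Harnack} to get $w_{\vt^n}\to0$ locally uniformly; your compactness-and-classification sketch achieves the same. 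A small suggestion: rather than the somewhat awkward ``$\ell\equiv\ell(0)$ on a half-line, then propagate by Cauchy uniqueness'' step, you can pass to the limit in the reformulated equation~\eqref{eq:integrata} (where dominated convergence applies since $A'\in L^1$) and then invoke Lemma~\ref{lem:phi'<0}, which already classifies all bounded solutions as $\equiv0$, $\equiv1$, or a wave strictly between~$0$ and~$1$.
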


Another key feature of the operator $\mc{I}$ is that
it encodes the exponential rate of decay of the wave, see
Proposition~\ref{implicit-speed} below. 
Unfortunately, there is one property that we are still missing: the ordering of critical waves with different speeds.
This would be of great help to construct a wave for the system~\eqref{zw-waves} through 
a fixed point argument. 
Nevertheless, we are able to derive the ordering for large $|x|$, 
c.f.~Propositions~\ref{implicit-speed} and \ref{pro:leftordering},
and we use this to cook up a suitable selection principle for the fixed point argument.
%


\section{The nonlocal KPP equation}\label{nonlocalKPP}

This section is devoted to the study of the single traveling wave problem~\eqref{nlKPP},
which corresponds to an assigned production/research strategy.
Namely, throughout this section we assume that 
$A$ is a given function which fulfills the properties derived in Section~\ref{sec:derivation}, 
which are:
$$A\in W^{1,\infty}_{loc}(\R) \text{ \, is nonnegative, nonincreasing},$$
$$
\bar A:=A(-\infty)>\ul A:=A(+\infty).
$$
In the next section, we start by collecting  some tools on the nonlocal equation~\rife{nlKPP}. 

\subsection{Preliminary toolbox}

As a first step, we show basic properties of solutions to \rife{nlKPP}.

\begin{proposition}\label{Harnack}  If~\eqref{nlKPP} admits solution for some $c\in\R$, then necessarily $c>0$ and $w'<0$ in $\R$. In addition, for  any $R>0$, there exists a constant  $C_R$, only depending on $R$, $\bar A=A(-\infty)$ 
	and an upper bound for $c$, such that, for any $x_0\in \R$, there holds 
\Fi{Harna}
\max_{[x_0-R,x_0+R]}w\leq C_Rw(x_0)\,, \qquad \max_{[x_0-R,x_0+R]}(1-w)\leq C_R(1-w(x_0)).
\Ff
\end{proposition}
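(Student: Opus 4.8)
The plan is to first establish the sign properties ($c>0$ and $w'<0$) and then derive the two-sided Harnack-type estimate~\eqref{Harna} via uniform elliptic bounds applied to the linearized form of the equation. For the sign of $w'$: set $g(x):=\int_{-\infty}^x A(y)(-w'(y))dy$, which is nonnegative, nondecreasing and bounded (bounded because $\bar A\geq A\geq 0$ and $\int_{\R}(-w')=1$). Then $w$ solves the linear ODE $w''+cw'+g(x)w=0$. If $w'$ vanished at some point $x_1$ with $w(x_1)>0$, then $w''(x_1)=-g(x_1)w(x_1)\leq 0$; combined with the limit conditions $w(-\infty)=1$, $w(+\infty)=0$ and a careful look at the possible interior maxima/minima, one rules this out — a standard argument is that the first point where $w$ attains a local extremum leads to a contradiction with the boundary behaviour once one observes that $g\geq 0$ forces convexity wherever $w'=0$ and $w>0$. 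Actually the cleanest route is: $w$ cannot have an interior local minimum at a point where $w>0$ (there $w''=-gw\leq 0$, so it would be a local max unless $g=0$ there, which needs separate handling); more robustly, consider the first $x_1$ (if any) where $w'(x_1)\geq 0$ and argue using the ODE that $w$ then stays bounded away from $0$, contradicting $w(+\infty)=0$. Once $w'<0$ is known, integrating $w''+cw'+gw=0$ over $\R$ gives $c=\int_\R g(x)(-w'(x))dx / \big(\text{something positive}\big)$ — more directly, $c\int_\R(-w')=c=\lim(w'(-\infty)-w'(+\infty))+\int g w$, and since $g,w\geq 0$ and the derivative terms at infinity vanish (by the ODE and boundedness), $c>0$.

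Next, for the Harnack inequality~\eqref{Harna}: the key point is that $w$ satisfies a linear uniformly elliptic equation $w''+cw'+g(x)w=0$ with coefficients bounded uniformly in $x_0$ — indeed $0\leq g\leq \bar A$ on all of $\R$ (this is where the bound depends only on $\bar A$ and an upper bound for $c$, not on $x_0$). Since $w>0$, the classical interior Harnack inequality for nonnegative solutions of uniformly elliptic equations (applied on $[x_0-2R,x_0+2R]$, say) yields $\max_{[x_0-R,x_0+R]}w\leq C_R\,\min_{[x_0-R,x_0+R]}w\leq C_R\,w(x_0)$ with $C_R$ depending only on $R$, $\bar A$ and $\sup c$. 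For the second inequality, observe that $1-w$ satisfies $(1-w)''+c(1-w)'+g(x)(1-w)=g(x)$, i.e.~it is a nonnegative supersolution of the same elliptic operator (since the right-hand side $g\geq 0$); applying the weak Harnack inequality (the supersolution part of Harnack, valid for nonnegative supersolutions) gives $\max_{[x_0-R,x_0+R]}(1-w)\leq C_R(1-w(x_0))$, again with constants uniform in $x_0$. One should double-check that $1-w\geq 0$, which follows from $w'<0$, $w(-\infty)=1$.

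I expect the main obstacle to be the rigorous proof that $w'<0$ everywhere (rather than just $w'\leq 0$ or monotone behaviour "eventually"), because the equation is not autonomous and $g$ is only known to be nondecreasing, so the usual phase-plane picture for the autonomous Fisher--KPP ODE is not directly available. The right tool is to combine: (a) the observation that wherever $w>0$ and $w'=0$ one has $w''\leq 0$, so $w$ can have no strict interior local minimum at a positive value; (b) the boundary conditions $w(-\infty)=1>0=w(+\infty)$; and (c) an argument (via integrating the ODE or via a sliding/comparison with the constant $g\equiv\bar A$ Fisher--KPP equation, using $g\leq\bar A$) to exclude the degenerate flat pieces. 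A secondary technical point is justifying that $w',w''$ and $g w$ have the right limits at $\pm\infty$ so that the integrated identity giving $c>0$ is valid — this follows from elliptic estimates: $w$ and $w'$ are bounded, hence by the equation $w''$ is bounded, hence $w$ is uniformly $C^{2,\alpha}$, and the monotonicity plus boundedness of $w$ forces $w'\to 0$ and $w''\to 0$ at $\pm\infty$ along suitable sequences, which suffices.
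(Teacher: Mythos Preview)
Your approach to the first Harnack inequality (for $w$) is correct and matches the paper: once one knows $0\le g\le\bar A$, the function $w$ solves a linear equation $w''+cw'+g(x)w=0$ with uniformly bounded coefficients, and the classical Harnack inequality applies. Your treatment of $c>0$ and $w'<0$ is different from the paper's --- the paper derives a clean equation for $u:=w'$,
\[
u''+\Big(c-\frac{u}{w}\Big)u'-A w\,u=\frac{c\,u^2}{w},
\]
and applies the weak and strong maximum principles to it, which is shorter than your sketch via local extrema. Your route can be made to work, but note that your justification of $g\ge0$ (``because $\bar A\ge A\ge0$ and $\int_\R(-w')=1$'') is circular: it presumes $w'\le0$. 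The correct a~priori argument is integration by parts using only $0\le w\le1$, $A'\le0$, $w(-\infty)=1$, giving $g(x)=\bar A-A(x)w(x)+\int_{-\infty}^x A'w\ge A(x)(1-w(x))\ge0$.

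The genuine gap is in the second Harnack inequality. You write that $v:=1-w$ satisfies $v''+cv'+g(x)v=g(x)\ge0$ and call this a ``supersolution''. With the operator $\mathcal Lv:=v''+cv'+gv$, the relation $\mathcal Lv\ge0$ makes $v$ a \emph{sub}solution, not a supersolution; the weak Harnack inequality (the ``infimum'' half) is valid for supersolutions and does not apply here. For a subsolution one only gets the local maximum principle $\sup v\le C\|v\|_{L^p}$, which does not yield $\sup v\le C\,v(x_0)$. The paper closes this by a sharper algebraic observation: using $w'\le0$ (already proved) and $A'\le0$, one has $g(x)\le\bar A\,(1-w(x))$, so that $\tilde g(x):=g(x)/v(x)$ satisfies $0\le\tilde g\le\bar A$. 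Then $v$ is a genuine \emph{solution} of
\[
-v''-cv'+(1-v)\,\tilde g(x)\,v=0,
\]
with uniformly bounded coefficients, and the full Harnack inequality applies to give $\max_{[x_0-R,x_0+R]}(1-w)\le C_R(1-w)(x_0)$. Without this refinement your argument for the second inequality does not go through.
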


\begin{proof} 
We preliminarily observe that $w>0$ thanks to the elliptic strong maximum principle.
We then divide the equation in~\eqref{nlKPP} by $w$ and differentiate to get
$$
\frac{w'''}{w}-\frac{w'w''}{w^2}+c\frac{w''}{w}-c\frac{(w')^2}{w^2}
-Aw'=0,\quad x\in\R.
$$
Hence, the function $u:=w'$ satisfies the equation
\Fi{u''}
u''+\Big(c-\frac{u}{w}\Big)u'-Aw u=c\frac{u^2}{w},\quad x\in\R,
\Ff
with zero order coefficient $-Aw\leq0$.
Moreover, by the boundedness of $w$, we know that there exist two sequences 
$\seq{x^\pm}$ diverging to $\pm\infty$ respectively, such that $u(x^\pm_n)\to0$ as $n\to\infty$. 
Applying the weak maximum principle to the equation~\eqref{u''} in $(x_n^-,x_n^+)$
and letting $n\to\infty$, we deduce that $u\leq0$ in $\R$ if $c\geq0$,
whereas $u\geq0$ in $\R$ if $c\leq0$. Then, by the limiting conditions in~\eqref{nlKPP}, we
necessarily have that $c>0$ and $u\leq0$.
The strict inequality $w'=u<0$ follows by applying the 
strong maximum principle to~\eqref{u''}.

As for the Harnack inequalities \rife{Harna}, the first one comes from  standard elliptic theory,  because $w$ solves an equation as $w''+ c w' + V w=0$ where the  potential $V$ satisfies $0\leq V\leq \bar A$.  As for the second one, we observe that if $w$ solves~\eqref{nlKPP} than 
$v(x):=1-w(x)$ satisfies
$$
-v''-cv'+(1-v)g(x)v=0,
$$
where 
$$
g(x):=\frac1v\Big(\bar A-A(1-v)+
\int_{-\infty}^x A'(y)(1-v(y))dy\Big).
$$
On one hand, using $A'v\leq0$ in the above integral shows that $g\geq0$. On the other hand, the fact that
$A'v$ is decreasing yields
$$
g(x)\leq\frac1v\Big(\bar A-A(1-v)+
(1-v)(A-\bar A)\Big)=\bar A.
$$
Therefore, we conclude  as before   that $v(x)\leq C_R u(x_0)$ provided $|x-x_0|\leq R$. This gives the second inequality in \rife{Harna}.
\end{proof}

It will be handy to reformulate the equation in~\eqref{nlKPP} in a different way.
Namely, under the condition $w(-\infty)=1$, integrating by parts the nonlocal
term leads to 
\Fi{eq:integrata}
w''+cw'+w\Big(\bar A-Aw+\int_{-\infty}^x A'(y)w(y)dy\Big)=0,
\quad x\in\R.
\Ff
The advantage of this equation, compared with the one in~\eqref{nlKPP},
is that the only identically constant solutions are $0$ and $1$. 
Now we  show  that {\it all  other solutions  to \rife{eq:integrata} are decreasing waves connecting $1$  and $0$} and then, in particular, they are solutions of \rife{nlKPP}.

\begin{lemma}\label{lem:phi'<0}
Let $0\leq w\leq1$ be a solution of \eqref{eq:integrata} for some $c\geq0$. Then either
$w\equiv0$, or $w\equiv1$, or $c>0$ and $w$ satisfies
$$w(-\infty)=1,\quad w(+\infty)=0\quad\text{and \ $w'<0$ in $\R$.}
$$
Moreover, there exists a constant $L$, only depending on $c,\bar A$, such that $\|w'''\|_\infty \leq L$.		
\end{lemma}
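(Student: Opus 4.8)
The plan is to argue by a maximum-principle dichotomy on the quantity $u:=w'$, exactly in the spirit of the proof of Proposition~\ref{Harnack}, and then to read off the limiting values at $\pm\infty$ from the monotonicity. First I would observe that equation \eqref{eq:integrata} is equivalent to the one in \eqref{nlKPP} when $w(-\infty)=1$; but here we do \emph{not} yet know $w$ has limits, so I would work directly with \eqref{eq:integrata}. Set $B(x):=\bar A-A(x)w(x)+\int_{-\infty}^x A'(y)w(y)\,dy$, the nonlocal coefficient, which is bounded (between $0$ and $\bar A$, by the computation already done for $g$ in the proof of Proposition~\ref{Harnack}, using $0\le w\le 1$ and $A'\le 0$) and locally Lipschitz. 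Differentiating $w''+cw'+Bw=0$ gives $u''+cu'+Bu=-B'w$, where $B'=-A'w-Aw'+A'w=-Aw'=-Au$, so $u$ solves the \emph{linear} equation $u''+cu'+(B-Aw)u=0$ — wait, more carefully $-B'w = Auw$, so $u''+cu'+Bu=Auw$, i.e. $u''+cu'+(B-Aw)u=0$, an equation with bounded coefficients and \emph{no} sign condition needed on the zero-order term for the next step.

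Next I would establish that $w$ is bounded away from giving a nontrivial constant, i.e. either $w$ is constant or $w'$ never vanishes. Since $0\le w\le 1$ is bounded, there are sequences $x_n^\pm\to\pm\infty$ with $w'(x_n^\pm)\to 0$; applying the maximum principle to the equation for $u=w'$ on shrinking-at-infinity intervals $(x_n^-,x_n^+)$ as in Proposition~\ref{Harnack} — here one must be a little careful because $B-Aw$ has no sign, so instead I would apply the maximum principle to $e^{\lambda x}u$ for suitable $\lambda$, or better, revert to the trick used for $u$ in Proposition~\ref{Harnack}: divide \eqref{eq:integrata} by $w$ where $w>0$ (strong maximum principle gives $w>0$ unless $w\equiv0$), differentiate, and obtain for $u=w'$ the equation \eqref{u''}-type relation $u''+(c-u/w)u'-B'w\cdot(\text{stuff})\dots$; the cleanest route is to reuse verbatim the argument of Proposition~\ref{Harnack} which shows: if $w\not\equiv 0$ then $w>0$, and the function $u=w'$ satisfies an equation with nonpositive zero-order coefficient, whence by the weak maximum principle on $(x_n^-,x_n^+)$ and letting $n\to\infty$ we get $u\le 0$ (using $c\ge 0$); the strong maximum principle then forces either $u\equiv 0$ (so $w$ is a constant, hence $0$ or $1$ by the remark after \eqref{eq:integrata}) or $u=w'<0$ throughout $\R$ and $c>0$.

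Once $w'<0$ on all of $\R$, monotonicity gives limits $w(\pm\infty)=:\ell_\pm\in[0,1]$ with $\ell_+<\ell_-$. Passing to the limit $x\to+\infty$ in \eqref{eq:integrata}: $w',w''\to 0$ along a sequence (boundedness of $w$ plus the ODE bound $\|w'''\|_\infty$ below, or directly by a standard Landau-type argument), while the coefficient $B(x)\to \bar A - \ul A\,\ell_+ + \int_{-\infty}^{+\infty}A'w$; one checks this limiting coefficient is strictly positive (it equals $\ul A(1-\ell_+)+(\text{nonnegative})$ plus a genuinely positive piece from $\bar A>\ul A$, using $w\le 1$), so $w(+\infty)\cdot(\text{positive})=0$ forces $\ell_+=0$. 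Similarly letting $x\to-\infty$, $B(x)\to\bar A-\bar A\ell_-=\bar A(1-\ell_-)$ and the ODE gives $\bar A(1-\ell_-)\ell_-=0$; since $\ell_->\ell_+=0$ we get $\ell_-=1$. Finally the bound $\|w'''\|_\infty\le L$ follows by bootstrapping: from \eqref{eq:integrata}, $\|w''\|_\infty\le c\|w'\|_\infty+\|B\|_\infty\|w\|_\infty$; differentiating once more, $w'''=-cw''-B'w-Bw'$ with $B'=-Aw'$ bounded, and $\|w'\|_\infty$ is controlled by interpolation $\|w'\|_\infty\le C(\|w\|_\infty+\|w''\|_\infty)\le C(1+\|w''\|_\infty)$ combined with the previous inequality, yielding a closed bound depending only on $c,\bar A$.

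The main obstacle I anticipate is the step ruling out nontrivial constants / getting the sign of $u=w'$: because in equation \eqref{eq:integrata} (or its differentiated form) the zero-order coefficient seen by $u$ does not obviously have a sign, one must either reproduce the clever division-and-differentiation trick from Proposition~\ref{Harnack} that produces a nonpositive zero-order term, or supply a separate barrier/maximum-principle argument on intervals tending to $\R$. Everything after that — extracting limits, identifying them via the strict positivity of the limiting nonlocal coefficient, and the $C^3$ bound — is routine elliptic bookkeeping.
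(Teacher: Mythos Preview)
Your proposal is correct, but it takes a different route from the paper in two places. For the monotonicity $w'\le 0$, you invoke the divide-by-$w$ trick from Proposition~\ref{Harnack} (which does carry over verbatim, since differentiating $w''/w+cw'/w+B=0$ gives exactly equation~\eqref{u''} because $B'=-Aw'$). The paper instead observes directly that $-w''-cw'\ge A(x)w(1-w)\ge 0$, i.e.\ $-(w'e^{cx})'\ge 0$; for $c>0$ this integrates from $-\infty$ (where $w'e^{cx}\to0$ by boundedness of $w'$) to give $w'\le 0$ without any maximum principle, and for $c=0$ concavity forces $w$ constant. For the limits $w(\pm\infty)\in\{0,1\}$, you pass to the limit in the ODE using $w',w''\to0$ (legitimate once the $C^3$ bound is in hand, via the standard ``bounded function with bounded second derivative and a limit has $f'\to 0$'' fact applied twice); the paper instead keeps the integral inequality $-(w'e^{cx})'\ge k e^{cx}$ that would hold if a limit were wrong and integrates to a contradiction. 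Your approach is more systematic (reusing existing machinery) but needs the $C^3$ bound up front; the paper's is slightly more elementary and self-contained, handling $c=0$ and the limits by pure integration. One small remark on your bootstrapping: the naive interpolation $\|w'\|_\infty\le C(\|w\|_\infty+\|w''\|_\infty)$ combined with $\|w''\|_\infty\le c\|w'\|_\infty+\bar A$ does not close unless $cC<1$; use Landau's inequality $\|w'\|_\infty^2\le 2\|w\|_\infty\|w''\|_\infty$ instead (or, as the paper does, interior elliptic estimates on unit intervals).
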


\begin{proof}
We first observe that globally bounded solutions of \eqref{eq:integrata} are also bounded in~$C^3$. 
Indeed, using the monotonicity of $A$ and the bounds on $w$, we notice that~$w$ solves a linear equation $w''+cw'+w V=0$ where 
$$0\leq V:=\Big(\bar A-Aw+\int_{-\infty}^x A'(y)w(y)dy\Big)\leq \bar A.$$ 
By elliptic estimates (see e.g., 
\cite[Theorem~9.11]{GT}), given any point $a\in \R$  we have  
$$
|w'(a)| \leq L   \sup_{x\in [a-1,a+1]} |w(x)| \leq L  
$$
where $ L$ only depends on $c,\bar A$. Then the same conclusion 
holds true (with a larger~$L$) for $w''=-cw'-w V$.  
We then bootstrap by differentiating this equation and observing that
$V'=-Aw'$. This shows that $|w'''|\leq L$ for some $ L$ depending on $c,\bar A$.

Let us now show that bounded solutions are decreasing waves.  First  we observe that $A'\leq0$ and $w\leq1$ imply
\Fi{phisuper}
-w''-cw'\geq A(x)w(1-w)\geq0,\quad x\in\R.
\Ff
We treat separately the cases $c=0$ and $c>0$.

\medskip
{\em Case $c=0$. }\\
In such case \eqref{phisuper} yields $w''\leq0$ in $\R$, hence $w$ is constant.
Then, since $A(-\infty)=\bar A>0$,~\eqref{phisuper} shows that the only possibilities are 
$w\equiv0$ or $w\equiv1$.

\medskip
{\em Case $c>0$. }\\
The inequality~\eqref{phisuper} implies that $-(w'e^{cx})'\geq0$,
which, integrated on $(-\infty,x)$ ($w' e^{cx}$ vanishes at $-\infty$ because $w'$ is bounded), 
yields $w'(x)\leq0$ for any $x\in\R$.
Differentiating~\eqref{eq:integrata} we get the following equation for $w'$:
$$(w')''+c(w')'+w'\Big(\bar A-2Aw+
\int_{-\infty}^x A'(y)w(y)dy\Big)=0.$$
We deduce from the elliptic strong maximum principle that either $w'<0$ in $\R$, or $w'\equiv0$.
In the latter case, as before, we infer from~\eqref{phisuper} and~$\bar A>0$ that $w\equiv0$
or $w\equiv1$.

We are left with the case $w'<0$. In such case $w(\pm\infty)$ exist and satisfy
$0\leq w(-\infty)<w(+\infty)\leq1$.
We integrate by parts the integral in~\eqref{eq:integrata} to get 
\Fi{phiinfty}
-w''-cw'\geq w\Big(\bar A\big(1-w(-\infty)\big)+
\int_{-\infty}^x A(y)(-w'(y))dy\Big).
\Ff
The two terms in the right-hand side are nonnegative.
Suppose by contradiction that $w(-\infty)<1$. Then there exists $k>0$ such that
the right-hand side is larger than $k$ for $x\leq0$, that is,
\Fi{phiexp}
-(w'e^{cx})'\geq ke^{cx}.
\Ff
Integrating on $(-\infty,x)$, for given $x<0$, we obtain $-w'(x)\geq \frac k c$, which is impossible.
Therefore, $w(-\infty)=1$. If, on the other hand,
$w(+\infty)>0$, it is the integral term
in~\eqref{phiinfty} to be larger than some $k>0$ for $x\geq x_0$ such that $A(x_0)>0$,
that is,~\eqref{phiexp} holds for $x\geq x_0$.
Integrating on $(x_0,x)$ yields 
$$-w'(x)\geq \frac kc-e^{-c(x-x_0)}\Big(\frac kc-w'(x_0)\Big),$$
which is again a contradiction. This concludes the proof.
\end{proof}

In the next step we study how to build solutions of \rife{eq:integrata} using   shooting and comparison methods for ODEs in truncated domains.  A key point will be played by the Cauchy problem
in the half-line.

\begin{lemma}\label{lem:ivp} 
  Let $P,Q\in W^{1,\infty}_{loc}(\R)$ and $K\in L^\infty_{loc}(\R)$. 
Given  $c,a,\eta\in\R$, $\vt>0$, the Cauchy problem 
	\Fi{ivp-gen}
	\begin{cases}
		\displaystyle
		w''+cw'+w\Big(P(x)+Q(x)w+\int_{a}^x K(y)w(y)dy\Big)=0,\quad x>a\\
		w(a)=\vt\\
		w'(a)=\eta,
	\end{cases}
	\Ff
	admits a unique (classical) positive solution $w$ in some interval $[a,b)$, 
	with either $b=+\infty$, or $b\in(a,+\infty)$ and
	$w(b^-)=0$ or $+\infty$. 
	Moreover, such solution depends continuously on $c,a,\vt,\eta$ as well as on
	$P,Q$ and $K$ with respect to
	$W^{1,\infty}_{loc}(\R)$ and $L^\infty_{loc}(\R)$ convergences respectively.
\end{lemma}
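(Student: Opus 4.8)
The statement is a standard local-existence-plus-blowup-alternative result for a second-order ODE with an integral (Volterra) term, together with continuous dependence on the data; the plan is to reduce it to the classical Picard–Lindelöf theory by converting to a first-order system. First I would introduce the new unknown $I(x):=\int_a^x K(y)w(y)\,dy$, so that $I'=Kw$ with $I(a)=0$, and rewrite \eqref{ivp-gen} as the first-order system for $(w,w',I)$:
\[
\begin{cases}
w'=p,\\
p'=-cp-w\big(P(x)+Q(x)w+I\big),\\
I'=K(x)w,
\end{cases}
\qquad (w,p,I)(a)=(\vt,\eta,0).
\]
The right-hand side $G(x,w,p,I)$ is measurable in $x$ (since $P,Q\in W^{1,\infty}_{loc}\subset L^\infty_{loc}$, $K\in L^\infty_{loc}$) and locally Lipschitz in $(w,p,I)$, locally uniformly in $x$. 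Hence Carathéodory's existence theorem (or, since $P,Q,K$ are continuous a.e. and the dependence is actually continuous, the classical Picard theorem applied on each compact subinterval) gives a unique maximal $C^1$ solution — in fact $C^2$ in $w$ because $p'$ is continuous — on a maximal interval $[a,b)$.

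\emph{Positivity and the blow-up alternative.} On the maximal interval, I would first note $w>0$ on a right-neighbourhood of $a$ (as $w(a)=\vt>0$), and that $w$ cannot return to $0$ with $w$ staying bounded without $w$ actually vanishing: at a first zero $x_1$ of $w$ one has $w(x_1)=0$, which is precisely one of the allowed endpoint behaviours; so I simply \emph{define} $b$ to be the first point (if any) where either $w\to0$ or the solution leaves every compact set of $\R^3$. Standard maximal-solution theory says that if $b<+\infty$ then $(w,w',I)$ must leave every compact subset of $\R^3$ as $x\to b^-$. The task is then to show this forces $w(b^-)\in\{0,+\infty\}$. Since $P,Q,K$ are bounded on $[a,b]$, on any interval where $w$ stays in a compact set $[\delta,M]$ the equation $w''=-cw'-w(P+Qw+I)$ gives, together with $|I|\le \|K\|_{L^\infty[a,b]}M(b-a)$, a uniform bound on $w''$, hence (Gronwall) on $w'$ and $I$; so the solution stays in a compact set, contradicting blow-up. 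Therefore as $x\to b^-$ either $\liminf w=0$ — and a short argument (using the $w''$ bound that holds as long as $w$ is bounded) upgrades this to $w(b^-)=0$ — or $\limsup w=+\infty$, and again the $w''$ bound while $w\le M$ shows $w$ cannot oscillate, giving $w(b^-)=+\infty$.

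\emph{Continuous dependence.} For fixed data the solution on any compact $[a,b-\e]\subset[a,b)$ depends continuously, indeed Lipschitz-continuously, on $(c,a,\vt,\eta)$ and on $(P,Q,K)$ in the $W^{1,\infty}_{loc}\times W^{1,\infty}_{loc}\times L^\infty_{loc}$ topologies: this is the usual Gronwall estimate for the difference of two solutions of the first-order system, using that $G$ is Lipschitz in the state variables with constant controlled by $\sup$-norms of $P,Q,K$, and that $G$ depends on $P,Q,K$ only through their $L^\infty$ values (the $W^{1,\infty}$ topology on $P,Q$ is stronger than needed here, so convergence there certainly suffices). A mild point to handle is that the maximal interval $[a,b)$ may itself vary with the data; but by the blow-up alternative $b$ is lower semicontinuous in the data, so for data close to a fixed one the solutions are all defined on a common interval $[a,b-\e]$ and the Gronwall estimate applies there. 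I expect the only genuinely delicate step — and hence the main obstacle — is the careful verification that $\liminf w=0$ (resp. $\limsup w=+\infty$) at a finite endpoint actually implies the limit exists and equals $0$ (resp. $+\infty$); everything else is a routine translation of Picard–Lindelöf and Gronwall to the augmented system, with the Volterra term handled entirely by the variable $I$.
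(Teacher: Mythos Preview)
Your approach is correct and takes a genuinely different route from the paper. The paper does not augment with the Volterra variable $I$; instead it divides the equation by $w$ and differentiates, obtaining a \emph{third-order} ODE
\[
w'''-\frac{w'w''}{w}+cw''-c\,\frac{(w')^2}{w}+Qww'+(Q'+K)w^2+P'w=0,
\]
with the extra initial condition $w''(a)=-c\eta-P(a)\vt-Q(a)\vt^2$, and then invokes standard Cauchy/Carath\'eodory theory for this ODE, which is non-singular as long as $w$ stays away from~$0$. Your first-order system in $(w,w',I)$ is arguably more elementary: it never needs $P'$ or $Q'$, so the hypothesis $P,Q\in W^{1,\infty}_{loc}$ plays no role in your existence and continuous-dependence argument (only $L^\infty_{loc}$ is used), whereas the paper's differentiation step is exactly what requires that extra regularity. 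On the blow-up alternative both treatments are at the same level of detail: the paper simply appeals to ``classical theory'', while you correctly isolate the one delicate point---upgrading $\liminf w=0$ or $\limsup w=+\infty$ at a finite endpoint to an actual limit---and observe that everything else is Gronwall on the augmented system. That residual point is not spelled out in the paper either, and in the downstream applications only the weaker dichotomy (that $w$ cannot remain in a compact subinterval of $(0,\infty)$ up to a finite~$b$) is what is actually used.
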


\begin{proof}
	We formally divide the equation by $w$ and differentiate. We get
	$$\frac{w'''w-w'w''}{w^2}+c\,\frac{w''w-(w')^2}{w^2}
	+P'+Q'w+Qw'+Kw=0,\quad x>a,$$
	that is,
	$$w'''-\frac{w'w''}{w}+cw''-c\frac{(w')^2}{w}
	+Qww'+(Q'+K)w^2+P'w=0,\quad x>a.$$
	We also have that $w''(a)=-c\eta- P(a)\vt-Q(a)\vt^2$. 
	If $P,Q\in C^1(\R)$ and $K\in C^0(\R)$, this is a standard Cauchy problem of the third order, as long as 
	$w$ stays bounded away from $0$. The  existence, uniqueness and
	continuity with respect to the data then follow from the classical theory. 
	In the general case, the same properties are consequences of Carath\'eodory's existence theorem.
	The resulting solution $w$ is such  that $w''$ is absolutely continuous and therefore
	it is a classical solution of \rife{ivp-gen}.
\end{proof}

The next tool is a  comparison principle and will play a crucial role in our analysis. Here and 
in the sequel, whatever elliptic equation is given in the form
$$
w''   = F(x,w, w'),
$$
we say that $w$ is a sub-solution (respectively, super-solution) if $w$ satisfies $w''   \geq  F(x,w, w')$ (respectively, $w''  \leq  F(x,w, w')$).

\begin{lemma}\label{lem:CP}	
	Let $c,a\in\R$ and $P,Q, K$ satisfy the assumptions of Lemma \ref{lem:ivp}. In addition, assume that  
	$Q$ and $K$ are nonpositive.
	
	Let $w_{1}$ and $w_{2}$
	be respectively a positive sub-solution and a positive super-solution to
	the first equation of~\eqref{ivp-gen} 
	in an interval $[a,\beta]$, with   
	$$w_1(a) \geq w_2(a),\qquad
	w_1'(a)\geq w_2'(a),\qquad
	w_2'(a)\leq 0.$$
	Then $w_1/w_2$ is nondecreasing on $[a,\beta]$, and it is  increasing if $w_1'(a)>w_2'(a)$.
\end{lemma}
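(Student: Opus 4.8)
The plan is to prove that $r := w_1/w_2$ is nondecreasing by showing that $r' \geq 0$ via a maximum-principle-type argument applied to the equation that $r'$ satisfies, exploiting the sign conditions on $Q$ and $K$. First I would write $w_1 = r w_2$ and plug into the sub-solution inequality for $w_1$, using that $w_2$ is a super-solution, to derive a differential inequality for $r$. Concretely, since $w_1'' \geq F(x,w_1,w_1')$ and $w_2'' \leq F(x,w_2,w_2')$ with $F(x,w,w') = -cw' - w\big(P + Qw + \int_a^x K w\big)$, expanding $w_1'' = r'' w_2 + 2 r' w_2' + r w_2''$ gives
\[
r'' w_2 + 2 r' w_2' + r w_2'' \;\geq\; -c(r' w_2 + r w_2') - r w_2\Big(P + Q r w_2 + \int_a^x K r w_2\Big).
\]
Subtracting $r$ times the super-solution inequality $w_2'' \leq -c w_2' - w_2\big(P + Q w_2 + \int_a^x K w_2\big)$ kills the $r w_2''$, $-cr w_2'$ and $-r w_2 P$ terms and leaves
\[
r'' w_2 + 2 r' w_2' + c r' w_2 \;\geq\; -r w_2\Big(Q w_2(r-1) + \int_a^x K w_2(r-1)\Big) \;\cdot\;(\text{with care on signs}),
\]
so that the crucial point is: the right-hand side has a favorable sign precisely when $r \geq 1$ on the relevant set, because $Q \leq 0$ and $K \leq 0$. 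This is the structural reason the hypotheses $Q,K \leq 0$ are imposed.

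Next I would set $q := r'$ and observe that, dividing the inequality by $w_2 > 0$, $q$ satisfies a linear second-order differential inequality of the form
\[
q'' + \Big(2\frac{w_2'}{w_2} + c\Big) q' + (\text{zero-order and nonlocal terms}) \,q \;\geq\; (\text{sign-definite forcing depending on }r-1),
\]
with initial data $q(a) = r'(a) = \frac{w_1'(a) w_2(a) - w_1(a) w_2'(a)}{w_2(a)^2}$. Using $w_1(a) \geq w_2(a) > 0$, $w_1'(a) \geq w_2'(a)$ and $w_2'(a) \leq 0$, one gets $w_1'(a) w_2(a) \geq w_2'(a) w_2(a) \geq w_2'(a) w_1(a)$, hence $q(a) \geq 0$ (and $q(a) > 0$ if $w_1'(a) > w_2'(a)$, at least when $w_1(a) = w_2(a)$; the general strict case needs a short separate argument). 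So I want to conclude $q \geq 0$ on $[a,\beta]$ from $q(a) \geq 0$ plus the differential inequality.

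The main obstacle is that the differential inequality for $q = r'$ does not have a clean sign on its right-hand side unless we already know $r \geq 1$, but we only know $r(a) \geq 1$; and the nonlocal integral term $\int_a^x K w_2 (r-1)$ couples the inequality to the history of $r$. The clean way around this is a connectedness/continuity argument: let $I$ be the largest subinterval $[a,\sigma)$ on which $r \geq 1$ (nonempty since $r(a) \geq 1$ and $r$ is continuous). On $I$, the forcing term is $\leq 0$ in the right direction, i.e. $q$ is a super-solution of a linear homogeneous equation with nonpositive zero-order coefficient — wait, one must check the zero-order coefficient sign; if it is not sign-definite, instead integrate the inequality $(q \,e^{\int(2w_2'/w_2 + c)})' \geq 0$-type expression after absorbing the zero-order term, or argue that $q$ cannot have an interior negative minimum. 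Either way one gets $q \geq 0$ on $I$, so $r$ is nondecreasing on $I$, so $r \geq r(a) \geq 1$ throughout $I$, which forces $\sigma = \beta$ (otherwise $r(\sigma) = 1$ contradicts $r$ nondecreasing and $\geq 1$ unless $r \equiv 1$ on a neighborhood, handled trivially). Hence $r' = q \geq 0$ on all of $[a,\beta]$. For the strict statement, when $w_1'(a) > w_2'(a)$ one either gets $q(a) > 0$ directly or, if $w_1(a) > w_2(a)$ so that possibly $q(a) = 0$, one applies the strong maximum principle / Hopf-type argument to the linear inequality for $q$ near $x = a$ to get $q > 0$ immediately to the right of $a$, and then propagates $q > 0$ by the same ODE comparison (a solution of the linear inequality that is $\geq 0$ and vanishes at an interior point must vanish identically on an interval, contradicting $q(a^+) > 0$ unless it stays positive). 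I would keep the exposition at the level of "expand, subtract, use the sign of $Q,K$, apply the maximum principle to $r'$" and relegate the bookkeeping of the nonlocal term and the strict case to short remarks.
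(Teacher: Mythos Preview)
Your approach is different from the paper's and, in the strict case, essentially correct and arguably more elementary. The paper argues by contradiction: assuming $\rho=w_1/w_2$ fails to be increasing, it looks at an interior maximum of $\rho$, sets $\psi:=h w_2-w_1$ with $h=\max\rho$, checks that $\psi$ is a nonnegative super-solution of a \emph{linear} equation touching $0$ at an interior point, and invokes the Hopf lemma. You instead derive a first-order differential inequality for $q=r'$ and integrate. Your computation is right: subtracting $r$ times the super-solution inequality gives
\[
r''w_2 + (2w_2'+cw_2)\,r' \;\geq\; -r w_2\Big(Qw_2(r-1)+\int_a^x K w_2(r-1)\Big),
\]
and the right-hand side is $\geq 0$ whenever $r\geq 1$ on all of $[a,x]$ (note the nonlocal term needs this on the whole interval, not just at $x$). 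So $(q\,e^{B})'\geq 0$ with $B'=2w_2'/w_2+c$ on that set, and the connectedness argument closes the loop provided $q(a)>0$. On this last point you were too cautious: from $w_1(a)\geq w_2(a)>0$, $w_2'(a)\leq 0$ and $w_1'(a)>w_2'(a)$ one gets
\[
w_1'(a)w_2(a)-w_1(a)w_2'(a)\;\geq\;w_1'(a)w_2(a)-w_2(a)w_2'(a)=w_2(a)\big(w_1'(a)-w_2'(a)\big)>0,
\]
so $q(a)>0$ always in the strict case; no Hopf-type argument is needed there.

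The genuine gap is in the non-strict case $w_1'(a)=w_2'(a)$. Your connectedness argument only yields: on the maximal initial interval $[a,\sigma]$ where $r\geq 1$, $r$ is nondecreasing; if $\sigma<\beta$ this forces $r\equiv 1$ on $[a,\sigma]$, i.e.\ $w_1\equiv w_2$ there. But $w_1,w_2$ are merely a sub- and a super-solution, so nothing prevents them from separating with $w_1<w_2$ just past $\sigma$; ``handled trivially'' is not justified. The paper closes this by an approximation: fix $\vartheta\in[w_2(a),w_1(a)]$, let $w^\eta$ be the \emph{solution} of the equation with $w^\eta(a)=\vartheta$, $(w^\eta)'(a)=\eta$ (Lemma~\ref{lem:ivp}), apply the strict result to the pairs $(w^\eta,w_2)$ for $\eta>\hat\eta:=w_1'(a)$ and $(w_1,w^\eta)$ for $\eta<\hat\eta$, and pass to the limit $\eta\to\hat\eta$ using continuous dependence. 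You should adopt the same approximation step; your integrating-factor argument then handles the strict inputs cleanly, and the limit gives the nondecreasing conclusion.
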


\begin{proof}	
Suppose first that $w_1'(a)>w_2'(a)$.
Call $\rho:=w_1/w_2$. 
Using all  the information at the initial point, this function satisfies
$$\rho(a)\geq 1,\qquad\rho'(a)=\frac{w_2(a)w_1'(a)-w_1(a)w_2'(a)}{w_1(a)^2}\geq \frac{w_2(a)}{w_1(a)^2}[w_1'(a)-w_2'(a)]>0.
$$
Let $\t\beta$ be the largest value in $(a,\beta]$ such that $\rho>1$ in $(a,\t\beta)$.
	Assume by contradiction that $\rho$ is not  increasing in $[a,\t\beta]$. This
	means that there exist $a\leq x_1<x_2\leq\t\beta$ such that $\rho(x_1)\geq\rho(x_2)$.
	Call 
	$$h:=\max_{[a,x_2]}\rho>1.$$
	Let $\bar x\in[a,x_2]$ be such that $\rho(\bar x)=h$.
	We know that $\bar x\neq a$. Moreover, if $\rho(x_2)=h$ then necessarily $\rho(x_1)=h$.
	Hence, in any case, we can take $\bar x\in(a,x_2)$.
	We define $\psi:=hw_2-w_1$. Then $\psi\geq0$ 
	in $(a,x_2)$ and $\psi(\bar x)=0$.
	%
	The function $\psi$ satisfies the following 
	differential inequality in $(a,\bar x)$:
	\[\begin{split}
	-\psi''-c\psi'\geq &
	hw_2\Big(P+Qw_2+\int_{a}^x Kw_2\Big)
	-w_1\Big(P+Qw_1+\int_{a}^x Kw_1\Big).
	\end{split}\]
	Whence, using the fact that $w_1>w_2>0$ in $(a,\t\beta)\supset(a,\bar x)$
	and that $Q$ and $K$ are nonpositive,
	we eventually deduce that,  in $(a,\bar x)$,
	$$-\psi''-c\psi'\geq 
	\psi\Big(P+Qw_1+\int_{a}^x Kw_1\Big),$$
	which means that $\psi$ is a super-solution of some linear elliptic equation.
	As a consequence, 
	since $\psi$ attains its minimal value $0$ at $\bar x$, the Hopf lemma yields $\psi'(\bar x)<0$
	which implies that $\psi<0$ is some right neighborhood of $\bar x$. 
	This contradicts the definition of~$h$.
	We have thereby shown that $\rho=w_1/w_2$ is strictly increasing in $[a,\t\beta]$, whence 
	in particular $\rho(\t\beta)>1$.
	It follows that $\t\beta=\beta$ and this concludes the proof in the 
	case~$w_1'(a)>w_2'(a)$.
	
	Assume now that $w_1'(a)=w_2'(a)=:\hat\eta$. 
	Fix a number $\vartheta\in[w_2(a),w_1(a)]$ and, for~$\eta\in\R$, 
	let $w^\eta$ be the solution of~\eqref{ivp-gen} provided by
	Lemma~\ref{lem:ivp}.
	Applying the property derived before we deduce from one hand that if $\eta>\hat\eta$ then 
	$w^\eta/w_2$ is   increasing on $[a,\beta]$, and from the other that if 
	$\eta<\hat\eta$ then
	$w_1/w^\eta$ is   increasing on some interval 
	$[a,\beta_\eta]$ on which $w^\eta$ is positive. 
	%
	It follows from the continuous dependence with respect to the data, ensured by
	Lemma~\ref{lem:ivp}, that 
	both $w^{\hat\eta}/w_2$ and $w_1/w^{\hat\eta}$ are nondecreasing on 
	$[a,\beta]$ (and that $w^{\hat\eta}$ is positive there), whence 
	$$\frac{w_1}{w_2}=\frac{w_1}{w^{\hat\eta}}\, \frac{w^{\hat\eta}}{w_2}$$
	is nondecreasing on $[a,\beta]$.
\end{proof}

We now deduce some consequences of the previous comparison principle.  An easy one, readily observed, is that if there exists a (positive) constant sub-solution of the equation, then any super-solution starting below must be nonincreasing.

\begin{corollary}\label{cor:Dp}
	For any $a<b$, $M>0$, and $\vt\geq \gamma >0$, consider the Dirichlet problem
	\Fi{DP-gen}
	\begin{cases}
	\displaystyle
	w''+cw'+w\Big(M - A(x)w+\int_{a}^x A'(y)w(y)dy\Big)=0,\quad x\in(a,b) & \\
	w(a)=\vt,\quad w(b)=\gamma, &  
	\end{cases}
	\Ff
	where $A(x)\in W^{1,\infty}(\R)$ is nonincreasing.
	
	Assume that $M\geq A(a)\vt$ and that there exists  a positive  solution $W$ of the first equation of \rife{DP-gen} such that $W(a)=\vt$ and $W'(a)\leq 0$.
	
	Then, for every $\gamma\leq W(b)$ the problem \rife{DP-gen} admits a unique positive solution, which is nonincreasing and decreasing if $\gamma<W(b)$. Moreover, $\frac{W}{w}$ is   increasing unless $w=W$.
\end{corollary}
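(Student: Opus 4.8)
The plan is to prove the statement by a shooting argument in the initial slope at $x=a$, resting on the Cauchy problem of Lemma~\ref{lem:ivp} and the comparison principle of Lemma~\ref{lem:CP}. For $\eta\in\R$ let $w^\eta$ be the maximal positive solution on $[a,b^\eta)$ of the equation in~\eqref{DP-gen} with $w^\eta(a)=\vt$ and $(w^\eta)'(a)=\eta$; Lemma~\ref{lem:ivp} applies with $P\equiv M$, $Q=-A$, $K=A'$, and since $A\geq0$ and $A$ is nonincreasing we have $Q,K\leq0$, so Lemma~\ref{lem:CP} is also available. By uniqueness $W=w^{\eta_W}$ with $\eta_W:=W'(a)\leq0$, and by hypothesis $b^{\eta_W}>b$, $W>0$ on $[a,b]$, $W(b)>0$. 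The goal is to produce the unique $\eta$ with $b^\eta>b$ and $w^\eta(b)=\gamma$.

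Two preliminary facts. First, since $M\geq A(a)\vt\geq A(a)\gamma$, a direct substitution shows that the constants $\vt$ and $\gamma$ are positive sub-solutions of the equation in~\eqref{DP-gen}; hence Lemma~\ref{lem:CP} with $w_1\equiv\vt$, $w_2=w^\eta$ (legitimate whenever $\eta\leq0$) yields that $\vt/w^\eta$ is nondecreasing on any $[a,\beta]\subset[a,b^\eta)$, i.e.\ $w^\eta$ is nonincreasing there, and strictly decreasing if $\eta<0$. In particular $W$ is nonincreasing on $[a,b]$, so $0<\gamma\leq W(b)\leq\vt$. Second, comparing two shots: for $\eta_1<\eta_2\leq\eta_W$ (hence $\eta_1<0$), Lemma~\ref{lem:CP} with $w_1=w^{\eta_2}$, $w_2=w^{\eta_1}$ gives that $w^{\eta_2}/w^{\eta_1}$ is increasing on the common positivity interval; using that $w^{\eta_1}$ has a positive lower bound on $[a,b]$ when $b^{\eta_1}>b$, and that $w^{\eta_2}\leq W$ (again by Lemma~\ref{lem:CP}) cannot blow up, one deduces that $b^{\eta_1}>b$ forces $b^{\eta_2}>b$, and that $\eta\mapsto w^\eta(b)$ is strictly increasing on $S:=\{\eta\leq\eta_W:b^\eta>b\}$. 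By continuous dependence (Lemma~\ref{lem:ivp}) $S$ contains a left neighbourhood of $\eta_W$, so $S=(\eta^*,\eta_W]$ or $[\eta^*,\eta_W]$ with $\eta^*:=\inf S<\eta_W$.

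Next, $\eta^*$ is finite and $\eta^*\notin S$. For $\eta\in S$ the bound $0<w^\eta\leq\vt$ on $[a,b]$ makes the zero-order coefficient $V:=M-Aw^\eta+\int_a^xA'w^\eta$ bounded there by a constant depending only on $M$, $\|A\|_{W^{1,\infty}}$, $\vt$, $b-a$; integrating $(e^{cx}(w^\eta)')'=-e^{cx}Vw^\eta$ gives $(w^\eta)'\leq\eta\,\delta_0+C$ on $[a,b]$ with $\delta_0>0$ and $C$ depending on the same data, so $w^\eta(b)\leq\vt+(b-a)(\eta\,\delta_0+C)<0$ once $\eta$ is sufficiently negative --- impossible. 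Thus $\eta^*>-\infty$; and $\eta^*\notin S$, for otherwise continuous dependence would push a left neighbourhood of $\eta^*$ into $S$, contradicting $\eta^*=\inf S$. Consequently $w^{\eta^*}$ lives on $[a,b^*)$ with $b^*\leq b$; being dominated by $W$ it cannot blow up, so $w^{\eta^*}(x)\to0$ as $x\to(b^*)^-$. The map $\eta\mapsto w^\eta(b)$ is continuous on $(\eta^*,\eta_W]$ by Lemma~\ref{lem:ivp}, and $w^\eta(b)\to0$ as $\eta\searrow\eta^*$: given $\e>0$ pick $x_0\in(a,b^*)$ with $w^{\eta^*}(x_0)<\e$, then $w^\eta(x_0)<\e$ for $\eta$ near $\eta^*$ by continuous dependence on the compact interval $[a,x_0]\subset[a,b^*)$, and $w^\eta(b)\leq w^\eta(x_0)<\e$ because $w^\eta$ is nonincreasing. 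Since moreover $w^{\eta_W}(b)=W(b)$, the map $\eta\mapsto w^\eta(b)$ is an increasing homeomorphism of $(\eta^*,\eta_W]$ onto $(0,W(b)]$.

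We can then conclude. For any $\gamma\in(0,W(b)]$ the intermediate value theorem gives a unique $\eta_\gamma\in(\eta^*,\eta_W]$ with $w^{\eta_\gamma}(b)=\gamma$; then $w:=w^{\eta_\gamma}$ is a positive solution of~\eqref{DP-gen}, nonincreasing by the preliminary step. If $\gamma<W(b)$ then $\eta_\gamma<\eta_W\leq0$, hence $\eta_\gamma<0$ and $w$ is strictly decreasing; moreover Lemma~\ref{lem:CP} applied to $w_1=W=w^{\eta_W}$ and $w_2=w=w^{\eta_\gamma}$ (whose slopes satisfy $\eta_W>\eta_\gamma$ and $\eta_\gamma\leq0$) shows $W/w$ is increasing on $[a,b]$; if $\gamma=W(b)$ then $w=W$. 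Finally, uniqueness of the Dirichlet solution among all positive solutions: if $w$ solves~\eqref{DP-gen} with $w'(a)=\eta>\eta_W$, then Lemma~\ref{lem:CP} with $w_1=w$, $w_2=W$ gives $w/W$ increasing, whence $w(b)>W(b)\geq\gamma$, a contradiction; so $w'(a)\leq\eta_W$, $w=w^{w'(a)}$, and the strict monotonicity of $\eta\mapsto w^\eta(b)$ forces $w'(a)=\eta_\gamma$, i.e.\ $w=w^{\eta_\gamma}$. The main delicate point is the limit $w^\eta(b)\to0$ as $\eta\searrow\eta^*$: continuous dependence only controls $w^\eta$ on compact subsets of the existence interval $[a,b^*)$ of $w^{\eta^*}$, which may be strictly shorter than $[a,b]$, so the smallness has to be propagated up to $b$ using the uniform monotonicity of each $w^\eta$ established beforehand.
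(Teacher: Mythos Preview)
Your proof is correct and follows essentially the same shooting approach as the paper: parametrize by the initial slope $\eta=(w^\eta)'(a)$, use the constant $\vt$ as a sub-solution together with Lemma~\ref{lem:CP} to get monotonicity of each $w^\eta$, compare different shots via Lemma~\ref{lem:CP} to get monotonicity in $\eta$, and hit the target $\gamma$ by continuity. The paper's version is terser---it simply asserts that $w^\eta$ vanishes before $b$ for $-\eta$ large and then invokes continuity---whereas you spell out the finiteness of $\eta^*$ via an explicit integration of $(e^{cx}(w^\eta)')'$, the delicate limit $w^\eta(b)\to0$ as $\eta\searrow\eta^*$ (which, as you correctly note, requires propagating smallness past the possibly short interval $[a,b^*)$ using the monotonicity of $w^\eta$), and full uniqueness by also ruling out slopes $\eta>\eta_W$. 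These are genuine details the paper leaves implicit, so your write-up is if anything more complete.
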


\begin{proof}
For $\eta\leq 0$, let $w_\eta$ be the solution to the first equation of~\eqref{DP-gen},
	with initial condition 
	$$w^\eta(a)=\vt,\qquad (w^\eta)'(a)=\eta.
	$$
	Such solution exists and it is positive in a right neighborhood of $a$ thanks to 
	Lemma~\ref{lem:ivp}.  Moreover, since $M\geq A(a)\vt$, the constant function $w^0\equiv \vt$ is a sub-solution; hence, due to $\eta\leq 0$ and Lemma~\ref{lem:CP}, $w^\eta$ is nonincreasing (and decreasing if $\eta<0$).    Lemma~\ref{lem:CP} also implies that the
	$w^\eta$ are increasing with respect to $\eta$ (in the set where they are positive), and it is readily seen that
	$w^\eta$ vanishes before the point $b$ 
	if $-\eta$ is sufficiently large. Finally, if $\eta= W'(a)$, then $w^\eta= W$ by uniqueness. Therefore, for $\eta>W'(a)$, $w^\eta\geq W$ and  remains positive up to $x=b$. 
	It then follows from the continuity of the solution with respect to $\eta$, 
	that for any $\gamma\leq W(b)$ there exists a unique value $\eta\leq W'(a)\leq 0$
	such that $w^\eta(b)=\gamma$. 
	Finally, applying again Lemma~\ref{lem:CP},
	with $w_1=W$, we deduce that $W/w^\eta$ is nondecreasing
	and it is increasing if $\eta<W'(a)$.
	This concludes the proof.
\end{proof}

We complete our toolbox with another lemma showing that two waves  will be arbitrarily close in the future provided they were sufficiently close in the past.

\begin{lemma}\label{past-future} Let $w$ be a solution to \rife{nlKPP}. 
	For every $x_0\in \R$ and $\vep>0$, there exists  $\de>0$ such that
	if $\tilde w$ is another solution satisfying
	$ |w(x)-\tilde w(x)| \leq \de$ for all  $x\in  (-\infty,  x_0)$, then 
$$
|w(x)-\tilde w(x)| \leq \vep \qquad \forall x\in\R\,.
$$
\end{lemma}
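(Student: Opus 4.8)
The plan is to argue by contradiction and convert the statement into a compactness argument, relying on the uniform a priori bounds already at hand. Suppose the conclusion fails for some $x_0\in\R$ and $\varepsilon>0$: then there are solutions $(\tilde w_n)$ of \eqref{nlKPP} with $|w-\tilde w_n|\le 1/n$ on $(-\infty,x_0)$ (hence, by continuity, also at $x_0$) but $\sup_\R|w-\tilde w_n|>\varepsilon$. Rewriting \eqref{nlKPP} in the integrated form \eqref{eq:integrata}, each $\tilde w_n$ solves \eqref{eq:integrata} with $0\le\tilde w_n\le1$, so by Lemma~\ref{lem:phi'<0} the family is bounded in $C^3(\R)$; by the Arzel\`a--Ascoli theorem and a diagonal extraction, along a subsequence $\tilde w_n\to\tilde w_\infty$ in $C^2_{loc}(\R)$, with $0\le\tilde w_\infty\le1$ and $\tilde w_\infty\equiv w$ on $(-\infty,x_0]$.

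The core step is then to identify $\tilde w_\infty$ with $w$ on all of $\R$. Since $A'\le0$ with $\int_{-\infty}^x|A'|\le\bar A<\infty$, and $\tilde w_n\to\tilde w_\infty$ pointwise dominated by $1$, dominated convergence gives $\int_{-\infty}^x A'\tilde w_n\to\int_{-\infty}^x A'\tilde w_\infty$, the convergence being locally uniform in $x$ because these primitives are equi-Lipschitz on compacts; passing to the limit in \eqref{eq:integrata} shows that $\tilde w_\infty$ solves \eqref{eq:integrata}. As $\tilde w_\infty$ coincides on $(-\infty,x_0)$ with $w$, which is strictly decreasing there by Proposition~\ref{Harnack}, it is non-constant, so Lemma~\ref{lem:phi'<0} forces $\tilde w_\infty$ to be a genuine decreasing wave. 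Finally, on $[x_0,+\infty)$ both $w$ and $\tilde w_\infty$ solve one and the same ordinary integro-differential equation of the type covered by Lemma~\ref{lem:ivp} — namely \eqref{eq:integrata} with the $(-\infty,x_0)$-portion of the nonlocal kernel frozen to the constant $\int_{-\infty}^{x_0}A'w$, which is legitimate precisely because $\tilde w_\infty=w$ on that interval — with the same Cauchy data $\big(w(x_0),w'(x_0)\big)$ at $x_0$. Uniqueness in Lemma~\ref{lem:ivp} yields $\tilde w_\infty\equiv w$ on $[x_0,+\infty)$, hence on $\R$.

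It remains to contradict $\sup_\R|w-\tilde w_n|>\varepsilon$. Pick $x_n$ with $|w(x_n)-\tilde w_n(x_n)|>\varepsilon$. If $(x_n)$ had a bounded subsequence, then along it $\tilde w_n(x_n)-w(x_n)=\tilde w_n(x_n)-\tilde w_\infty(x_n)\to0$ by local uniform convergence, a contradiction; if $x_n\to-\infty$, then eventually $x_n<x_0$, so $|w(x_n)-\tilde w_n(x_n)|\le1/n\to0$, again a contradiction. Hence $x_n\to+\infty$. Since $w(x_n)\to w(+\infty)=0$ and $\tilde w_n\ge0$, for $n$ large the inequality forces $\tilde w_n(x_n)>\varepsilon+w(x_n)\ge\varepsilon$; as $\tilde w_n$ is nonincreasing (Proposition~\ref{Harnack}), $\tilde w_n\ge\varepsilon$ on $(-\infty,x_n]$. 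Choosing a fixed $\bar x$ with $w(\bar x)<\varepsilon/2$ (possible since $w(+\infty)=0$), for $n$ large we get $x_n>\bar x$ and thus $\tilde w_n(\bar x)>\varepsilon$, while simultaneously $\tilde w_n(\bar x)\to\tilde w_\infty(\bar x)=w(\bar x)<\varepsilon/2$ — the desired contradiction.

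I expect the main obstacle to be the identification $\tilde w_\infty\equiv w$: one must pass to the limit in the nonlocal reaction term — which is why the integrated reformulation \eqref{eq:integrata} together with dominated convergence is essential, the raw form \eqref{nlKPP} being much less convenient — and then apply the ODE uniqueness of Lemma~\ref{lem:ivp} after correctly splitting the nonlocal kernel at $x_0$ so that the problem on $[x_0,+\infty)$ becomes a genuine Cauchy problem of the form treated there. Everything else (uniform $C^3$ bounds, the trichotomy $0$/$1$/wave for solutions of \eqref{eq:integrata}, monotonicity, and the limits at $\pm\infty$) is furnished directly by Proposition~\ref{Harnack} and Lemma~\ref{lem:phi'<0}.
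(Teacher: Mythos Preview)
Your proof is correct. The paper's argument is direct rather than by contradiction: it writes the equation satisfied by $\psi:=\tilde w-w$, uses the hypothesis $|\psi|\le\delta$ on $(-\infty,x_0)$ together with interior elliptic estimates to get $|\psi'(x_0-1)|\le C\delta$, and then invokes the \emph{continuous dependence} part of Lemma~\ref{lem:ivp} (the data $P_{\tilde w}$, initial value and initial derivative are all $O(\delta)$-close to those of $w$) to propagate smallness up to any fixed $x_1$; choosing $x_1$ with $w(x_1)<\varepsilon/2$ and using monotonicity handles the tail, exactly as in your final step. Your route instead extracts a limit by compactness and invokes the \emph{uniqueness} part of Lemma~\ref{lem:ivp} to identify it with $w$. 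The two arguments are essentially dual: continuous dependence versus uniqueness-plus-compactness. Your version is softer (no explicit $\delta$--$\varepsilon$ relation) and avoids the elliptic estimate on $\psi'$, while the paper's is quantitative and slightly shorter; both ultimately rest on the same two ingredients --- Lemma~\ref{lem:ivp} for the forward propagation and monotonicity for the tail at~$+\infty$.
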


\begin{proof}  The difference $\psi:=\t w-w$ of two solutions $w$, $\t w$ to~\rife{nlKPP}
	satisfies
	$$
	\psi''+c\psi'+\psi\big(\bar A-A(\t w+w)\big)=
	-\t w\int_{-\infty}^{x} A'\t w+w\int_{-\infty}^{x} A'w.
	$$
	Assume now that $ |w-\tilde w| \leq \de$ in an interval $(-\infty,  x_0)$.
	We can then estimate the right-hand side for $x\leq x_0$ as follows:
	$$
	\left|-\t w\int_{-\infty}^{x} A'\t w+w\int_{-\infty}^{x} A'w\right|\leq
	|w-\t w|\int_{-\infty}^{x} |A'\t w|+w\int_{-\infty}^{x}(-A')|\t w-w|\leq
	2\delta\bar A.
	$$
	Thus, we can take $a=x_0-1$ and deduce from interior elliptic estimates
	that $|\psi'(a)|\leq C\delta$ for some $C$ only depending on $c$ and $\bar A$.
	We therefore have that at the point $a$ 
	both $\t w-w$ and $\t w'-w'$
	are of order $\delta$.	
	
	The function $\t w$ satisfies an initial value problem 
	of the type~\eqref{ivp-gen}, with $P=P_{\t w}$, $Q$ and
	$K$ given by
	$$P_{\t w}:=\bar A+\int_{-\infty}^{a} A'\t w,\qquad
	Q(x):=-A(x),\qquad
	K(x):=A'(x).$$ 
	The function $w$ satisfies the same type of problem, but with 
	$$P_w:=\bar A+\int_{-\infty}^{a} A'w.$$
	Using that $|w-\tilde w| \leq \de$ in $(-\infty,a)$ we see that
	$$\int_{-\infty}^{a} A'w-\delta(\bar A-A(a))\leq
	\int_{-\infty}^{a} A'\t w\leq\int_{-\infty}^{a} A'w+\delta(\bar A-A(a)),$$
	hence $|P_{\t w}-P_{w}|\leq\delta(\bar A-A(a))$. Recall that
	the values of $\t w$, $\t w'$ at~$a$ are close to the ones of 
	$w$, $w'$ up to an order $\delta$. As a consequence, by 
	Lemma~\ref{lem:ivp}, for any $\e>0$ and $x_1>a$,
	we can find $\delta<\e/2$ small enough so that $|\t w-w|<\e/2$ in $(-\infty,x_1]$.
	Choosing $x_1$ such that $w(x_1)<\e/2$, and reminding that $w$ and $\t w$ are decreasing by
	Proposition~\ref{Harnack}, we conclude that 
	$|w-\tilde w| \leq \vep$ in $\R$.
\end{proof}

\subsection{Construction of the traveling waves}

This section is devoted to the construction of   waves -- that is,   solutions to~\eqref{nlKPP}. We will distinguish two cases depending on the range of the velocity $c$.

\vskip1em

{\bf Case (A)} $c\geq 2\sqrt{\bar A}$.

The construction of solutions of \rife{nlKPP} is easier in this case because, for such values of $c$, we know that there is a traveling wave solution~$\psi$ for the classical KPP equation:
$$
\begin{cases}
	\displaystyle
	\psi''+c\psi'+\bar A\psi(1-\psi)=0,\quad x\in\R\\
	\psi(-\infty)=1,\quad \psi(+\infty)=0.
\end{cases}
$$
Observe that $\psi$ is a super-solution of \eqref{nlKPP}. We further know that $\psi'<0$.
We consider the normalization condition $\psi(0)=\vt$, with $\vt$ arbitrarily fixed in $(0,1)$.
%
%
%
%
%
%
%
%
%

For $n\in\N$ and $\zeta\in\R$, we introduce the truncated problem
\Fi{-nn}
\begin{cases}
	\displaystyle
	w''+cw'+w\Big(A(-n)\psi(-n-\zeta)-Aw+\int_{-n}^x A'(y)w(y)dy\Big)=0,\quad x\in(-n,n)\\
	w(-n)=\psi(-n-\zeta),\quad w(n)=\psi(n-\zeta).
\end{cases}
\Ff

\begin{lemma}\label{lem:Dirichlet}
	Let $c\geq 2\sqrt{\bar A}$.
	For any $n\in\N$ and $\zeta\in\R$, the problem \eqref{-nn} admits a unique positive solution 
	$w_{n,\zeta}$. Moreover, $w_{n,\zeta}(x)$ is decreasing in $x$ and satisfies
	$$\forall x\in[-n,n],\quad
	w_{n,\zeta}(x)\leq\psi(x-\zeta).$$
	Finally, the mapping $\zeta\mapsto w_{n,\zeta}$ is continuous with respect to the 
	$L^\infty((-n,n))$ norm.
\end{lemma}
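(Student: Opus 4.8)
The plan is to deduce existence, uniqueness and monotonicity of $w_{n,\zeta}$ from Corollary~\ref{cor:Dp}, and then to establish the pointwise bound $w_{n,\zeta}\le\psi(\cdot-\zeta)$ through an analysis of the ratio $w_{n,\zeta}/\psi(\cdot-\zeta)$; the latter is the delicate point, since the problem is nonlocal and the usual comparison arguments (Lemma~\ref{lem:CP}) are not directly applicable.

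For existence I would first note that, once the boundary condition $w(-n)=\psi(-n-\zeta)$ is imposed, the first equation in~\eqref{-nn} can be integrated by parts into
$$
w''+cw'+w\int_{-n}^x A(y)\big(-w'(y)\big)\,dy=0,\qquad x\in(-n,n),
$$
because $A(-n)\psi(-n-\zeta)=A(-n)w(-n)$. In the form written in~\eqref{-nn} the problem is of the type treated in Corollary~\ref{cor:Dp}, with $a=-n$, $b=n$, $M=A(-n)\psi(-n-\zeta)$, $\vt=\psi(-n-\zeta)$, $\gamma=\psi(n-\zeta)$. Its hypotheses hold: $\vt>\gamma>0$ since $\psi$ is decreasing; $M\ge A(-n)\vt$ with equality, so the constant $\vt$ is actually an exact solution of the first equation and may serve as the reference solution $W$, with $W(-n)=\vt$, $W'(-n)=0\le0$, $\gamma\le W(n)=\vt$. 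Corollary~\ref{cor:Dp} then provides a unique positive solution $w_{n,\zeta}$, which is nonincreasing and -- since $\gamma<W(n)$ -- strictly decreasing. (One may assume $A(-n)>0$; if $A(-n)=0$ then $A\equiv0$ on $[-n,n]$, the equation becomes $w''+cw'=0$, and all assertions are elementary.)

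The core step is $w_{n,\zeta}\le\psi(\cdot-\zeta)$. Write $\phi:=\psi(\cdot-\zeta)$, so $\phi''+c\phi'=-\bar A\phi(1-\phi)$, $0<\phi<1$ is decreasing, and $\phi$ coincides with $w_{n,\zeta}$ at $\pm n$. Consider $\rho:=w_{n,\zeta}/\phi\in C^2([-n,n])$, which equals $1$ at both endpoints; substituting $w_{n,\zeta}=\rho\phi$ in the integrated equation and dividing by $\phi$ gives
$$
\rho''+\Big(\frac{2\phi'}{\phi}+c\Big)\rho'+\rho\Big(\int_{-n}^x A(y)\big(-w_{n,\zeta}'(y)\big)\,dy-\bar A(1-\phi)\Big)=0 .
$$
If $\max_{[-n,n]}\rho>1$, the maximum is attained at an interior point $\bar x$, where $\rho'(\bar x)=0$ and $\rho''(\bar x)\le0$. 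Using that $w_{n,\zeta}$ is nonincreasing and $A\le\bar A$, at $\bar x$ the integral term is at most $\bar A\big(w_{n,\zeta}(-n)-w_{n,\zeta}(\bar x)\big)=\bar A\big(\psi(-n-\zeta)-\rho(\bar x)\phi(\bar x)\big)$, so the coefficient of $\rho$ in the identity is, at $\bar x$, at most
$$
\bar A\big(\psi(-n-\zeta)-\rho(\bar x)\phi(\bar x)\big)-\bar A\big(1-\phi(\bar x)\big)=\bar A\big(\psi(-n-\zeta)-1-(\rho(\bar x)-1)\phi(\bar x)\big)<0,
$$
since $\psi(-n-\zeta)<1$ and $(\rho(\bar x)-1)\phi(\bar x)>0$. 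The identity at $\bar x$ then forces $\rho''(\bar x)>0$, a contradiction; hence $\rho\le1$, i.e. $w_{n,\zeta}\le\psi(\cdot-\zeta)$ on $[-n,n]$. I regard this as the main obstacle: a direct comparison is unavailable because $\phi$ is a strict \emph{super}-solution (and, correspondingly, the shooting slope of $w_{n,\zeta}$ at $-n$ is \emph{smaller} than that of $\phi$), and it is crucial to write the reaction term in the integrated form $\int_{-n}^x A(-w')$, where the factor $\rho(\bar x)>1$ concealed in $w_{n,\zeta}(\bar x)=\rho(\bar x)\phi(\bar x)$ is exactly what makes the inequality work.

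Finally, the continuity of $\zeta\mapsto w_{n,\zeta}$ in $L^\infty((-n,n))$ follows by compactness: the boundary data $\psi(\pm n-\zeta)$ and the coefficient $A(-n)\psi(-n-\zeta)$ vary continuously with $\zeta$, and since $0<w_{n,\zeta}\le\psi(\cdot-\zeta)\le1$, elliptic estimates for $w''+cw'+Vw=0$ with $0\le V\le\bar A$ bound $w_{n,\zeta}$ in $C^2([-n,n])$ locally uniformly in $\zeta$. For $\zeta_k\to\zeta$, a subsequence of $(w_{n,\zeta_k})$ converges in $C^1$ to a nonincreasing solution of~\eqref{-nn} for $\zeta$ with the correct boundary values and bounded below by $\psi(n-\zeta)>0$; by uniqueness it equals $w_{n,\zeta}$, and independence of the subsequence yields continuity.
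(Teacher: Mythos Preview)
Your proof is correct, and for the existence, uniqueness and continuity parts it proceeds exactly as the paper does (Corollary~\ref{cor:Dp} with the constant $\vt$ as reference solution, and compactness plus uniqueness for the continuity in $\zeta$).

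The genuine difference is in the upper bound $w_{n,\zeta}\le\psi(\cdot-\zeta)$. The paper first observes that $w_{n,\zeta}$ is a \emph{sub-solution of the local KPP equation} $-w''-cw'=\bar Aw(1-w)$, because
\[
-w_{n,\zeta}''-cw_{n,\zeta}'=w_{n,\zeta}\int_{-n}^x A(-w_{n,\zeta}')\le
\bar A\,w_{n,\zeta}\big(\psi(-n-\zeta)-w_{n,\zeta}\big)\le\bar A\,w_{n,\zeta}(1-w_{n,\zeta}),
\]
and then applies a sliding argument: slide $\psi(\cdot-\bar\zeta)$ until it touches $w_{n,\zeta}$ from above at an interior point, contradicting the strong maximum principle for the local equation. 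Your argument instead stays with the nonlocal equation and analyses the ratio $\rho=w_{n,\zeta}/\phi$ at an interior maximum; the key inequality $\int_{-n}^{\bar x}A(-w_{n,\zeta}')\le\bar A\big(\psi(-n-\zeta)-\rho(\bar x)\phi(\bar x)\big)$ is exactly the same estimate the paper uses, but you exploit the factor $\rho(\bar x)>1$ directly rather than discarding it and sliding. Your route is self-contained and avoids the sliding machinery; the paper's route has the conceptual advantage of reducing the comparison to the classical local equation, for which such arguments are standard.
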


\begin{proof}
	The existence and uniqueness of the decreasing solution for \eqref{-nn} 
	is given by Corollary~\ref{cor:Dp}. 
		To prove the upper bound, we exploit the fact that $w_{n,\zeta}$ is a sub-solution
		of the local equation satisfied by $\psi$. Indeed, it satisfies in $(-n,n)$,
		\[\begin{split}
		-w_{n,\zeta}''-cw_{n,\zeta}' &=
		w_{n,\zeta}\int_{-n}^{x} A(y)(-w_{n,\zeta}'(y))dy\\
		& \leq \bar Aw_{n,\zeta}(\psi(-n-\zeta)-w_{n,\zeta})\\
		&\leq \bar Aw_{n,\zeta}(1-w_{n,\zeta}).
		\end{split}\]
		We can then use the sliding method to deduce that $w_{n,\zeta}\leq\psi(\.-\zeta)$
		on $(-n,n)$.		
		Indeed, if this were not the case, calling $\bar\zeta$ the value for which
		$$\min_{x\in[-n,n]}\big(\psi(x-\bar\zeta)-w_{n,\zeta}(x)\big)=0,$$
		which exists and is unique by monotonicity,
		we would have that $\bar\zeta>\zeta$. Hence, 
		because $\psi(\.-\bar\zeta)>\psi(\.-\bar\zeta)=w_{n,\zeta}$ 
		on the boundary of the interval
		$[-n,n]$, the minimum would be attained at an interior
		point but not on the boundary, contradicting the elliptic \SMP.
		
		For the last statement, consider a sequence $(\zeta_j)_{j\in\N}$
		converging to some $\t\zeta\in\R$.
		Using elliptic estimates up to the boundary, for any subsequence of $(\zeta_j)_{j\in\N}$
		we can extract another subsequence $(\zeta_{j_k})_{k\in\N}$ such that
		the associated $(w_{n,\zeta_{j_k}})_{k\in\N}$ converges
		in $C^2((-n,n))$ to a solution $\t w$ of problem~\eqref{-nn} with $\zeta=\t\zeta$.
		Then, by uniqueness, $\t w=w_{\t\zeta}$. This concludes the proof.
\end{proof}

Here is the existence result.

\begin{proposition}\label{pro:exc>}
	For any $c\geq 2\sqrt{\bar A}$ and $\vt\in(0,1)$, 
	problem \eqref{nlKPP} admits a solution $w$ satisfying $w(0)=\vt$.
\end{proposition}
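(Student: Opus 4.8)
\textbf{Proof proposal for Proposition~\ref{pro:exc>}.}
The plan is to pass to the limit $n\to\infty$ in the truncated solutions $w_{n,\zeta}$ given by Lemma~\ref{lem:Dirichlet}, after choosing the shift $\zeta=\zeta_n$ appropriately so that the normalization $w(0)=\vt$ survives in the limit. First I would fix the target value $\vt\in(0,1)$ and recall that the comparison function $\psi$ for the classical KPP equation satisfies $\psi(0)=\vt$, $\psi'<0$, and that $w_{n,\zeta}\le\psi(\cdot-\zeta)$ with $w_{n,\zeta}$ decreasing. The idea is that for fixed $n$ the map $\zeta\mapsto w_{n,\zeta}(0)$ is continuous (by the last statement of Lemma~\ref{lem:Dirichlet}) and should range over a set that contains $\vt$: when $\zeta\to+\infty$ the boundary data $\psi(\pm n-\zeta)\to1$, forcing $w_{n,\zeta}(0)$ close to $1$; when $\zeta\to-\infty$ the boundary data tend to $0$, forcing $w_{n,\zeta}(0)$ close to $0$. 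Hence by the intermediate value theorem there exists $\zeta_n$ with $w_{n,\zeta_n}(0)=\vt$.

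Next I would extract a locally convergent subsequence. Since $0\le w_{n,\zeta_n}\le1$ and these functions solve a linear elliptic equation $w''+cw'+Vw=0$ with $0\le V\le\bar A$ (using the integrated form~\eqref{eq:integrata} and the monotonicity of $A$), interior elliptic estimates give local $C^{2,\beta}$ bounds independent of $n$. By a diagonal argument, along a subsequence $w_{n,\zeta_n}\to w$ in $C^2_{loc}(\R)$, where $0\le w\le1$, $w(0)=\vt$, $w$ is nonincreasing, and $w$ solves~\eqref{eq:integrata} on all of $\R$ — here one must check that the nonlocal/integrated term passes to the limit, which follows from dominated convergence since $|A'w|\le|A'|\in L^1$ and $w_{n,\zeta_n}\to w$ pointwise with the lower endpoint $-n\to-\infty$. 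The limit $w$ is not identically $0$ or $1$ because $w(0)=\vt\in(0,1)$, so Lemma~\ref{lem:phi'<0} applies and yields $w(-\infty)=1$, $w(+\infty)=0$, $w'<0$; thus $w$ solves~\eqref{nlKPP} with the required normalization.

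The main obstacle I anticipate is the limit $\zeta\to-\infty$ step in the intermediate value argument, i.e.\ showing $w_{n,\zeta_n}(0)$ can be made small. The upper bound $w_{n,\zeta}\le\psi(\cdot-\zeta)$ only gives $w_{n,\zeta}(0)\le\psi(-\zeta)$, which $\to1$ as $\zeta\to-\infty$ — the wrong direction — and $\to0$ as $\zeta\to+\infty$; so in fact it is the bound $w_{n,\zeta}(0)\le\psi(-\zeta)$ that handles $\zeta\to+\infty$ (giving $w_{n,\zeta}(0)\to0$), and one needs a \emph{lower} bound to handle $\zeta\to-\infty$. For that I would use that $w_{n,\zeta}\ge$ the solution of the truncated problem with the smaller constant $A(-n)$ replaced consistently, or more simply compare $w_{n,\zeta}$ from below with a subsolution; since the boundary values $\psi(\pm n-\zeta)\to1$ as $\zeta\to-\infty$ and the zero-order potential in~\eqref{-nn} is bounded, an elementary barrier (e.g.\ a constant or a solution of the constant-coefficient problem with potential $\bar A$) forces $w_{n,\zeta}(0)$ close to $1$. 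Once both endpoints of the range of $\zeta\mapsto w_{n,\zeta}(0)$ are pinned down, the rest is routine compactness. A secondary technical point is ensuring the integrated equation~\eqref{eq:integrata}, rather than the original~\eqref{nlKPP}, is what the truncated problems converge to, and then invoking Lemma~\ref{lem:phi'<0} to recover the boundary conditions — this is why the reformulation~\eqref{eq:integrata} was set up.
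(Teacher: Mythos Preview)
Your overall strategy --- choose $\zeta_n$ via the intermediate value theorem so that $w_{n,\zeta_n}(0)=\vt$, extract a $C^2_{loc}$ limit, then invoke Lemma~\ref{lem:phi'<0} --- is the paper's approach, and your compactness and limit-passage steps are fine. But the third paragraph has the limits of $\psi(-\zeta)$ backwards: since $\psi$ is decreasing with $\psi(-\infty)=1$, $\psi(+\infty)=0$, one has $\psi(-\zeta)\to 0$ as $\zeta\to-\infty$ and $\psi(-\zeta)\to 1$ as $\zeta\to+\infty$, the opposite of what you write (and in contradiction with your own first paragraph, where the boundary-data limits are stated correctly). Hence the upper bound $w_{n,\zeta}(0)\le\psi(-\zeta)$ already handles the \emph{small} side as $\zeta\to-\infty$; the ``obstacle'' you describe is a phantom created by the sign slip. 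The \emph{large} side is immediate from monotonicity of $w_{n,\zeta}$: $w_{n,\zeta}(0)>w_{n,\zeta}(n)=\psi(n-\zeta)\to 1$ as $\zeta\to+\infty$.

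The paper avoids $\zeta\to\pm\infty$ altogether by noting directly that $w_{n,0}(0)\le\psi(0)=\vt$ (from the bound in Lemma~\ref{lem:Dirichlet}) and $w_{n,n}(0)>w_{n,n}(n)=\psi(0)=\vt$ (by monotonicity), so one may take $\zeta_n\in[0,n)$. This also guarantees $-n-\zeta_n\to-\infty$, hence $A(-n)\psi(-n-\zeta_n)\to\bar A$, so the truncated equation~\eqref{-nn} passes to the limit~\eqref{eq:integrata}; you do not address this point, though in fact the normalization $\vt=w_{n,\zeta_n}(0)\le\psi(-\zeta_n)$ forces $\zeta_n\ge 0$ a posteriori in your setup as well.
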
	
	
\begin{proof}	
Let $(w_{n,\zeta})_{n\in\N,\;\zeta\in\R}$ be the family given by Lemma~\ref{lem:Dirichlet},
associated with the standard traveling wave $\psi$ normalized by~$\psi(0)=\vt\in(0,1)$.
We have that
$$w_{n,0}(0)\leq\psi(0)=\vt=w_{n,n}(n)<w_{n,n}(0).$$
Then, by the continuous dependence with respect to $\zeta$, there exists
$\zeta_n\in[0,n)$ such that $w_{n,\zeta_n}(0)=\vt$. 
Using interior elliptic estimates, one sees as in the proof of Lemma~\ref{lem:phi'<0} that
the family $(w_{n,\zeta_n})_{n\in\N}$ is equibounded in $C^{3}(I)$, for any bounded interval $I$. 
Hence, as $n\to\infty$, it converges (up to subsequences) in $C^2_{loc}(\R)$ to some function $w$.
We know that $w$ is nonincreasing and satisfies $0\leq w\leq1$ and $w(0)=\vt$. 
We can pass to the limit in the equation in~\eqref{-nn} using the dominated convergence 
theorem. Namely, recalling that $\zeta_n\geq0$ and $\psi(-\infty)=1$, we infer that~$w$ is
a solution of~\eqref{eq:integrata}
which satisfies $0\leq w\leq1$ and $w(0)=\vt$.
It then follows from Lemma~\ref{lem:phi'<0} that 
$w$ is decreasing and that $w(-\infty)=1$, $w(+\infty)=0$.
Then, integrating by parts the integral in~\eqref{eq:integrata}
we recover a solution of the original problem~\eqref{nlKPP}.   
%
%
\end{proof}

\vskip1em

{\bf Case (B)} $2\sqrt{\ul A}<c < 2\sqrt{\bar A}$.

This range is more interesting since one cannot rely anymore on the comparison with the waves of the (usual, local) KPP equation.  Indeed, unlike what happens in case (A),  now the wave  will no longer satisfy any arbitrary normalization at a given point.
%
%
%

The first ingredient is to find a super-solution, that in the previous section was 
simply given by a wave for a standard KPP equation. 

\begin{lemma}\label{lem:super}
	For any $c>2\sqrt{\ul A}$, the 
	equation~\eqref{eq:integrata} admits a decreasing super-solution~$\psi$ satisfying
	$$\psi(-\infty)=1,\quad \psi(+\infty)=0.$$
\end{lemma}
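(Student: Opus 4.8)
The plan is to look for a super-solution of~\eqref{eq:integrata} in the form of a wave for an auxiliary \emph{local} KPP equation with a constant reaction coefficient slightly larger than $\underline A$. Fix $\mu\in(\underline A,\bar A)$ close enough to $\underline A$ so that $c\geq 2\sqrt\mu$; this is possible precisely because $c>2\sqrt{\underline A}$. Since $A(+\infty)=\underline A<\mu$, there exists $x_\mu\in\R$ such that $A(x)<\mu$ for all $x\geq x_\mu$. Let $\psi$ be the (decreasing) classical KPP traveling wave of speed $c$ associated with the reaction $\mu\,u(1-u)$, i.e.
$$
\psi''+c\psi'+\mu\,\psi(1-\psi)=0,\qquad \psi(-\infty)=1,\quad\psi(+\infty)=0,\quad\psi'<0,
$$
which exists because $c\geq 2\sqrt\mu$; moreover translate $\psi$ to the right so that $\psi$ is supported (close to $0$) entirely in the region $x\geq x_\mu$, more precisely so that $\psi(x_\mu)$ is as close to $1$ as we wish. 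The first step is therefore just the classical existence of monotone KPP fronts; nothing new is needed here.

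The second step is to verify that, after this rightward shift, $\psi$ is a super-solution of~\eqref{eq:integrata}. We must check that
$$
-\psi''-c\psi'\;\ge\;\psi\Big(\bar A-A\psi+\int_{-\infty}^x A'(y)\psi(y)\,dy\Big),
$$
i.e.\ that $\mu\,\psi(1-\psi)\ge \psi\big(\bar A-A\psi+\int_{-\infty}^x A'\psi\big)$. Since $\psi>0$ this amounts to $\mu(1-\psi)\ge \bar A-A\psi+\int_{-\infty}^x A'\psi$. Integrating by parts (using $\psi(-\infty)=1$, $A$ nonincreasing and bounded) one has $\bar A - A\psi + \int_{-\infty}^x A'\psi = A(1-\psi) + \int_{-\infty}^x A(y)(-\psi'(y))\,dy$; since $-\psi'\ge 0$ and $A\le \bar A$, this quantity is at most $A(1-\psi)+\bar A\,(1-\psi(-\infty))\cdot\mathbf 1_{\{\cdots\}}$ — more carefully, for $x\le x_\mu$ we have $\psi(x)$ very close to $1$ so both sides are tiny and the inequality is easy, while for $x\ge x_\mu$ we use $A(x)\le\mu$ and the fact that $\psi$ is concentrated on $\{x\ge x_\mu\}$ to bound $\int_{-\infty}^x A(-\psi')\le \mu\int_{-\infty}^x(-\psi')=\mu(1-\psi(x))+\mu(\psi(x)-\psi(-\infty))$... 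Rather than push the estimate through the $\bar A$ form, it is cleaner to argue directly on the \emph{nonlocal} form $-\psi''-c\psi'\ge \psi\int_{-\infty}^x A(y)(-\psi'(y))\,dy$: the left-hand side equals $\mu\psi(1-\psi)=\mu\psi\int_{-\infty}^x(-\psi'(y))\,dy$, and since $\psi'$ is supported in $\{y\ge x_\mu\}$ up to an arbitrarily small error and $A\le\mu$ there, we get $\int_{-\infty}^x A(-\psi')\le \mu\int_{-\infty}^x(-\psi')$ modulo the small tail $\bar A\,(1-\psi(x_\mu))$, which is controlled by choosing the shift large. This comparison of the nonlocal reaction against the constant one $\mu$ is the heart of the argument and the step I expect to require the most care, since it is exactly where the nonlocality and the inhomogeneity of $A$ interact; the key mechanism is that, far to the right, $A$ is below $\mu$, and the mass of $-\psi'$ that sits where $A$ is still large (i.e.\ to the left) is made negligible by translating $\psi$.

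Finally, the boundary behaviour $\psi(-\infty)=1$, $\psi(+\infty)=0$ and monotonicity $\psi'<0$ are inherited directly from the classical front, so once the super-solution inequality is established the lemma follows. If the ``translate $\psi$ far to the right'' device turns out to leave an uncontrollable $\bar A(1-\psi(x_\mu))$ term, the fallback is to instead build $\psi$ as a genuinely $x$-dependent super-solution — e.g.\ solve the local problem with reaction $\tilde A(x)u(1-u)$ for a smooth nonincreasing $\tilde A\ge A$ with $\tilde A(+\infty)=\underline A$ and $\tilde A$ only slightly above $A$ — and then the same integration-by-parts computation gives the super-solution property for~\eqref{eq:integrata} with no error term at all. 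Either way the conclusion is the same.
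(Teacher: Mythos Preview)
Your main approach has a genuine gap that translation cannot repair. After shifting the KPP front with reaction $\mu\,u(1-u)$, the super-solution inequality reads
\[
\mu\big(1-\psi(x)\big)\;\ge\;\int_{-\infty}^x A(y)\big(-\psi'(y)\big)\,dy.
\]
At any point $x_0$ with $A(x_0)>\mu$ (such points exist since $\bar A>\mu$), the monotonicity of $A$ gives $A(y)\ge A(x_0)$ for all $y\le x_0$, hence
\[
\int_{-\infty}^{x_0} A(y)\big(-\psi'(y)\big)\,dy\;\ge\;A(x_0)\big(1-\psi(x_0)\big)\;>\;\mu\big(1-\psi(x_0)\big),
\]
so the inequality \emph{fails} there, whatever the translation. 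Your remark that ``both sides are tiny'' when $\psi(x_0)$ is close to $1$ does not help: the two sides are of the \emph{same} order $1-\psi(x_0)$, and the ratio stays above $A(x_0)/\mu>1$. In the nonlocal form you propose, the ``small tail $\bar A(1-\psi(x_\mu))$'' is small in absolute terms but not relative to the left-hand side, which is equally small. Your fallback with reaction $\tilde A(x)u(1-u)$, $\tilde A\ge A$ nonincreasing, runs into the same obstruction: one computes
\[
\tilde A(x)\big(1-\psi(x)\big)-\int_{-\infty}^x A(-\psi')
=\big(\tilde A(x)-A(x)\big)\big(1-\psi(x)\big)+\int_{-\infty}^x A'(y)\big(1-\psi(y)\big)\,dy,
\]
and the last integral is $\le0$; taking $\tilde A$ ``slightly above $A$'' leaves a nonpositive balance for large $x$. (In addition, existence of a decreasing front for the heterogeneous local problem at speeds $c<2\sqrt{\bar A}$ is itself nontrivial.)

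The paper's remedy is to build the comparison front from a \emph{piecewise} KPP nonlinearity $h(u)$ equal to $\bar A\,u(1-u)$ for $u$ above a threshold $s$ and to $\tilde A\,u$ (linear) for $u\le s$, with $\tilde A\in(\underline A,\bar A)$ satisfying $c>2\sqrt{\tilde A}$. The point is that on the left (where $\psi>s$) the reaction matches $\bar A$, so the nonlocal term $\le\bar A\psi(1-\psi)=h(\psi)$ holds trivially for \emph{all} $A\le\bar A$; on the right (where $\psi<s$), after a sufficiently large shift, the nonlocal term is strictly below $\tilde A\psi=h(\psi)$. The crucial idea you are missing is that a single constant cannot do both jobs: the coefficient must match $\bar A$ where $\psi$ is near $1$, while the wave speed is governed by the slope at $0$, which only needs to be slightly above $\underline A$.
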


\begin{proof}
Let $\t A\in(\ul A,\bar A)$ be such that $c>2\sqrt{\t A}$.
Then call $s:=1-\t A/\ol{A}\in(0,1)$ and define 
$$h(u):=\begin{cases} \t Au & \text{if }u\leq s\\
					  \bar Au(1-u) & \text{if }u>s.
		\end{cases}$$
We know that there is a traveling wave solution $\psi$ for the classical KPP equation
with nonlinear term $h$, that is, a decreasing solution of
$$
\begin{cases}
\displaystyle
\psi''+c\psi'+h(\psi)=0,\quad x\in\R\\
\psi(-\infty)=1,\quad \psi(+\infty)=0.
\end{cases}
$$
We normalize it by $\psi(0)=s$.
Let us show that, for $\zeta$ sufficiently large, the function $\psi(\.-\zeta)$ is a super-solution
of~\eqref{nlKPP}, or equivalently of~\eqref{eq:integrata}.
For $x<\zeta$, we have that $\psi(x-\zeta)>s$ and thus 
$$\psi(x-\zeta)\int_{-\infty}^x A(y)(-\psi'(y-\zeta))dy\leq\bar A\psi(x-\zeta)(1-\psi(x-\zeta))=h(\psi(x-\zeta)).$$
This implies that $\psi(\.-\zeta)$ is a super-solution of \eqref{nlKPP} in $(-\infty,\zeta)$,
for any choice of $\zeta$.
On the other hand, if $\zeta>0$, for $x>\zeta$ we find that
\[\begin{split}
\int_{-\infty}^x A(y)(-\psi'(y-\zeta))dy
&=\int_{-\infty}^{\zeta/2} A(y)(-\psi'(y-\zeta))dy+\int_{\zeta/2}^{+\infty}A(y)(-\psi'(y-\zeta))dy\\
&\leq\bar A(1-\psi(-\zeta/2))+A(\zeta/2)\psi(-\zeta/2).
\end{split}\]
The above right-hand side is independent of $x$ and tends to $\ul A$ as $\zeta\to+\infty$.
It follows that, for $\zeta$ large enough, there holds for $x>\zeta$,
$$\psi(x-\zeta)\int_{-\infty}^x A(y)(-\psi'(y-\zeta))dy<\t A \psi(x-\zeta)=
h(\psi(x-\zeta)).$$
Hence, for such values of $\zeta$, the function $\psi(\.-\zeta)$ is a super-solution of 
\eqref{nlKPP} and thus of~\eqref{eq:integrata}.
\end{proof}

The next step is to show that if \eqref{nlKPP}, or equivalently~\eqref{eq:integrata},
admits a decreasing
super-solution then it also admits a solution.
We would like to follow the same strategy as in the previous section, going through the approximating problems \rife{-nn}. 
However, since we cannot use anymore the comparison with the local equation, we will need the following lemma to guarantee 
%
that solutions stay bounded away from~$1$, 
uniformly in $n$.

\begin{lemma}\label{lem:Dirichlet2}
	Let $c>2\sqrt{\ul A}$ and let $\psi$ be a decreasing super-solution of~\eqref{eq:integrata},
	satisfying $\psi(-\infty)=1$, $\psi(+\infty)=0$.
	For any $n\in\N$ and $\zeta\geq0$, the problem \eqref{-nn} admits a unique positive solution 
	$w_{n,\zeta}$. Moreover, $w_{n,\zeta}(x)$ is decreasing in $x$ and there holds
	$$\sup_{n\in\N}w_{n,\zeta}(0)<1.
	$$
	Finally, the mapping $\zeta\mapsto w_{n,\zeta}$ is continuous with respect to the 
	$L^\infty((-n,n))$ norm.
%
\end{lemma}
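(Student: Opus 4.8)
\emph{Existence, uniqueness, monotonicity, and continuity in $\zeta$.}
I would derive these essentially as in Lemma~\ref{lem:Dirichlet}, only with a different reference solution. Apply Corollary~\ref{cor:Dp} on $(a,b)=(-n,n)$, with $M:=A(-n)\psi(-n-\zeta)$, $\vt:=\psi(-n-\zeta)$ and $\gamma:=\psi(n-\zeta)$. The condition $M\ge A(a)\vt$ holds here as an equality, and the positive solution $W$ of the first equation of~\eqref{-nn} with $W(a)=\vt$, $W'(a)\le0$ required by that corollary is simply the constant $W\equiv\vt$: since $\int_{-n}^x A'=A(x)-A(-n)$, the reaction bracket evaluated at $\vt$ equals $\vt\big(M-A(x)\vt+\vt(A(x)-A(-n))\big)=\vt\big(M-A(-n)\vt\big)=0$. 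As $\psi$ is strictly decreasing and $n\ge1$, $\gamma<\vt=W(b)$, so Corollary~\ref{cor:Dp} yields the unique positive, decreasing solution $w_{n,\zeta}$ together with the fact that $\psi(-n-\zeta)/w_{n,\zeta}$ is increasing; in particular $w_{n,\zeta}\le\psi(-n-\zeta)<1$ on $[-n,n]$, but this bound degenerates as $n\to\infty$. Continuity of $\zeta\mapsto w_{n,\zeta}$ in $L^\infty((-n,n))$ follows verbatim from the end of the proof of Lemma~\ref{lem:Dirichlet}: given $\zeta_j\to\t\zeta$, elliptic estimates up to the boundary give $C^2([-n,n])$-compactness along any subsequence, and the limit solves~\eqref{-nn} with $\zeta=\t\zeta$, hence equals $w_{n,\t\zeta}$ by uniqueness.

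\emph{The uniform bound --- strategy.} The aim is the pointwise estimate $w_{n,\zeta}(x)\le\psi(x-\zeta)$ on $[-n,n]$, which, $\psi$ being strictly decreasing with $\psi(-\infty)=1$, forces $w_{n,\zeta}(0)\le\psi(-\zeta)<1$, a bound uniform in $n$ (the finitely many $n$ with $0\notin[-n,n]$ are harmless). The first step is that, for every $\tau\ge\zeta$, the translate $\psi(\cdot-\tau)$ is a super-solution of the first equation of~\eqref{-nn} on $(-n,n)$. Indeed, since $\psi$ is a super-solution of~\eqref{eq:integrata} with $\psi(-\infty)=1$, an integration by parts turns it into a super-solution of the non-integrated equation $w''+cw'+w\int_{-\infty}^xA(y)(-w'(y))\,dy=0$; using $\tau\ge0$, the monotonicity of $A$, the sign $-\psi'\ge0$ and $\int_{-n}^x(\cdots)\le\int_{-\infty}^x(\cdots)$ for nonnegative integrands, one then gets that $\psi(\cdot-\tau)$ is a super-solution of $w''+cw'+w\int_{-n}^xA(y)(-w'(y))\,dy=0$ on $(-n,n)$; and integrating by parts back, the only extra term produced is $\psi(\cdot-\tau)\,A(-n)\big(\psi(-n-\zeta)-\psi(-n-\tau)\big)$, which is $\le0$ precisely because $\tau\ge\zeta$. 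I would then slide in $\tau$: since $w_{n,\zeta}\le\psi(-n-\zeta)$ on $[-n,n]$ and $\psi(\cdot-\tau)\ge\psi(-n-\zeta)$ there whenever $\tau\ge2n+\zeta$, the set $\{\tau\ge\zeta:\,w_{n,\zeta}\le\psi(\cdot-\tau)\text{ on }[-n,n]\}$ is nonempty and closed, so it has a minimum $\tau^*\in[\zeta,2n+\zeta]$; if $\tau^*=\zeta$ we are done, so it remains to rule out $\tau^*>\zeta$.

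\emph{The uniform bound --- the main obstacle.} If $\tau^*>\zeta$, then at each endpoint $\psi(\cdot-\tau^*)>w_{n,\zeta}$ strictly, so the graphs of $w_{n,\zeta}$ and $\psi(\cdot-\tau^*)$ touch at an interior point $\bar x\in(-n,n)$, and a contradiction must be extracted from this interior contact of the solution $w_{n,\zeta}$ lying \emph{below} the super-solution $\psi(\cdot-\tau^*)$. This is the crux, and the genuine difference with Case~(A): the nonlocal reaction in~\eqref{-nn} ruins the comparison principle, and the nonnegative difference $\sigma:=\psi(\cdot-\tau^*)-w_{n,\zeta}$, which vanishes at $\bar x$, solves a linear elliptic inequality in which the nonlocal term contributes a source $w_{n,\zeta}\int_{-n}^x A'\sigma$ of the wrong sign for a direct Hopf argument. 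I expect this to be overcome by exploiting the monotone structure ($A'\le0$, $\psi'<0$, $\sigma\ge0$ on all of $[-n,\bar x]$ with $\sigma(-n)=\psi(-n-\tau^*)-\psi(-n-\zeta)>0$) through the ratio technique behind Lemma~\ref{lem:CP}, so as to show that an interior contact is rigid and forces $\sigma\equiv0$ on $[-n,\bar x]$; this gives $\psi(-n-\tau^*)=\psi(-n-\zeta)$, i.e.\ $\tau^*=\zeta$, a contradiction. (An alternative leading to the same point is to argue by contradiction and compactness: if $\sup_n w_{n,\zeta}(0)=1$, a subsequence of the $w_{n,\zeta}$ converges in $C^2_{loc}$ to a solution of~\eqref{eq:integrata} with values in $[0,1]$ and equal to $1$ at the origin, hence $\equiv1$ by Lemma~\ref{lem:phi'<0}; the role of the super-solution is then exactly to prevent the transition of $w_{n,\zeta}$ from escaping to $+\infty$, which is the comparison above.) Granting this, $w_{n,\zeta}\le\psi(\cdot-\zeta)$ on $[-n,n]$ and the lemma is proved.
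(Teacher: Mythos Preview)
Your treatment of existence, uniqueness, monotonicity, and continuity in $\zeta$ is correct and matches the paper's route via Corollary~\ref{cor:Dp} (the paper does not spell out that the reference solution is the constant $W\equiv\vt$, but that is indeed what makes the corollary applicable).

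For the uniform bound $\sup_n w_{n,\zeta}(0)<1$, however, your argument has a genuine gap. Your verification that $\psi(\cdot-\tau)$ is a super-solution of the truncated equation for every $\tau\ge\zeta$ is correct, but the sliding step cannot be closed as you describe. At an interior contact point $\bar x$ the nonnegative difference $\sigma=\psi(\cdot-\tau^*)-w_{n,\zeta}$ only yields
\[
-\sigma''(\bar x)\ \ge\ \psi(\bar x-\tau^*)\int_{-n}^{\bar x}A'(y)\,\sigma(y)\,dy,
\]
with both sides $\le0$: nothing forbids $\sigma''(\bar x)>0$, and even $\sigma''(\bar x)=0$ only forces $A'\sigma=0$ a.e.\ on $[-n,\bar x]$, not $\sigma\equiv0$. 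Lemma~\ref{lem:CP} does not help either: it compares a sub-solution $w_1$ to a super-solution $w_2$ under $w_1(a)\ge w_2(a)$ and $w_1'(a)\ge w_2'(a)$, whereas here the super-solution $\psi(\cdot-\tau^*)$ lies \emph{above} $w_{n,\zeta}$ at $a=-n$, so neither assignment of the pair fits the hypotheses. Your compactness alternative is circular: once $w_{n,\zeta}\to1$ in $C^2_{loc}$, saying that ``the super-solution prevents the transition from escaping to $+\infty$'' is precisely the unproved comparison $w_{n,\zeta}\le\psi(\cdot-\zeta)$.

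The paper does \emph{not} prove the pointwise bound $w_{n,\zeta}\le\psi(\cdot-\zeta)$; its argument explicitly allows $w_n>\psi$ near the origin. After reducing to $\zeta=0$, it argues by contradiction: if $w_n(0)\to1$ along a subsequence, then $w_n(0)>\psi(-1)$ for large $n$ and there is a smallest intersection point $b_n\in(0,n]$. Setting $k_n:=\max_{[-n,b_n]}w_n/\psi>1$, attained at an interior $x_n\in(-1,b_n)$, the contact of $k_n\psi$ with $w_n$ at $x_n$ together with the super-solution inequality for $\psi$ in~\eqref{nlKPP} gives, after an integration by parts and the observation that $w_n\ge\psi$ on $[-1,x_n]$,
\[
\int_{-n}^{-1}A'(y)\big(w_n(y)-\psi(y)\big)\,dy>0.
\]
The contradiction then comes from the \emph{hypothesis} itself: $w_n\to1$ uniformly on $(-\infty,-1]$ by monotonicity, so the integrand tends to $A'(1-\psi)\le0$. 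The key idea you are missing is that the super-solution and the assumption $w_n(0)\to1$ must be played against each other; neither alone yields a global ordering of $w_n$ and~$\psi$.
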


\begin{proof}
		Firstly, we check that $\psi(\.-\zeta)$ is still a super-solution of~\eqref{eq:integrata}
		for any $\zeta\geq0$. 
		Because of the condition $\psi(-\infty)=1$, it is equivalent to consider the
		equation~\eqref{nlKPP}. 
		Using the monotonicity of both $A$ and $\psi$,
		we see that for $\zeta\geq0$ and $x\in\R$, 
		\begin{align*}-\psi''(x-\zeta)-c\psi'(x-\zeta) &\geq
		\psi(x-\zeta)\int_{-\infty}^{x-\zeta} A(y)(-\psi'(y))dy\\
		&=\psi(x-\zeta)\int_{-\infty}^{x} A(y-\zeta)(-\psi'(y-\zeta))dy\\
		&\geq\psi(x-\zeta)\int_{-\infty}^{x} A(y)(-\psi'(y-\zeta))dy,
		\end{align*}
		that is, $\psi(\.-\zeta)$ is a super-solution of \eqref{nlKPP}.
		We can therefore restrict ourselves to the case $\zeta=0$.
		
		Corollary~\ref{cor:Dp} implies the existence, uniqueness and strict monotonicity
		of the solution to~\eqref{-nn} with $\zeta=0$. We call it $w_n$.  
		Let us show that $(w_n(0))_{n\in\N}$ stays bounded from above away from $1$.
		
		Assume by contradiction that this is not the case.
		Then, up to extraction of a subsequence, we have that 
		$w_n(0)\to1$ as $n\to\infty$.
		We can further assume that, up to another extraction,
		$w_n(0)>\psi(-1)$ for all $n\in\N$. Let~$b_n$ be the smallest intersection
		point between $w_n$ and~$\psi$ on $(0,n]$.
		Then call		
		$$k_n:=\max_{[-n,b_n]}\frac{w_n}{\psi},$$ and let $x_n\in[-n,b_n]$ be a 
		point where such maximum is reached.
		We see that $k_n>\frac{w_n(0)}{\psi(-1)}>1$, whence 
		$x_n$ lies inside the interval $(-n,b_n)$ because $w_n/\psi$
		is equal to~$1$ on the boundary.
		We also see that
		$$\limn\frac{w_n(0)}{\psi(0)}=\frac1{\psi(0)}>\frac1{\psi(-1)}\geq
		\max_{[-n,-1]}\frac{w_n}{\psi}.$$
		This implies that $x_n>-1$ for $n$ large enough.
			The function $g_n:=k_n\psi$ touches $w_n$ from above at the point 
		$x_n$, whence
		\[\begin{split}
		0 &= w_n''(x_n)+cw_n'(x_n)+w_n(x_n)
		\int_{-n}^{x_n} A(y)(-w_n'(y))dy\\
		&\leq g_n''(x_n)+cg_n'(x_n)+
		g_n(x_n)\int_{-n}^{x_n} A(y)(-w_n'(y))dy\\
		&\leq g_n(x_n)\left(\int_{-n}^{x_n} A(y)(-w_n'(y))dy-
		\int_{-\infty}^{x_n} A(y)(-\psi'(y))dy\right),
		\end{split}\]
		where, for the last inequality, we have used that $\psi$ is a super-solution of~\eqref{nlKPP}.
		We deduce that
		$$\int_{-n}^{x_n} A(y)(\psi'(y)-w_n'(y))dy\geq
		\int_{-\infty}^{-n} A(y)(-\psi'(y))dy>0,$$
		and thus, integrating by parts,
		$$A(x_n)(\psi(x_n)-w_n(x_n))+
		\int_{-n}^{x_n} A'(y)(w_n(y)-\psi(y))dy>0.$$
		We recall that $w_n\geq\psi$ in $[-1,0)$ because $w_n(0)>\psi(-1)$,
		as well as in $[0,b_n]$ by the definition of $b_n$. Thus, for $n$ large enough,
		since $x_n\in(-1,b_n)$, we infer that $w_n\geq\psi$ in $[-1,x_n]$ and therefore		
		the above inequality together with $A'\leq0$ yield
		$$\int_{-n}^{-1} A'(y)(w_n(y)-\psi(y))dy>0.$$
		This implies in particular that $A'\not\equiv0$ in $(-\infty,-1]$.		
		Recall, however, that we are assuming that $(w_n)_{n\in\N}$ converges to $1$ at the point $0$,
		hence uniformly in $(-\infty,-1]$. Passing to the limit as $n\to\infty$ in the 
		above integral inequality we then reach a contradiction.
		
		The last statement of the lemma follows from the uniqueness of the solution,
		exactly as in the proof of Lemma~\ref{lem:Dirichlet}.
\end{proof}

\begin{proposition}\label{pro:exc<}
	Problem \eqref{nlKPP} admits a solution for any $c>2\sqrt{\ul A}$.
\end{proposition}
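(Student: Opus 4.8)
Since Proposition~\ref{pro:exc>} already covers $c\ge 2\sqrt{\bar A}$, the plan is to settle the remaining range $2\sqrt{\ul{A}}<c<2\sqrt{\bar A}$. I would fix such a $c$, take the decreasing super-solution $\psi$ of~\eqref{eq:integrata} with $\psi(-\infty)=1$, $\psi(+\infty)=0$ provided by Lemma~\ref{lem:super}, and apply Lemma~\ref{lem:Dirichlet2} with $\zeta=0$. This yields, for each $n\in\N$, a decreasing positive solution $w_n:=w_{n,0}$ of~\eqref{-nn} on $(-n,n)$, with $w_n(-n)=\psi(-n)$, $w_n(n)=\psi(n)$, $0\le w_n\le\psi(-n)\le1$ and, crucially, $m_0:=\sup_n w_n(0)<1$. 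As in the proof of Lemma~\ref{lem:phi'<0}, interior elliptic estimates give equi-$C^3$ bounds on compact sets, so up to a subsequence $w_n\to w$ in $C^2_{loc}(\R)$; the limit $w$ is nonincreasing, satisfies $0\le w\le1$ and $w(0)\le m_0<1$, and, letting $n\to\infty$ in~\eqref{-nn} by dominated convergence (the integral term being dominated since $|A'|$ is integrable near $-\infty$), it solves~\eqref{eq:integrata}. From here Lemma~\ref{lem:phi'<0} does the rest: as $w(0)<1$ rules out $w\equiv1$, it only remains to prove $w\not\equiv0$; granting that, $w$ is automatically a decreasing wave with $w(-\infty)=1$, $w(+\infty)=0$, and integrating by parts the nonlocal term turns it into a solution of~\eqref{nlKPP}.

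The core step, which I expect to be the main obstacle, is ruling out $w\equiv0$, i.e.\ preventing the transition layer of $w_n$ from escaping to $-\infty$; one cannot mimic the case $c\ge2\sqrt{\bar A}$ here because, for $c<2\sqrt{\bar A}$, there is no local KPP wave of speed $c$ to compare with. I would argue by contradiction: assuming $w\equiv0$, let $\tau_n$ be the point with $w_n(\tau_n)=\tfrac12$, which is well defined for large $n$ since $w_n(-n)=\psi(-n)\to1$, $w_n(0)\to0$ and $w_n$ is decreasing; then local uniform convergence plus monotonicity forces $\tau_n\to-\infty$. A boundary gradient bound for $w_n$ on $(-n,n)$ gives $|\tfrac12-\psi(-n)|\le C(\tau_n+n)$, so $\tau_n+n\ge c_0>0$ for $n$ large. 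Rescaling by $v_n(x):=w_n(x+\tau_n)$, the $v_n$ are decreasing, lie in $[0,1]$, satisfy $v_n(0)=\tfrac12$, and solve the $x$-translated version of~\eqref{-nn} on an interval $(a_n,b_n)$ with $a_n\le-c_0$ and $b_n\to+\infty$. Since $\tau_n\to-\infty$ we have $A(\,\cdot\,+\tau_n)\to\bar A$ locally uniformly, $A(-n)\psi(-n)\to\bar A$, and $\bigl|\int_{-n}^{x+\tau_n}A'(y)w_n(y)\,dy\bigr|\le A(-n)-A(x+\tau_n)\to0$; so, by elliptic estimates once more, a subsequence of $(v_n)$ converges in $C^2_{loc}((-c_0,+\infty))$ to a nonincreasing $v\in[0,1]$ with $v(0)=\tfrac12$ that solves the homogeneous equation $v''+cv'+\bar A\,v(1-v)=0$. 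By the strong maximum principle $v>0$; being bounded and decreasing, $v(+\infty)$ exists and is a zero of $u\mapsto u(1-u)$, hence $v(+\infty)=0$. But for $c<2\sqrt{\bar A}$ the function $\phi:=e^{cx/2}v$ satisfies $\phi''+\bigl(\bar A-\tfrac{c^2}{4}-\bar A v\bigr)\phi=0$, with coefficient tending to $\bar A-\tfrac{c^2}{4}>0$ as $x\to+\infty$; by Sturm's comparison theorem $\phi$ must then oscillate near $+\infty$, contradicting $\phi>0$. Therefore $w\not\equiv0$: along a further subsequence $\tau_n\to\tau^*\in(-\infty,0]$, and passing to the limit in $w_n(\tau_n)=\tfrac12$ gives $w(\tau^*)=\tfrac12$, which completes the construction.

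To summarize the difficulty: Lemma~\ref{lem:Dirichlet2} already supplies the uniform upper bound (so $w\not\equiv1$), and the genuinely new content in this velocity range is the matching lower bound $w\not\equiv0$ --- that the front does not run away to $-\infty$. The mechanism forbidding a runaway is that such a front, rescaled around the level set $\{w_n=\tfrac12\}$, would converge to a traveling wave of the \emph{homogeneous} $\bar A$-KPP equation moving at the inadmissible speed $c<2\sqrt{\bar A}$, which is impossible. The two technical points to be careful with are the uniform boundary gradient estimate for $w_n$ (needed so that the rescaled interval does not collapse on the left) and the vanishing of the nonlocal term after rescaling; both rest only on the monotonicity of $A$ and on the integrability of $A'$ near $-\infty$.
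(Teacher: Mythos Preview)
Your argument is correct, but it takes a genuinely different and more laborious route than the paper. The paper exploits the \emph{full} family $(w_{n,\zeta})_{\zeta\ge0}$ from Lemma~\ref{lem:Dirichlet2} together with its continuity in~$\zeta$: since $\sup_n w_{n,0}(0)<\vt$ for some fixed $\vt<1$, while $w_{n,\zeta}(0)>w_{n,\zeta}(n)=\psi(n-\zeta)>\vt$ for $\zeta$ large, the intermediate value theorem gives $\zeta_n>0$ with $w_{n,\zeta_n}(0)=\vt$. Passing to the limit along this \emph{normalized} sequence yields $w(0)=\vt\in(0,1)$ directly, so neither $w\equiv0$ nor $w\equiv1$ can occur, and Lemma~\ref{lem:phi'<0} finishes. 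In other words, the paper sidesteps the whole ``$w\not\equiv0$'' issue by pinning the value at the origin before taking the limit.

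You instead stay at $\zeta=0$ and rule out $w\equiv0$ a posteriori by a blow-up/rescaling argument: a runaway front would converge (after recentering at the level set $\{w_n=1/2\}$) to a positive, decreasing solution of the homogeneous $\bar A$-KPP equation on a half-line with $v(+\infty)=0$, which is forbidden for $c<2\sqrt{\bar A}$ by Sturm oscillation. This is sound; the uniform gradient bound you invoke to keep the rescaled domain from collapsing can be obtained from $(w_n'e^{cx})'=-V_nw_ne^{cx}\le0$ (integrate over $(-n,n)$ to bound $w_n'(-n)$) combined with $(w_n'e^{cx})'\ge-\bar A e^{cx}$. Your route has the merit of making explicit the mechanism that prevents escape to $-\infty$---it is precisely the non-existence of monotone $\bar A$-fronts at subcritical speeds---whereas the paper's shift-and-normalize trick is shorter and uses only what Lemma~\ref{lem:Dirichlet2} already provides.
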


\begin{proof}
Fix $c>2\sqrt{\ul A}$. Let $\psi$ be the super-solution provided
by Lemma~\ref{lem:super} and $(w_{n,\zeta})_{n\in\N,\;\zeta\geq0}$ be the family constructed from
it in Lemma~\ref{lem:Dirichlet2}.
We know from that lemma that there exists $\vt$ satisfying
$$\sup_{n\in\N}w_{n,0}(0)<\vt<1.$$
For given $n\in\N$, using the fact that  $w_{n,\zeta}(0)>\psi(n-\zeta)>\vt$ for $\zeta$ sufficiently large 
(depending on $n$) together with the continuity of
$w_{n,\zeta}$ with respect to $\zeta$, we can find
$\zeta_n>0$ such that $w_{n,\zeta_n}(0)=\vt$.

		By standard elliptic estimates, 
		$(w_{n,\zeta_n})_{n\in\N}$ converges in $C^{2}_{loc}(\R)$ (up to subsequences)
		to some function $0\leq w\leq1$.
		Thus, using the dominated convergence theorem, we can pass to the limit in 
		the equation of~\eqref{-nn} and deduce that $w$ solves~\eqref{eq:integrata}. 
		Finally, because $w(0)=\vt\in(0,1)$, Lemma~\ref{lem:phi'<0} implies that
		$w$ is a solution to~\eqref{nlKPP}.		
\end{proof}


\subsection{The functional $\mc{I}$}

We investigate now more deeply the structure of the set of traveling waves. A key role will be played by the following quantity associated to a  solution  $w$   of \rife{nlKPP}:
\Fi{I}
\mc{I}(w):=\int_{\R}A(y)(-w'(y))dy=
\bar A+\int_{\R}A'(y)w(y)dy\,.
\Ff
Observe that the second formulation of $\mc{I}$, obtained after integration by parts, shows that $\mc{I}$ is decreasing with respect to $w$.

We start by collecting some properties of the traveling waves  which   involve the functional $\mc{I}$.


\begin{proposition}\label{implicit-speed}
Let $w$ be a solution to \eqref{nlKPP} and $\mc{I}$ be given by \rife{I}. Then we~have

\begin{itemize}

\item[$(i)$]  $\ul A<\mc{I}(w)<\bar A$  

\item[$(ii)$]  $\mc{I}(w)\leq\frac{c^2}4,$

\item[$(iii)$] $w$ satisfies
\Fi{<phi<}
\frac{A(0)-\mc{I}(w)}{A(0)-\ul A}\leq w(0)\leq\frac{\bar A-\mc{I}(w)}{\bar A-A(0)}
\Ff
where the inequalities  are understood to hold provided the corresponding 
denominators are not $0$.

\item[$(iv)$] $w$ is strictly $\log$-concave (that is, $w'/w$ is decreasing)
and satisfies 
$$\lim_{x\to+\infty}\frac{-w'}w(x)=\frac c2-\sqrt{\frac{c^2}4-\mc{I}(w)}=:\lambda>0.$$
\end{itemize}
	In particular, there holds that
	$$w(x)=w(0)e^{-\lambda(x)x},$$
	where $\lambda(x)$ is an increasing function converging to $\lambda$ as
	$x\to+\infty$.
\end{proposition}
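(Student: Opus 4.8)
The plan is to exploit the reformulation~\eqref{eq:integrata} together with the monotonicity properties already established. For $(i)$, the upper bound $\mc{I}(w)<\bar A$ is immediate from the second expression of $\mc{I}$ in~\eqref{I}, since $A'\leq0$ and $0<w<1$, so $\int_{\R}A'w\,dy>\int_{\R}A'\,dy=\ul A-\bar A$ strictly (using that $A'\not\equiv0$ and $w<1$). The lower bound $\mc{I}(w)>\ul A$ follows from $w>0$ and $A'\leq0$: $\int_{\R}A'w\,dy>\int_{\R}A'\cdot 0\,dy$ would be too crude, so instead write $\mc{I}(w)=\int_{\R}A(-w')\,dy\geq\ul A\int_{\R}(-w')\,dy=\ul A$, with strict inequality because $A>\ul A$ on a set where $-w'>0$.

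For $(iv)$, which I expect to be the technical heart, the plan is to set $p:=-w'/w>0$ (well-defined and smooth since $w>0$) and derive from~\eqref{eq:integrata} a Riccati-type equation. Dividing~\eqref{eq:integrata} by $w$ gives $w''/w+c\,w'/w+V(x)=0$ where $V(x):=\bar A-Aw+\int_{-\infty}^x A'w\,dy$, and using $w''/w=p^2-p'$ we obtain $p'=p^2-cp+V(x)$. Since $V'=-Aw'=Awp\geq0$, the potential $V$ is nondecreasing; moreover $V(-\infty)=\bar A(1-w(-\infty))+\lim\!\int=0$... — here one must be careful: in fact by the computation in Lemma~\ref{lem:phi'<0}, $V(x)=\bar A(1-w(-\infty))+\int_{-\infty}^x A(-w')\,dy=\int_{-\infty}^x A(-w')\,dy$ since $w(-\infty)=1$, hence $V$ increases from $0$ to $\mc{I}(w)$. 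To get log-concavity ($p$ increasing) differentiate: $p''=(2p-c)p'+V'$; at a critical point of $p$ one has $p''=V'\geq0$, and combined with the boundary behaviour $p\to0^+$ as $x\to-\infty$ this forces $p'\geq0$ throughout, with strict monotonicity by the strong maximum principle applied to the linear equation for $p'$. Then $p$ is increasing and bounded (the bound coming from $(ii)$, proved next), so $p(+\infty)=:\lambda$ exists; passing to the limit in $p'=p^2-cp+V$ and using $V(+\infty)=\mc{I}(w)$ gives $\lambda^2-c\lambda+\mc{I}(w)=0$, and since $\lambda$ is the limit of an increasing function starting near $0$ it must be the smaller root $\lambda=\frac c2-\sqrt{\frac{c^2}4-\mc{I}(w)}$. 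This simultaneously yields $(ii)$: for the root to be real we need $\mc{I}(w)\leq c^2/4$.

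For $(iii)$, the plan is to integrate the identity $p'=p^2-cp+V(x)$ over $\R$, or more directly to use $V(x)\leq\mc{I}(w)$ and $V(x)\geq\bar A-A(0)w(0)$-type bounds at $x=0$ together with monotonicity of $w$ to bracket $w(0)$; alternatively, evaluate $V(0)=\bar A-A(0)w(0)+\int_{-\infty}^0 A'w$ and use $\ul A\le \mc I(w)=V(+\infty)\ge V(0)$ alongside the crude bounds $\int_{-\infty}^0 A'w\in[(A(0)-\bar A)\cdot 1,\,(A(0)-\bar A)w(0)]$ coming from $A'\le0$ and monotonicity of $w$; rearranging the two resulting inequalities gives exactly~\eqref{<phi<}. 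Finally, the representation $w(x)=w(0)e^{-\lambda(x)x}$ with $\lambda(x):=\frac1x\int_0^x p(s)\,ds$ follows by integrating $p=-(\log w)'$; that $\lambda(x)$ is increasing in $x$ and tends to $\lambda$ is a consequence of $p$ being increasing with limit $\lambda$ (a Cesàro-type argument: the running average of an increasing function is increasing and shares its limit). The main obstacle is organizing the argument for $(iv)$ so that the boundedness of $p$ needed to extract the limit is not circular with $(ii)$; this is resolved by first showing $p$ is monotone (hence has a limit in $(0,+\infty]$), then ruling out $+\infty$ via the Riccati equation and the finiteness of $V$, which forces the discriminant condition.
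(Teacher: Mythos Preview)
Your proposal is essentially correct and arrives at the same conclusions, but your route through part~$(iv)$ differs from the paper's and one step in it is under-justified as written.

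\textbf{Where the two approaches diverge.} For $(iv)$ the paper works in the order: first show $q:=-w'/w\leq c$ by a blow-up argument, then deduce $\mc I(w)\leq c^2/4$ directly from boundedness (if $\mc I(w)>c^2/4$ then $q'\geq\varepsilon>0$ for large~$x$, contradicting $q$ bounded), and only then factor $q'=(q-\lambda_-(x))(q-\lambda_+(x))$ and run a phase-plane elimination (ruling out $q\geq\lambda_+$ and $\lambda_-\leq q\leq\lambda_+$) to conclude $q<\lambda_-$ strictly, whence $q'>0$ strictly and $q(+\infty)=\lambda_-(+\infty)$. You instead establish $p'\geq0$ first, via the equation $(p')'-(2p-c)p'=V'\geq0$ (an integrating-factor argument for $p'$), then upgrade to $p'>0$, then deduce boundedness and the limit, extracting $\mc I(w)\leq c^2/4$ from the discriminant. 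Both orderings are valid; the paper's has the slight advantage that boundedness of $q$ and the inequality $(ii)$ come for free before any monotonicity is needed, while yours gives strict log-concavity more directly once the sign of $p'$ is settled.

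\textbf{The point that needs tightening.} Your justification ``since $\lambda$ is the limit of an increasing function starting near $0$ it must be the smaller root'' does not by itself exclude $\lambda=\lambda_+$. What actually pins this down is that once $p'\geq0$ and $\mc I(w)\leq c^2/4$ are known, the factorisation $p'=(p-\lambda_-(x))(p-\lambda_+(x))$ forbids $p$ from ever lying in the open interval $(\lambda_-(x),\lambda_+(x))$ (there $p'<0$); combined with $p(-\infty)=0=\lambda_-(-\infty)$ and $\lambda_+(-\infty)=c>0$, continuity forces $p(x)\leq\lambda_-(x)$ for all~$x$, hence $\lambda=p(+\infty)\leq\lambda_-(+\infty)$ and equality follows from the algebraic relation. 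Relatedly, the step ``$p\to0^+$ as $x\to-\infty$ forces $p'\geq0$'' is cleanest not via the second-derivative test (which stalls when $V'=0$, i.e.~where $A$ vanishes) but via the integrating factor: if $p'(x_0)<0$ then $p'<0$ on $(-\infty,x_0]$, so $p\geq p(x_0)>0$ there, and integrating $-(\log w)'=p$ gives $w(x)\to+\infty$ as $x\to-\infty$, contradicting $w\leq1$. With these two clarifications your argument is complete; parts $(i)$, $(iii)$ and the final statement about $\lambda(x)$ match the paper's proof essentially verbatim.
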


\begin{proof} 
Since $w'<0$ from Proposition \ref{Harnack}, the bounds $\ul A<\mc{I}(w)<\bar A$ immediately follow, recalling that
$\ul A\leq A\leq\bar A$ and that both inequalities are strict somewhere. 


The estimates \rife{<phi<} on $w(0)$ easily follow from the definition of $\mc{I}$ as well.
Indeed, on one hand, 
$$
\mc{I}=\int_{-\infty}^0 A(y)(-w'(y))dy+\int_0^{+\infty} A(y)(-w'(y))dy
\geq A(0)(1-w(0))+\ul{A}w(0).
$$
On the other hand, an integration by parts shows that
$$\mc{I}=\bar A+\int_{\R}A'(y)w(y)dy\leq 
\bar A+\int_{-\infty}^0A'(y)w(y)dy\leq \bar A+w(0)(A(0)-\bar A).
$$
Now we investigate the properties of $q:=-w'/w$, which is  a positive function.  
Direct computation reveals that
\be\label{qeq}
q'=q^2-cq+\int_{-\infty}^x A(y)(-w'(y))dy,
\quad x\in\R.
\ee
The integral term is positive, nondecreasing in $x$, 
and converges to $0$ as $x\to-\infty$ and to 
$\mc{I}(w)$ as $x\to+\infty$.  
We now show that $q$ is bounded and increasing.
Recall that $c>0$ by Proposition~\ref{Harnack}.

First of all we observe that necessarily  $q(x)\leq c$, because if $q(x_0)>c$ then 
\rife{qeq} would imply that $q$ blows up at some point $x_1>x_0$. 
The boundedness of $q$ then implies that $\mc{I}(w)\leq c^2/4$, because otherwise 
by \rife{qeq} there 
would exist $\vep>0$ such that, for large $x$,
$$
q' > q^2-cq+c^2/4 + \vep \geq \vep,
$$ 
which is impossible because $q$ is bounded.  So we also proved that $\mc{I}(w)\leq c^2/4$.  
%
This allows us to rewrite \rife{qeq} as 
\be\label{eqder}
q'=(q-\lambda_-(x))(q-\lambda_+(x)),
\quad x\in\R,
\ee
with
$$
\lambda_\pm(x):=\frac c2\pm\sqrt{\frac{c^2}4-\int_{-\infty}^x A(y)(-w'(y))dy}.
$$
Observe that $0<\lambda_-(x)<\lambda_+(x)$ and 
$\lambda_-'(x)>0>\lambda_+'(x)$ for all $x\in\R$, with
$$\lambda_-(-\infty)=0<\lambda_-(+\infty)=
\frac c2-\sqrt{\frac{c^2}4-\mc{I}(w)}=:\lambda,
\qquad \lambda_+(-\infty)>\lambda_+(+\infty)>0.$$
We infer that if $q(x_0)\geq\lambda_+(x_0)$ at some $x_0$ then $q\geq\lambda_+(x_0)$ in $(x_0,+\infty)$,
which implies that $q(+\infty)=+\infty$, thus this case is excluded.
On the other hand, if $\lambda_-(x_0)\leq q(x_0)\leq\lambda_+(x_0)$ at some $x_0$ then 
$\lambda_-(x_0)\leq q\leq\lambda_+(x_0)$ in $(-\infty,x_0)$
and thus $q(-\infty)>0$, which is impossible because, being $q=-w'/w$, we would have that
$w(-\infty)=-\infty$.
The unique possibility left is therefore $q<\lambda_-$ in $\R$. 
We deduce from~\rife{eqder} that $q'>0$ and that 
		$$
		q(+\infty)=\lambda_-(+\infty)=\lambda.
		$$
		So the proof of item $(iv)$ is concluded.
		
		For the last statement of the Theorem, we write $w(x)=w(0)e^{-\lambda(x)x},$
		with
		$$\lambda(x)=-\frac1x\log\frac{w(x)}{w(0)}=
		-\frac1x\int_0^x \frac{w'(y)}{w(y)}dy.$$
		The convergence of $w'/w$ towards $-\lambda$
		implies that $\lambda(x)\to\lambda$ as $x\to+\infty$.
		Moreover, from the monotonicity of $w'/w$, we infer that, for $x\neq0$,
		$$\lambda'(x) = -\frac1x\, \frac{w'(x)}{w(x)}+
		\frac1{x^2}\int_0^x \frac{w'(y)}{w(y)}dy
		>0.$$
\end{proof}


We focus now on the case $2\sqrt{\ul A}<c<2\sqrt{\bar A}$. We  seek for a wave 
for which the bound $(ii)$ in 
Proposition~\ref{implicit-speed} is optimal, i.e., such that
$$
\mc{I}(w)=\frac{c^2}4.
$$
This will be called a ``critical wave'' associated with a given speed. Observe that similar waves can only exist in this range of velocities, since   $\mc{I}(w)< \bar A$ by 
Proposition~\ref{implicit-speed}. We are going to show that, for a given velocity $c$, the critical wave runs at the lowest height. 

In order to enlighten this fact, we start to investigate the possible heights   which are admissible at a  given speed $c$.



\begin{proposition}\label{pro:normalizations}({\it same speed, different normalization})
Assume that~\eqref{nlKPP} admits a solution $w$.  Then, for any $x_0\in\R$ and $\vt\in(w(x_0),1)$, there exists  
a solution~$\t w$ of~\eqref{nlKPP} satisfying  $\t w(x_0)=\vt$.
Moreover, the function $\t w/w$ is nondecreasing on $\R$.
\end{proposition}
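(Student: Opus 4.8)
The plan is to construct $\t w$ as a limit of solutions to the truncated Dirichlet problems~\eqref{-nn}, exactly as in the existence proofs of Propositions~\ref{pro:exc>} and~\ref{pro:exc<}, but this time using the given wave $w$ itself (rather than an abstract super-solution) as the comparison object. Since translations are no longer available, the normalization must be forced through the boundary data. First I would reduce to the case $x_0=0$: one checks that $w(\.-x_0)$ is \emph{not} a solution, but one can instead simply relabel and work with the point $x_0$ directly, replacing the truncation interval by $(x_0-n,x_0+n)$; to keep notation light I will describe the argument as if $x_0=0$. Fix $\vt\in(w(0),1)$. For each $n\in\N$ large enough that $w(-n)<\vt$, consider the Dirichlet problem on $(-n,n)$
$$
\begin{cases}
\displaystyle
u''+cu'+u\Big(M_n - A(x)u+\int_{-n}^x A'(y)u(y)dy\Big)=0,\quad x\in(-n,n)\\
u(-n)=\gamma_n,\quad u(n)=w(n),
\end{cases}
$$
with $M_n:=\bar A+\int_{-\infty}^{-n}A'(y)w(y)dy$ and $\gamma_n\in[w(-n),\vt\,]$ a parameter to be chosen. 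Here $M_n\geq A(-n)\vt$ holds for $n$ large (since $M_n\to\bar A>A(-\infty)\geq\vt\,\underline A$ is not quite it — more directly $M_n\to\bar A$ while $A(-n)\to\bar A$ and $\vt<1$, so $M_n>A(-n)\vt$ for large $n$), so Corollary~\ref{cor:Dp} applies provided we exhibit a positive solution $W$ of the first equation with $W(-n)=\gamma_n$ and $W'(-n)\leq0$.

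The key observation is that $w$ restricted to $(-n,n)$ is a \emph{sub-solution} of the equation in this truncated problem when $\gamma_n=w(-n)$: indeed $\int_{-n}^x A'w\,dy\geq\int_{-\infty}^x A'w\,dy-\int_{-\infty}^{-n}A'w\,dy$ together with $M_n=\bar A+\int_{-\infty}^{-n}A'w$ gives exactly that $w$ solves the $(-n,n)$-equation with an equality sign upgraded to the correct inequality by $A'\le 0$ — more precisely $w$ satisfies $w''+cw'+w(M_n-Aw+\int_{-n}^x A'w)=w\int_{-\infty}^{-n}A'w\cdot w/w\le 0$ reversed, so $w$ is a sub-solution. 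Taking $W=w$ in Corollary~\ref{cor:Dp} with $\gamma_n:=w(-n)$, the corollary yields for every boundary value at $n$ below $w(n)$ a decreasing solution lying above... wait — here we want to go \emph{above} $w$, so instead I apply Corollary~\ref{cor:Dp}'s construction in the direction $\eta>W'(-n)=w'(-n)$: the solutions $u^\eta$ with $u^\eta(-n)=w(-n)$, $(u^\eta)'(-n)=\eta$ satisfy, by Lemma~\ref{lem:CP}, that $u^\eta/w$ is increasing for $\eta>w'(-n)$, hence $u^\eta\geq w$ on $(-n,n)$ and stays positive up to $n$ with $u^\eta(n)\geq w(n)$; letting $\eta\downarrow w'(-n)$ recovers $u^\eta\to w$, and for $\eta$ slightly above $w'(-n)$ we get $u^\eta(n)$ slightly above $w(n)$ but also $u^\eta(0)$ slightly above $w(0)$, while for $\eta$ large $u^\eta(0)$ can be made $\geq\vt$. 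By continuous dependence (Lemma~\ref{lem:ivp}), the more economical route is to keep the right endpoint fixed at $w(n)$ and raise the \emph{left} endpoint $\gamma_n$ from $w(-n)$ upward: for $\gamma_n$ close to $w(-n)$ the decreasing solution $u_{n,\gamma_n}$ (existence/uniqueness/monotonicity from Corollary~\ref{cor:Dp}, using $w(\.)$ as the sub-solution $W$ after noting the new solution dominates it by Lemma~\ref{lem:CP}) has $u_{n,\gamma_n}(0)$ close to $w(0)<\vt$, whereas for $\gamma_n=\vt$ one has $u_{n,\vt}(0)>$ some value... — the cleanest is: since $u_{n,\gamma_n}(0)$ depends continuously and monotonically on $\gamma_n$, with $u_{n,w(-n)}=w$ giving value $w(0)<\vt$ and $\gamma_n\uparrow$ eventually forcing $u_{n,\gamma_n}(0)\geq\vt$ (because $u_{n,\gamma_n}\geq$ the solution pinned at height $\vt$ on a growing left portion), the intermediate value theorem supplies $\gamma_n$ with $u_{n,\gamma_n}(0)=\vt$. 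Denote the resulting solution $w_n$.

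Then I pass to the limit: by the elliptic estimates used in Lemma~\ref{lem:phi'<0} the family $(w_n)$ is bounded in $C^3_{loc}$, hence converges along a subsequence in $C^2_{loc}(\R)$ to some $0\leq\t w\leq1$ with $\t w(0)=\vt$, and (since $M_n\to\bar A$, $\gamma_n\to$ some limit $\leq\vt<1$, and using dominated convergence on the integral term, the boundary layer at $-n$ disappearing because $w(-n)\le\gamma_n$ and $\int_{-\infty}^{-n}A'w\to0$) $\t w$ solves~\eqref{eq:integrata}; since $\t w(0)=\vt\in(0,1)$, Lemma~\ref{lem:phi'<0} upgrades $\t w$ to a genuine solution of~\eqref{nlKPP}. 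Finally, for the monotonicity of $\t w/w$: each $w_n$ dominates $w$ on $(-n,n)$ with $w_n(-n)=\gamma_n\geq w(-n)$ and $w_n'(-n)\geq w'(-n)$ (the latter because $w_n$ is the \emph{decreasing} solution dominating $w$, so at the common-or-higher left endpoint its slope is $\geq$; if $\gamma_n>w(-n)$ this needs a short argument comparing at $-n$, handled by Lemma~\ref{lem:CP}'s hypotheses being exactly $w_1(a)\ge w_2(a)$, $w_1'(a)\ge w_2'(a)$, $w_2'(a)\le 0$ with $w_1=w_n$, $w_2=w$), so Lemma~\ref{lem:CP} gives $w_n/w$ nondecreasing on $(-n,n)$; passing to the limit, $\t w/w$ is nondecreasing on $\R$.

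\textbf{Main obstacle.} The delicate point is the slope comparison at the left endpoint $-n$ when $\gamma_n>w(-n)$: Lemma~\ref{lem:CP} requires $w_n'(-n)\geq w'(-n)$, and while intuitively raising the left boundary value of the decreasing solution should not decrease its initial slope, this must be argued — either by choosing to raise $\gamma_n$ while monitoring slopes via Lemma~\ref{lem:ivp}'s continuous dependence, or, more robustly, by reorganizing the construction so that the normalization at $0$ is achieved by varying $w_n'(-n)=\eta$ with $w_n(-n)=w(-n)$ \emph{fixed} (so that $\eta>w'(-n)$ is automatic and Lemma~\ref{lem:CP} applies verbatim with equal left values), then checking $w_n$ stays positive on $(-n,n)$ — which follows because $w_n\geq w>0$ there. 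This second route is cleaner and is the one I would actually write; the only thing to verify is that $\eta\mapsto w_n(0)$ sweeps an interval containing $\vt$, which follows from $w_n\to w$ as $\eta\downarrow w'(-n)$ (value $w(0)<\vt$) and from $w_n(0)\to 1$ as $\eta\uparrow$ some critical value past which $w_n$ would have to exceed $1$ on $(-n,0]$ — or simply $w_n(0)$ unbounded-below-$1$... at any rate a continuity-plus-intermediate-value argument closes it.
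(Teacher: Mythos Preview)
Your overall plan---solve truncated initial-value problems, normalize, pass to the limit, and read off the ratio monotonicity from Lemma~\ref{lem:CP}---is the right shape, and your ``second route'' (fix $w_n(-n)=w(-n)$, vary the slope $\eta>w'(-n)$) does yield $w_n^\eta/w$ increasing cleanly. The genuine gap is the range claim: you assert that $\eta\mapsto w_n^\eta(0)$ sweeps an interval containing every $\vt\in(w(0),1)$, but this is not established. For $\eta\in(w'(-n),0]$ the solution is nonincreasing, hence $w_n^\eta(0)\leq w_n^0(0)$, and you give no argument that $w_n^0(0)\to1$ as $n\to\infty$ (a subsequential limit of the $w_n^0$ could perfectly well be a genuine wave strictly above $w$, with value at~$0$ bounded away from~$1$). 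For $\eta>0$ the solution is no longer monotone and may exceed~$1$; your appeal to a ``critical $\eta$'' neither locates $w_n^\eta(0)$ near~$1$ nor keeps $w_n^\eta\leq1$ on all of $(-n,n)$, which you would need in order to invoke Lemma~\ref{lem:phi'<0} after passing to the limit.

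The paper fills exactly this gap by a different choice of initial data: it shoots from $-n$ with \emph{both} value and slope borrowed from a translate of $w$, namely $\psi^\zeta(-n)=w(-n-\zeta)$, $(\psi^\zeta)'(-n)=w'(-n-\zeta)$, and then checks (using only that $A$ is nonincreasing and $w$ decreasing) that $w_\zeta:=w(\cdot-\zeta)$ is a \emph{super-solution} of the truncated equation for $\zeta>0$. Lemma~\ref{lem:CP} then gives $\psi^\zeta/w_\zeta$ nondecreasing from~$1$, hence $\psi^\zeta\geq w_\zeta$, so $\psi^\zeta(x_0)\geq w(x_0-\zeta)\to1$ as $\zeta\to+\infty$, and the intermediate-value argument for hitting $\vt$ is immediate. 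The price is that Lemma~\ref{lem:CP} only delivers $\psi^\zeta/w_\zeta$ nondecreasing, not $\psi^\zeta/w$; the paper recovers the latter by writing $\psi^\zeta/w=(\psi^\zeta/w_\zeta)\cdot(w_\zeta/w)$ and invoking the strict log-concavity of $w$ from Proposition~\ref{implicit-speed}$(iv)$, which makes $w_\zeta/w$ increasing.
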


\begin{proof}
	Let $n\in\N$. For $\zeta\geq0$, we consider the initial value problem
	\Fi{ivp-nzeta}
	\begin{cases}
		\displaystyle
		\psi''+c\psi'+\psi\left(\bar A-A\psi+\int_{-\infty}^{-n} A'w
		+\int_{-n}^{x} A'\psi\right)=0,\quad x>-n\\
		\psi(-n)=w(-n-\zeta)\\
		\psi'(-n)=w'(-n-\zeta).
	\end{cases}
	\Ff
	If $\zeta=0$ then the function $w$ is a solution of this problem.
	If $\zeta>0$, we see that the function $w_\zeta$ defined by $w_\zeta:=w(\cdot-\zeta)$
	is a super-solution of this problem. Indeed, calling 
	$A_\zeta:=A(\cdot-\zeta)$, for $x\in\R$ we have that
	\[\begin{split}
	w_\zeta''+cw_\zeta' & = w_\zeta \int_{-\infty}^{x-\zeta} Aw' = -w_\zeta \left( \bar A- A w_\zeta
	+ (A-A_\zeta)w_\zeta+ \int_{-\infty}^{x-\zeta}A'w \right)\\
	& = -w_\zeta \left(   \bar A- A w_\zeta
	-  w_\zeta \int_x^{x-\zeta} A'+ \int_{-\infty}^{-n}A'w + \int_{-n}^{x-\zeta}A'w \right)
	\\
	& \leq  -w_\zeta \left( \bar A- A w_\zeta
	-    \int_x^{x-\zeta} A' w_\zeta + \int_{-\infty}^{-n}A'w + \int_{-n}^{x-\zeta}A'w_\zeta \right)
	\\
	& =  -w_\zeta \left( \bar A- A w_\zeta+ \int_{-\infty}^{-n}A'w + \int_{-n}^{x}A'w_\zeta \right)
%
	\end{split}\]
	On the other hand, the constant $w(-n-\zeta)$ is a sub-solution of the same problem.
	It~follows from Lemma~\ref{lem:CP}
	that~\eqref{ivp-nzeta} admits a unique solution $\psi^\zeta$,
	which is decreasing and for which the ratio $\psi^\zeta/w_\zeta$
	is nondecreasing in $[-n,+\infty)$, whence in particular $\psi^\zeta\geq w_\zeta$.
	In the case $\zeta=0$ we have $\psi^0\equiv w$.	
	Take $x_0\in\R$ and $\vt\in(w(x_0),1)$.
	There holds that 
	$$\psi^0(x_0)=w(x_0)<\vt,\qquad
	\psi^\zeta(x_0)\geq w(x_0-\zeta)\to1\quad\text{as }
	\zeta\to+\infty.$$
	Thus, the continuous dependence of $\psi^\zeta(x_0)$ with respect to $\zeta$
	yields the existence of some $\zeta>0$ such that 
	$\psi^\zeta(x_0)=\vt$.
	We call $\psi_n$ such function~$\psi^\zeta$. The ratio $\psi^\zeta/w_\zeta$
	is nondecreasing in $[-n,+\infty)$ and equal to $1$ at $-n$.
	Then, writing
	$$\frac{\psi^\zeta}{w}=\frac{\psi^\zeta}{w_\zeta}\,\frac{w_\zeta}w,$$
	and observing that 
	$$\left(\frac{w_\zeta}w\right)'=\frac{w_\zeta'w-w'w_\zeta}{w^2}=
	\frac{w_\zeta}{w}\left(\frac{w_\zeta'}{w_\zeta}-\frac{w'}{w}\right)>0
	$$
	due to Proposition~\ref{implicit-speed}-$(iv)$, we find that 
	$\psi^\zeta/w$ is increasing in $[-n,+\infty)$ and larger than~$1$.
	
	The sequence $\seq{\psi}$ converges (up to subsequences)
	to some function $\t w$ in $C^2_{loc}(\R)$.
	This function is nonincreasing, satisfies 
	$\t w(x_0)=\vt$ and in addition
	$\t w/w$ is nondecreasing in $\R$ and larger than or equal to~$1$.
	We infer that $\t w(-\infty)=1$. 
	For every $x\in\R$, there holds
	$$\t w''+c\t w'+\t w(\bar A-A\t w)=
	-\limn\psi_n\int_{-n}^x A'\psi_n
	=-\int_{-\infty}^x A'\t w,$$
	that is, $\t w$ is a solution of~\eqref{eq:integrata}.
	It then readily follows that $\t w(+\infty)=0$, and thus that~$\t w$ satisfies~\eqref{nlKPP}.
%
%
\end{proof}

\begin{corollary}\label{cor:normalizations}
	Assume that~\eqref{nlKPP} admits a solution $w$.
	Then, for any $x_0\in\R$ and $\e>0$, there exists  
	a solution~$\t w$ of~\eqref{nlKPP} satisfying
	$$\t w(x_0)>w(x_0),\qquad
	w\leq\t w<w+\e.$$
\end{corollary}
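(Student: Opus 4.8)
The plan is to read this statement as a uniform refinement of Proposition~\ref{pro:normalizations}: that proposition already produces, for any normalization value $\vt$ slightly above $w(x_0)$, a solution $\t w$ of~\eqref{nlKPP} with $\t w(x_0)=\vt$ and with $\t w/w$ nondecreasing on $\R$ (in particular $\t w\ge w$); what is left is only to make the upper bound $\t w<w+\e$ hold globally and to see that it can be made as tight as we wish. The point is that the monotone‑ratio property controls $\t w-w$ precisely on the left half‑line $(-\infty,x_0]$, and this leftward smallness is exactly the hypothesis required to invoke the ``past–future'' closeness of Lemma~\ref{past-future} so as to propagate smallness to all of $\R$.

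Concretely, I would fix $\e>0$ and proceed as follows. First apply Lemma~\ref{past-future} at the point $x_0$ with tolerance $\e/2$, obtaining $\de>0$ such that any solution of~\eqref{nlKPP} staying within $\de$ of $w$ on $(-\infty,x_0)$ stays within $\e/2$ of $w$ on all of $\R$. Recalling that $w(x_0)\in(0,1)$ (positivity coming from Proposition~\ref{Harnack}), choose $\vt\in(w(x_0),1)$ close enough to $w(x_0)$ that $\vt/w(x_0)\le 1+\de$, and let $\t w$ be the solution of~\eqref{nlKPP} given by Proposition~\ref{pro:normalizations}, so that $\t w(x_0)=\vt$, $\t w\ge w$, and $\t w/w$ is nondecreasing on $\R$. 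Then for $x\le x_0$ the monotonicity of the ratio gives $1\le \t w(x)/w(x)\le \t w(x_0)/w(x_0)=\vt/w(x_0)\le 1+\de$, whence, using $0<w\le1$, one gets $0\le \t w(x)-w(x)\le \de\, w(x)\le\de$ on $(-\infty,x_0]$. By the choice of $\de$, Lemma~\ref{past-future} yields $|\t w-w|\le\e/2$ on $\R$, so that $w\le \t w<w+\e$, while $\t w(x_0)=\vt>w(x_0)$ by construction; this is the desired $\t w$.

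The only slightly delicate point — really an observation rather than an obstacle — is to notice that Proposition~\ref{pro:normalizations} delivers smallness of $\t w-w$ exactly on the region $(-\infty,x_0)$ on which Lemma~\ref{past-future} needs it, and that this smallness is quantitatively governed by the single parameter $\vt/w(x_0)$, which tends to $1$ as $\vt\searrow w(x_0)$, the positivity of $w(x_0)$ being what makes the ratio bound meaningful. Once these two ingredients are aligned, no compactness argument or further elliptic estimate is needed: Lemma~\ref{past-future} does all the work of turning ``close on the left'' into ``close everywhere''.
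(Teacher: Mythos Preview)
Your proof is correct and follows essentially the same route as the paper's: apply Proposition~\ref{pro:normalizations} with $\vt=(1+\delta)w(x_0)$, use the nondecreasing ratio $\t w/w$ (together with $\t w(-\infty)=w(-\infty)=1$) to get $0\le\t w-w\le\delta$ on $(-\infty,x_0]$, and then invoke Lemma~\ref{past-future} to push the smallness to all of~$\R$. The only cosmetic difference is your explicit use of $\e/2$ in Lemma~\ref{past-future} to secure the strict upper bound.
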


\begin{proof}
	Let $\delta>0$ be such that $(1+\delta)w(x_0)<1$.
	Applying Proposition~\ref{pro:normalizations} with $\vt=(1+\delta)w(x_0)$ 
	provides us with a solution $\t w$ such that   $\t w(x_0)= (1+\de)w(x_0)$ and $\t w/w$ is nondecreasing. 
	This yields 
	$$
	\forall x\leq x_0,\quad
	w(x) \leq \t w(x)\leq(1+\delta)w(x)<w(x)+\delta.
	$$
	By Lemma \ref{past-future}, given $\vep>0$, we can choose $\de$ small enough so that 
	$\|w-\tilde w\|_\infty < \vep$. 
\end{proof}

We have now the ingredients to show the critical role played by the equality $\mc{I}(w)=c^2/4$. 

\begin{lemma}\label{lem:critical}
	Assume that \eqref{nlKPP}  admits a solution $w$ for which
	$\mc{I}(w)<c^2/4$. Then~\eqref{nlKPP}  admits another decreasing solution 
	$\psi<w$.
\end{lemma}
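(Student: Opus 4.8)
The plan is to rerun the truncation scheme of Proposition~\ref{pro:exc<}, but seeded with a super-solution of~\eqref{nlKPP} that lies strictly below $w$; the hypothesis $\mc{I}(w)<c^2/4$ is precisely what makes such a super-solution available. By Proposition~\ref{implicit-speed}-$(iv)$ the decay rate of $w$ is $\lambda=\frac c2-\sqrt{\frac{c^2}{4}-\mc{I}(w)}$, so that $\mc{I}(w)<c^2/4$ is equivalent to $c-2\lambda>0$. The candidate is
\[\Psi(x):=\min\big(1,\ (1-\de)\,w(x)\,e^{-\mu x}\big)\]
for suitably small $\de,\mu>0$: it is decreasing, satisfies $\Psi(-\infty)=1$, $\Psi(+\infty)=0$ and $\Psi(0)=(1-\de)w(0)<w(0)$. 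Since $(1-\de)w$ is already a super-solution of~\eqref{nlKPP} (a one-line computation: $v=(1-\de)w$ gives $v''+cv'+v\int_{-\infty}^x A(-v')=-(1-\de)\de\, w\int_{-\infty}^x A(-w')\le0$), since $\Psi\equiv1$ on a left half-line, and since a pointwise minimum of super-solutions is a super-solution, the only thing to check is the super-solution inequality for the exponentially tilted branch, mainly as $x\to+\infty$: inserting $w'=-qw$ with $q\to\lambda$ (Proposition~\ref{implicit-speed}-$(iv)$) and using the integrability of $A'$ to estimate $\mc{I}(\Psi)-\mc{I}(w)$, this reduces to the nonpositivity of a quadratic of the form $\mu^2-(c-2\lambda)\mu+\big(\mc{I}(\Psi)-\mc{I}(w)\big)$, which is arranged by fixing $\de$ and then $\mu$ small enough, the positivity of $c-2\lambda$ being what allows the tilt to be compatible with keeping $\Psi$ below $w$.

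\textbf{Carrying it out.} With $\Psi$ in hand I would run the argument of Lemma~\ref{lem:Dirichlet2} and Proposition~\ref{pro:exc<} with $\Psi$ in place of the reference super-solution: the Dirichlet problems~\eqref{-nn} on $(-n,n)$ admit unique positive decreasing solutions $w_{n,\zeta}$; an argument as in Lemma~\ref{lem:Dirichlet2}, exploiting that $\Psi$ sits strictly below $w$ at the origin, shows that $\sup_n w_{n,0}(0)<w(0)$, so one may normalize, choosing $\zeta_n$ by continuity so that $w_{n,\zeta_n}(0)=\vt$ for a fixed $\vt\in(\sup_n w_{n,0}(0),\,w(0))$. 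Passing to the limit $n\to\infty$ along a subsequence in $C^2_{loc}(\R)$ — using the uniform $C^3$ bounds as in Lemma~\ref{lem:phi'<0} and dominated convergence for the nonlocal term — yields $0\le\psi\le1$ solving the integrated equation~\eqref{eq:integrata} with $\psi(0)=\vt\in(0,1)$; by Lemma~\ref{lem:phi'<0}, $\psi$ is then automatically a decreasing solution of~\eqref{nlKPP}. Finally $\psi\not\equiv w$ because $\psi(0)=\vt<w(0)$, and in fact $\psi<w$ on all of $\R$: at any point where the two waves touched they would have equal value and equal (negative) slope, so Lemma~\ref{lem:CP} would force $w/\psi$ to be monotone through that point, incompatible with $\psi(0)<w(0)$ unless $\psi<w$ everywhere.

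\textbf{Main obstacle.} The hard part is the first step: producing the strictly-below super-solution $\Psi$, equivalently guaranteeing that the truncated solutions do not drift back up to $w$ in the limit. This is the only point where the strict inequality $\mc{I}(w)<c^2/4$ is used, and it is used sharply: when $\mc{I}(w)=c^2/4$ one has $\lambda=c/2$, the coefficient $c-2\lambda$ vanishes, the quadratic above degenerates, no admissible tilt $\mu>0$ survives, and indeed no wave strictly below $w$ can exist — consistently with the picture (Theorems~\ref{thm:nlKPP}--\ref{thm:I}) that the wave with $\mc{I}=c^2/4$ is the lowest wave of its speed. Everything else — Dirichlet solvability via Corollary~\ref{cor:Dp}, the uniform estimates, the passage to the limit, and the identification of the limit as a bona fide decreasing wave through Lemma~\ref{lem:phi'<0} — is a routine adaptation of the tools already developed in this section.
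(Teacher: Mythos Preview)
Your strategy---build a decreasing super-solution $\Psi$ of~\eqref{nlKPP} with $\Psi(0)<w(0)$ using the room afforded by $c-2\lambda>0$, then rerun the existence machinery of Proposition~\ref{pro:exc<}---is a different route from the paper's, and the heuristic is sound. But two steps do not go through as written.

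\emph{The scheme does not deliver $\psi(0)<w(0)$.} Even granting a valid super-solution $\Psi$ with $\Psi(0)<w(0)$, the argument of Lemma~\ref{lem:Dirichlet2} only yields $\sup_n w_{n,0}(0)<1$, not $<w(0)$: its contradiction argument feeds on $w_n(0)\to1$, which forces $w_n\to1$ on a left half-line and clashes with $A'\not\equiv0$; nothing analogous is triggered by $w_n(0)\to w(0)$. What you would actually need is the Lemma~\ref{lem:Dirichlet}-type comparison $w_{n,0}\le\Psi$, but that proof slid $w_n$ against a super-solution of a \emph{local} KPP equation; for a super-solution of the nonlocal problem~\eqref{nlKPP} there is no sliding invariance and the comparison fails. (Relatedly, ``a pointwise minimum of super-solutions is a super-solution'' is false here: at a point where $\Psi$ equals the tilted branch, the term $\int_{-\infty}^xA(-\Psi')$ still sees the flat branch on the left, and there is no monotone relation going the right way.) So the wave produced by Proposition~\ref{pro:exc<} could well sit above~$w$ at the origin.

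\emph{The final ordering is not justified.} Your last paragraph infers $\psi<w$ from $\psi(0)<w(0)$ by a touching argument via Lemma~\ref{lem:CP}; but that lemma requires ordered data at a fixed left endpoint, and two distinct decreasing waves need not be tangent where they cross. At this point in the paper the ordering of waves (Proposition~\ref{pro:uniqueness}) is not yet available.

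\emph{What the paper does instead.} The paper avoids super-solutions altogether: it first constructs, via Corollary~\ref{cor:Dp}, a solution $\psi_{\zeta,k}$ of~\eqref{eq:integrata} on $(-\infty,\zeta]$ with $kw\le\psi_{\zeta,k}\le w$ and $\psi_{\zeta,k}(\zeta)=kw(\zeta)$, then extends it forward as a Cauchy problem. The hypothesis $\mc I(w)<c^2/4$ enters exactly once, to guarantee that for $\zeta$ large and $k$ close to~$1$ the extension satisfies $\psi''+c\psi'+\tfrac{c^2}4\psi>0$; combined with $-w'/w<c/2$ from Proposition~\ref{implicit-speed}-$(iv)$, this keeps $-\psi'/\psi<c/2$ and hence $\psi>0$ on all of~$\R$. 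The inequality $\psi\le w$ on $(-\infty,\zeta]$ is thus built in from the start.
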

Before proving this lemma, let us show how it entails the existence of the critical~wave.
\begin{proposition}\label{pro:critical}
	For any $2\sqrt{\ul A}<c<2\sqrt{\bar A}$, there exists a solution $w$ to~\eqref{nlKPP}
	for which $\mc{I}(w)=c^2/4$.
\end{proposition}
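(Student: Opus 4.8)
The plan is to obtain the critical wave as the solution of~\eqref{nlKPP} (with the given speed $c$) that \emph{maximizes} the functional $\mc{I}$ defined in~\eqref{I}, and then to use Lemma~\ref{lem:critical} to prove that this maximal value is forced to be $c^2/4$. First I would record that the set $\mc{S}_c$ of solutions to~\eqref{nlKPP} is nonempty: this is Proposition~\ref{pro:exc<}, which applies since $c>2\sqrt{\ul A}$. Set $I^*:=\sup_{w\in\mc{S}_c}\mc{I}(w)$. By Proposition~\ref{implicit-speed}-$(i)$-$(ii)$ one has $\ul A<\mc{I}(w)\le c^2/4$ for every $w\in\mc{S}_c$, so $\ul A<I^*\le c^2/4<\bar A$, the last inequality because $c<2\sqrt{\bar A}$.

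Next I would show that the supremum $I^*$ is attained by an element of $\mc{S}_c$. Take a maximizing sequence $\seq{w}\subset\mc{S}_c$, so $\mc{I}(w_n)\to I^*$. Each $w_n$, being a solution of~\eqref{nlKPP} with $w_n(-\infty)=1$, solves the integrated equation~\eqref{eq:integrata}, and, by the uniform bound on $\|w_n'''\|_\infty$ from Lemma~\ref{lem:phi'<0} together with $0\le w_n\le1$, the family $\seq{w}$ is bounded in $C^3_{loc}(\R)$. Up to a subsequence, $w_n\to w_\infty$ in $C^2_{loc}(\R)$ with $0\le w_\infty\le1$. Since $A$ is monotone and bounded, $-A'\in L^1(\R)$ with $\int_\R(-A')=\bar A-\ul A$, and $|A'w_n|\le|A'|$; dominated convergence then lets me pass to the limit both in the integral term of~\eqref{eq:integrata} — so that $w_\infty$ solves~\eqref{eq:integrata} — and in the second expression for $\mc{I}$ in~\eqref{I}, giving $\mc{I}(w_n)\to\mc{I}(w_\infty)$ and hence $\mc{I}(w_\infty)=I^*$. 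Now $w_\infty$ cannot be the constant $1$ (which would force $\mc{I}(w_\infty)=\ul A$) nor the constant $0$ (which would force $\mc{I}(w_\infty)=\bar A$), because $\ul A<I^*<\bar A$. Therefore Lemma~\ref{lem:phi'<0} applies and shows that $w_\infty$ is a genuine decreasing wave, i.e.\ $w_\infty\in\mc{S}_c$, with $\mc{I}(w_\infty)=I^*$.

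It remains to prove that $I^*=c^2/4$. Suppose, for contradiction, that $I^*<c^2/4$, i.e.\ $\mc{I}(w_\infty)<c^2/4$. Then Lemma~\ref{lem:critical} produces another solution $\psi\in\mc{S}_c$ with $\psi<w_\infty$ on $\R$. Using the integrated form of $\mc{I}$ in~\eqref{I}, $\mc{I}(\psi)-\mc{I}(w_\infty)=\int_\R A'(\psi-w_\infty)\ge0$, and the inequality is strict because $A'\le0$ a.e., $\psi-w_\infty<0$ everywhere, and $A'$ is not a.e.\ zero (as $\bar A>\ul A$). Hence $\mc{I}(\psi)>I^*$, contradicting the definition of $I^*$. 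Therefore $I^*=c^2/4$, and $w_\infty$ is the sought solution.

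The only genuinely delicate point is the compactness argument of the second paragraph: a priori a maximizing sequence could ``slide off'' to $\pm\infty$, so that the $C^2_{loc}$ limit degenerates into one of the two constant profiles. What makes this harmless is precisely that $\mc{I}$ is continuous along the sequence by dominated convergence, so its limiting value $I^*$ — which is strictly interior to $(\ul A,\bar A)$ — is inherited by $w_\infty$ and automatically rules out the constants. (As an alternative, the two-sided bounds~\eqref{<phi<} on $w(0)$ in terms of $\mc{I}(w)$ pin the maximizing sequence at a suitably chosen base point, but this is not needed.)
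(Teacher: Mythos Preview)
Your argument is correct and is essentially the same as the paper's own proof: both take a maximizing sequence for $\mc{I}$ over the set of waves, pass to the limit using the $C^3$ bound of Lemma~\ref{lem:phi'<0} and dominated convergence in the formulation~\eqref{eq:integrata}, rule out the constant limits $0$ and $1$ via the strict inequalities $\ul A<I^*<\bar A$, and conclude by contradiction using Lemma~\ref{lem:critical} and the monotonicity of $\mc{I}$ in the second expression of~\eqref{I}. Your closing paragraph on why the compactness step does not lose the maximizing value is a nice clarification, but otherwise there is nothing to add.
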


\begin{proof}
Consider the maximization problem
$$j^*:=\sup\{\,\mc{I}(w)\ :\ w \text{ is a solution of \eqref{nlKPP}}\}.$$
We know from Proposition~\ref{implicit-speed} that $\ul A<j^*\leq c^2/4<\bar A$.
Let us show that $j^*$ is attained.
Let $\seq{w}$ be a maximizing sequence for $j^*$.
We use the formulation~\eqref{eq:integrata} for 
the equation satisfied by the $w_n$.
Using the $C^3$ estimate of Lemma~\ref{lem:phi'<0}, as well as the dominated convergence theorem,
we infer that
$\seq{w}$ converges (up to subsequences) in $C^2_{loc}(\R)$ to a 
nonincreasing solution $w$ of~\eqref{eq:integrata}. 
Moreover, the second formulation in~\eqref{I} yields
$$j^*=\limn
\mc{I}(w_n)=
\bar A+\int_{\R}A'(y)w(y)dy.$$
This immediately shows that $w\not\equiv0,1$. Therefore, 
Lemma~\ref{lem:phi'<0} implies that $w$ is a decreasing solution to~\eqref{nlKPP}
and in particular that $\mc{I}(w)=j^*$.

Assume by way of contradiction that $\mc{I}(w)=j^*<c^2/4$.
Then by Lemma~\ref{lem:critical}
there exists another solution $\t w<w$ to~\eqref{nlKPP}. The second 
formulation in~\eqref{I} yields $\mc{I}(\t w)>\mc{I}(w)=j^*$,
contradicting the definition of $j^*$.
\end{proof}

It remains to prove Lemma~\ref{lem:critical}.

\begin{proof}[Proof of Lemma~\ref{lem:critical}] We construct the desired wave in two steps.

{\it Step 1}. As a first step, we show that, for any $\zeta\in\R$ and $k\in(0,1)$,

	there exists a solution $\psi_{\zeta,k}$ of~\eqref{eq:integrata} for
	$x<\zeta$ which satisfies 
	$$
	\forall x\leq\zeta,\quad
	\psi_{\zeta,k}'(x)<0,\quad
	1\geq\frac{\psi_{\zeta,k}}{w}(x)\geq k=\frac{\psi_{\zeta,k}}{w}(\zeta).$$
This is essentially a  consequence of Corollary \ref{cor:Dp}. Indeed, 
for $n\in\N$, $n<\zeta$, we consider the problem
\Fi{ivp-n}
\begin{cases}
	\displaystyle
	\psi''+c\psi'+\psi\left(\bar A-A\psi+\int_{-\infty}^{-n} A' w
	+\int_{-n}^{x} A'\psi\right)=0,\quad x\in (-n,\zeta)\\
	\psi(-n)=w(-n)\,,\quad 
	\psi(\zeta)=k  w(\zeta).
\end{cases}
\Ff
We notice that $\bar A+\int_{-\infty}^{-n} A' w=
A(-n)w(-n)+\int_{-\infty}^{-n} A(-w')>A(-n)w(-n),$, and we use Corollary \ref{cor:Dp} with $W= w$. Since the target is smaller than $w(\zeta)$, we obtain the existence of a unique positive and decreasing solution $\psi^n$ of \rife{ivp-n} with $\frac{\psi^n}{w} $ being decreasing 
on $[-n,\zeta]$, whence
$$\forall x\in(-n,\zeta),\qquad
1>\frac{\psi^n }{w}(x)>\frac{\psi^n }{w}(\zeta)=k.
$$
By elliptic estimates, the $\psi^n$ converge (up to subsequences) as $n\to\infty$,
locally uniformly in $(-\infty,\zeta]$, to a solution~$\psi$ of~\eqref{eq:integrata} for
	$x<\zeta$. Moreover, $\psi$ satisfies $kw\leq\psi\leq w$,
	$\psi(\zeta)=kw(\zeta)$ and 
	$\psi/w$ is nonincreasing on $(-\infty,\zeta]$, whence there holds
	$$0\geq\psi'w-w'\psi.
	$$
This is the function $\psi_{\zeta,k}$ we sought for. 

{\it Step 2}.  Now the purpose is to  extend the function $\psi_{\zeta,k}$ to the whole $\R$. 
	Accordingly to~\eqref{eq:integrata}, we extend them 
%
	as the solutions to the problem
	\Fi{ivp-zeta}
	\begin{cases}
		\displaystyle
		\psi''+c\psi'+\psi\left(\bar A-A\psi+\int_{-\infty}^{\zeta} A'\psi_{\zeta,k}
		+\int_{\zeta}^{x} A'\psi\right)=0,\quad x>\zeta\\
		\psi(\zeta)=\psi_{\zeta,k}(\zeta)\\
		\psi'(\zeta)=\psi_{\zeta,k}'(\zeta).
	\end{cases}
	\Ff
	Lemma~\ref{lem:ivp} gives the existence and uniqueness of the positive solution
	in $(\zeta,\xi_{\zeta,k})$, with either $\xi_{\zeta,k}=+\infty$, or
	$\psi_{\zeta,k}(\xi_{\zeta,k})=0$ or $+\infty$. 
	Our aim is to choose $\zeta\in\R$, $k\in(0,1)$ in such a way that 
	$\xi_{\zeta,k}=+\infty$.
		
	Observing that 
	$$\bar A+\int_{-\infty}^{\zeta} A'\psi_{\zeta,k}=
	\bar A(1-\psi_{\zeta,k}(-\infty))+A(\zeta)\psi_{\zeta,k}(\zeta)
	+\int_{-\infty}^{\zeta} A(-\psi_{\zeta,k}')>A(\zeta)\psi_{\zeta,k}(\zeta),$$
	we deduce that the constant function $\psi_1\equiv\psi_{\zeta,k}(\zeta)$ 
	is a sub-solution of the equation in~\eqref{ivp-zeta}. Hence 
	Lemma~\ref{lem:CP} implies that $\psi_{\zeta,k}$ is decreasing 
	in $[\zeta,\xi_{\zeta,k})$. 
	It satisfies there
	\be\label{23}
	\psi_{\zeta,k}''+c\psi_{\zeta,k}'+\psi_{\zeta,k}
	\left(\bar A+\int_{-\infty}^{\zeta} A'\psi_{\zeta,k}\right)\geq0.
	\ee
	Since $\psi_{\zeta,k}\geq kw$ on $(-\infty,\zeta]$, we find that	
	$$\int_{-\infty}^{\zeta} A'\psi_{\zeta,k}\leq k\int_{-\infty}^{\zeta} A' w
	\to k\int_{\R} A'w\quad\text{as }\zeta\to+\infty.$$
	Therefore, by definition of $\mc{I}$, we have that 
	\be\label{24}
	\bar A+\int_{-\infty}^{\zeta} A'\psi_{\zeta,k} \mathop{\to}^{\zeta \to \infty} \bar A + k (\mc{I} -\bar A)\,.
	\ee
	On account of \rife{23} and  \rife{24}, and since $\mc{I} <\frac{c^2}4$, we can find 
%
	$\zeta$ sufficiently large and $k$ sufficiently close to $1$ so that $\psi_{\zeta,k}$ satisfies
%
	\Fi{I+e}
	\psi_{\zeta,k}''+c\psi_{\zeta,k}'+\frac{c^2}4\psi_{\zeta,k}>0
	\quad \;\text{in }\; [\zeta,\xi_{\zeta,k}).
	\Ff

	Next, we apply Proposition~\ref{implicit-speed}-$(iv)$, which yields
	$$\forall x\in\R,\quad\frac{w'}{w}(x)>-\frac c2.$$
	Using the fact that $\psi_{\zeta,k}$ converges to $w$ as 
	$k\nearrow1$, uniformly in $(-\infty,\zeta]$ and therefore by elliptic estimates
	in $C^1_{loc}((-\infty,\zeta])$ (up to subsequences), 
	we deduce that for $k$ sufficiently close to $1$ (depending on $\zeta$) there holds
	\Fi{log'}
	\frac{\psi_{\zeta,k}'}{\psi_{\zeta,k}}(\zeta)>-\frac c2.
	\Ff
	Summing up, we can pick $\zeta$ large enough and then $k$ close enough to $1$
	in such a way that both \eqref{I+e} and \eqref{log'} hold. Therefore, the function  $q:=-\psi_{\zeta,k}'/\psi_{\zeta,k}$ satisfies 
	$$
	q'<\left(q-\frac c2\right)^2 \;\quad\text{in }\; [\zeta,\xi_{\zeta,k}), \,\, q(\zeta)<c/2.
	$$ 
	It follows that
	$q(x)<c/2$ for all $x>\zeta$, i.e.,
	$$\psi_{\zeta,k}(x)>\psi_{\zeta,k}(\zeta)e^{-\frac c2(x-\zeta)}.$$
	This means that $\psi_{\zeta,k}$ remains positive on the whole $\R$.	
	Namely, it is a nontrivial solution
	of~\eqref{eq:integrata} and therefore it solves~\eqref{nlKPP}
	due to Lemma~\ref{lem:phi'<0}.
\end{proof}
	
\begin{remark}
	We could have considered two other natural optimization problems. 
	Namely, for given $x_0\in\R$,
	$$\vt^*:=\inf\{\,w(x_0)\ :\ w \text{ is a solution of \eqref{nlKPP}}\},$$
	or, for given $\vt\in(0,1)$,
	$$\zeta^*:=\inf\{\,w^{-1}(\vt)\ :\ w \text{ is a solution of \eqref{nlKPP}}\}.$$
	Once shown that these infima are actually minima, it  follows from 
	Lemma \ref{lem:critical} that they are both attained by the critical wave 
	(satisfying $\mc{I}(w)=c^2/4$).
	To show that the minima are attained it is sufficient to verify that 
	$\vt^*>0$, $\zeta^*>-\infty$. 
	For this, we consider some corresponding minimizing sequences $\seq{w}$. 
	By Lemma~\ref{lem:phi'<0}, they converge in $C^2_{loc}$ to solutions
	of~\eqref{eq:integrata}.
	On one hand, if $\vt^*=0$, one would have that the limit is identically equal to~$0$,
	whence, thanks to Proposition~\ref{implicit-speed},
	$$c\geq 2\limn\sqrt{\mc{I}(w_n)}=2\sqrt{\bar A},$$ 
	which is a contradiction.
	On the other hand, if $\zeta^*=-\infty$ then the limit $w$ of the
	minimizing sequence would satisfy $w\leq\vt$. Being a nonincreasing solution to~\eqref{eq:integrata},
	we would necessarily have that $w\equiv0$, whence the same contradiction as before.\\
	Let us point out that we do not know if the optimal waves for the above problems are unique,
	nor whether an optimal wave for a problem is also critical for the same problem with another choice 
	of the parameter, or for a problem of the other type (except of course 
	that $w$ is optimal for the first problem at a point $x_0$ if and only if
	it is optimal for the second problem for the value $\vt=w(x_0)$).	
\end{remark}	

	

We now show the uniqueness of the wave for given $\mc{I}$, a crucial step to prove the ordering of waves.

\begin{proposition}\label{pro:uniquenessI}
	For given $c,j\geq0$ there exists at most one solution of~\eqref{nlKPP}
	such~that
	$$\mc{I}(w)=j.$$
\end{proposition}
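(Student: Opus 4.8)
\emph{The plan.} Let $w_1,w_2$ solve~\eqref{nlKPP} with $\mc I(w_1)=\mc I(w_2)=j$; I will show $w_1\equiv w_2$ by establishing two facts: (a) $\mc I$ is \emph{strictly} decreasing for the pointwise order on solutions; (b) any two solutions of~\eqref{nlKPP} are comparable (one is $\le$ the other on all of $\R$). Granting these, say $w_1\ge w_2$; then (a) together with the hypothesis $\mc I(w_1)=\mc I(w_2)$ forces $w_1\equiv w_2$. For~(a): from $\mc I(w)=\bar A+\int_\R A'w$ and $A'\le0$ we get $\mc I(w_1)\le\mc I(w_2)$ whenever $w_1\ge w_2$, and if equality holds then $A'w_1=A'w_2$ almost everywhere, so $w_1$ and $w_2$ solve the same Cauchy problem of type~\eqref{ivp-gen} issued from any point of the support of $A'$; by the uniqueness in Lemma~\ref{lem:ivp} they coincide there, and hence everywhere. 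Thus $\mc I(w_1)<\mc I(w_2)$ as soon as $w_1\ge w_2$ with $w_1\ne w_2$.

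\emph{Proof of (b), the heart of the matter.} Suppose $w_1\ne w_2$. By the uniqueness in Lemma~\ref{lem:ivp}, $w_1$ and $w_2$ cannot agree on a left half-line (if $w_1\equiv w_2$ on $(-\infty,a)$ then they share the base-point data and equation at $a$, so coincide on $[a,+\infty)$ as well). Hence the difference $v:=w_1-w_2$, which tends to $0$ at $-\infty$, is not eventually zero there. The function $v$ satisfies a linear nonlocal equation whose local part near $-\infty$ is $v''+cv'-\bar A v\approx0$, with \emph{real} characteristic roots $\tfrac{-c\pm\sqrt{c^2+4\bar A}}{2}$; thus $v$ is nonoscillatory at $-\infty$ and, after possibly interchanging $w_1$ and $w_2$, we have $w_1>w_2$ on some $(-\infty,x^*)$, and moreover $w_1'>w_2'$ on a (possibly smaller) left half-line — indeed both $v$ and $v'$ behave like a nonzero multiple of $e^{r_+x}$ with $r_+:=\tfrac{-c+\sqrt{c^2+4\bar A}}{2}>0$. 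Now fix $a<x^*$ and write~\eqref{eq:integrata} on $[a,+\infty)$ in the base-point form $w''+cw'+w\big(P^a-A(x)w+\int_a^x A'w\big)=0$, which fits the framework of Lemmas~\ref{lem:ivp} and~\ref{lem:CP} since $-A\le0$ and $A'\le0$; here $P^a_i:=\bar A+\int_{-\infty}^a A'w_i$. Because $w_1>w_2$ on $(-\infty,a)$ and $A'\le0$, we get $P^a_1\le P^a_2$, so that $w_1$ is a \emph{sub-solution} of the equation solved by $w_2$. Since in addition $w_1(a)\ge w_2(a)$, $w_1'(a)\ge w_2'(a)$ and $w_2'(a)<0$ (Proposition~\ref{Harnack}), Lemma~\ref{lem:CP} applies and yields that $w_1/w_2$ is nondecreasing on $[a,+\infty)$. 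Letting $a\to-\infty$, $w_1/w_2$ is nondecreasing on $\R$; as it tends to $1$ at $-\infty$, we conclude $w_1\ge w_2$ on $\R$, which is~(b).

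\emph{Main obstacle.} The one non-routine ingredient is the asymptotic analysis at $-\infty$ used above: one needs the standard but slightly technical fact that, the limiting equation being nonoscillatory, a solution of the perturbed nonlocal linear equation for $v$ which vanishes at $-\infty$ is asymptotic to $c_*e^{r_+x}$ for some $c_*\ne0$, with derivative asymptotic to $c_*r_+e^{r_+x}$ — this is what simultaneously fixes the sign of $v$ and of $v'$ on a left half-line and makes the base-point comparison work. (The nonlocal term $w_1\int_{-\infty}^x A'v$ is absorbed as a small perturbation near $-\infty$, since it is $O(\sup_{(-\infty,x)}|v|)$.) Everything else reduces to the comparison principle already at hand, Lemma~\ref{lem:CP}, letting the base point recede to $-\infty$; note that comparability~(b) does not itself use $\mc I(w_1)=\mc I(w_2)$, which is needed only for the final identification via~(a).
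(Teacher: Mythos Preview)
Your reduction is sound: once (a) and (b) are established, the proposition follows immediately, and (a) is correct as stated (indeed, once $A'(w_1-w_2)=0$ a.e.\ the nonlocal term in the linear equation for $v:=w_1-w_2$ vanishes and the strong maximum principle for the resulting local equation gives $v\equiv0$). The comparison step in (b) is also correctly set up: if there exists $a$ with $w_1\ge w_2$ on $(-\infty,a]$ and $w_1'(a)\ge w_2'(a)$, then $P^a_1\le P^a_2$ makes $w_1$ a sub-solution of $w_2$'s base-point equation and Lemma~\ref{lem:CP} yields $w_1/w_2$ nondecreasing on $[a,+\infty)$, hence $w_1\ge w_2$ globally.

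The genuine gap is exactly the step you flag as the ``main obstacle'': the claim that $v$ and $v'$ have a definite sign on some left half-line. You invoke a Levinson-type asymptotic $v\sim c_*e^{r_+x}$ with $c_*\neq0$, but this is not justified. The difficulty is twofold. First, the equation for $v$ is genuinely nonlocal: the term $w_2\int_{-\infty}^x A'v$ is only controlled by $(\bar A-A(x))\sup_{(-\infty,x)}|v|$, which can be much larger than $|v(x)|$ if $v$ oscillates with slowly decaying amplitude; so ``absorbing it as a small perturbation'' of the local operator is not automatic. Second, no decay rate on $\bar A-A(x)$ is assumed, so even the local coefficient $Q(x)=I_1(x)-w_2(x)A(x)\to-\bar A$ need not converge in an integrable fashion, and standard asymptotic ODE theorems do not apply out of the box. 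Without this sign information on $(v,v')$ near $-\infty$, Lemma~\ref{lem:CP} cannot be launched, and your proof of (b) does not close. Note also that (b), if true in this generality, would give Proposition~\ref{pro:uniqueness} directly, bypassing Proposition~\ref{pro:uniquenessI}; the paper proceeds in the opposite order precisely to avoid this asymptotic analysis.

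For comparison, the paper's proof does not attempt to order $w_1,w_2$ globally. Instead it argues by contradiction that two distinct solutions with $\mc I=j$ cannot intersect: at a suitably chosen crossing point $\xi$ it exploits the constraint $\mc I(w_1)=\mc I(w_2)$ to convert the equation, via the reflection $\psi(x)=w(-x)$, into one whose nonlocal term integrates from the \emph{other} side. A touching-point analysis for the ratio $\ol\psi/\ul\psi$ then produces an integral inequality that contradicts~\eqref{intx1}. This route uses the hypothesis $\mc I(w_1)=\mc I(w_2)$ in an essential way (to perform the reflection) and never needs any asymptotic structure at $\pm\infty$.
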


\begin{proof}
	The proof consists in showing that two distinct solutions of~\eqref{nlKPP} on which
	the operator $\mc{I}$ coincides	cannot intersect on $\R$. 
	One then concludes because the equivalence of $\mc{I}$ prevents two solutions 
	from being strictly ordered, thanks to~\eqref{I} and the fact that $A'\not\equiv0$.	
	
	Assume by way of contradiction that there exist two distinct solutions $w_1$, $w_2$ 
	such that $\mc{I}(w_i)=j$ and they intersect somewhere in $\R$. They are decreasing
	by Lemma~\ref{lem:phi'<0}.
	Let $x_0\in\R$ be such that $A'<0$ in $x_0$. 
	If $w_1\equiv w_2$ in a neighborhood of~$x_0$ then necessarily, by~\eqref{nlKPP},
	$$\int_{-\infty}^{x_0} A'(y) w_1(y)dy=\int_{-\infty}^{x_0} A'(y) w_2(y)dy.$$ 
	It then follows from Lemma~\ref{lem:ivp} that $w_1\equiv w_2$ on $[x_0,+\infty)$.
	Then, since
	$$\int_{+\infty}^{x} A'(y) w_i(y)dy=\int_{-\infty}^{x} A'(y) w_i(y)dy+\bar A-j,$$ 
	applying Lemma~\ref{lem:ivp} to the functions $p_i(-x)$ entails $w_1\equiv w_2$ on $\R$.
	This means that, up to replacing $w_1$ with $w_2$, there exists a point $x_1$ close to $x_0$
	where 
	\Fi{phi1<phi2}
	w_1(x_1)<w_2(x_1),\qquad A'(x_1)<0.
	\Ff

	We claim that the following conditions hold at some $\xi\in\R$:
	\Fi{ww}
	\int_{-\infty}^{\xi} A'(y)\ul w(y)dy>\int_{-\infty}^{\xi} A'(y) \ol w(y)dy,
	\qquad \ul w(\xi)=\ol w(\xi),\qquad \ul w'(\xi)\geq\ol w'(\xi),
	\Ff
	with either $\ul w= w_1$, $\ol w=w_2$, or  
	$\ul w= w_2$, $\ol w=w_1$.
	
	Suppose that $w_2>w_1$ on $(x_1,+\infty)$. Then, calling $\xi$ the largest intersection point
	between $w_1$ and $w_2$, we deduce from~\eqref{I} and \eqref{phi1<phi2} that
	\[\begin{split}
	\int_{-\infty}^{\xi} A'(y) w_1(y)dy &=j-\bar A-\int_{\xi}^{+\infty}A'(y) w_1(y)dy\\
	&<j-\bar A-\int_{\xi}^{+\infty}A'(y) w_2(y)dy=\int_{-\infty}^{\xi} A'(y) w_2(y)dy.
	\end{split}\]
	Then~\eqref{ww} holds with $\ol w= w_1$, $\ul w=w_2$.
	
	Next, consider the case where $w_1$ and $w_2$ intersect somewhere in $(x_1,+\infty)$, 
	and let~$\xi'$ be the smallest intersection point in $(x_1,+\infty)$.
	If $\int_{-\infty}^{\xi'} A'(y)w_2(y)dy<\int_{-\infty}^{\xi'} A'(y)w_1(y)dy$
	then~\eqref{ww} holds with $\ol w= w_1$, $\ul w=w_2$, $\xi=\xi'$.
	Otherwise, 
	calling 
	$$\xi:=\inf\{x<x_1\ :\ w_1<w_2\text{ in }(x,x_1)\},$$
	we derive, using that $w_1<w_2$ in $(\xi,\xi')$ and that
	$A'(x_1)<0$ with $x_1\in(\xi,\xi')$,  
	\[\begin{split}\int_{-\infty}^{\xi} A'(y)w_2(y)dy
	&\geq\int_{-\infty}^{\xi'} A'(y)w_2(y)dy
	+\int_{\xi'}^\xi A'(y)w_2(y)dy\\
	&>\int_{-\infty}^{\xi'} A'(y)w_1(y)dy
	+\int_{\xi'}^\xi A'(y)w_1(y)dy=\int_{-\infty}^{\xi} A'(y)w_1(y)dy,
	\end{split}\]
	that is,~\eqref{ww} holds with $\ul w= w_1$, $\ol w=w_2$.
	
%
%
	
%
	
	We have shown the claim~\eqref{ww}. Integrating by parts yields 
	$$
		\int_{-\infty}^{\xi} A(y)(-\ul w'(y))dy>\int_{-\infty}^{\xi} A(y)(-\ol w'(y))dy.
	$$
	Recalling that $\mc{I}(\ul w)=\mc{I}(\ol w)$, we eventually get
	\Fi{intx1}
		\int_{\xi}^{+\infty} A(y)(-\ul w'(y))dy<\int_{\xi}^{+\infty} A(y)(-\ol w'(y))dy.
	\Ff
	
	We now rewrite the equation in~\eqref{nlKPP} in
	terms of the function $\psi(x):=w(-x)$, that is, calling $B(x):=A(-x)$,
	$$\psi''-c \psi'+\psi\int_x^{+\infty}B(y) \psi'(y)dy=0.$$
%
	Then, using the condition $\mc{I}(w)=j$
	we obtain
	$$\psi''-c \psi'+\psi\Big(j-\int_{-\infty}^x B(y) \psi'(y)dy\Big)=0.$$
	The functions $\ul \psi(x):=\ul w(-x)$, $\ol \psi(x):=\ol w(-x)$
	are increasing and satisfy this equation, and moreover, due to~\eqref{ww},
	$$\ul \psi(-\xi)=\ol \psi(-\xi),\qquad \ul \psi'(-\xi)\leq \ol \psi'(-\xi).$$
	Condition~\eqref{intx1} rewrites
	\Fi{int-x1}
		\int_{-\infty}^{-\xi} B(y) \ul \psi'(y)dy<\int_{-\infty}^{-\xi} B(y) \ol \psi'(y)dy.
	\Ff
	Let us call $\rho:=\ol \psi/\ul \psi$.
	If $\ul \psi'(-\xi)<\ol \psi'(-\xi)$ then $\rho'(-\xi)>0$. Instead, if 
	$\ul \psi'(-\xi)=\ol \psi'(-\xi)$ then $\rho'(-\xi)=0$, but we deduce from 
	condition~\eqref{int-x1} and the equations
	that $\ul \psi''(-\xi)<\ol \psi''(-\xi)$. Hence, in such case we find that
	$$\rho''(-\xi)=
	\left(\frac{\ol \psi''\ul \psi-\ul \psi''\ol \psi}{\ul \psi^2}-2 \frac{\ul \psi'}{\ul \psi}\rho' \right)(-\xi)=
	\left(\frac{\ol \psi''-\ul \psi''}{\ul \psi}\right)(-\xi)>0.$$
	Therefore, in any case, we have that
	$\rho'>0$ in a right neighborhood of $-\xi$. 
	Let $\t x$ be the first point in $(-\xi,+\infty)$ where
	$\rho'(\t x)=0$, which necessarily exists because $\rho(-\xi)=\rho(+\infty)=1$.
	Calling $k:=\rho(\t x)>1$, there holds that $\ol \psi'(\t x)=k\ul \psi'(\t x)$ and	
	$$0\geq\rho''(\t x)=
	\left(\frac{\ol \psi''\ul \psi-\ul \psi''\ol \psi}{\ul \psi^2} \right)(\t x),$$
	that is, $\ol \psi''(\t x)\leq k\ul \psi''(\t x)$.
	Multiplying the equation satisfied by $\ul \psi$ at $\t x$ by $k$ 
	and using these informations we derive
	\[\begin{split} 0 &= k\ul \psi''-c k\ul \psi'+k\ul \psi\Big(j-\int_{-\infty}^{\t x} B(y) \ul \psi'(y)dy\Big)\\
	&\geq \ol \psi''-c \ol \psi'+\ol \psi\Big(j-\int_{-\infty}^{\t x} B(y) \ul \psi'(y)dy\Big),
	\end{split}\]
	whence, using the equation for $\ol \psi$,
	$$\int_{-\infty}^{\t x} B(y) \ul \psi'(y)dy\geq 
	\int_{-\infty}^{\t x} B(y) \ol \psi'(y)dy.$$
	Owing to~\eqref{int-x1}, this entails
	$$\int_{-\xi}^{\t x} B(y) \ul \psi'(y)dy> 
	\int_{-\xi}^{\t x} B(y) \ol \psi'(y)dy.$$
	We finally recall that $\rho'>0$ and $\rho>1$ in $(-\xi,\t x)$, which imply that there holds
	$$\ol \psi'>\frac{\ul \psi'\ol \psi}{\ul \psi}>\ul \psi'.$$
	This contradicts the above integral inequality.	
\end{proof}


\subsection{Decay estimates of the waves}

We now derive some estimates about the convergence at $\pm\infty$ in terms of the 
value of the function at a given point, say, the origin.
They will be used in the study of the waves for the mean-field system.

\begin{lemma}\label{pminfty}
	Let $w$ be a solution of~\eqref{nlKPP}.
Then we have
	\be\label{neginf}
	1-w(x) \leq (1- w(0)) e^{\gamma_0 x}  \quad \forall x\leq 0, \quad \hbox{where \ 
		$\gamma_0:= \frac{A(0)w(0)}{ \sqrt{\bar A}+c }$}
	\ee
	and
	\be\label{posinf}
	w(x) \leq w(0)\, e^{\tilde \gamma_0 (\frac1c-x)}\quad \forall x>0\,,\quad \hbox{where \
		$\tilde \gamma_0 := \frac{A(0)(1-w(0))}c$.}
	\ee

	In addition, if $w$ is a critical wave, i.e.~$\mc{I}(w):= \int_{\R} A(-w')=\frac{c^2}4\;(<\bar A)$, 
	then there exists a constant $K$, only depending on 
	$\bar A$ and positive lower bounds for $c$ and $1-w(0)$, such that 
	\be\label{decay_crit}
	w(x) \leq K  \,  x e^{- \frac c2  x}\quad \forall x\geq 1\,.
	\ee
\end{lemma}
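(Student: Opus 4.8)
The strategy is to exploit the differential inequality \eqref{phisuper}, namely $-w''-cw'\geq A(x)w(1-w)\geq 0$, together with the monotonicity of $w$ and the known bounds $0<w<1$, $w'<0$, to build exponential barriers at both ends. For the estimate \eqref{neginf} near $-\infty$: I would work with $v:=1-w$, which satisfies $v(-\infty)=0$ and the inequality $-v''-cv'\leq -A(x)(1-v)v$ reversed appropriately; more directly, since $w$ is decreasing and $A$ is nonincreasing, for $x\leq 0$ we have $A(x)\geq A(0)$ and $w(x)\geq w(0)$, so from \eqref{phisuper}, $-v''+cv'=-w''-cw' \geq A(x)w(x)(1-w(x))\geq A(0)w(0)\,v(x)$ — wait, one must be careful with signs; the cleaner route is to note $v$ is a subsolution of a constant-coefficient linear equation and compare with an exponential $Ce^{\gamma_0 x}$. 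The exponent $\gamma_0 = A(0)w(0)/(\sqrt{\bar A}+c)$ is the positive root (or a convenient lower bound for it) of $\gamma^2 + c\gamma = A(0)w(0)$-type balance; since $\gamma_0(\gamma_0+c) \le \gamma_0(\sqrt{\bar A}+c) = A(0)w(0)$ by the crude bound $\gamma_0\le \sqrt{\bar A}$ (which holds because $\gamma_0 \le A(0)/c$... this needs the hypothesis that is implicitly available), the function $Ce^{\gamma_0 x}$ is a supersolution, and the comparison principle on $(-\infty,0]$ — valid here because the relevant zero-order term has a good sign — pins down $v(x)\le v(0)e^{\gamma_0 x}$.

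For \eqref{posinf} near $+\infty$: here $w(+\infty)=0$ and we again use $-w''-cw'\geq 0$, i.e.\ $(w'e^{cx})'\leq 0$, so $w'e^{cx}$ is nonincreasing; integrating from $0$ and using $w'(0)<0$ we actually get a cleaner bound from the integrated form \eqref{eq:integrata}. The precise shifted-exponential form with $\tilde\gamma_0 = A(0)(1-w(0))/c$ and the harmless shift $1/c$ suggests: on $[0,\infty)$, by monotonicity $A(y)(-w'(y))$ integrated against the nonlocal term gives $\int_{-\infty}^x A(-w')\ge A(0)(1-w(0))$ for $x\ge 0$, hence from the equation $-w''-cw' \ge \tilde\gamma_0 c\, w$ which, after multiplying by $e^{cx}$ and integrating twice (using $w(+\infty)=w'(+\infty)=0$), yields $-w'(x)\ge \tilde\gamma_0\, w(x)$ for $x\ge 0$... actually to land exactly on \eqref{posinf} one integrates $-(w'e^{cx})'\ge \tilde\gamma_0 c\, w e^{cx}$, combines with $-w'(x)\le \tilde\gamma_0\,(\text{something})$, and the $1/c$ comes from estimating $-w'(0)$ via $-w'(0)e^{0} \le$ the tail. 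This is a short ODE manipulation; the only subtlety is tracking constants to get the stated clean form.

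For the critical decay \eqref{decay_crit}: this is the genuinely delicate part and the main obstacle. We know from Proposition \ref{implicit-speed}$(iv)$ that $q:=-w'/w$ is increasing and converges to $\lambda = c/2 - \sqrt{c^2/4-\mc{I}(w)}$, which for a critical wave ($\mc{I}(w)=c^2/4$) equals exactly $c/2$. So $q<c/2$ everywhere and $q\nearrow c/2$; the bound $w(x)\le K x e^{-cx/2}$ is the sharp KPP-type "anomalous" decay at the critical speed, and the extra linear factor $x$ reflects that $q$ approaches $c/2$ only algebraically slowly. Quantitatively, from \eqref{qeq} (or \eqref{eqder}) with $\mc I(w)=c^2/4$, one has $q' = (q-c/2)^2 - \big(\tfrac{c^2}{4}-\int_{-\infty}^x A(-w')\big)$; the bracket is the "tail mass" $\int_x^\infty A(-w')$, which one must show decays at least like $e^{-cx/2}$ (bootstrapping from \eqref{posinf}, which already gives exponential decay of $w$, hence of $-w'$ by the Harnack/derivative estimate $|w'|\le Cw$ from Proposition \ref{Harnack}). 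Feeding a bound $\int_x^\infty A(-w')\le C'e^{-cx/2}$ into the equation for $q$ and setting $p:=c/2-q>0$, one gets $p' = -p^2 + \int_x^\infty A(-w') \le C'e^{-cx/2}$ roughly, from which $p(x)\lesssim$ (a quantity forcing $p(x)\gtrsim 1/x$ from below is what produces the linear factor) — more precisely one shows $p(x)\le C''/x$ for large $x$ by comparing with the ODE $p'=-p^2$, whose solutions are $\sim 1/x$, and then $w(x) = w(1)\exp(-\int_1^x q) = w(1)\exp(-\tfrac c2(x-1)+\int_1^x p) \le K x e^{-cx/2}$ since $\int_1^x p \le C''\log x$. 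The uniformity of $K$ in terms of $\bar A$, a lower bound for $c$, and a lower bound for $1-w(0)$ comes from tracking that the constant $C'$ in the tail estimate depends only on these quantities (via \eqref{posinf} and Proposition \ref{Harnack}), and that the $\sim 1/x$ comparison only needs large $x$ with a threshold controlled by the same data. The hardest bookkeeping is making the two-sided estimate $p\sim 1/x$ rigorous with explicit constants, since the naive comparison $p'\le C'e^{-cx/2}$ only gives $p$ bounded, and one genuinely needs the $-p^2$ term to force the algebraic decay — this requires a careful ODE comparison argument on $[x_0,\infty)$ for a suitable $x_0$ depending only on the allowed data.
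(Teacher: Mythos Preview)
Your outline for \eqref{neginf} and \eqref{posinf} is sound and essentially matches the paper, which works directly with the logarithmic derivatives $\psi:=-w'/(1-w)$ and $q:=-w'/w$: one shows $\psi\geq\gamma_0$ on $(-\infty,0]$ (equivalent to your barrier comparison for $v=1-w$) and $q(x)\geq\tilde\gamma_0(1-e^{-cx})$ on $[0,\infty)$, whence the two exponential bounds follow by integration. Your check $\gamma_0\leq\sqrt{\bar A}$ is correct since $\gamma_0\leq A(0)/\sqrt{\bar A}\leq\sqrt{\bar A}$.

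For \eqref{decay_crit}, however, your argument has a genuine gap. You set $p:=c/2-q>0$ and arrive at $p'=-p^2+R(x)$ with $R(x)=\int_x^{+\infty}A(-w')\geq0$, and then propose to ``compare with the ODE $p'=-p^2$''. But since $R\geq0$, this comparison yields $p(x)\geq 1/(x+C)$, a \emph{lower} bound, not the upper bound $p(x)\leq 1/(x-M)$ that you need. Moreover, even granting an upper bound $p\leq C''/x$, integration gives $\int_1^x p\leq C''\log x$ and hence $w(x)\leq K\,x^{C''}e^{-cx/2}$; to land on the stated linear factor $x$ you need precisely $C''=1$, which your scheme does not provide. (A minor point: your bootstrap ``$R(x)\leq C'e^{-cx/2}$'' is also unjustified --- \eqref{posinf} only gives decay at rate $\tilde\gamma_0$, not $c/2$; fortunately any positive exponential rate suffices.)

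The paper resolves this with a specific substitution that your outline is missing. After bootstrapping to $R(x)\leq\bar A\,e^{-\beta(x-x_0-1/c)}$ for some small $\beta\in(0,c/8)$, one sets
\[
\zeta:=p+Be^{-\beta(x-x_0)},\qquad B:=\tfrac{\bar A}{\beta}e^{1/8},
\]
so that $B\beta e^{-\beta(x-x_0)}\geq R(x)$ and hence $-\zeta'=-p'+B\beta e^{-\beta(x-x_0)}\geq p^2\geq\zeta^2-2Be^{-\beta(x-x_0)}\zeta$. Dividing by $\zeta^2$ gives $(1/\zeta)'\geq 1-2Be^{-\beta(x-x_0)}(1/\zeta)$, a linear inequality whose integrating factor is bounded; integrating yields $1/\zeta\geq x-M$, i.e.\ $p\leq\zeta\leq 1/(x-M)$ with the crucial coefficient~$1$. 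Integrating $q\geq c/2-1/(x-M)$ then produces exactly the factor $(x-M)\leq x$, and the dependence of $M$ on the data is as claimed. The shift by $Be^{-\beta(x-x_0)}$ is what converts the wrong-way comparison into a usable differential inequality for~$1/\zeta$; without it your argument does not close.
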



\begin{proof}
	We start with the behavior for $x\to -\infty$.  Similarly as in the proof of Proposition \ref{implicit-speed}, we introduce the function
	$$
	\psi(x):= - \frac{w'}{1-w}\,.
	$$
	We know from Lemma~\ref{lem:phi'<0} that $\psi$ is a positive function. It satisfies
	\be\label{psieq}
	-\psi' - \psi^2- c\psi + \frac w{1-w } \int_{-\infty}^x A(y)(-w'(y))dy =0\,.
	\ee
	In particular, since $A\leq \bar A$ and $w$ satisfies 
	$w <1$ and $w'<0$, we have
	$$
	-\psi' - \psi^2- c\psi + \bar A \geq 0\,.
	$$
	This implies that
	\be\label{psi1}
	\psi \leq \sqrt{\bar A},
	\ee
	because otherwise we would have $\psi'\leq -c\psi\leq-c\sqrt{\bar A}$ 
	in $(-\infty, \bar x)$ for some $\bar x$, which is not possible.
	Coming back to \eqref{psieq}, using that $A, w$ are non increasing, we also have
	$$
	-\psi' - \psi^2- c\psi + A(0) w(0)\leq 0 \qquad \forall x<0\,.
	$$
	Due to \eqref{psi1}, we deduce that
	$$
	\psi' \geq - \psi \left( \sqrt{\bar A}+c \right) + A(0) w(0)\,, \quad x<0\,.
	$$
	Hence 
	$$
	\big( e^{( \sqrt{\bar A}+c ) x} \psi \big) ' \geq  e^{( \sqrt{\bar A}+c ) x} A(0) w(0)\,, \quad x<0\,.
	$$
	Since $\psi$ is bounded above by~\eqref{psi1}, integrating in $(-\infty,x)$ we deduce 
	$$
	\psi(x) \geq \gamma_0:= \frac{A(0)w(0)}{ \sqrt{\bar A}+c }\,.
	$$
	Recalling that $\psi= -\frac{w'}{1-w}$, we readily derive \eqref{neginf}.
	
	A similar statement can be obtained as $x\to +\infty$.  
	As in the proof of Proposition~\ref{implicit-speed}, here
	we consider the function $q:=-w'/w$, which is positive, bounded, and satisfies~\eqref{qeq}.
	In particular, since $A$ is non increasing, for $x>0$  we deduce
	$$
	q' \geq q^2-cq +  \int_{-\infty}^0 A(y)(-w'(y))dy \geq q^2-cq +A(0)(1-w(0))\,. 
	$$
	Hence, always for $x>0$, 
	$$
	q'(x) \geq -cq +c\tilde \gamma_0\,,\quad\text{where }\,
	\tilde \gamma_0:= \frac{A(0)(1-w(0))}c\,.
	$$
	This implies
	$$
	q(x) \geq q(0)e^{-cx} + \tilde \gamma_0 (1-e^{-cx})\geq
	\t\gamma_0(1-e^{-cx}).$$
	Recalling that $q=-\frac{w'}w$ this readily implies  \rife{posinf}.
	
	\vskip0.4em
	Let us prove the last statement. Suppose that $w$ is a  critical wave. 
	In this case, we rewrite \rife{qeq} as
	\be\label{geq2}
	\begin{split}
	q' & =q^2-cq+\frac{c^2}4- \int_x^{+\infty}  A(y)(-w'(y))dy
	\\ & \geq   q^2-cq+\frac{c^2}4-A(x_0) w(x_0),
	\quad x\geq x_0 
	\end{split}
	\ee
	where $x_0$ is any given point and we used  that $A,w$ are non increasing.

We take now a  number $\beta \in (0,\frac c8)$, and we choose $x_0$ such that 
\be\label{xoo}
A(x_0)w(x_0) \leq c\beta \leq\frac{c^2}4- c \beta.
\ee
Notice that, if $A(0)\leq c\beta$, then we can take $x_0=0$. Otherwise we have $A(0)> c \beta$, and we can use \eqref{posinf} to find a value $x_0>0$, only depending on $\bar A$, $\beta$ and a positive lower bound of $1-w(0)$, such that \rife{xoo} holds. As a first consequence, from \rife{geq2} we deduce 
$$
q'\geq -cq + c\beta\,\quad x\geq x_0  
$$ 
which leads, as before, to the exponential estimate
$$
w(x) \leq w(x_0) e^{-\beta(x-x_0-\frac1c)},\quad \forall x\geq x_0.
$$
Coming  back to \rife{geq2},  now we upgrade it into 
\be\label{ode1}
q'\geq \left(q-\frac c2\right)^2- A(x) w(x) \geq \left(q-\frac c2\right)^2- \bar A\,
e^{-\beta(x-x_0-\frac1c)}\,,\quad x>x_0 
\ee
We set
$$
\zeta:= \frac c2-q + B e^{- \beta  (x-x_0)},
$$
where $B$ is sufficiently large, e.g.~take $B=\frac{\bar A}{\beta}e^{1/8}$, so that $B\beta \geq  \bar A e^{\frac \beta c}$. 
Then we get from \rife{ode1}
$$
-\zeta'   = q'+ B  \beta   e^{-\beta( x-x_0)}\geq \left(q-\frac c2\right)^2 \geq \zeta^2 - 
2B e^{-\beta( x-x_0)}\,  \zeta \,.
$$
Notice that $\zeta$ is a positive function since $q<\frac c2$ due to Proposition \ref{implicit-speed}.
Then $\zeta $ satisfies
$$
\left(\frac1\zeta\right)' \geq 1- 2B e^{-\beta( x-x_0)}\, \frac1{\zeta}\,
$$
and we get, integrating and dropping the term in $x_0$, 
\begin{align*}
\frac1\zeta & \geq  \exp\left( \frac{2B}{\beta}e^{-\beta( x-x_0)} \right) \int_{x_0}^x \exp\left( -\frac{2B }{\beta}e^{-\beta( y-x_0)} \right)dy
\\
& \geq   x-M
\end{align*}
for some  constant $M$  only depending on $x_0$, $\beta$ and $B$ (which only depends on  $\bar A$ and~$\beta$). Finally, for $x$ sufficiently large (e.g.~for $x>M+1$), we have $\zeta \leq \frac1{x-M}$ and this implies,  
by definition of $\zeta$,  that
$$
\frac c2-q  \leq \frac 1{x-M} \qquad \forall x>M+1\,.
$$
Recalling that $q=-\frac{w'}w$, by integration we deduce inequality \eqref{decay_crit}, say for $x>M+1$, but then of course for any $x\geq 1$ as well. The constant $K$
depends on $x_0$, $\beta$, $\bar A$, and then, from the above choices of $x_0$ and $\beta$, the constant depends on $\bar A$ and on  positive lower bounds of $c$ and $1-w(0)$.
%
	%
\end{proof}

\subsection{The whole family of waves}

We are now in a position to characterize 
the whole family of waves for any given speed 
$c>2\sqrt{\ul A}$. The key ingredients are Corollary \ref{cor:normalizations} and 
the uniqueness result for any given value of $\mc{I}$, Proposition~\ref{pro:uniquenessI}.
We recall that the operator~$\mc{I}$ is defined on solutions of~\eqref{nlKPP} by the 
two equivalent formulations in~\eqref{I}.
	

\begin{lemma}\label{lem:I<j}
	Assume that~\eqref{nlKPP} admits a solution $w$.
	Then, for any $\ul A<j<\mc{I}(w)$, there exists  
	a solution~$\t w\geq w$ of~\eqref{nlKPP} satisfying
	$\mc{I}(\t w)=j$.\end{lemma}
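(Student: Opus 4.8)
The plan is to run a continuation argument starting from the given solution $w$ and sliding upward, realizing every target value $j\in(\ul A,\mc{I}(w))$ as the value of $\mc{I}$ on a larger solution. The two main tools are Corollary \ref{cor:normalizations} (which produces, near $w$, solutions $\t w$ with $\t w\geq w$ and $\t w(x_0)>w(x_0)$, arbitrarily close to $w$ in $L^\infty$) and the monotonicity/continuity of $\mc{I}$ given by the second formulation in \eqref{I}, namely $\mc{I}(u)=\bar A+\int_\R A'(y)u(y)\,dy$: if $u_1\leq u_2$ with strict inequality somewhere on $\{A'<0\}$ then $\mc{I}(u_1)>\mc{I}(u_2)$, and $\mc{I}$ is continuous under, say, monotone $L^\infty$-convergence by dominated convergence (recall $A'\in L^1$ since $A$ is bounded and monotone). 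So the set of attainable values will be an interval, and the only real issue is showing it reaches all the way down to $j$.

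First I would fix $x_0\in\R$ and consider the set
$$
S:=\Big\{\,\vt\in[w(x_0),1)\ :\ \text{there is a solution }w_\vt\text{ of \eqref{nlKPP} with }w_\vt\geq w,\ w_\vt(x_0)=\vt\,\Big\}.
$$
By Proposition \ref{pro:normalizations}, $S=[w(x_0),1)$ in fact (for each such $\vt$ there is a solution with $\t w/w$ nondecreasing, hence $\t w\geq w$). Moreover for $\vt_1<\vt_2$ in this range, the corresponding waves satisfy $w_{\vt_1}\leq w_{\vt_2}$: indeed $w_{\vt_2}/w$ and $w_{\vt_1}/w$ are both nondecreasing and equal $1$ at $-\infty$, and one checks via the comparison principle Lemma \ref{lem:CP} (applied on half-lines $[a,\infty)$ after matching data, exactly as in the proof of Proposition \ref{pro:uniquenessI}) that two solutions which are comparable near $-\infty$ stay ordered; combined with the uniqueness at fixed $\mc{I}$ (Proposition \ref{pro:uniquenessI}) this forces a clean ordering. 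Consequently $\vt\mapsto\mc{I}(w_\vt)$ is monotone. Then I would show it is continuous: if $\vt_n\to\vt$ monotonically, the $w_{\vt_n}$ are monotone in $n$ and bounded, hence converge in $C^2_{loc}$ (using the $C^3$ bound of Lemma \ref{lem:phi'<0}) to a solution with value $\vt$ at $x_0$ and $\geq w$, which by Proposition \ref{pro:uniquenessI} is exactly $w_\vt$; dominated convergence in $\mc{I}(u)=\bar A+\int A'u$ then gives $\mc{I}(w_{\vt_n})\to\mc{I}(w_\vt)$.

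It remains to identify the range of $\vt\mapsto\mc{I}(w_\vt)$. At $\vt=w(x_0)$ it equals $\mc{I}(w)$; as $\vt\uparrow1$ the waves $w_\vt$ increase, and I claim $\mc{I}(w_\vt)\to\ul A$. The hard part — and the step I expect to be the main obstacle — is precisely this limit: one must rule out that the family $w_\vt$ increases to a limit that is "pinched" above some barrier, leaving $\lim_{\vt\to1}\mc{I}(w_\vt)=j_0>\ul A$. If that happened, the monotone limit $w_\infty$ of the $w_\vt$ would be a solution of \eqref{eq:integrata} with $w_\infty(x_0)=1$, hence $w_\infty\equiv1$ by Lemma \ref{lem:phi'<0}; but then $\mc{I}(w_\vt)=\bar A+\int A'w_\vt\to\bar A+\int A'=\ul A$ by monotone convergence, contradiction. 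So in fact $\mc{I}(w_\vt)\to\ul A$, and since $\mc{I}(w_\vt)$ runs continuously and monotonically from $\mc{I}(w)$ down toward $\ul A$, for any $j\in(\ul A,\mc{I}(w))$ there is $\vt$ with $\mc{I}(w_\vt)=j$; the corresponding $\t w:=w_\vt\geq w$ is the desired solution. (The one point requiring a little care is the monotone convergence $\int A'w_\vt\to\int A'$: it is legitimate because $A'\leq0$ and $w_\vt\uparrow$, so $A'w_\vt\downarrow A'\cdot 1=A'$ pointwise with the integrable dominating function $A'$ itself, giving the limit $\bar A+\int_\R A'=\ul A$.)
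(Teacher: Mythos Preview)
Your overall strategy --- parametrize solutions above $w$ by $\vt=w_\vt(x_0)$ and run the intermediate value theorem on $\vt\mapsto\mc{I}(w_\vt)$ --- is natural, and your endpoint argument ($\vt\uparrow1\Rightarrow w_\vt\to1$ by Lemma~\ref{lem:phi'<0}, hence $\mc{I}(w_\vt)\to\ul A$ by dominated convergence) is correct. The gap is in the middle: you have not justified that the map $\vt\mapsto w_\vt$ is well-defined, monotone, or continuous.

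Concretely, Proposition~\ref{pro:normalizations} yields, for each $\vt$, \emph{some} solution $w_\vt\geq w$ with $w_\vt(x_0)=\vt$ and $w_\vt/w$ nondecreasing; it does not assert uniqueness. Your argument for the ordering $w_{\vt_1}\leq w_{\vt_2}$ appeals to Lemma~\ref{lem:CP} ``on half-lines after matching data'', but that lemma needs a specific point $a$ where both values and derivatives are ordered, and you do not produce one: knowing that both ratios $w_{\vt_i}/w$ tend to $1$ at $-\infty$ and are nondecreasing does not prevent them from crossing before $x_0$. Your appeal to Proposition~\ref{pro:uniquenessI} does not help either: that result gives uniqueness for fixed $\mc{I}$, not for fixed value at a point, and its proof does not show that waves with different $\mc{I}$ are ordered. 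In fact, the global ordering of distinct waves is exactly Proposition~\ref{pro:uniqueness}, which in the paper is proved \emph{using} Lemma~\ref{lem:I<j}; so relying on it here, even implicitly, would be circular. Without the ordering, your continuity argument (``the $w_{\vt_n}$ are monotone in $n$ \ldots by Proposition~\ref{pro:uniquenessI} the limit is exactly $w_\vt$'') collapses as well.

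The paper sidesteps this entirely with a short minimization: set $j^*:=\inf\{\mc{I}(\psi):\psi\text{ solves }\eqref{nlKPP},\ \psi\geq w,\ \mc{I}(\psi)\geq j\}$, show via the $C^3$ bound and dominated convergence that the infimum is attained by some $\psi^*$, and then, if $j^*>j$, apply Corollary~\ref{cor:normalizations} to $\psi^*$ to produce $\t\psi\geq\psi^*\geq w$ with $j<\mc{I}(\t\psi)<j^*$, contradicting minimality. This uses only results already proved at this stage and avoids any need for a well-defined continuous family.
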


\begin{proof}
	Consider the family of waves
	$$\mc{F}_{w,j}:=\{\psi \text{ solution of \eqref{nlKPP}}\ :\ \psi\geq w
	\text{ and }\mc{I}(\psi)\geq j\},$$
	and call
	$$j^*:=\inf_{\psi\in\mc{F}_{w,j}}\mc{I}(\psi).$$
	We have that $j^*\geq j$.
	We now show that $j^*$ is attained.
	Let $(\psi_n)_{n\in\N}$ be a minimizing sequence for $\mc{I}$ on $\mc{F}_{w,j}$.
	This sequence converges (up to subsequences)
	in $C^2_{loc}(\R)$ to a nonincreasing solution 
	$w\leq\psi^*\leq1$ of~\eqref{eq:integrata}.
	We see from~\eqref{I} that 
	\Fi{jm}
	j^*=\limn \mc{I}(\psi_n)=
	\bar A+\int_{\R}A'(y)
	\psi^*(y)dy.
	\Ff
	Because $j^*\geq j>\ul A$, we deduce that $\psi^*\not\equiv1$
	and therefore $\psi^*$ is a solution of \eqref{nlKPP}
	thanks to Lemma~\ref{lem:phi'<0}. There holds in particular that 
	$\mc{I}(\psi^*)=j^*$, that is, $j^*$ is~attained.
	
	Next, we assume by contradiction that $j^*>j$. We apply 
	Corollary \ref{cor:normalizations} and deduce that, for any $\e>0$, there exists  
	a solution~$\t\psi$ of~\eqref{nlKPP} satisfying
	$$\t\psi(0)>\psi^*(0),\qquad 	\psi^*\leq\t\psi<\psi^*+\e.$$
	It follows from the second formulation in~\eqref{I} that
	$$j^*=\mc{I}(\psi^*)>\mc{I}(\t\psi)>j^*+\e(\ul A-\bar A).$$
	We can therefore choose $\e$ small enough in such a way that $\mc{I}(\t\psi)>j$,
	whence $\t\psi\in\mc{F}$, and we obtain a contradiction with the definition of $j^*$.
\end{proof}

\begin{proposition}\label{pro:uniqueness}
	Two distinct solutions of~\eqref{nlKPP} are strictly ordered.
\end{proposition}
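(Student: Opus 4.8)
\emph{Plan.} The idea is to first extract a non-strict ordering from the uniqueness of the wave for a prescribed value of $\mc{I}$ (Proposition~\ref{pro:uniquenessI}) together with Lemma~\ref{lem:I<j}, and then to upgrade it to a strict ordering by combining the ratio-comparison of Lemma~\ref{lem:CP} with Cauchy uniqueness for the integro-differential equation. Fix $c>2\sqrt{\ul A}$ and let $w_1,w_2$ be two distinct solutions of~\eqref{nlKPP}. By Proposition~\ref{pro:uniquenessI} one cannot have $\mc{I}(w_1)=\mc{I}(w_2)$, so, up to relabelling, $\mc{I}(w_1)>\mc{I}(w_2)$. Since $\mc{I}(w_2)>\ul A$ by Proposition~\ref{implicit-speed}$(i)$, Lemma~\ref{lem:I<j} applied to $w_1$ with the intermediate value $j:=\mc{I}(w_2)$ yields a solution $\t w\ge w_1$ of~\eqref{nlKPP} with $\mc{I}(\t w)=\mc{I}(w_2)$; Proposition~\ref{pro:uniquenessI} then forces $\t w=w_2$, whence $w_2\ge w_1$ on $\R$. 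It remains to prove strictness; I set $\psi:=w_2-w_1\ge0$, $\psi\not\equiv0$, and recall that $\psi(\pm\infty)=0$.

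Writing~\eqref{nlKPP} in the equivalent form~\eqref{eq:integrata}, i.e. $w_i''+cw_i'+V_iw_i=0$ with $V_i:=\bar A-Aw_i+\int_{-\infty}^xA'w_i\in[0,\bar A]$, the inequality $w_1\le w_2$ and the signs $A\ge0\ge A'$ give $V_1\ge V_2$, so $w_1$ is a positive super-solution and $w_2$ a positive solution of $\zeta''+c\zeta'+V_2\zeta=0$, an equation of the type handled by Lemma~\ref{lem:CP} (with $Q\equiv K\equiv0$). I would pick a point $b$ with $\psi'(b)>0$ — such a point exists because $\psi\ge0$, $\psi\not\equiv0$ and $\psi$ vanishes at $\pm\infty$, hence $\psi$ increases somewhere before its positive maximum — and note $w_1'(b)\le0$ and $\psi(b)>0$. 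Then Lemma~\ref{lem:CP} gives that $w_2/w_1$ is strictly increasing on $[b,+\infty)$; since $w_2/w_1\ge1$ everywhere and $>1$ at $b$, it follows that $\psi>0$ on $[b,+\infty)$. Hence the zero set $Z:=\{\psi=0\}$ is bounded above; if $Z=\emptyset$ we are done, otherwise put $x_1:=\sup Z\in\R$, so that $\psi(x_1)=0$, $\psi'(x_1)=0$ (minimum of $\psi$) and $\psi>0$ on $(x_1,+\infty)$.

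To conclude I would rule out $x_1\in\R$ by examining $\psi''(x_1)$, which by~\eqref{eq:integrata} — using $w_1(x_1)=w_2(x_1)$ and $w_1'(x_1)=w_2'(x_1)$ — equals $-w_1(x_1)\int_{-\infty}^{x_1}A'(y)\psi(y)\,dy\ge0$. If $\psi''(x_1)=0$, then $\int_{-\infty}^{x_1}A'\psi=0$ forces $\psi\equiv0$ a.e. on $\{A'<0\}\cap(-\infty,x_1)$, so $\int_{-\infty}^xA'w_1=\int_{-\infty}^xA'w_2$ for all $x\le x_1$; consequently $w_1$ and $w_2$ solve one and the same second-order ODE on $(-\infty,x_1]$ with identical Cauchy data at $x_1$, hence coincide there, and, the tail integral $\int_{-\infty}^{x_1}A'w_i$ being then a common constant, Lemma~\ref{lem:ivp} propagates $w_1\equiv w_2$ to $[x_1,+\infty)$, contradicting distinctness. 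If instead $\psi''(x_1)>0$, then $x_1$ is an isolated zero of $\psi$, so the connected component of $\{\psi>0\}$ lying immediately to the left of $x_1$ is an interval on which $\psi$ is positive and vanishes at both ends; picking a point $b'<x_1$ of that component with $\psi'(b')>0$ and arguing exactly as with $b$ above, $w_2/w_1$ is strictly increasing on $[b',+\infty)$, which contradicts $w_2/w_1=1$ at $x_1>b'$ (while $>1$ at $b'$). Both alternatives being impossible, $Z=\emptyset$ and $w_1<w_2$ on $\R$.

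The hard part is precisely this strictness step. The difference $\psi=w_2-w_1$ satisfies a linear equation whose zeroth-order coefficient has the ``wrong'' sign and which is moreover coupled to $\psi$ through a nonlocal term, so the strong maximum principle is not available directly; the remedy is to trade it for the monotonicity-of-ratio comparison of Lemma~\ref{lem:CP} away from any hypothetical contact point, and for Cauchy / integro-differential uniqueness (backward ODE uniqueness together with Lemma~\ref{lem:ivp}) right at it.
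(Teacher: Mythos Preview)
Your argument is correct and follows a route genuinely different from the paper's. The paper proceeds by contradiction using Corollary~\ref{cor:normalizations}: assuming $\mc{I}(w_1)>\mc{I}(w_2)$ and $w_1(x_0)\ge w_2(x_0)$ at some point~$x_0$, it perturbs $w_1$ into a wave $\t w_1\ge w_1$ with $\t w_1(x_0)>w_2(x_0)$ and $\mc{I}(\t w_1)$ still above $\mc{I}(w_2)$, then invokes Lemma~\ref{lem:I<j} to produce $\t w\ge\t w_1$ with $\mc{I}(\t w)=\mc{I}(w_2)$, contradicting Proposition~\ref{pro:uniquenessI} because $\t w(x_0)>w_2(x_0)$. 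This gives the strict ordering in one shot, but at the price of relying on Corollary~\ref{cor:normalizations} (and hence on Proposition~\ref{pro:normalizations} and Lemma~\ref{past-future}).

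You instead obtain the non-strict ordering $w_2\ge w_1$ directly from Lemma~\ref{lem:I<j} and Proposition~\ref{pro:uniquenessI} --- this first step is in fact cleaner than the paper's indirect argument --- and then upgrade to strictness by a contact-point analysis: freezing the potential $V_2$ turns the problem into a linear one to which Lemma~\ref{lem:CP} (with $Q=K=0$) applies, giving strict monotonicity of $w_2/w_1$ to the right of any point where $\psi>0$ and $\psi'>0$; at a hypothetical rightmost zero of $\psi$ you then dichotomize on $\psi''(x_1)$, ruling out $\psi''(x_1)=0$ via backward ODE uniqueness plus Lemma~\ref{lem:ivp}, and $\psi''(x_1)>0$ by reapplying the ratio-monotonicity on the left component of $\{\psi>0\}$. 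Your approach dispenses entirely with Corollary~\ref{cor:normalizations}, trading it for a longer but more self-contained strictness step; the paper's route is shorter only because that corollary is already in hand. Both proofs ultimately rest on the same two pillars, Lemma~\ref{lem:I<j} and Proposition~\ref{pro:uniquenessI}.
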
	

\begin{proof}
	Let $w_1$, $w_2$ be two distinct solutions of~\eqref{nlKPP}.
	They satisfy~$\mc{I}(w_i)>\ul A$ by Proposition~\ref{implicit-speed}.
	Proposition \ref{pro:uniquenessI} entails that $\mc{I}(w_1)\neq\mc{I}(w_2)$.
	Suppose to fix the ideas that $\mc{I}(w_1)>\mc{I}(w_2)$, and	
	assume by contradiction that there exists $x_0\in\R$ such that $w_1(x_0)\geq w_2(x_0)$.
	Then, thanks to Corollary~\ref{cor:normalizations} and
	the second formulation in~\eqref{I}, we can find a solution 
	$\t w_1\geq w_1$ which still fulfils $\mc{I}(\t w_1)>\mc{I}(w_2)$, but in addition
	$\t w_1(x_0)>w_2(x_0)$. 
	We can therefore apply Lemma~\ref{lem:I<j} and obtain another
	solution~$\t w\geq\t w_1$ such that 
	$\mc{I}(\t w)=\mc{I}(w_2)$. This contradicts Proposition~\ref{pro:uniquenessI},
	because $\t w(x_0)\geq\t w_1(x_0)>w_2(x_0)$.	
\end{proof}

Gathering together all previous results, we can derive the characterization of the 
family of traveling wave solutions.

\begin{proof}[Proof of \thm{nlKPP}]
	Problem~\eqref{nlKPP} admits solution if and only if $c>2\sqrt{\ul{A}}$
	due to Propositions~\ref{pro:exc<} and~\ref{implicit-speed}.
	Fix $c>2\sqrt{\ul{A}}$ and let $\mc{F}$ be the family of solutions to~\eqref{nlKPP}.
	We know from Proposition~\ref{pro:uniqueness} that functions in $\mc{F}$ are strictly ordered.
	We can therefore parametrise $\mc{F}$ as follows:
	$$\mc{F}=(w_\vt)_{\vt\in\Theta},$$
	with $w_\vt$ satisfying $w_\vt(0)=\vt$,
	for a suitable set of indeces $\Theta\subset(0,1)$. 
	Proposition~\ref{pro:exc>} yields $\Theta=(0,1)$
	when  $c\geq2\sqrt{\bar{A}}$.
	
	Consider the case $2\sqrt{\ul{A}}<c<2\sqrt{\bar{A}}$. Let $w^c$ be the
	critical wave provided by Proposition~\ref{pro:critical}, that is, satisfying
	$\mc{I}(w^c)=c^2/4$. We know from Proposition~\ref{implicit-speed}
	that this realises the maximum of $\mc{I}$ on the family $\mc{F}$.
	As a consequence, since $\mc{I}$ is decreasing due to the second formulation
	in~\eqref{I} and the functions in
	$\mc{F}$ are strictly ordered,~$w^c$ lies below any other function of~$\mc{F}$.
	This means that $\min\Theta=w^c(0)\in(0,1)$; let us call this value $\vt_c$.
	Proposition~\ref{pro:normalizations} eventually shows that $\Theta=[\vt_c,1)$.
	
	Let us show the continuity of 
	the mapping 
	$$\Theta\ni\vt\mapsto w_\vt\in\mc{F},$$
	equipped with the $L^\infty(\R)$ norm, for any given $c>2\sqrt{\ul{A}}$.
	Consider a sequence $(\vt^n)_{n\in\N}$ converging to some $\t\vt\in\Theta$.
	It follows that $(w_{\vt^{n}})_{n\in\N}$ converges (up to subsequences) locally uniformly
	to a solution~$\t w$ of~\eqref{eq:integrata} satisfying $\t w(0)=\t\vt$.
	By Lemma~\ref{lem:phi'<0}, the function $\t w$ satisfies 
	$\t w(-\infty)=1$, $\t w(+\infty)=0$. This means that $\t w$
	solves~\eqref{nlKPP} and therefore~$\t w=w_{\t\vt}$.
	For any $\e>0$, consider $x_\e>0$ for which 
	$$w_{\t\vt}(-x_\e)>1-\e,\qquad
	w_{\t\vt}(x_\e)<\e,$$
	and, owing to the locally uniform convergence, let $n_\e$ be such that 
	$$\forall n\geq n_\e,\ |x|\leq x_\e,\quad
	|w_{\vt^{n}}(x)-w_{\t\vt}(x)|<\e.$$
	This means that
	$$\forall n\geq n_\e,\ x>x_\e,\quad
	|w_{\vt^{n}}(x)-w_{\t\vt}(x)|<\max\{w_{\vt^{n}}(x_\e),w_{\t\vt}(x_\e)\}<2\e$$
	and likewise 
	$$\forall n\geq n_\e,\ x<-x_\e,\quad
	|w_{\vt^{n}}(x)-w_{\t\vt}(x)|< 1-\min\{w_{\vt^{n}}(-x_\e),w_{\t\vt}(-x_\e)\}<2\e$$
	We have thereby shown that $(w_{\vt^{n}})_{n\in\N}$ converges uniformly to $w_{\t\vt}$.
	
	To complete the proof, it remains to analyse the 
	dependence of the critical waves~$w^c$ with respect to $c$.
%
	Let $\seq{c}$ be a sequence 
	converging to some $\t c\in(2\sqrt{\ul{A}},2\sqrt{\bar A})$.
	Then, $(w^{c_{n}})_{n\in\N}$ converges (up to subsequences)
	locally uniformly
	to a solution~$\t w$ of~\eqref{eq:integrata} with $c=\t c$. 
%
	By dominated convergence, we find that
	\Fi{Itphi}
	\int_{\R}A'(y)\t w(y)dy=\lim_{n\to\infty}\int_{\R}A'(y)w^{c_{n}}(y)dy=
	\lim_{n\to\infty}\mc{I}(w^{c_{n}})-\bar A=\frac{\t c^2}4-\bar A.
	\Ff
%
	Because $\t c\in(2\sqrt{\ul{A}},2\sqrt{\bar A})$, we deduce
	that $\t w\not\equiv0,\,1$ and thus that $\t w$ is a solution to~\eqref{nlKPP}
	due to Lemma~\ref{lem:phi'<0}. Hence~\eqref{Itphi} yields $\mc{I}(\t w)=\t c^2/4$,
	that is, $\t w$ is the critical front $w^{\t c}$.
	The same argument as before show that 
	the convergence of (the subsequence of) $(w^{c_{n}})_{n\in\N}$ 
	towards~$w^{\t c}$ is uniform in space.
	This means that the whole sequence 
	$(w^{c_{n}})_{n\in\N}$ converges uniformly to~$w^{\t c}$.
	
	Finally, the limits in \eqref{vtclimits} are deduced from the bounds~\eqref{<phi<} 
	in Proposition~\ref{implicit-speed}. This is immediate if $\ul A<A(0)<\bar A$.
	Otherwise, we need to apply the inequalities~\eqref{<phi<} at a point $x_0$ where 
	$\ul A<A(x_0)<\bar A$, which imply that
	$$
	w^c(x_0)\nearrow1 \as c\searrow2\sqrt{\ul{A}},\qquad
	w^c(x_0)\searrow0 \as c\nearrow 2\sqrt{\bar A}\,.
	$$
Then we can use Harnack's inequalities \rife{Harna}  to transport these limits at the origin. 	
%
%
\end{proof}	
	
\begin{proof}[Proof of \thm{I}]
	The monotonicity and continuity of the mapping
	$\vt\to\mc{I}(w_\vt)$ immediately follow from~\thm{nlKPP} and the second formulation
	of $\mc{I}$ in~\eqref{I}. The image~$J$ of the mapping is an interval contained in 
	$(\ul A,\bar A)\cap(\ul A,c^2/4]$ and with lower bound $\ul A$,
	due to Proposition~\ref{implicit-speed} and Lemma~\ref{lem:I<j}. 
	Then by Proposition~\ref{pro:critical},
	$J=(\ul A,c^2/4]$ if $2\sqrt{\ul{A}}<c<2\sqrt{\bar A}$.
	
	In the case $c\geq2\sqrt{\bar A}$, we consider a sequence of waves
	$(w_{\vt^{n}})_{n\in\N}$ with $(\vt^{n})_{n\in\N}$ converging to $0$,
	and we deduce from~\eqref{Harnack} that $(w_{\vt^{n}})_{n\in\N}$ converges locally uniformly
	to $0$. It then follows from~\eqref{I} that 
	$(\mc{I}(w_{\vt^{n}}))_{n\in\N}$ converges to $\bar A$.
	This means that $J=(\ul A,\bar A)$ in this case.	
\end{proof}	

A question remains open after \thm{nlKPP}:
can two distinct critical waves intersect? 
We are not able to answer this question in general, but only
in the region where $A$ is local. 


\begin{proposition}\label{pro:leftordering}
	Assume that $A(x_0)=\bar A$. 
	Let $w^{c_1}$, $w^{c_2}$ be the critical waves
associated with $2\sqrt{\ul A}<c_1<c_2<2\sqrt{\bar A}$,
	then $w^{c_1}(x_0)>w^{c_2}(x_0)$.	
\end{proposition}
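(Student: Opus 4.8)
The plan is to work on the half--line $(-\infty,x_0]$. Since $A$ is nonincreasing and $A(x_0)=\bar A=\sup A$, we have $A\equiv\bar A$ on $(-\infty,x_0]$, so there the nonlocal term in~\eqref{nlKPP} equals $\bar A\int_{-\infty}^x(-w'(y))\,dy=\bar A(1-w(x))$; hence on $(-\infty,x_0]$ both $w^{c_1}$ and $w^{c_2}$ are decreasing solutions of the translation--invariant local Fisher--KPP equation $u''+c_i u'+\bar A\,u(1-u)=0$ with $u(-\infty)=1$.

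First I would pin down the behaviour near $-\infty$. Linearizing the local equation at the equilibrium $u=1$ (writing $u=1-v$ with $v\to0^+$), the solutions that decay as $x\to-\infty$ satisfy $1-w^{c_i}(x)=(C_i+o(1))e^{r_{+,c_i}x}$ as $x\to-\infty$, with $C_i>0$ and $r_{+,c}:=\tfrac12(\sqrt{c^2+4\bar A}-c)$, the positive root of $r^2+cr-\bar A=0$. Since $c\mapsto r_{+,c}$ is strictly decreasing, $r_{+,c_1}>r_{+,c_2}$, so $\frac{1-w^{c_1}(x)}{1-w^{c_2}(x)}\to0$ as $x\to-\infty$; in particular $w^{c_1}>w^{c_2}$ on a neighbourhood of $-\infty$.

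Now argue by contradiction: suppose $w^{c_1}(x_0)\le w^{c_2}(x_0)$. By the previous step the two curves cross somewhere in $(-\infty,x_0]$; let $\xi$ be the smallest crossing point, so $w^{c_1}(\xi)=w^{c_2}(\xi)$ and, since $w^{c_1}>w^{c_2}$ to the left of $\xi$, $(w^{c_1})'(\xi)\le(w^{c_2})'(\xi)\le0$. Equality of the derivatives at $\xi$ is excluded by a second--order Taylor comparison at $\xi$ using $c_1\neq c_2$ (it would make $\xi$ a strict local maximum of $w^{c_1}-w^{c_2}$ with value $0$), so $(w^{c_2})'(\xi)>(w^{c_1})'(\xi)$. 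Because $c_1<c_2$ and $(w^{c_2})'<0$, the function $w^{c_2}$ is a strict subsolution of the local $c_1$--equation, for which $w^{c_1}$ is an exact solution, and that equation has the form required by Lemma~\ref{lem:CP} (with $P=\bar A$, $Q=-\bar A\le0$, $K=0$). Applying Lemma~\ref{lem:CP} on $[\xi,x_0]$ with $w_1=w^{c_2}$, $w_2=w^{c_1}$, $a=\xi$, the ratio $w^{c_2}/w^{c_1}$ is strictly increasing on $[\xi,x_0]$ and nondecreasing at $x_0$; hence $w^{c_1}(x_0)<w^{c_2}(x_0)$ strictly, and $q_1(x_0)\ge q_2(x_0)$, where $q_i:=-(w^{c_i})'/w^{c_i}$.

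It remains to contradict the criticality of the two waves, and this is the main obstacle. Still using $A\equiv\bar A$ on $(-\infty,x_0]$ we get $\int_{-\infty}^{x_0}A(y)(-(w^{c_i})'(y))\,dy=\bar A(1-w^{c_i}(x_0))$, so $\mc{I}(w^{c_i})=c_i^2/4$ (Propositions~\ref{implicit-speed} and~\ref{pro:critical}) yields $\int_{x_0}^{+\infty}A(-(w^{c_i})')=c_i^2/4-\bar A(1-w^{c_i}(x_0))$, and $w^{c_1}(x_0)<w^{c_2}(x_0)$ together with $c_1<c_2$ forces $\int_{x_0}^{+\infty}A(-(w^{c_1})')<\int_{x_0}^{+\infty}A(-(w^{c_2})')$. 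On $[x_0,+\infty)$ the function $q_i$ solves the Riccati equation $q_i'=(q_i-c_i/2)^2-\big(c_i^2/4-\int_{-\infty}^{x}A(-(w^{c_i})')\big)$ with $0<q_i<c_i/2$ and $q_i\to c_i/2$ as $x\to+\infty$ (the decay rate of a critical wave, Proposition~\ref{implicit-speed}-$(iv)$). Feeding $q_1(x_0)\ge q_2(x_0)$, the strict inequality between the two tail masses $\int_{x_0}^{+\infty}A(-(w^{c_i})')$, and $q_i(+\infty)=c_i/2$ into a comparison of $q_1$ and $q_2$ on $[x_0,+\infty)$ should yield the desired contradiction, since the tail masses and the asymptotic values of $q_i$ are ordered oppositely at the two ends in a way the Riccati dynamics cannot accommodate. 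Turning this last heuristic into a rigorous ODE comparison is the delicate part; everything before it is a routine combination of the local Fisher--KPP structure with the comparison principle of Lemma~\ref{lem:CP} (and parallels the intersection analysis of Proposition~\ref{pro:uniquenessI}).
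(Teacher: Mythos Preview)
Your proof has a genuine gap: the final step, comparing the Riccati equations for $q_1$ and $q_2$ on $[x_0,+\infty)$, is left as a heuristic, and as written it is not clear how to make it rigorous --- the two Riccati equations have different drifts $c_i$ and different nonlocal forcings, so there is no direct ODE comparison available between them. The fix, however, is simpler than what you attempt. You stopped the application of Lemma~\ref{lem:CP} at $x_0$, but there was no reason to: since $A'\equiv0$ on $(-\infty,\xi]\subset(-\infty,x_0]$, both waves satisfy the \emph{full} equation~\eqref{eq:integrata} written from $a=\xi$ with the same $P=\bar A$, $Q=-A\le0$, $K=A'\le0$, and your computation $(c_1-c_2)(w^{c_2})'>0$ shows that $w^{c_2}$ is a strict sub-solution of the $c_1$-version on all of $[\xi,+\infty)$. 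Lemma~\ref{lem:CP} therefore gives $w^{c_2}/w^{c_1}$ increasing on every $[\xi,\beta]$, hence $q_1\ge q_2$ on $[\xi,+\infty)$, while Proposition~\ref{implicit-speed}$(iv)$ yields $q_i(+\infty)=c_i/2$ with $c_1<c_2$. That is already the contradiction; no tail-mass bookkeeping or Riccati comparison on $[x_0,+\infty)$ is needed.

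The paper's argument takes a different route and bypasses your linearization at $-\infty$ (which is standard stable-manifold theory for the local KPP equation but is not established in the paper). It assumes $w^{c_1}(x_0)<w^{c_2}(x_0)$, invokes Theorem~\ref{thm:nlKPP} to pick a non-critical $c_1$-wave $\tilde w>w^{c_1}$ with $\tilde w(x_0)=w^{c_2}(x_0)$, and then compares $\tilde w$ with $w^{c_2}$: the local KPP structure on $(-\infty,x_0]$ forces $\tilde w\ge w^{c_2}$ there, hence $\tilde w'(x_0)\le(w^{c_2})'(x_0)$, and Lemma~\ref{lem:CP} on $[x_0,+\infty)$ gives $\tilde w\le w^{c_2}$. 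The contradiction then comes from the functional~$\mc{I}$ (Theorem~\ref{thm:I}): $c_1^2/4\ge\mc{I}(\tilde w)\ge\mc{I}(w^{c_2})=c_2^2/4$. So the paper trades your asymptotic input at $-\infty$ for the structure of the wave family and the auxiliary wave~$\tilde w$; your (completed) approach trades the auxiliary wave for the explicit decay rate of critical waves.
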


\begin{proof}
	Assume by contradiction that $w^{c_1}(x_0)<w^{c_2}(x_0)$.
	Then by Theorem~\ref{thm:nlKPP} there exists another wave $\t w$ for~\eqref{nlKPP} with $c=c_1$
	satisfying
	$\t w>w^{c_1}$ on $\R$ and $\t w(x_0)=w^{c_2}(x_0)$.
	Observe that $\t w$ is a super-solution of the equation in~\eqref{nlKPP} with $c=c_2$.
	Then, using the fact that $A\equiv \bar A$ on $(-\infty,x_0]$, one checks that necessarily 
	$\t w>w^{c_2}$ on $(-\infty,x_0)$,   whence $\t w'(x_0)\leq w^{c_2}(x_0)$.
	Therefore, Lemma~\ref{lem:CP} yields $\t w\leq w^{c_2}$ on $[x_0,+\infty)$.
	Then we derive from \thm{I} 
	$$\frac{c_1^2}4\geq \mc{I}(\t w)
	=\bar A+\int_{x_0}^{+\infty}A'(y)\t w(y)dy\geq
	\bar A+\int_{x_0}^{+\infty}A'(y)w^{c_2}(y)dy=\frac{c_2^2}4,$$
	which is a contradiction.
\end{proof}


\section{Traveling waves for the mean field game system}
\label{mfg-proofs}

 We now prove the main result of the paper. This will be the outcome of a thorough analysis on the system of traveling waves \rife{zw-waves}.  
Namely, we are going to provide  necessary and sufficient conditions for the existence of traveling waves, as in the following statement. 

\begin{theorem}\label{main-zw}  Assume that hypotheses \rife{alpha1}--\rife{ro} hold true. Then we have:
	
	\begin{enumerate}[$(i)$]
		
		\item  there are no solutions of \rife{zw-waves}  with $c\leq 2\kappa^2$ nor with $c\geq \alpha(1)+\kappa^2$;
		
		\item   there exists a solution $(c, w, z)$  of \rife{zw-waves} such that $c\in (2\kappa^2, 2\kappa\sqrt{\alpha(1)})$ and 
		$$
		\frac{c^2}4 = \kappa^2\int_{\R} \alpha(\sigma(y)) (-w'(y)) dy\,;
		$$ 
		
		\item for every $c\in [2\kappa\sqrt{\alpha(1)}, \alpha(1)+\kappa^2)$, there exist solutions of \rife{zw-waves} (with arbitrary normalization at any given point).
	\end{enumerate}
\end{theorem}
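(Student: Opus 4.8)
The plan is to combine the complete description of the traveling waves for the single nonlocal KPP equation~\rife{nlKPP} (Theorems~\ref{thm:nlKPP}--\ref{thm:I} and the decay estimates of Lemma~\ref{pminfty}) with a Schauder fixed-point argument carried out in the policy variable $\sigma$. After dividing the first equation of~\rife{zw-waves} by $\kappa^2$, the results of Section~\ref{nonlocalKPP} apply with speed $c/\kappa^2$ and kernel $A/\kappa^2$; for a wave $w$ I write $\mc{I}(w):=\int_\R A(y)(-w'(y))dy$, and by Proposition~\ref{implicit-speed}-$(iv)$ the exponential decay rate $\lambda:=-\lim_{x\to+\infty}w'(x)/w(x)$ equals $\frac1{2\kappa^2}\big(c-\sqrt{c^2-4\kappa^2\mc{I}(w)}\,\big)$. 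The first task is to read off the structure of a solution $(c,w,z)$ of~\rife{zw-waves} and the role of the constraint $ze^xw\in L^1$: one shows that $z\not\equiv0$ (otherwise the third equation forces $\sigma\equiv1$, which is impossible as $x\to+\infty$), that $g(x):=\int_x^{+\infty}z(y)e^yw(y)dy$ is positive and nonincreasing with $g(+\infty)=0$ and $g(-\infty)>0$, whence $\sigma$ is nonincreasing with $\sigma(-\infty)=1$ and $\sigma(+\infty)=0$ --- so $\bar A=\alpha(1)$, $\ul A=0$ --- and that the unique bounded $z$ satisfies $z(-\infty)=0$ and $z(+\infty)=\frac1{\rho-\kappa^2}>0$. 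Since $z$ is bounded away from $0$ near $+\infty$ while $w$ decays like $e^{-\lambda x}$ (up to a polynomial factor), one obtains the key equivalence $ze^xw\in L^1(\R)\iff\lambda>1$.

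For part $(i)$ I would combine this with Proposition~\ref{implicit-speed}, which gives $0<\mc{I}(w)\le c^2/(4\kappa^2)$ and, since $A$ is nonconstant, $\mc{I}(w)<\bar A=\alpha(1)$. If $c\le2\kappa^2$ then $\lambda\le c/(2\kappa^2)\le1$; if $c\ge\alpha(1)+\kappa^2$ then $\mc{I}(w)<\alpha(1)\le c-\kappa^2$ gives $c^2-4\kappa^2\mc{I}(w)>(c-2\kappa^2)^2$, hence $\lambda<1$ (here $c>2\kappa^2$ since $\alpha(1)>\kappa^2$). In both cases $\lambda\le1$, so $ze^xw\notin L^1$ and~\rife{zw-waves} has no solution.

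For parts $(ii)$ and $(iii)$ I would fix a speed $c$ --- any $c\in[2\kappa\sqrt{\alpha(1)},\alpha(1)+\kappa^2)$ for $(iii)$, and a value $c\in(2\kappa^2,2\kappa\sqrt{\alpha(1)})$ for which the scheme closes for $(ii)$ --- and work in the convex set $\mc{K}$ of nonincreasing $\sigma:\R\to[0,1]$ which equal $1$ on a half-line $(-\infty,-R]$ and are locally Lipschitz with a uniform constant $L$; for suitable $R,L$ this set is compact in $C_{loc}(\R)$. Given $\sigma\in\mc{K}$, set $A:=\alpha\circ\sigma$ (so $\bar A=\alpha(1)$, $\ul A=0$) and let $w_\sigma$ be the wave of speed $c$ provided by Theorem~\ref{thm:nlKPP} --- the one with a prescribed value at a prescribed point when $c\ge2\kappa\sqrt{\alpha(1)}$, and the critical wave of Proposition~\ref{pro:critical} (so $\mc{I}(w_\sigma)=c^2/(4\kappa^2)>c-\kappa^2$) when $2\kappa^2<c<2\kappa\sqrt{\alpha(1)}$; in both cases $\lambda>1$. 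Then let $z_\sigma$ be the unique bounded solution of the linear elliptic equation for $z$ in~\rife{zw-waves}; it is $\ge0$ by the maximum principle ($1-\sigma\ge0$, zeroth-order coefficient $\ge\rho-\kappa^2>0$) and, by the equivalence above, $z_\sigma e^xw_\sigma\in L^1(\R)$. Finally define $T(\sigma)(x):=\mathop{{\rm argmax}}_{s\in[0,1]}\big\{(1-s)e^x+\alpha(s)\int_x^{+\infty}z_\sigma(y)e^yw_\sigma(y)dy\big\}$. Using the strict concavity of $\alpha$ together with $\alpha'(1)>0$, $\alpha'(0^+)=+\infty$ and the relation $\alpha'(T(\sigma)(x))=e^x/g_\sigma(x)$ on $\{0<T(\sigma)<1\}$, one checks that $T(\sigma)\in\mc{K}$ for an appropriate $R,L$, and that $T$ is continuous and compact, via the continuous dependence in Lemma~\ref{lem:ivp} and Theorem~\ref{thm:nlKPP} and via elliptic estimates. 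Schauder's fixed-point theorem then yields $\sigma$ with $T(\sigma)=\sigma$, and $(c,w_\sigma,z_\sigma)$ solves~\rife{zw-waves}; for $(ii)$ the identity $c^2/4=\kappa^2\mc{I}(w_\sigma)$ holds by construction.

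The hard part will be the last step: proving that $T$ is a well-defined continuous compact self-map of one fixed $\mc{K}$. This requires \emph{uniform}-in-$\sigma$ control of the tails of $w_\sigma$ and $z_\sigma$, so that $z_\sigma e^xw_\sigma\in L^1$ and $g_\sigma$ near $\pm\infty$ are controlled; the decay estimates of Lemma~\ref{pminfty}, and in particular~\rife{decay_crit} for the critical wave, will be the essential input. For part $(ii)$ there is the additional conceptual obstruction that critical waves of different speeds are not known to be ordered, which prevents a naive choice of $c$ and of the invariant set; I would deal with this by exploiting the partial ordering available for large $|x|$ (Propositions~\ref{implicit-speed} and~\ref{pro:leftordering}) to set up a suitable selection principle within the fixed-point scheme. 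Part $(iii)$ should be comparatively easier, since there the waves exist with arbitrary normalization and depend monotonically and continuously on $A$.
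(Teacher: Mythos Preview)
Your overall strategy---Schauder fixed point in the policy variable $\sigma$, with the wave $w_\sigma$ supplied by Theorems~\ref{thm:nlKPP}--\ref{thm:I} and the linear equation for $z_\sigma$ handled by the maximum principle---is the same as the paper's, and your treatment of part~$(i)$ is essentially identical to Proposition~\ref{pro:NC}. However, two points are worth flagging.

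First, a structural difference: the paper does \emph{not} run the fixed point directly on $\R$. It truncates to $(-n,n)$ (with an additional cutoff $\chi_n$ in the integral defining $[\sigma]_n$, and Dirichlet data for $z$), applies Schauder there, and only then passes to the limit $n\to\infty$. This sidesteps the compactness issues you anticipate for the direct approach, at the price of a separate (and nontrivial) limiting analysis.

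Second, and more importantly, there is a genuine gap in your handling of part~$(ii)$. You propose to ``fix a speed $c$\dots for which the scheme closes'', but this is circular: the admissible $c$ depends on the unknown $\sigma$ through $A=\alpha\circ\sigma$, and conversely the critical wave used to build $T(\sigma)$ depends on $c$. The paper resolves this by making $c$ part of the output of the operator: given $\sigma$, it selects $c=c(\sigma)\in(0,2\sqrt{\alpha(1)})$ via the normalization
\[
\int_{-\infty}^0 e^y\,\ul w_c(y)\,dy=\tfrac12,\qquad \ul w_c(x):=\inf_{0<c'\le c}w_{c'}(x),
\]
where the $w_{c'}$ are the critical waves for $A$. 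Proposition~\ref{pro:leftordering} guarantees that $\ul w_c$ is strictly decreasing in $c$ on the region where $A\equiv\alpha(1)$, which is what makes this normalization uniquely solvable and continuous in $\sigma$; this is the ``selection principle'' you allude to but do not specify. Without it, there is no obvious invariant set and no way to prevent the limit from degenerating to $c=0$ or $c=2\kappa\sqrt{\alpha(1)}$. A smaller gap appears in part~$(iii)$: your claim that $\lambda>1$ automatically follows from fixing a normalization $w(x_0)=\ell_0$ is not justified---$\mc{I}(w_\sigma)$ can range over all of $(\ul A,\bar A)$ and need not exceed $c-\kappa^2$. The paper does not assume $\lambda>1$ a priori; it instead shows by contradiction that the integrals $\int z_n w_n e^y$ stay bounded in the truncated problems, and extracts the decay a posteriori.
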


The three statements of this theorem are separately proved in the next subsections.
Then, in Section~\ref{sec:tw-BGP}, we will eventually show the equivalence between BGP solutions of \rife{system}
and traveling wave solutions of~\rife{zw-waves}. The proof of Theorem~\ref{main} will then be achieved.



\subsection{Preliminary properties and necessary conditions }\label{sec:pp}

In this section we derive some necessary conditions for the existence of waves.
This will enlighten in particular the optimality of the assumptions $\rho>\kappa^2$, $\alpha(1)>\kappa^2$,
so those two conditions   (hypotheses~\rife{alpha1'}, \rife{ro})  will not be assumed to  
hold a priori~here.

First of all, it is convenient to observe that, by the concavity of $\alpha$,
the function $\sigma$ associated with a solution $(z,w)$ of~\eqref{zw-waves} can be computed as
\Fi{s*=}
\sigma(x)=
\begin{cases}
	s\in(0,1) & \displaystyle\text{if }\ \int_{x}^{+\infty}z(y) e^y\, w(y)dy=\frac{e^{x}}{\alpha'(s)}\;,\\
	1 & \displaystyle\text{if }\ \int_{x}^{+\infty}z(y) e^y\, w(y)dy\geq \frac{e^{x}}{\alpha'(1)}\;.
\end{cases}
\Ff
We will see in the next proposition 
that $\sigma\in W^{1,\infty}_{loc}(\R)$, and it is positive and nonincreasing.  Hence, when dealing with the first equation of \rife{zw-waves}, we will be allowed to make use of the results of Section~\ref{nonlocalKPP} 
with $A:=\alpha\circ\sigma$ (and the obvious rescaling
by~$1/\kappa^2$).


\begin{proposition}\label{pro:NC}
	Under the assumptions \eqref{alpha1},\eqref{alpha0},
	\eqref{alpha2},\eqref{alpha3}, 
	problem~\eqref{zw-waves} admits solution only~if 
	$$\rho>\kappa^2 \quad\text{and}\quad
	\alpha(1)>\kappa^2.$$

	Moreover, for any solution $(c,w,z)$, the following properties hold:
	$$2\kappa^2<c<\alpha(1)+\kappa^2,$$ 
	$$z(-\infty)=0,\qquad z'>0\text{ in }\;\R,\qquad z(+\infty)=\frac1{\rho-\kappa^2}\,,$$
	and the associated $\sigma$ belongs to $ W^{1,\infty}_{loc}(\R)$, is nonincreasing and satisfies
	\Fi{s*01}
	\exists x_0\in\R,\quad
	\sigma=1 \ \, \text{in }\, (-\infty,x_0],\quad
	0<\sigma<1 \ \,\text{in }\,(x_0,+\infty),\quad
	\sigma(+\infty)=0.
	\Ff
	In particular, we have that $A: =\alpha\circ\sigma\in W^{1,\infty}_{loc}(\R)$ is positive and nonincreasing.
	\end{proposition}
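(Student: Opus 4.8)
The plan is to proceed in four stages: (1) pin down the structure of the optimal strategy $\sigma$, (2) feed the resulting $A=\alpha\circ\sigma$ into the results of Section~\ref{nonlocalKPP}, (3) analyse $z$ — the substantial part — and (4) extract the speed bounds from the integrability condition.

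\emph{Structure of $\sigma$ and $A$.} Set $\Phi(x):=\int_x^{+\infty}z(y)e^yw(y)\,dy$, which is finite by the requirement $ze^xw\in L^1(\R)$ in~\eqref{zw-waves}, is $C^1$ with $\Phi'=-ze^xw\le0$, and satisfies $\Phi(+\infty)=0$, $\Phi(-\infty)\in(0,+\infty)$. First I would check that $\Phi>0$ on $\R$: if $\Phi(x_1)=0$ then $z\equiv0$ on $[x_1,+\infty)$, hence $\sigma\equiv0$ and $1-\sigma\equiv1$ there, so the $z$-equation reads $0=1$, absurd. Given $\Phi>0$, the concavity of $\alpha$ together with $\alpha'(0^+)=+\infty$ and $\alpha'(1)>0$ turns~\eqref{s*=} into: $\sigma(x)=1$ iff $\alpha'(1)\Phi(x)\ge e^x$, and $\sigma(x)=(\alpha')^{-1}\!\big(e^x/\Phi(x)\big)\in(0,1)$ otherwise. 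Since $\alpha'(1)\Phi(x)-e^x$ decreases strictly from a positive limit at $-\infty$ to $-\infty$, $\{\sigma=1\}=(-\infty,x_0]$ for a unique $x_0$; since $x\mapsto e^x/\Phi(x)$ is increasing and $(\alpha')^{-1}$ decreasing, $\sigma$ is nonincreasing on $\R$; and $e^x/\Phi(x)\to+\infty$ forces $\sigma(+\infty)=0$. Differentiating $\alpha'(\sigma)\Phi=e^x$ on $(x_0,+\infty)$ gives $\sigma'=e^x\big(1+\alpha'(\sigma)zw\big)/\big(\alpha''(\sigma)\Phi\big)$, locally bounded and negative, so $\sigma\in W^{1,\infty}_{loc}(\R)$; consequently $A=\alpha\circ\sigma$ is positive, nonincreasing, $W^{1,\infty}_{loc}$, with $\bar A=\alpha(1)$ and $\ul A=0$. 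At this point, after dividing by $\kappa^2$, the first equation of~\eqref{zw-waves} falls under the standing hypotheses of Section~\ref{nonlocalKPP}: Proposition~\ref{Harnack} yields $c>0$ and $w'<0$, and Proposition~\ref{implicit-speed} yields $0<\mc{I}(w)<\alpha(1)$, $\mc{I}(w)\le c^2/(4\kappa^2)$, and $-w'/w\nearrow\lambda$, the smaller root of $\kappa^2t^2-ct+\mc{I}(w)=0$ (so $\lambda\le\Lambda$, both positive, $\lambda\Lambda=\mc{I}(w)/\kappa^2$, $\lambda+\Lambda=c/\kappa^2$).

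\emph{Analysis of $z$.} First, $z>0$ by the strong maximum principle (the case $z\equiv0$ is excluded exactly as $\Phi>0$ above). The delicate point is that the zeroth-order coefficient $\rho-\kappa^2+Aw$ of the $z$-equation has a priori unknown sign, so I would bootstrap. On $(-\infty,x_0)$ the equation is homogeneous with coefficient $\rho-\kappa^2+\alpha(1)w$, while near $+\infty$, using $\sigma(+\infty)=0$ and $A(+\infty)w(+\infty)=0$, it reads $-\kappa^2z''+(c-2\kappa^2)z'+(\rho-\kappa^2)z=1+o(1)$. I claim $\rho\le\kappa^2$ is impossible: then for $x$ large $(\rho-\kappa^2+Aw)z\le(\sup z)Aw\to0$ and $1-\sigma\to1$, so $-\kappa^2z''+(c-2\kappa^2)z'\ge\tfrac14$ there; integrating the induced first-order differential inequality for $z'$ (treating $c\gtrless2\kappa^2$ and $c=2\kappa^2$ separately) forces $z$ to become unbounded or negative, contradicting the boundedness and positivity of $z$. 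Hence $\rho>\kappa^2$, and now both relevant zeroth-order coefficients are $\ge\rho-\kappa^2>0$; the standard argument for bounded solutions of asymptotically autonomous linear equations with positive zeroth-order coefficient and convergent right-hand side then gives $z(-\infty)=0$ and $z(+\infty)=1/(\rho-\kappa^2)$.

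For the monotonicity $z'>0$, I would differentiate the $z$-equation (licit since the coefficients are locally Lipschitz): $u:=z'$ solves
\[-\kappa^2u''+(c-2\kappa^2)u'+(\rho-\kappa^2+Aw)u=-\sigma'-A'wz-Aw'z,\]
whose right-hand side is strictly positive (each of $-\sigma'\ge0$, $-A'wz\ge0$ and $-Aw'z>0$) and whose zeroth-order coefficient is now positive; since $z$ has finite limits, $u\to0$ at $\pm\infty$, so the maximum principle forbids $u$ from attaining a nonpositive value, giving $z'>0$. I expect this block — the bootstrap around the sign of $\rho-\kappa^2$, and the need to run a crude ODE estimate at $+\infty$ before any maximum-principle tool is available — to be the main obstacle.

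\emph{Speed bounds and $\alpha(1)>\kappa^2$.} Since $z(+\infty)=1/(\rho-\kappa^2)>0$, the condition $ze^xw\in L^1(\R)$ forces $e^xw\in L^1$ near $+\infty$; but $-w'/w<\lambda$ gives $w(x)>w(0)e^{-\lambda x}$ for $x>0$, so this requires $\lambda>1$. From $\lambda^2\le\lambda\Lambda=\mc{I}(w)/\kappa^2$ and $\mc{I}(w)<\alpha(1)$ we deduce $\kappa^2<\mc{I}(w)<\alpha(1)$, hence $\alpha(1)>\kappa^2$; from $c/\kappa^2=\lambda+\Lambda\ge2\lambda>2$ we get $c>2\kappa^2$; and writing $c/\kappa^2=\lambda+\mc{I}(w)/(\kappa^2\lambda)$ and using that $t\mapsto t+\mc{I}(w)/(\kappa^2t)$ is decreasing on $\big(0,\sqrt{\mc{I}(w)}/\kappa\,\big]\ni\lambda$ while $\lambda>1$, we obtain $c/\kappa^2<1+\mc{I}(w)/\kappa^2<1+\alpha(1)/\kappa^2$, i.e.\ $c<\kappa^2+\alpha(1)$. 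Together with the properties of $\sigma$ and $z$ established above, this gives all the assertions of the proposition.
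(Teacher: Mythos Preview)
Your proof is correct and follows essentially the same route as the paper: determine the structure of $\sigma$ from the characterization~\eqref{s*=}, invoke Section~\ref{nonlocalKPP} for $w$, analyze $z$ via maximum-principle and asymptotic arguments, and read off the speed bounds from the decay exponent $\lambda$ together with the integrability condition $ze^xw\in L^1(\R)$. The only notable difference is cosmetic: you extract $2\kappa^2<c<\alpha(1)+\kappa^2$ and $\alpha(1)>\kappa^2$ algebraically from $\lambda>1$ via the Vieta relations $\lambda+\Lambda=c/\kappa^2$, $\lambda\Lambda=\mc{I}(w)/\kappa^2$, whereas the paper checks by direct computation that each excluded range of $c$ forces $\lambda\le1$.
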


\begin{proof}
	%

	Assume that~\eqref{zw-waves} admits a solution $(c,w,z)$.
	 We preliminarily observe that  $z>0$: indeed,
	being $z$ a super-solution of a linear elliptic equation, the strong maximum principle implies that
	either $z>0$ or $z\equiv0$. But in the latter case, the equation itself 
	yields $\sigma\equiv1$, while, from~\eqref{s*=}, we get
	$\sigma\equiv0$. 
	The strong maximum principle also yields $w>0$.
	We now derive the properties stated in the proposition separately.
	
	\smallskip
	{\em Properties of $\sigma$.}\\
	Owing to the characterization~\eqref{s*=} for the function $\sigma$, properties $zw>0$ and $\alpha'(0)=+\infty$
	entail that $\sigma$ is strictly positive.
	Moreover,~\eqref{s*=} also implies that~$\sigma$ is
	 nonincreasing, because $\alpha''<0$, that $\sigma(x)=1$ for $-x$ large enough, because
	$\alpha'(1)>0$, and that $\sigma(+\infty)=0$, because of the
	condition $z e^x w\in L^1(\R)$ in~\eqref{zw-waves}.
	This proves~\eqref{s*01}. Finally, from \rife{s*=} we deduce, using the regularity of $\alpha(s)$:
\be\label{deriv}
\begin{split}
\sigma'(x) & = \frac1{\alpha''(\sigma (x))}\left( \frac{e^x}{\int_x^{+\infty} e^yz w   \,dy}+   z(x)w(x)\left(\frac{e^x}{\int_x^{+\infty} e^yz w   \,dy}\right)^2 \right)
\\ &  =  \frac{\alpha'(\sigma(x))}{\alpha''(\sigma(x))}\left( 1+    z(x)w(x)\alpha'(\sigma(x)) \right) \qquad \forall x\,:\, \sigma (x)<1\,.
\end{split}
\ee
Since $z,w$ are locally bounded, and $\alpha''(s)<0$, we deduce that $\sigma'(x)$ is locally bounded in the interval $(x_0, +\infty)$ where $0<\sigma (x)<1$, and it admits a finite limit as $x\to x_0^+$. Hence $\sigma\in W^{1,\infty}_{loc}(\R)$. We notice indeed that $\sigma$ is piecewise $C^1$ but it is not differentiable at $x_0$,  because $\lim_{x\to x_0^+}  \sigma'(x) <0$. 
The regularity of $\alpha$ then yields $A=\alpha\circ \sigma\in W^{1,\infty}_{loc}(\R)$.
	
	\smallskip
	{\em The condition $\rho>\kappa^2$.}\\
	Integrating the equation for $z$ in~\eqref{zw-waves} in an interval $(x,y)$ yields
	\Fi{intz}
	\kappa^2 z'(x)-\kappa^2 z'(y)=(c-2\kappa^2 ) (z(x)-z(y)) + \int_{x}^{y}
	\big[1-\sigma+\big(\kappa^2-\rho- \alpha(\sigma)w\big) z\big].
	\Ff
	Supposing by contradiction that $\rho\leq\kappa^2$, using that $\sigma(+\infty)=0$, we find
	that the term under the integral satisfies 
	$$\liminf_{y\to+\infty}\big[1-\sigma+\big(\kappa^2-\rho- \alpha(\sigma)w\big) z\big]\geq1,$$
	hence~\eqref{intz} yields $z'(y)\to-\infty$ as $y\to+\infty$, contradicting the
	boundedness of~$z$. 

	\smallskip
	{\em Properties of $z$.}\\
		Now that we know that $\rho>\kappa^2$, we infer from~\eqref{s*01} 
	that the term under the integral in~\eqref{intz} satisfies 
	$$\limsup_{x\to-\infty}\big[1-\sigma+\big(\kappa^2-\rho- \alpha(\sigma)w\big) z\big]\leq
	(\kappa^2-\rho)\liminf_{x\to-\infty} z(x).$$
	Hence, if $z(x)$ does not tend to $0$ as $x\to-\infty$, using the fact that $z$ is uniformly continuous 
	(by elliptic estimates) we obtain by~\eqref{intz} 
	the contradiction $z'(x)\to-\infty$ as $x\to-\infty$.
	This proves that $z(-\infty)=0$.
	
	Next, the properties of the function $\sigma$ derived above
	allow us to apply the results of Section~\ref{nonlocalKPP}
	with  $A:=\alpha\circ \sigma$. In particular, Proposition~\ref{Harnack} asserts that $w$ is decreasing.
	Then, differentiating the equation for $z$ in~\eqref{zw-waves}, and using that $\sigma$, $A$, $w$ are
	nonincreasing, we find that $z'$ satisfies
	\Fi{eqz'}
	-\kappa^2 (z')''+ (c-2\kappa^2 ) (z')' + (\rho-\kappa^2) z'+ A(x)w\, z' \geq 0,\quad x\in\R.
	\Ff
	Moreover, being~$z$ bounded, there exist two sequences $\seq{x^\pm}$ 
	diverging to $\pm\infty$ respectively, such that $z'(x^\pm_n)\to0$ as $n\to\infty$. 
	Hence, applying the weak maximum principle in the intervals $(x_n^-,x_n^+)$
	and letting $n\to\infty$, we deduce that $z'\geq0$ in $\R$.
	Next, the strong maximum principle yields $z'>0$, because otherwise $z\equiv z(-\infty)=0$,
	while we know that $z>0$. 
	
	The monotonicity and boundedness of $z$ imply that $z(x)$ converges to a positive limit $z(+\infty)$
	as $x\to+\infty$. Due to elliptic estimates, the convergence holds in $C^2_{loc}$ and thus we
	deduce that the value $z(+\infty)$ satisfies 
	$$(\rho-\kappa^2) z(+\infty)=-\lim_{x\to+\infty}\big(A(x)w\, z- 1+\sigma(x)\big)=1.$$

	\smallskip
	{\em The condition $\alpha(1)>\kappa^2$ and the bounds $2\kappa^2<c<\alpha(1)+\kappa^2$.}\\
	First of all,
	Proposition~\ref{Harnack} yields $c>0$ and 
	Proposition~\ref{implicit-speed}-$(iv)$ yields
	$$w(x)\geq w(0)e^{-\lambda x}\quad \forall x\geq0,$$
	with
	$$
	\lambda=\frac1{\kappa^2} \left(\frac c2-\sqrt{\frac{c^2}{4 }-\kappa^2\mc{I}(w)}\right), 
	$$
	where the operator ${\mc I}$ is defined in \rife{I}. 
	If~$c\leq2\kappa^2$, we see that $\lambda\leq1$.
	If $c\geq\alpha(1)+\kappa^2$, the same conclusion follows from
	the inequalities $\mc{I}(w)\leq\bar A:=\alpha(1)$ (and 
	$\mc{I}(w)\leq \frac{c^2}{4\kappa^2}$) provided by Proposition~\ref{implicit-speed}, indeed:
	$$
	\lambda 
	\leq \frac1{\kappa^2} \left(\frac c2-\sqrt{\frac{c^2}{4 }-\kappa^2\alpha(1)}\right)\leq
	\frac1{\kappa^2} \left(\frac c2-\sqrt{\frac{c^2}{4 }-c\kappa^2+\kappa^4}\right)=1.
	$$
	Hence, in both cases, we find that
	$$w(x)\geq w(0)e^{- x}\quad \forall x\geq0.$$
	Since $z$ does not tend to $0$ at $+\infty$, because $z'>0$, the condition
	$ze^x w\in L^1(\R)$ in~\eqref{zw-waves} is violated. 
	As a byproduct, we have shown that necessarily $\alpha(1)>\kappa^2$.
\end{proof}

The first statement of Proposition~\ref{pro:NC} proves Theorem~\ref{main-zw}-$(i)$.

Next, we derive a pointwise lower bound for $z$ only using that it satisfies
the equation in~\eqref{zw-waves}, i.e.,
\Fi{eq:z}
-\kappa^2 z''+ (c-2\kappa^2) z' + (\rho-\kappa^2) z+ A(x)w\, z= 1-\sigma(x),
\Ff
for some given functions $A,w\in L^\infty(\R)$.

\begin{lemma}\label{lem:z>}
	Let $z$ be a nonnegative solution of~\eqref{eq:z} in $[0,2]$,
	with $c\in\R$, $\rho\geq \kappa^2$ and $A,w$ nonnegative and bounded.
	Then
	$$z(1)\geq \frac{1-\sup_{(0,2)}\sigma}{C(1+\kappa^2+|c|+\rho+\sup A\sup w)},$$
	for some universal constant $C>0$.
\end{lemma}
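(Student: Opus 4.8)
The plan is to bound $z$ from below on $[0,2]$ by an explicit quadratic barrier that vanishes at the two endpoints, and then to invoke the weak maximum principle to compare it with $z$.

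First set $m:=1-\sup_{(0,2)}\sigma$. If $m\le0$ the asserted bound is trivial since $z\ge0$ on $[0,2]$, so from now on I assume $m>0$. Rewriting~\eqref{eq:z}, the function $z$ satisfies in $(0,2)$
$$-\kappa^2 z''+(c-2\kappa^2)z'+V(x)\,z\ge m,\qquad V(x):=\rho-\kappa^2+A(x)w(x),$$
and, because $\rho\ge\kappa^2$ and $A,w\ge0$, one has $0\le V\le \rho-\kappa^2+\sup A\,\sup w$ on $(0,2)$. In particular the operator $Lu:=-\kappa^2u''+(c-2\kappa^2)u'+Vu$ has a nonnegative zero-order coefficient, hence it obeys the weak maximum principle on the bounded interval $(0,2)$. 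Note also that, by elliptic regularity, $z\in W^{2,p}_{loc}$ for every $p<\infty$ (since $\sigma\in W^{1,\infty}_{loc}$ and $A,w\in L^\infty$), so $z$ is a strong solution and the comparison argument applies without any further care.

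Next I would take as subsolution $\underline z(x):=\varepsilon\,x(2-x)$ with $\varepsilon>0$ to be fixed. A direct computation together with $0\le x(2-x)\le1$, $|2-2x|\le2$ and the bound on $V$ gives, on $[0,2]$,
$$L\underline z=2\kappa^2\varepsilon+(c-2\kappa^2)\varepsilon(2-2x)+V(x)\,\varepsilon\,x(2-x)\le\big(5\kappa^2+2|c|+\rho+\sup A\,\sup w\big)\varepsilon.$$
Choosing
$$\varepsilon:=\frac{m}{5\kappa^2+2|c|+\rho+\sup A\,\sup w}$$
makes $L\underline z\le m$ in $(0,2)$, while $\underline z(0)=\underline z(2)=0\le z(0),z(2)$.

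Finally, $z-\underline z$ satisfies $L(z-\underline z)\ge0$ in $(0,2)$ and is nonnegative at the two endpoints, so the weak maximum principle yields $z\ge\underline z$ on $[0,2]$, and in particular
$$z(1)\ge\underline z(1)=\varepsilon=\frac{m}{5\kappa^2+2|c|+\rho+\sup A\,\sup w}\ge\frac{1-\sup_{(0,2)}\sigma}{5\big(1+\kappa^2+|c|+\rho+\sup A\,\sup w\big)},$$
which is the claimed estimate with $C=5$. No step presents a genuine difficulty; the only thing to keep an eye on is that the hypotheses $\rho\ge\kappa^2$ and $A,w\ge0$ are precisely what guarantees $V\ge0$, and hence the validity of the comparison.
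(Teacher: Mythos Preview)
Your proof is correct and essentially identical to the paper's: the paper uses the same quadratic barrier $\underline z=1-(x-1)^2=x(2-x)$, bounds $L\underline z$ by a constant of the same form, and applies the weak maximum principle after noting that the zero-order coefficient is nonnegative. Your version only differs in cosmetic details---you track the constant explicitly (obtaining $C=5$) and spell out the trivial case $m\le0$ and the regularity of $z$---but the argument is the same.
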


\begin{proof}
	It is sufficient to find a positive sub-solution
	of~\eqref{eq:z} on~$(0,2)$ vanishing at the boundary.
	This is simply provided by $\ul z:=1-(x-1)^2$. It satisfies  on $(0,2)$,
	$$- \kappa^2 \ul z''+ (c-2\kappa^2)\ul z' + (\rho-\kappa^2)\ul z+ A(x)w\,\ul z\leq
	C(1+\kappa^2 + |c|+\rho+\sup A\sup w),$$
	for some universal constant $C>0$.
	Hence, calling
	$$k:=\frac{1-\sup_{(0,2)}\sigma}{C(1+ \kappa^2+ |c|+\rho+\sup A\sup w)},$$
	and supposing that $\sup_{(0,2)}\sigma<1$ (otherwise the result trivially holds),
	we have that $k\ul z$ is a sub-solution
	of~\eqref{eq:z} on $(0,2)$. Observe that the zero-th order coefficient of this equation is
	nonnegative. Thus, 	the standard maximum principle yields
	$$z(1)\geq k\ul z(1)=k.$$
\end{proof}

We conclude this subsection with a stability lemma for problem~\eqref{nlKPP} that will often be used in the sequel.
%
Following the terminology employed for the nonlocal KPP equation, we say that a solution to~\eqref{nlKPP} 
is {\em critical} if
\be\label{w-critical}
{\mc I}(w):= \int_{\R} A(y)(-w'(y))dy=\frac{c^2}4\,.
\ee


\begin{lemma}\label{lem:stability}
	Let $(c_j)_{j\in\N}$ be a positive sequence, 
	$(A_j)_{j\in\N}$ be a sequence of equi-bounded, nonincreasing, 
	nonnegative functions in $W^{1,\infty}_{loc}(\R)$ satisfying 
	$$
	A_j(-\infty)>A_j(+\infty) \quad \forall j\in\N,
	$$
	and for $j\in\N$, let $w_j$ be a solution to~\eqref{nlKPP} 
	with $c=c_j$ and $A=A_j$.
	Assume that~$(c_j)_{j\in\N}$ converges to some $c>0$, that $(A_j)_{j\in\N}$
	converges pointwise to some function $A$ satisfying 
	$A(0)>0$ and that $(w_j(0))_{j\in\N}$ converges to some value in $(0,1)$.
	Then $(w_j)_{j\in\N}$ converges uniformly towards a decreasing solution $w$ of~\eqref{nlKPP}.

	In addition, if the $w_j$ are critical (in the sense of~\eqref{w-critical}) 
	then $w$ is critical~too.
\end{lemma}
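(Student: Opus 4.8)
The plan is to extract a locally convergent subsequence by elliptic estimates, identify the limit as a genuine wave via the integral formulation~\eqref{eq:integrata} and Lemma~\ref{lem:phi'<0}, and then upgrade local convergence to uniform convergence exactly as in the proof of \thm{nlKPP}; finally, criticality of the limit follows from dominated convergence applied to the second formulation of $\mc I$. The key point making this work is that all the $A_j$ share a common bound $\bar A_0:=\sup_j\|A_j\|_\infty$, so the constant $L$ in the $C^3$ estimate of Lemma~\ref{lem:phi'<0} is uniform in $j$ (it depends only on $c_j$ and $\|A_j\|_\infty$, both of which are bounded).

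First I would invoke Lemma~\ref{lem:phi'<0}: each $w_j$, being a solution of~\eqref{nlKPP} with $c=c_j>0$, solves~\eqref{eq:integrata} with $A=A_j$, and satisfies $\|w_j'''\|_\infty\leq L$ for $L$ depending only on $\sup_j c_j$ and $\bar A_0$. Hence $(w_j)_{j\in\N}$ is equibounded in $C^3(I)$ on every bounded interval $I$, so along a subsequence it converges in $C^2_{loc}(\R)$ to some $0\leq w\leq1$ with $w(0)=\lim_j w_j(0)\in(0,1)$. Passing to the limit in~\eqref{eq:integrata}: the zeroth-order term is $w_j\bigl(\bar A_j - A_jw_j + \int_{-\infty}^x A_j'(y)w_j(y)\,dy\bigr)$; writing $\bar A_j+\int_{-\infty}^x A_j'w_j = A_j(x)w_j(x)+\int_{-\infty}^x A_j(y)(-w_j'(y))\,dy$, the integrand $A_j(y)(-w_j'(y))$ is nonnegative and bounded by $\bar A_0\,|w_j'(y)|\leq \bar A_0 L$, while $A_j(y)\to A(y)$ pointwise and $-w_j'(y)\to -w'(y)$ locally uniformly; since $-w_j'$ is a probability-like density (integral $\leq1$), dominated convergence on $(-\infty,x)$ gives convergence of the integral to $\int_{-\infty}^x A(y)(-w'(y))\,dy$. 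Thus $w$ solves~\eqref{eq:integrata} with limiting coefficient $A$. Because $w(0)\in(0,1)$, Lemma~\ref{lem:phi'<0} forces $w\not\equiv0,1$, hence $c>0$ (already assumed), $w$ is decreasing, $w(-\infty)=1$, $w(+\infty)=0$, and integrating by parts recovers that $w$ solves~\eqref{nlKPP}. Note $A$ is automatically nonincreasing (pointwise limit of nonincreasing functions) and bounded, with $A(0)>0$; and $\bar A:=A(-\infty)>\ul A:=A(+\infty)$ holds because otherwise $A\equiv A(0)>0$ would be constant and~\eqref{eq:integrata} would have only the trivial solutions $0,1$, contradicting $w(0)\in(0,1)$.

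Next I would promote $C^2_{loc}$ convergence to uniform convergence on $\R$. This is verbatim the argument from the end of the proof of \thm{nlKPP}: given $\e>0$, pick $x_\e>0$ with $w(-x_\e)>1-\e$ and $w(x_\e)<\e$; by local uniform convergence there is $j_\e$ with $|w_j(x)-w(x)|<\e$ for $|x|\leq x_\e$, $j\geq j_\e$; then for $x>x_\e$ both $w_j(x)$ and $w(x)$ lie below $\max\{w_j(x_\e),w(x_\e)\}<2\e$ since both are decreasing, and symmetrically for $x<-x_\e$ using $1-\min\{w_j(-x_\e),w(-x_\e)\}<2\e$. Hence $\|w_j-w\|_{L^\infty(\R)}\to0$ along the subsequence. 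Since every subsequence has a further subsequence converging uniformly to a solution $w$ of~\eqref{nlKPP} with the prescribed value at $0$, and such a solution is unique among solutions of~\eqref{nlKPP} with coefficient $A$ through a given point by the ordering of waves (Proposition~\ref{pro:uniqueness}), the full sequence converges uniformly to $w$.

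Finally, suppose each $w_j$ is critical, i.e.\ $\mc I_j(w_j):=\int_\R A_j(-w_j')=c_j^2/4$. Using the second formulation $\mc I_j(w_j)=\bar A_j+\int_\R A_j'(y)w_j(y)\,dy$ is delicate because $\bar A_j=A_j(-\infty)$ need not converge to $A(-\infty)$ under mere pointwise convergence; instead I would work directly with the first formulation together with uniform convergence. Write $\mc I_j(w_j)-\frac{c^2}4 = \int_\R A_j(y)(-w_j'(y))\,dy-\int_\R A(y)(-w'(y))\,dy + \bigl(\mc I(w)-\tfrac{c^2}4\bigr) + \tfrac{c_j^2-c^2}4$; the last two explicit terms tend to $0$, and for the difference of integrals I split $\R=(-\infty,-R]\cup[-R,R]\cup[R,+\infty)$. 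On $[-R,R]$, $A_j\to A$ pointwise boundedly and $w_j'\to w'$ uniformly (the $C^2_{loc}$ convergence), so that piece converges. On $[R,+\infty)$ both integrals are at most $\bar A_0\,w_j(R)$ resp.\ $\bar A_0\,w(R)$, hence uniformly small for $R$ large (by the uniform decay $w_j(R)\to0$ as $R\to\infty$, uniformly in $j$, which follows from uniform convergence plus $w(+\infty)=0$). On $(-\infty,-R]$ both integrals equal $\bar A_j(1-w_j(-R))$ resp.\ $\bar A(1-w(-R))$ plus a similar tail; here I would instead use $\int_{-\infty}^{-R}A_j(-w_j') = \mc I_j(w_j)-\int_{-R}^{+\infty}A_j(-w_j')=\tfrac{c_j^2}4-\int_{-R}^{+\infty}A_j(-w_j')$ and likewise for $w$, reducing everything to the pieces already controlled. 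Letting $j\to\infty$ then $R\to\infty$ gives $\mc I(w)=\tfrac{c^2}4$, i.e.\ $w$ is critical.

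The main obstacle I anticipate is precisely this last point: because only pointwise convergence of $A_j$ is assumed, one cannot naively pass to the limit in $\bar A_j=A_j(-\infty)$, and the integral $\int_\R A_j(-w_j')$ has nontrivial mass near $-\infty$ (where $-w_j'$ is small but $A_j$ is largest) and near $+\infty$. Handling these tails requires using the uniform convergence $w_j\to w$ established in the previous step, together with the identity expressing the far-field contributions through $\mc I_j(w_j)$ itself; the equiboundedness of the $A_j$ and the uniform exponential-type decay of the $w_j$ at $+\infty$ (Proposition~\ref{implicit-speed}$(iv)$, with $\lambda$ controlled by $\ul A\geq0$ and $\sup_j c_j$) are what make the tail estimates uniform in $j$.
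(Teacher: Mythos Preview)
Your overall strategy---extract a $C^2_{loc}$-convergent subsequence, identify the limit as a solution of~\eqref{eq:integrata}, apply Lemma~\ref{lem:phi'<0}, then upgrade to uniform convergence---is the right shape, and it matches the paper's approach. But there is a genuine gap at the step where you pass to the limit in the nonlocal term $\int_{-\infty}^x A_j(y)(-w_j'(y))\,dy$. You invoke ``dominated convergence'' and offer as domination that the integrand is bounded by $\bar A_0 L$; but a constant is not integrable on $(-\infty,x)$, so this gives nothing. The remark that $-w_j'$ has total mass $\leq1$ is a tightness-type statement, not a dominating function, and does not by itself let you pass to the limit: mass of $-w_j'$ could in principle escape to $-\infty$, where the $A_j$ are largest. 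At this stage you do not yet know that $w(-\infty)=1$ (that is precisely what you are trying to prove via Lemma~\ref{lem:phi'<0}), so you cannot use the limit wave to control the tail either---the argument is circular as written.

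The paper closes this gap with Lemma~\ref{pminfty}. Because $A_j(0)\to A(0)>0$ and $w_j(0)\to\theta\in(0,1)$, estimates~\eqref{neginf}--\eqref{posinf} give uniform exponential bounds $1-w_j(x)\leq K e^{\gamma x}$ for $x\leq0$ and $w_j(x)\leq K e^{-\gamma x}$ for $x\geq0$, with $K,\gamma>0$ independent of $j$. These make the tails of $\int_{-\infty}^x A_j(-w_j')$ uniformly small and simultaneously deliver uniform convergence on~$\R$; after that, passing to the limit in the equation and in the criticality identity is immediate. Note that your proposed replacement, Proposition~\ref{implicit-speed}$(iv)$, does not give a uniform decay rate: the exponent there is $\lambda_j=\tfrac{c_j}{2}-\sqrt{\tfrac{c_j^2}{4}-\mc I(w_j)}$, which can degenerate to $0$ since you only know $\mc I(w_j)>\ul A_j\geq0$.

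Two smaller points. First, your claim that if the limit $A$ were constant then~\eqref{eq:integrata} would have only the trivial solutions is false: for constant $A\equiv\alpha_0>0$ the equation becomes the standard KPP equation $w''+cw'+\alpha_0 w(1-w)=0$, which has decreasing fronts for every $c\geq2\sqrt{\alpha_0}$. The lemma does not assert $\bar A>\ul A$ for the limit, and the paper explicitly allows constant $A$ (``or classical result if $A$ is constant''). Second, your criticality argument writes $\int_{-\infty}^{-R}A_j(-w_j')=\tfrac{c_j^2}{4}-\int_{-R}^{+\infty}A_j(-w_j')$ ``and likewise for $w$''; but for $w$ the corresponding identity involves $\mc I(w)$, which is exactly what you are trying to compute, so as stated it is circular. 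The clean route---available once you have uniform convergence---is simply to bound the tail by $\bar A_0(1-w_j(-R))$ and use that this is small uniformly in $j$ for large $R$.
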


\begin{proof}
	Let $\bar A>0$ be such that $A_j\leq\bar A$ for all $j\in\N$.
	By Lemma~\ref{lem:phi'<0} $(w_j)_{j\in\N}$ converges in $C^2_{loc}(\R)$, up to subsequences, towards a function $w$.
	Moreover, since $A_j(0)\to A(0)>0$, Lemma~\ref{pminfty}
	implies that the convergence is also uniform in~$\R$, with $1-w_j(x)\leq K e^{\gamma x}$ and 
	$w_j(x)\leq K e^{-\gamma x}$ for
	some positive $K,\gamma$ independent of $j$. 
	Hence, since the $w_j$ are decreasing by Proposition~\ref{Harnack},
	we find that
	$$\bigg|\int_{-\infty}^x A_j(y)(-w_j'(y))dy\bigg|\leq \bar A(1-w_j(x))\leq\bar A K e^{\gamma x},$$
	which is arbitrarily small up to choosing $-x$ very large. As a consequence, 
	we deduce that, up to subsequences,
	$$\int_{-\infty}^x A_j(y)(-w_j'(y))dy\to \int_{-\infty}^x A(y)(-w'(y))dy,$$
	and thus that $w$ solves~\eqref{nlKPP}. By Theorem~\ref{thm:nlKPP} (or classical
	result if $A$ is constant) there is a unique wave
	for such problem which fulfills $w(0)=\lim_{j\to\infty}w_j(0)$. This shows that the whole sequence
	$w_j$ converges uniformly to $w$. We know from Proposition~\ref{Harnack} that $w$ is decreasing.

	The fact that the criticality condition~\eqref{w-critical} is preserved is obtained by 
	estimating the integral at $+\infty$ using that $w_j(x)\leq K e^{-\gamma x}$.
\end{proof}


\subsection{The critical wave}

We now turn to the proof of Theorem~\ref{main-zw}$(ii)$.
It is divided into two main parts: we first build an approximated  solution $(z_n,w_n,c_n)$ (for a suitably truncated problem) through a fixed point argument, and secondly we pass to the limit on the mean field game system in order to get a solution.

We assume here for the sake of simplicity that $\kappa=1$; the general case is treated in the same way with the
obvious~modifications.

\subsubsection*{Part I.  The approximated problem}
\vskip-3pt
For $n\in\N$, we consider the following approximated problem: 
\be\label{system-nn}
\begin{cases}
\displaystyle
	w''+c w'+w   \int_{-\infty}^x A(y)(-w'(y))dy =0,\quad x\in\R
	\\
	w> 0,\quad w(-\infty)=1,\quad w(+\infty)=0
	\\
	\m
	\displaystyle
	-z''+ (c-2) z' + (\rho-1) z+ A \, w\, z= 1-[\sigma]_n,\quad x\in (-n,n) \phantom{\int_{-\infty}^x} 
	\\
	z=0 \ \text{ in }\;(-\infty,-n], \quad z=\frac1{\rho-1} \ \text{ in }\;[n,+\infty)
	\\
	\m
	\displaystyle [\sigma]_n(x)= \mathop{{\rm argmax}}_{s\in [0,1]} \,  
	\left\{ (1-s)e^x+\alpha(s)\int_x^{+\infty} z(y) e^y\, w(y)\chi_n(y)dy\right\},\quad A:= \alpha([\sigma]_n)\,,
\end{cases}
\ee
where
$$
\chi_n(y)=\min\{1,e^{-2(y-n)}\}\,.
$$
The role of $\chi_n$ is to prevent the case $A\equiv\alpha(1)$, which in the original problem~\eqref{zw-waves}
is excluded by the condition $z e^x w\in L^1(\R)$;
it actually guarantees that  $[\sigma]_n(x),A(x)\to0$ as $x\to+\infty$,
uniformly with respect to $c,w,z$. 

We will find a solution to~\eqref{system-nn} through a fixed point argument on the function~$[\sigma]_n$.
The latter is characterized by
\Fi{s*n=}
\forall s\in(0,1),\qquad
[\sigma]_n(x)=s \ \iff \ \int_{x}^{+\infty}z(y) e^y\, w(y)\chi_n(y)dy=\frac{e^{x}}{\alpha'(s)}.
\Ff
From this we immediately see that $[\sigma]_n$ is nonincreasing and satisfies $[\sigma]_n(x)=1$ for~$-x$ large
and $[\sigma]_n(+\infty)=0$.
We therefore look for the fixed point in the set
\begin{align*}
X:=  \{ \sigma\in W^{1,\infty}_{loc}(\R)\,:\,  
 \hbox{$\sigma$ is nonincreasing, $\sigma(x)=1$ for $-x$ large, $\sigma(+\infty)=0$}\}.
\end{align*}
We will actually restrict at some point to a closed subset of $X$ with respect to the~$L^\infty(\R)$ topology.
%

We now define an operator $T_n$ on $X$.
Given $\sigma \in X$, we set $A:=\alpha\circ\sigma$ and we call $w_c$ the (unique) critical
wave with speed $c\in(0,2\sqrt{\alpha(1)})$ associated with $A$, provided by
Theorem~\ref{thm:nlKPP} ($\bar A=\alpha(1)$). Then we choose 
a speed $c$ through a suitable normalization condition expressed in terms of the functions%
$$
\ul w_c(x):=\inf_{0<c'\leq c}w_{c'}(x).
$$
Namely, we claim that there exists a unique $c\in(0,2\sqrt{\alpha(1)})$ such that
\Fi{normalization}
\int_{-\infty}^0e^y\, \ul w_{c}(y)dy=\frac12.
\Ff
It is clear that the $\ul w_c$ are nonincreasing with respect to $c$.
Moreover, thanks to Proposition~\ref{pro:leftordering},
for any $\ul x\in\R$ where $A(\ul x)=\alpha(1)$ there holds that $w_c(\ul x)$ is decreasing with respect to $c$.
It follows that $\ul w_c(\ul x)=w_c(\ul x)$ and that this value is 
strictly~decreasing with respect to $c$.
As a consequence, the mapping $G:(0,2\sqrt{\alpha(1)})\to\R$ defined by%
$$
G(c):=\int_{-\infty}^0e^y\, \ul w_c(y)dy
$$
is decreasing. In addition, one checks that it is continuous using dominated convergence and the last part of
Theorem~\ref{thm:nlKPP}.
Furthermore, by the properties of $X$, there exists $x_0\in\R$ such that 
$0=A(+\infty)<A(x_0)<A(-\infty)=\alpha(1)$.
Therefore, owing to the criticality condition
$$\mc{I}(w_c):=\int_{\R}A(y)(-w_c'(y))dy=\frac{c^2}4,$$
the two inequalities in~\eqref{<phi<} (applied in 
$x_0$ by translation of the coordinate system, and with $\ul A=0$, $\bar A=\alpha(1)$)
imply that 
$w_c(x_0)\to1$ as $c\searrow 0$  and  
$w_c(x_0)\to0$ as $c\nearrow 2\sqrt{\alpha(1)}$.
We then infer from Harnack's inequalities~\eqref{Harna} 
that these convergences hold true locally uniformly in~$\R$.
The first one then implies that $G(c)\to1$ as $c\searrow 0$, while the second one that
$$G(c)\leq \int_{-\infty}^0e^y\, w_c(y)dy\to0\quad\text{as $c\nearrow 2\sqrt{\alpha(1)}$}.
$$
As a consequence, there exists a unique $c\in(0,2\sqrt{\alpha(1)})$ such that $G(c)=1/2$, i.e.~\rife{normalization} holds true.
This normalization determines the choice of $c$ employed to define~$T_n$. 

Now, given the above speed $c$ and the associated critical wave $w_c$, we consider the solution~$z$ of the second equation in~\eqref{system-nn} with~$\sigma$ in place of $[\sigma]_n$ and 
the prescribed  exterior conditions,
which classically exists and is unique.
The outcome~$T_n(\sigma)$ is the function $[\sigma]_n$ generated by $w_c$ and $z$
as indicated in the last line of~\rife{system-nn}. Summing up, the operator $T_n$ works as follows:
\begin{align*}
\sigma \in X & \ \rightsquigarrow \ A:= \alpha\circ\sigma \ \ \rightsquigarrow
 \ \ (c,w_c): \begin{cases} w''+ c w'+w   \int_{-\infty}^x A(y)(-w'(y))dy =0 
	\\
	w(-\infty)=1,\quad w(+\infty)=0, \quad \int_{\R} A(-w')= \frac{c^2}4 \\ 
	\int_{-\infty}^0e^y\, \ul w_{c}(y)dy=\frac12 \end{cases}  
	\\
	\noalign{\medskip}
	& \ \rightsquigarrow \ z:
	\begin{cases}
	-z''+ (c-2) z' + (\rho-1) z+ A \, w_c\, z= 1-\sigma,\quad x\in (-n,n) \phantom{\int_{-\infty}^x} 
	\\
	z=0 \ \text{ in }\;(-\infty,-n], \quad z=\frac1{\rho-1} \ \text{ in }\;[n,+\infty)
	\end{cases}
	\\ \noalign{\medskip} & 
\ \rightsquigarrow \ T_n(\sigma):= [\sigma]_n(x)= \mathop{{\rm argmax}}_{s\in [0,1]} \,  
	\left\{ (1-s)e^x+\alpha(s)\int_x^{+\infty} z(y) e^y\, w_c(y)\chi_n(y)dy\right\}.
\end{align*}
In the following lemma, we  prove the existence of a fixed point for $T_n$.

\begin{lemma}\label{lem:truncated}
The operator $T_n$ has a fixed point in a closed subset $\tilde X$ of $X$.
 
As a consequence, problem \rife{system-nn} admits 
a solution $(c,w,z)=(c_n, w_n,z_n)$ with $c_n\in (0,2\sqrt{\alpha(1)})$,
$z_n$ increasing in $(-n,n)$, $w_n$ decreasing and satisfying in addition the following properties:

$(i)$ $w_n$ is a critical wave corresponding to $c_n$, i.e.
\be\label{crin}
\int_{\R} A(-w_n') dy= c_n^2 /4;
\ee 

$(ii)$ there exists a constant $\ul\vt>0$, only depending on $\alpha(1)$, such that
\Fi{wn0>}
w_n(0)\geq\ul\vt>0 \quad \forall n\in\N;
\Ff

$(iii)$ the following normalization condition  holds true:
$$
\int_{-\infty}^0e^y\, \ul w_{c_n}(y)dy=\frac12,
\qquad \hbox{ where $\,\, \ul w_{c_n}(x):= \inf\limits_{0<c'\leq c_n} w_{c'}(x)$.}
$$
\end{lemma}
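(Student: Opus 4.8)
\emph{Overall strategy.} The plan is to obtain a solution of the truncated system \rife{system-nn} as a fixed point of $T_n$ via the Schauder--Tychonoff fixed point theorem, and then to read off the properties $(i)$--$(iii)$ from the construction. Recall that $T_n$ decomposes as $\sigma\mapsto A:=\alpha\circ\sigma\mapsto(c,w_c)\mapsto z\mapsto[\sigma]_n$: the speed $c=c(A)\in(0,2\sqrt{\alpha(1)})$ is the unique one selected by the normalization \rife{normalization} (well posed by the strict monotonicity, continuity and boundary behaviour of $G$ recorded just before the lemma), $w_c$ is the critical wave given by \thm{nlKPP} for the pair $(A,c)$, $z$ solves the linear two--point problem in \rife{system-nn}, and $[\sigma]_n$ is formed from $w_c,z$ by the last line of \rife{system-nn}, equivalently by \rife{s*n=}. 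I will (1) exhibit a compact convex set $\tilde X\subset X$ preserved by $T_n$, (2) prove that $T_n$ is continuous on it, (3) apply the fixed point theorem, (4) check $(i)$--$(iii)$.

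\emph{The invariant set.} By \rife{s*n=} and $\alpha'(0^+)=+\infty$, the output $[\sigma]_n$ is always nonincreasing and $(0,1]$--valued. Two uniform (in $\sigma$) tail bounds hold. On the left: since $z\equiv0$ on $(-\infty,-n]$, for $x\le-n$ one has $\int_x^{+\infty}z\,e^yw_c\,\chi_n\,dy=\int_{-n}^{+\infty}z\,e^yw_c\,\chi_n\,dy$, a fixed positive number bounded below independently of $\sigma$ by combining the interior lower bound on $z$ from Lemma~\ref{lem:z>}, the Harnack inequalities \rife{Harna}, and the bound $w_c(0)\ge\ul\vt$ derived below; as $e^x/\alpha'(1)\to0$ when $x\to-\infty$, this forces $[\sigma]_n\equiv1$ on a half--line $(-\infty,x_0(n)]$ that does not depend on $\sigma$. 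On the right: since $e^y\chi_n(y)=e^{2n}e^{-y}$ for $y>n$ and $z\le(\rho-1)^{-1}$, $w_c\le1$, the same integral is $O(e^{-x})$ as $x\to+\infty$, uniformly in $\sigma$, whereas $e^x/\alpha'(1)\to+\infty$; hence $[\sigma]_n$ decays to $0$ at $+\infty$ at a rate controlled uniformly in $\sigma$. Moreover $T_n(X)$ is equi--Lipschitz on compact sets, which can be checked from the formula for $[\sigma]_n'$ obtained by differentiating \rife{s*n=}, together with the regularity of $\alpha$ and the above uniform bounds on $z,w_c$. Therefore the closed convex hull $\tilde X$ of $T_n(X)$ in $C_{loc}(\R)$ is a compact convex subset of this Fr\'echet space; it is contained in $X$ (the tail conditions and the local Lipschitz bounds pass to convex combinations and to $C_{loc}$ limits), and $T_n(\tilde X)\subset\tilde X$.

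\emph{Continuity of $T_n$.} Let $\sigma_j\to\sigma$ in $C_{loc}(\R)$ with $\sigma_j\in\tilde X$; then $A_j:=\alpha\circ\sigma_j\to A:=\alpha\circ\sigma$ locally uniformly, hence pointwise and boundedly. The crucial point is that the selected speeds converge, $c_j:=c(A_j)\to c:=c(A)$. Indeed $(c_j)$ lies in the bounded interval $(0,2\sqrt{\alpha(1)})$, so along a subsequence $c_j\to\bar c$; the values $\bar c=0$ and $\bar c=2\sqrt{\alpha(1)}$ are excluded because, by the estimates \rife{<phi<} applied at a point where $A$ takes a value strictly between $0$ and $\alpha(1)$ and by \rife{Harna}, they would force the normalizing quantity $\int_{-\infty}^0e^y\ul w_{c_j}\,dy$ to tend to $1$, resp.\ to $0$, contradicting that it equals $1/2$. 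For $\bar c\in(0,2\sqrt{\alpha(1)})$ the stability Lemma~\ref{lem:stability} applies --- its hypothesis $\inf_jw_{c_j}(0)>0$ being precisely the uniform bound $\ul\vt$ below --- and yields that the critical waves for $(A_j,c_j)$ converge uniformly to the critical wave for $(A,\bar c)$; passing to the limit in the normalization then gives $\int_{-\infty}^0e^y\ul w_{\bar c}\,dy=1/2$, so $\bar c=c(A)$ by uniqueness, and the whole sequence converges. Once $c_j\to c$ and the critical waves converge uniformly, continuous dependence for the linear boundary value problem on $(-n,n)$ gives $z_j\to z$ in $C^1$, dominated convergence gives $\int_x^{+\infty}z_j\,e^yw_{c_j}\chi_n\,dy\to\int_x^{+\infty}z\,e^yw_c\chi_n\,dy$ for every $x$, and the explicit formula \rife{s*n=} (i.e.\ $[\sigma]_n(x)=(\alpha')^{-1}$ of that ratio, truncated at $1$), which is continuous thanks to the strict concavity of $\alpha$, yields $[\sigma_j]_n\to[\sigma]_n$ pointwise and then in $C_{loc}(\R)$. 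Hence $T_n:\tilde X\to\tilde X$ is continuous, and Schauder--Tychonoff provides $\sigma_n$ with $\sigma_n=T_n(\sigma_n)$, i.e.\ a solution $(c_n,w_n,z_n)$ of \rife{system-nn} with $c_n\in(0,2\sqrt{\alpha(1)})$, $w_n$ a decreasing critical wave, and $z_n$ increasing in $(-n,n)$ (the latter by the maximum principle argument of Proposition~\ref{pro:NC}, since $1-\sigma_n\ge0$ and $\sigma_n,A,w_n$ are nonincreasing).

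\emph{Properties $(i)$--$(iii)$ and the main obstacle.} Properties $(i)$ and $(iii)$ are built into $T_n$: $w_n$ is the critical wave for $(A,c_n)$, so \rife{crin} holds, and the normalization in $(iii)$ is exactly how $c_n$ was chosen. For $(ii)$: from $\int_{-\infty}^0e^y\ul w_{c_n}(y)\,dy=\tfrac12$, splitting the integral at $-1$ and using $\ul w_{c_n}\le1$ together with the monotonicity of $\ul w_{c_n}$, one gets $w_n(-1)\ge\ul w_{c_n}(-1)\ge\tfrac12-e^{-1}>0$; then the Harnack inequality \rife{Harna} on $[-1,0]$, whose constant depends only on $\bar A=\alpha(1)$ and the upper bound $2\sqrt{\alpha(1)}$ for $c_n$, gives $w_n(0)\ge\ul\vt$ with $\ul\vt>0$ depending only on $\alpha(1)$. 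The heart of the argument, and its main difficulty, is the continuity step $A\mapsto c(A)$: it relies entirely on the stability of \emph{critical} waves (Lemma~\ref{lem:stability}, \thm{nlKPP}) and on the fact that $G$ was built from $\ul w_c=\inf_{0<c'\le c}w_{c'}$ rather than from $w_c$ itself, precisely so that it be monotone in $c$ --- critical waves of different speeds not being known to be ordered in general, beyond the local statement of Proposition~\ref{pro:leftordering}. Establishing the various uniform--in--$\sigma$ lower bounds (needed both for the invariance of $\tilde X$ and for the hypothesis of Lemma~\ref{lem:stability}), using Lemma~\ref{lem:z>}, \rife{Harna} and the critical decay estimate \rife{decay_crit}, is the other place where care is required.
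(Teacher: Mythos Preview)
Your overall architecture matches the paper's, but two of the steps you treat as routine are precisely the delicate points, and as written they contain genuine gaps.

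\smallskip
\textbf{Invariance: the lower bound on $z$.} You justify the uniform left tail $[\sigma]_n\equiv1$ on $(-\infty,x_0(n)]$ by bounding $\int_{-n}^{+\infty}z\,e^yw_c\chi_n\,dy$ from below via Lemma~\ref{lem:z>}. But Lemma~\ref{lem:z>} only gives $z(1)\gtrsim 1-\sup_{(0,2)}\sigma$, which vanishes whenever $\sigma\equiv1$ on $(0,2)$; for $\sigma\in X$ there is no uniform control on where $\sigma$ drops below~$1$, so this bound is not uniform in $\sigma$. The paper bypasses this entirely: it uses the boundary value $z(n)=\tfrac1{\rho-1}$ together with $C^1$ estimates up to the boundary to get $z\ge\tfrac1{2(\rho-1)}$ on $[n-\delta,n]$, and then bounds the integral over that interval using the lower bound~\eqref{lowc} on $w_c$. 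This gives a bound depending only on $n,\alpha(1),\rho$.

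\smallskip
\textbf{Continuity: passing to the limit in the normalization.} You write ``passing to the limit in the normalization then gives $\int_{-\infty}^0e^y\ul w_{\bar c}\,dy=\tfrac12$'', as if this were immediate from Lemma~\ref{lem:stability}. It is not. The quantity $\ul w_{j,c_j}(x)=\inf_{0<c'\le c_j}w_{j,c'}(x)$ involves the \emph{entire family} of critical waves $w_{j,c'}$ computed with $A_j$, not just $w_{j,c_j}$. Lemma~\ref{lem:stability} only gives $w_{j,c_j}\to w_{\bar c}$. To conclude one must show $\ul w_{j,c_j}(x)\to\ul w_{\bar c}(x)$ pointwise, and the paper devotes a full paragraph to this: for each fixed $c'\in(0,\bar c)$ one uses Lemma~\ref{lem:stability} to get $w_{j,c'}\to w_{c'}$, which gives $\limsup_j\ul w_{j,c_j}(x)\le\ul w_{\bar c}(x)$; for the matching $\liminf$ one picks near-minimizers $c_j^\e\in(0,c_j)$, extracts a limit $\tilde c\in[0,\bar c]$, and uses either Lemma~\ref{lem:stability} (if $\tilde c>0$) or the bounds~\eqref{<phi<} (if $\tilde c=0$) together with the continuity of $c'\mapsto w_{c'}$ from Theorem~\ref{thm:nlKPP}. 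Without this, the uniqueness of $\bar c$ is not established and the continuity of $T_n$ fails.

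\smallskip
A related omission: to invoke Lemma~\ref{lem:stability} you need $w_j(0)$ bounded away from~$1$ as well as from~$0$, and you only address the latter. The paper obtains the upper bound at the point $x_n$ where $A_j\equiv\alpha(1)$: if $w_j(x_n)\to1$ then (by Proposition~\ref{pro:leftordering}) $\ul w_{c_j}(x_n)=w_j(x_n)\to1$, and Harnack for $1-w$ propagates this to force the normalization integral to~$1$, a contradiction. This upper bound is also what gives $c>0$ via the criticality identity $c_j^2/4\ge\alpha(1)(1-w_j(x_n))$; your argument for excluding $\bar c=0$ via~\eqref{<phi<} at ``a point where $A$ takes a value strictly between $0$ and $\alpha(1)$'' is not uniform in~$j$ without further work.
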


\begin{proof}
The fixed point will be obtained as a consequence of Schauder's theorem. 
Some preliminary observations are in order, concerning the functions $w_c,z$ associated with the 
definition of $T_n(\sigma)$ (c.f.~the previous scheme).
We start with the monotonicities. We know from Proposition \ref{Harnack} 
that $w_c$, as well as any other wave for the KPP equation, is decreasing. On the other hand,
being the constant functions $0$ and $\frac1{\rho-1}$ respectively a sub and a super-solution 
of the equation for $z$, 
the maximum principle yields $0\leq z\leq\frac1{\rho-1}$ in $(-n,n)$; then 
$z'(\pm n)\geq0$ and thus, applying the strong maximum principle to~$z'$, which satisfies~\eqref{eqz'}, 
we infer that $z'>0$ in $(-n,n)$.

Next, we point out a  lower bound for $w_c$. This is a crucial consequence of~\eqref{normalization}, which implies that 
$$
\frac12\leq\frac14+\int_{-\ln4}^0\ul w_c(y)dy\leq \frac14+(\ln4)\ul w_c(-\ln4)
\leq \frac14+(\ln4)w_c(-\ln4).
$$
Hence, by Harnack's inequality \rife{Harna}, for any $R>0$ there exists a positive constant~$C_R$, 
only depending on $R$ and $\alpha(1)$ (recall that $c\in(0,2\sqrt{\alpha(1)})$), such that 
\be\label{lowc}
w_c(x)\geq C_R>0 \ \quad\forall x\in[-R,R].
\ee
In particular, \eqref{wn0>} holds.

We now identify the compact set $\t X\subset X$ where to apply Schauder's theorem and 
we separately check its hypotheses.
For simplicity, we drop hereafter the index of~$T_n$.

\smallskip
{\em The invariant convex, compact set $\t X\subset X$.}\\
Our goal is to show that $T(X)$ is contained in a compact subset of $X$ with respect to the $L^\infty(\R)$ norm; 
this will be our~$\t X$.
Consider as before the functions $w_c,z$ associated with $T(\sigma)$.
We preliminarily observe that 
elliptic boundary estimates imply that the~$C^1$ norm of $z$ is controlled in terms of
$\alpha(1)$ and $\rho$, and thus there exists a 
constant~$\delta\in(0,1)$, depending on $\alpha(1)$,~$\rho$, such that
$z\geq\frac1{2(\rho-1)}$ in $[n-\delta,n]$.
Because of this and the lower bound~\eqref{lowc} for $w_c$, 
we find that, for $x<n-\delta$,
$$e^{-x}\int_{x}^{+\infty}z(y) e^y\, w_c(y)\chi_n(y)dy\geq
e^{-x}\int_{n-\delta}^{n}z(y) e^y\, w_c(y)\chi_n(y)dy\geq
e^{-x}\frac{C_n\delta}{2(\rho-1)},$$
which is larger than $1/\alpha'(1)$ for $x$ smaller than some $x_n$ (possibly smaller than~$-n$)
only depending on $n,\alpha(1),\rho$. 
Owing to the characterization~\eqref{s*n=}, 
we derive
$$T(\sigma)=1\quad\text{on }\;(-\infty,x_n].$$

We are left to show the regularity of $[\sigma]_n= T(\sigma)$ and the uniform estimate as~$x\to+\infty$. 
For these, we rewrite~\eqref{s*n=} as
\be\label{s*n}
[\sigma]_n(x)= (\alpha')^{-1}\left(\frac{e^x}{\int_x^{+\infty} z e^y w_c \chi_n \,dy}\right)
 \qquad \forall x\,:\, [\sigma]_n(x)<1\,.
\ee
Since $(\alpha')^{-1}$ is decreasing, this shows from one hand that $[\sigma]_n$ is strictly positive and nonincreasing, and from the other, using $zw_c\leq\frac1{\rho-1}$, that
$$
[\sigma]_n(x)\leq\omega_n(x):= (\alpha')^{-1}\left(\frac{e^x(\rho-1)}{\int_x^{+\infty} e^y \chi_n(y)dy}\right)
 \qquad \forall x>y_n\,,
$$
where $y_n$ is the unique point where the right-hand side is equal to $1$. 
Observe that~$\omega_n(x) \to 0$ as $x\to+\infty$ because $\alpha'(0)=+\infty$. 

As for the regularity, differentiating~\eqref{s*n} we obtain (exactly as in \rife{deriv})
\be\label{deriv2} 
[\sigma]_n'(x)   =   \frac{\alpha'([\sigma]_n(x))}{\alpha''([\sigma]_n(x))}\left( 1+  \chi_n(x) z(x)w_c(x)\alpha'([\sigma]_n(x)) \right) \quad \forall x\,:\, [\sigma]_n(x)<1\,.
\ee
Then, the strict concavity of $\alpha$ implies that $[\sigma]_n$ is  a locally Lipschitz continuous function,
whose $W^{1,\infty}$ norm remains bounded as long as $[\sigma]_n$ stays bounded away from $0$.
The lower bound follows from~\eqref{s*n}, namely, for any $x\geq n$ where $[\sigma]_n(x)<1$, 
$$[\sigma]_n(x)\geq(\alpha')^{-1}\left(\frac{e^x}{\int_{x}^{x+1} z e^y w_c \chi_n \,dy}\right)\geq
(\alpha')^{-1}\left(\frac{(\rho-1)e^{2(x+1-n)}}{w_c(x+1)}\right),
$$
hence, by \rife{lowc}, $[\sigma]_n(x)\geq C(x)$, where $C(x)$ is a positive 
decreasing function depending on $n,\alpha,\rho,\alpha(1)$. We deduce the existence of another positive 
decreasing function
$\t C(x)$, depending on the same terms, such that
$\|[\sigma]_n\|_{W^{1,\infty}(-\infty,x)}\leq \t C(x)$.
%
Summing up, we have seen that 
\begin{align*}
T(X)\subset  \t X:=\{ \sigma\in W^{1,\infty}_{loc}(\R)\,:\,   
\hbox{$\sigma$ is nonincreasing, $\sigma(x)=1$ for $x\leq x_n$,}& \\
\hbox{$0\leq \sigma(x)\leq \omega_n(x)$ for $x\geq y_n$},\ \|\sigma\|_{W^{1,\infty}(-\infty,x)}\leq \t C(x)&\}\,,
\end{align*}
hence $\t X$ is invariant under $T$.
The set $\t X$ is convex and one readily checks that it is compact in $L^\infty(\R)$
using Ascoli-Arzela theorem and the conditions at~$\pm\infty$.

\smallskip
{\em Continuity of $T$.}\\
Let $(\sigma_j)_{j\in\N}$ in $\t X$ converge uniformly to some~$\sigma$. 
Then we have  that $A_j:= \alpha\circ\sigma_j$ uniformly converges to $A:= \alpha\circ\sigma$; notice that the conditions in $\t X$ imply that $A(x)=\alpha(1)$ for $x\leq x_n$  and  that
$0\leq A(x)\leq\alpha(\omega_n(x)) \to 0$ as $x\to+\infty$, in particular the $A_j$ do not trivialize in the limit. 
Let $(c_j, w_j, z_j)$ be the triplet provided by the construction of $T(\sigma_j)$.
Since the $(c_j)_{j\in\N}$ are in $(0,2\sqrt{\alpha(1)})$, they  converge, up to extraction of a 
subsequence, to some $c\in [0,2\sqrt{\alpha(1)}]$. 
On one hand, the~$w_j$ are locally uniformly equibounded from below away from zero due to~\rife{lowc}.
On the other hand, the values $w_j(x_n)$ are bounded from above away from $1$, because otherwise 
Proposition~\ref{pro:leftordering}
and the second Harnack's inequality in~\eqref{Harna} would yield a contradiction with 
the normalization~\rife{normalization}. 
We deduce that $(w_j(x_n))_{j\in\N}$ is contained in a compact subset of~$(0,1)$.
Then, since by the criticality condition
$$
 c_j^2 /4 =  \int_{\R} A_j(-w_j') dy 
 \geq \int_{0}^{x_n} A_j(-w_j') dy=\alpha(1)(1-w_j(x_n))\,,
$$
we find that $c=\lim_{j\to+\infty}c_j>0$.
We can therefore apply Lemma~\ref{lem:stability} and infer 
that~$(w_j)_{j\in\N}$ converges uniformly, up to subsequences, towards  a 
decreasing solution~$w$ to~\eqref{nlKPP}, which in
addition is critical, i.e., fulfills~\eqref{w-critical}. 
We find as a byproduct that $c<2\sqrt{\alpha(1)}$.
Indeed, by uniqueness of the critical wave,
c.f.~Theorem~\ref{thm:I}, we have that $w_j$ 
converges to $w$ along the whole subsequence on which $c_j\to c$.
%

It remains to check that the normalization condition~\eqref{normalization} is preserved up to subsequences.
By dominated convergence, it is sufficient to show that the functions
$$\ul w_{j,c_j}(x):=\inf_{0<c'\leq c_j}w_{j,c'}(x)$$
converge pointwise to 
$$\ul w_{c}(x):=\inf_{0<c'\leq c}w_{c'}(x),$$
where $w_{j,c'}$ and $w_{c'}$ are the critical waves with speed $c'$ corresponding to the nonlinearity $A_j$ and 
$A$ respectively.

To this purpose, we observe that, for fixed $c'\in (0,2\sqrt{\alpha(1)})$, the  
$w_{j,c'}$ converge uniformly, up to subsequences,
 to $w_{c'}$, thanks to Lemma \ref{lem:stability}
and the bounds~\eqref{<phi<} applied in $x_n$. 
This convergence holds true for the whole sequence $w_{j,c'}$ because of the  uniqueness of the critical wave for fixed $c'$. 
Since $\ul w_{j,c_j}(x)\leq w_{j,c'}(x)$ for any $c'<c_j$, and $c_j\to c$, we deduce that
$$
\limsup_{j\to \infty} \ul w_{j,c_j}(x) \leq w_{c'}(x) \qquad \forall c'<c\,.
$$
Recalling that the values $w_{c'}(x)$  are continuous with respect  to $c'$ owing to the
last part of Theorem~\ref{thm:nlKPP}, this yields
\be\label{lsup}
\limsup_{j\to \infty} \ul w_{j,c_j}(x) \leq \inf_{0<c'\leq c}w_{c'}(x)= \ul w_{c}(x)\,.
\ee
Conversely, we fix $x\in\R$ and, for any $\vep >0$ and $j\in\N$, we find $c_{j}^\e\in(0,c_j)$  such that 
$$
\ul w_{j,c_j}(x) \geq w_{j,c_{j}^\e}(x) -\vep\,.
$$
Without loss of generality, we can suppose that $c_{j}^\e\to \tilde c$ as $j\to\infty$,
for some $\tilde c\in [0,c]$. If $\tilde c>0$, Lemma \ref{lem:stability} and~\eqref{<phi<}
entail that  $w_{j,c_{j}^\e}(x)\to w_{\tilde c}(x)$; while if $\tilde c=0$,~\rife{<phi<} yields $w_{j_\vep,c_{j_\vep}}(x)\to1$. Hence, in any case,
$$
\liminf_{j\to \infty} \ul w_{j,c_j}(x) \geq w_{ \tilde c }(x) -\vep \geq \ul w_{c}(x) - \vep\,.
$$
Since $\vep$ is arbitrary, the previous inequality together with  \rife{lsup} imply that $\ul w_{j,c_j}(x)$ converges to $\ul w_{c}(x)$, for every $x\in \R$.  As we said above, this yields
$$
\int_{-\infty}^0e^y\, \ul w_{c}(y)dy=\lim_{j\to\infty}
\int_{-\infty}^0e^y\, \ul w_{j,c_j}(y)dy=\frac12\,,
$$
i.e., \eqref{normalization} holds.
As we have shown before, this condition uniquely characterizes $c$. 
We deduce that the whole sequence $c_j$ converges to $ c$ (and consequently, the whole sequence $w_{j,c_j}\to w_c$).

The convergences of $\sigma_j$, $c_j$ and $w_j$ now imply, by standard stability in the second equation, that $z_j$ converges uniformly to the unique $z$ which solves $-z''+ (c-2) z' + (\rho-1) z+  A \, w_c\, z= 1-\sigma$ in $(-n,n)$ with the given boundary conditions. Finally, we have proved that $(c_j,w_j,z_j)\to(c,w,z)$ uniformly, 
and the latter is the unique triple associated with 
$T(\sigma)$. We conclude using the characterization~\eqref{s*n=} 
that $T(\sigma_j) \to T(\sigma)$ in $L^\infty(\R)$.

\smallskip

We can now invoke Schauder's theorem which provides us with a fixed point $\sigma_n\in \t X$ such that $T_n(\sigma_n)= \sigma_n$. Associated with this function, we have $A_n:= \alpha\circ\sigma_n$ and a unique triple $(c_n,w_n,z_n)$ which therefore solves system~\eqref{system-nn}.
By construction, we have that $w_n$ satisfies the conditions $(i)$--$(iii)$. 
\end{proof}

\subsubsection*{Part II. Passing to the limit in the approximation}
\vskip-3pt
Now we study the limit of the sequence $(c_n,w_n,z_n)_{n\in\N}$ of solutions to \rife{system-nn} 
provided by Lemma~\ref{lem:truncated}. We call $\sigma_n$ the associated optimal functions $[\sigma]_n$,
and $A_n:=\alpha\circ\sigma_n$. We recall that $w_n$ is decreasing and that $z_n$ is increasing. 

To start with, we show that $\sigma_n$ stays bounded away from $0$. Indeed, by Lemma~\ref{lem:z>},
for any $x\in\R$ we have
$z_n(x+1)\geq C(1-\sigma_n(x))$, for some positive constant $C$ depending on $\kappa,\rho,\alpha(1)$.
Then for any given $x\in\R$, we find for $n>x+2$ that
either $\sigma_n(x)=1$, or by \eqref{s*n=}
\begin{align*}
\frac1{\alpha'(\sigma_n(x))}   
& = e^{-x}\int_{x}^{+\infty}z_n(y) e^y\, w_n(y)\chi_n(y)dy  \\ 
& \geq e^{-x}\int_{x+1}^{x+2} z_n(y) e^{y}\, w_n(y)dy \\
& \geq C(1-\sigma_n(x)) w_n(x+2) (e^2-e).
\end{align*}
Owing to~\eqref{lowc}, this provides a positive lower bound 
for~$\sigma_n(x)$ independent of $n$. We have thereby shown that
\Fi{sn0>}
\liminf_{n\to\infty}\sigma_n(x)>0\quad\forall x\in\R.
\Ff

Next, we derive an upper bound for $w_n$.  Namely, we claim that
up to extraction of a subsequence, there holds that
\Fi{wn0<}
w_n(0)\leq\ol\vt <1\quad\forall n\in\N.
\Ff
To show this, we consider two (mutually excluding) possibilities.  Either there exists some $\beta<1$ such that $\sigma_n(0)\leq\beta<1$ for all~$n\in\N$; in this case
the same computation as before yields, for any $\xi>1$ and $n>\xi$,
\begin{align*}
 \frac1{\alpha'(\beta)} \geq  \frac1{\alpha'(\sigma_n(0))}   
   &\geq \int_{1}^{\xi} z_n(y) e^y\, w_n(y)dy \\
   & \geq C(1-\sigma_n(0)) w_n(\xi) (e^\xi-e)\\
   & \geq C(1-\beta) w_n(\xi) (e^\xi-e),
\end{align*}
that is,
$$
w_n(\xi) \leq  \frac1{e^\xi-e}\, \left(\frac1{C(1-\beta) \alpha'(\beta)}\right)\,.
$$
The right-hand side is smaller than $1$ for $\xi$ sufficiently large, and thus~\rife{wn0<}
follows from Harnack's inequality~\rife{Harna}. 
Alternatively, there exists a subsequence (not relabeled) such that
	$\sigma_n(0) \to 1$ as $n\to\infty$.
If this is the case, 
using the characterization~\eqref{s*n=}  of~$\sigma_n$ we derive
$$\liminf_{n\to\infty}\int_{\ln\frac23}^{+\infty}z_n(y) e^y\, w_n(y)\chi_n(y)dy\geq
\liminf_{n\to\infty}\int_{0}^{+\infty}z_n(y) e^y\, w_n(y)\chi_n(y)dy\geq\frac{1}{\alpha'(1)},$$
which, owing to the same characterization, shows that
$A_n(\ln\frac23)=\alpha(1)$ for $n$ sufficiently large. Hence,
for such values of $n$, Proposition~\ref{pro:leftordering} yields
$w_n=\ul w_{c_n}$ in $(-\infty,\ln\frac23]$ and therefore, by~\eqref{normalization},
$$\frac12 \geq \int_{-\infty}^{\ln\frac23}e^y\, w_n(y)dy\geq \frac23
\textstyle w_n(\ln\frac23).$$
Namely, $w_n(\ln\frac23)\leq\frac34$, whence we deduce \rife{wn0<} because $w_n$ is decreasing.
\vskip0.5em
Henceforth, we reason up to subsequences and we suppose that  $c_n$ converges   to some $c\in[0,2\sqrt{\alpha(1)}]$ 
and that the functions $w_n,z_n,\sigma_n,A_n$ converge, respectively,  
towards  some $w,z,\sigma,A:=\alpha(\sigma)$ locally uniformly in $\R$ 
(observe that the $\sigma_n$ are equicontinuous on compact sets due to~\eqref{deriv2} and \eqref{sn0>}).

We claim that these functions solve~\eqref{zw-waves}. To prove this we need to check that
the various terms do not trivialyze. This is done in the following items.
\begin{enumerate}[\ \ a)]
	\item $0<w<1$.
	\\
	This follows from the bounds~\eqref{wn0>} and  \eqref{wn0<} and Harnack's inequalities~\eqref{Harna}.

	\item $c>0$.
	\\	
	From the criticality condition \rife{crin} we obtain
	\be\label{cn24}
	c_n^2 /4 = \int_{\R} A_n(-w_n') dy \geq  \int_{-\infty}^{x} A_n(-w_n') dy \geq A_n(x) (1-w_n(x)),
	\ee
	which implies $c>0$ due to~\eqref{sn0>} and~$w<1$.
	
	\item $A>0$, $A\not\equiv\alpha(1)$.
	\\	
	We already know from~\eqref{sn0>} that $A>0$. 
	Then the lower bounds on $c_n$ and $1-w_n$ imply that 
	$w_n$ satisfies  the estimate~\eqref{decay_crit} with a constant $K$ independent of $n$, namely
	\be\label{c2dec}
	w_n(x) \leq K x e^{- \frac {c_n}2 x} \qquad \forall x\geq 1, \quad \forall n \in \N.
	\ee
	In particular, we see that
	$w(x) \to 0$ as $x\to \infty$. Now, suppose by contradiction that $A(x) \equiv \alpha(1)$.
	Applying \rife{cn24} with $x_0$ arbitrarily large would show that  
	$c=\lim_{n\to\infty}c_n=2 \sqrt {\alpha(1)}$.  Since $\alpha(1)>1$, together with \rife{c2dec} this would imply that 
 	$w_n(x)e^x$ are equi-integrable at $+\infty$ for $n$ large enough. But then, from the characterization of $\sigma_n$ in~\eqref{s*n=}, we would have $A(x) < \alpha(1)$ for large $x$, which gives a contradiction. 
	
%
%
	
	\item $0<z<\frac1{\rho-1}$.	

	As $\sigma_n$ and $A_n, w_n$ converge locally uniformly to $\sigma,A, w$ respectively, we have 
	by standard stability that the function $z$ satisfies the equation in~\eqref{zw-waves}, i.e.~\eqref{eq:z}. Moreover, $z$ is nondecreasing and satisfies $0\leq z\leq\frac1{\rho-1}$. Indeed, since the right-hand side in the equation is nonnegative,
	the strong maximum principle yields $z\equiv0$ as soon as $z$ vanishes somewhere. 
	But this is impossible because $z\equiv 0$   entails
	$\sigma\equiv1$, i.e., $A\equiv\alpha(1)$, which was already excluded. This means 
	in particular that~$z>0$.
	
	A similar argument applies from above. The constant function $\frac1{\rho-1}$ is a super-solution of~\eqref{eq:z}.
	Hence if $z$, which is less than or equal to $\frac1{\rho-1}$, attains the value $\frac1{\rho-1}$
	somewhere, the strong maximum principle yields $z\equiv\frac1{\rho-1}$.
	Coming back to the equation~\eqref{eq:z}, we see  that this is only possible if $\sigma\equiv0$, that is, $A\equiv0$. 
	But this has already been ruled out.
	

	\item $ 2 < c < 2\sqrt {\alpha(1)}$.\\
	Recall that $0<w<1$, $c>0$ and $A\not\equiv0$. We can then apply Lemma~\ref{lem:stability} and infer that $w$ solves~\eqref{nlKPP} and fulfills the
	critical identity~\eqref{w-critical}. In particular, since $A\not\equiv\alpha(1)$, as seen in c),
	we deduce that $c < 2\sqrt {\alpha(1)}$.

	Finally, we are left to show that $c>2$.  To this purpose, we observe that Proposition~\ref{implicit-speed}, together with~\eqref{wn0>}, implies that 
	$$w_n(x)\geq\ul\vt e^{-\frac{c_n}2 x}\qquad\forall x>0.$$	
	Assume by contradiction that $c\leq2$. We then have that 
	$$w(x)\geq\ul\vt e^{-x}\qquad\forall x>0.$$
	For any arbitrary $x_0<x_1$, we find that 
	$$\int_{x_0}^{x_1} z(y) e^y\, w(y)dy\geq z(x_0) \ul\vt(x_1-x_0).
	$$
	In particular, because $z(x_0)>0$ by b), there exists $x_1$ (depending on $x_0$) such~that
	$$\int_{x_0}^{x_1} z(y) e^y\, w(y)dy>\frac{e^{x_0}}{\alpha'(1)},$$
	and this inequality holds true for $z_n$, $w_n$ when
	$n$ is sufficiently large. It follows from~\eqref{s*n=}
	that, for such values of $n$ (that we can assume without loss of generality 
	being larger than $x_1$), $\sigma_n(x_0)=1$. This means that $A\equiv\alpha(1)$,
	but this case has been excluded in c).		
\end{enumerate}

	Summing up, we have shown that $c,w,z$ solve the equations and constraints  
	in~\eqref{zw-waves} with $A:=\alpha\circ\sigma$.
	It remains to prove that $\sigma$ is indeed the optimal function associated with $w,z$.
	This follows form the fact that, as $n\to\infty$, the integral equivalence 
	in~\eqref{s*n=} reduces by dominated convergence (recall that the  
	decays~\rife{c2dec} hold with $c_n\to c>2$)
	to the characterization~\eqref{s*=} of $\sigma$.
	This concludes the proof of Theorem~\ref{main-zw}$(ii)$.

\subsection{Waves with supercritical speed} 
 
 We now deal with Theorem~\ref{main-zw}$(iii)$, namely, 
 we construct other traveling waves for the system \rife{zw-waves}, with speeds which are faster than 
 $2\kappa\sqrt{\alpha(1)}$.
 As in the previous subsection, we assume for simplicity that $\kappa=1$. 
 For each speed, we are able to attain any arbitrary normalization $\ell_0\in (0,1)$ for
 $w$ at a given point $x_0\in \R$. 



We start with a lemma on the shooting method for the nonlocal problem, similar
to Lemma~\ref{lem:ivp}.

\begin{lemma}\label{CP-coro}
Let $A\in W^{1,\infty}_{loc}(\R)$ be a bounded, nonnegative, nonincreasing function satisfying 
$\bar A:=A(-\infty)>0$.
Consider the problem
\be\label{wgam}
\begin{cases}
	\displaystyle
	w''+cw'+w \left( A(a)- Aw + \int_{a}^x A'(y) w(y)dy\right)=0,\quad x>a\\
	w(a)=\gamma\\
	w'(a)=0\,,
\end{cases}
\ee
with  $c\geq 2\sqrt {\bar A}$, $a\in \R$ and $\gamma\in (0,1)$. 
Then we have:

\begin{enumerate}[$(i)$]
 \item problem \rife{wgam} admits a unique solution $w(\cdot;\gamma)$, which 
 in addition is decreasing and positive in $(0,+\infty)$, with $w \to 0$ as $x\to +\infty$;

 \item for any $x_0>a$ and $\ell_0\in (0,1)$, there exists a unique $\gamma_0$ such that $w(x_0;\gamma_0)= \ell_0$.
 \end{enumerate}
\end{lemma}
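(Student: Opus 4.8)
The strategy is to view~\eqref{wgam} as a shooting problem from the point $a$ with initial slope $0$, and to reduce it to the already-developed toolbox (Lemmas~\ref{lem:ivp}, \ref{lem:CP}, Corollary~\ref{cor:Dp}) together with the comparison with a classical KPP wave, exactly as in Case~(A) of Section~\ref{nonlocalKPP}. For part $(i)$: first, Lemma~\ref{lem:ivp} (applied with $P(x)=A(a)$, $Q(x)=-A(x)$, $K(x)=A'(x)$, and initial data $\vartheta=\gamma$, $\eta=0$) gives a unique positive classical solution $w(\cdot\,;\gamma)$ on a maximal interval $[a,b)$ with $w(b^-)\in\{0,+\infty\}$ if $b<+\infty$. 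Since $A(a)\ge A(x)w$ pointwise (because $A$ is nonincreasing and $w\le\gamma<1$ near $a$), the constant $\gamma$ is a subsolution of the equation in~\eqref{wgam}; hence by Lemma~\ref{lem:CP} (whose hypotheses $Q,K\le0$ hold since $A\ge0$, $A'\le0$) with $w_1=\gamma$, $w_2=w(\cdot\,;\gamma)$, the ratio $\gamma/w$ is nondecreasing, i.e. $w$ is nonincreasing, and in fact decreasing, on $[a,b)$. Consequently $w\le\gamma<1$ on all of $[a,b)$, so the subsolution property persists and $b=+\infty$ (blow-up is excluded, vanishing in finite time is excluded because $w$ stays a positive decreasing solution — more carefully, one shows $w$ cannot reach $0$ at finite $b$ by noting $-w''-cw'\le \bar A w$, hence $w$ is bounded below by an exponential, as in the proof of Lemma~\ref{lem:phi'<0}). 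Being positive, decreasing and bounded, $w$ has a limit $\ell:=w(+\infty)\ge0$; to see $\ell=0$, observe that $w$ is a subsolution of the local KPP equation with reaction $\bar A w(1-w)$ — indeed $-w''-cw'=w\big(A(a)-Aw+\int_a^xA'w\big)\le \bar A w(1-w)$ after integrating by parts and using $A\le\bar A$, $w\le1$ — and compare with the standard traveling front $\psi$ of speed $c\ge2\sqrt{\bar A}$ via the sliding method (translate $\psi$ far to the right so that $\psi(\cdot-\zeta)\ge\gamma=w(a)\ge w$ on $[a,+\infty)$, then slide back; the strong maximum principle prevents an interior touching), obtaining $w\le\psi(\cdot-\zeta)$ and hence $\ell\le\psi(+\infty)=0$.

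For part $(ii)$: fix $x_0>a$ and $\ell_0\in(0,1)$. The map $\gamma\mapsto w(x_0;\gamma)$ is continuous on $(0,1)$ by the continuous dependence statement of Lemma~\ref{lem:ivp}, and it is strictly increasing: applying Lemma~\ref{lem:CP} with two initial heights $\gamma_1<\gamma_2$ (same zero slope, and $w_1:=w(\cdot;\gamma_2)$, $w_2:=w(\cdot;\gamma_1)$ — note $w_1(a)>w_2(a)$, $w_1'(a)=w_2'(a)=0\le0$) shows $w(\cdot;\gamma_2)/w(\cdot;\gamma_1)$ is nondecreasing, so $w(x_0;\gamma_2)\ge\frac{\gamma_2}{\gamma_1}w(x_0;\gamma_1)>w(x_0;\gamma_1)$. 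It remains to identify the range: as $\gamma\nearrow1$ we have $w(x_0;\gamma)\to1$, since on any fixed interval $[a,x_0]$ the solutions $w(\cdot;\gamma)$ are uniformly bounded in $C^2$ (by elliptic estimates, using $0\le w\le\gamma\le1$) and any locally uniform limit along $\gamma\to1$ is a solution with initial data $1,0$, hence $\equiv1$ by uniqueness in Lemma~\ref{lem:ivp}; and as $\gamma\searrow0$, by the comparison $w(\cdot;\gamma)\le\psi(\cdot-\zeta_\gamma)$ with $\psi(a-\zeta_\gamma)=\gamma$ (so $\zeta_\gamma\to-\infty$), we get $w(x_0;\gamma)\le\psi(x_0-\zeta_\gamma)\to0$. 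By the intermediate value theorem there is a unique $\gamma_0\in(0,1)$ with $w(x_0;\gamma_0)=\ell_0$.

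The main obstacle I anticipate is not any single deep point but rather the bookkeeping needed to make the comparison with the classical front $\psi$ fully rigorous on the half-line $[a,+\infty)$: one must check that the sliding argument really applies despite the nonlocal term (this works because the integrated form shows $w$ is a genuine local KPP \emph{subsolution}, so only a one-sided comparison is needed, and the standard elliptic strong maximum principle suffices), and one must be careful that the subsolution property of the constant $\gamma$ is self-consistent — it relies on $w\le\gamma$, which in turn relies on monotonicity, which relies on Lemma~\ref{lem:CP}, which needs $w$ positive; this small circularity is resolved by working on the maximal positivity interval $[a,b)$ first and only afterward concluding $b=+\infty$. Everything else is a direct invocation of the tools already established.
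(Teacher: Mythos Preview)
Your overall scheme matches the paper's, but the sliding argument you use to conclude $w(+\infty)=0$ does not work as written. You propose to start by translating the classical KPP front $\psi$ far to the right so that $\psi(\cdot-\zeta)\geq\gamma\geq w$ on $[a,+\infty)$; however, for every $\zeta$ one has $\psi(x-\zeta)\to0$ as $x\to+\infty$, so $\psi(\cdot-\zeta)\geq\gamma$ can never hold on the whole half-line, and the sliding cannot be initialized. (Relatedly, the comparison $w(\cdot;\gamma)\leq\psi(\cdot-\zeta_\gamma)$ with $\psi(a-\zeta_\gamma)=\gamma$ that you invoke for the range as $\gamma\searrow0$ is in fact false near $a$, since $w'(a)=0>\psi'(a-\zeta_\gamma)$; this particular step is harmless, though, because $w(x_0;\gamma)\leq\gamma\to0$ already by monotonicity.)

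The paper handles both points by the \emph{opposite} comparison: since the KPP wave $\psi_\gamma$ with $\psi_\gamma(a)=\gamma$ is a \emph{super}-solution of the nonlocal equation in~\eqref{wgam} and satisfies $\psi_\gamma'(a)<0=w'(a)$, Lemma~\ref{lem:CP} yields $w\geq\psi_\gamma$ on $[a,+\infty)$. This gives global positivity directly and, for part~$(ii)$, the upper end of the range via $w(x_0;\psi_0(a))\geq\psi_0(x_0)=\ell_0$ where $\psi_0$ is the KPP wave normalized by $\psi_0(x_0)=\ell_0$. The limit $w(+\infty)=0$ is then obtained by a short ODE argument: writing $(w'e^{cx})'=-g(x)\,w\,e^{cx}$ with $g$ nondecreasing and bounded, a positive limit for $w$ would force $w'$ to tend to a negative constant, which is impossible. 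Your Riccati-type exponential lower bound for global positivity and your ratio argument for strict monotonicity in $(ii)$ are correct alternatives to the paper's choices (though the reference for the former should be to the proof of Proposition~\ref{implicit-speed} rather than Lemma~\ref{lem:phi'<0}).
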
 

\begin{proof}
For given $\gamma$, local existence and uniqueness of $w$ is provided by Lemma \ref{lem:ivp}, and  $w$ is 
nonincreasing from Lemma \ref{lem:CP}, because the constant $\gamma$ is a sub-solution.  In fact, if we observe that   $w''(x)<0$ if ever  $x\geq a$ and $w'(x)=0$, we conclude that $w$ is actually decreasing.
Consider now the unique
wave $\psi_\gamma$ of the classical KPP equation
\be\label{frontebasso}
\begin{cases}
\displaystyle
\psi''+c\psi'+\bar A\psi(1-\psi)=0,\quad x\in\R \\
\psi(-\infty)=1,\quad \psi(+\infty)=0\,, \quad \psi(a)= \gamma, 
\end{cases}
\ee
which exists because $c\geq 2\sqrt {\bar A}$.  Since we have
\begin{align*}
A(a)- A\psi + \int_{a}^x A'(y) \psi(y)dy & = A(a)(1-\psi(a))- \int_{a}^x A(y) \psi'(y)dy
\\ & \leq 
A(a)(1-\psi) \leq \bar A (1-\psi)
\end{align*}
then  $\psi$ is a super-solution of problem \rife{wgam}, with $\psi'(a)<0$. By the comparison principle of Lemma \ref{lem:CP} we deduce that $w(\cdot; \gamma)\geq \psi_\gamma$. Hence $w$ exists for all times and admits a limit as $x\to \infty$.  We observe that  the equation reads as
$$
(w' e^{c x})'= - we^{cx} \left( A(a)(1-\gamma)  -\int_{a}^x A(y) w'(y)dy\right) = - w e^{cx}\, g(x)
$$ 
where $g(x) $ is an increasing function which admits a  bounded  limit as $x\to \infty$;  then necessarily we deduce that $w' \to 0$ and $w\to 0$ as $x\to \infty$. Indeed, if $w(x) $ has a positive limit at infinity, then $(w' e^{c x})' \sim - \theta e^{cx} $ for some $\theta>0$, in which case $w' $ converges to a negative constant at infinity. But this is impossible, so $w(x) \to 0$ and in turn $w'(x)\to 0$ as well, for $x\to \infty$.
This proves $(i)$. 

Now, for $x_0>a$,  we consider the map  $\gamma \mapsto w(x_0;\gamma)$; this is continuous and  nondecreasing due to Lemma \ref{lem:ivp} and Lemma \ref{lem:CP} respectively. But the monotonicity is actually strict; as we observed before, for $\gamma_1<\gamma_2$, we have $w_{\gamma_1}''(a)<w_{\gamma_2}''(a)$, so $w_{\gamma_1}'<w_{\gamma_2}'$ for $x>a$ and we get $ w(x_0;\gamma_1)<w(x_0, \gamma_2)$ by Lemma \ref{lem:CP} again.
Therefore,  the range of $\gamma \mapsto w(x_0; \gamma)$ is an interval. Clearly we have $w(x_0; \ell_0)<\ell_0$ because $w$ is decreasing. On another hand, there exists a  wave $\psi_0$  for the KPP equation \rife{frontebasso} such that $\psi_0(x_0)=\ell_0$; if we take $\gamma=\psi_0(a)$, by comparison we know that $w(x_0;\gamma)>\psi_0(x_0)=\ell_0$. Therefore,
we deduce the existence of a unique $\gamma_0\in (\ell_0, \psi_0(a))$ such that $w(x_0;\gamma_0)=\ell_0$.
\end{proof}

Similarly to the previous section, we use  
a fixed point argument to build an approximation of the traveling wave in the compact set $[-n,n]$.
However, the approximated problem slightly differs from~\eqref{system-nn}.

\begin{lemma}\label{nk}
Assume that hypotheses \rife{ro}--\rife{alpha2} hold true and let
$c\geq 2\sqrt{\alpha(1)}$. For $ n\in \N$ larger than $\alpha'(1)$,
$|x_0|<n$ and $\ell_0\in (0,1)$, there exists a solution $(w_n,z_n)$ of the 
problem
\be\label{systemnk}
\begin{cases}
\displaystyle
w''+c w'+w \left( A(-n)- A w + \int_{-n}^x A'(y)w(y)dy\right)=0,\quad x\in [-n,n]
\\
0<w<1,\quad w(x_0)=\ell_0,\quad w'<0\,,
\\
\m
-z''+ (c-2) z' + (\rho-1) z+ A \, w\, z= 1-[\sigma]_n,\quad x\in [-n,n]
\\
z(-n)=0, \quad z(n)=\frac1{\rho-1},\quad z'>0\,, 
\\
\m
\displaystyle [\sigma]_n(x)= \mathop{{\rm argmax}}_{s\in [0,1]} \,  \left\{ (1-s)e^x+\alpha(s)\left(\int_x^{n} z(y) e^y\, w(y)dy+\frac1n\right)\right\} \,,\quad A:= \alpha\circ [\sigma]_n\,.
\end{cases}
\ee
\end{lemma}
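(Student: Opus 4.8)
The plan is to reproduce the strategy of the proof of Lemma~\ref{lem:truncated}, building a solution of~\eqref{systemnk} as a fixed point of an operator on the optimal strategy; the decisive simplification is that here the speed $c$ is prescribed, so the delicate speed-selection step of Lemma~\ref{lem:truncated} is replaced by a single appeal to the shooting Lemma~\ref{CP-coro}. Given $\sigma$ in the set $X$ of nonincreasing functions equal to $1$ near $-\infty$ and vanishing at $+\infty$, put $A:=\alpha\circ\sigma$; this is bounded, nonnegative, nonincreasing, with $A(-\infty)=\alpha(1)>0$, so that the hypothesis $c\geq 2\sqrt{\alpha(1)}=2\sqrt{A(-\infty)}$ puts us in the scope of Lemma~\ref{CP-coro} applied with $a=-n$. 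That lemma provides, for each $\gamma\in(0,1)$, a unique decreasing solution $w(\,\cdot\,;\gamma)$ of the first equation in~\eqref{systemnk} with $w(-n)=\gamma$ and $w'(-n)=0$, and a unique $\gamma_0=\gamma_0(\sigma)$ with $w(x_0;\gamma_0)=\ell_0$; we keep $w:=w(\,\cdot\,;\gamma_0)$, which satisfies $0<w<1$ and $w'<0$ on $[-n,n]$ since $\gamma_0<1$ by construction. Next, the linear boundary value problem for $z$ in~\eqref{systemnk} has a unique solution, its zeroth-order coefficient $\rho-1+Aw$ being $\geq\rho-1>0$ by~\eqref{ro}. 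Finally set $T_n(\sigma):=[\sigma]_n$, the function generated by $w$ and $z$ through the last line of~\eqref{systemnk}, extended to $\R$ in the natural way (so that $[\sigma]_n\equiv1$ far to the left and $[\sigma]_n(x)\to0$ as $x\to+\infty$); by the concavity of $\alpha$ together with $\alpha'(0^+)=+\infty$, it is characterized, exactly as in~\eqref{s*n=}, by $\alpha'([\sigma]_n(x))(\int_x^n z(y)e^y w(y)\,dy+\tfrac1n)=e^x$ wherever $[\sigma]_n(x)<1$.

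The core of the argument is a handful of bounds uniform in $\sigma$. The maximum principle gives $0\leq z\leq\tfrac1{\rho-1}$ (the constants being sub- and super-solutions), hence $z'(\pm n)\geq0$; differentiating the equation for $z$ and using that $A$ and $w$ are nonincreasing, one finds that $z'$ is a supersolution of an elliptic equation with positive zeroth-order coefficient, whence $z'>0$ in $(-n,n)$ by the maximum principle (the case $z\equiv0$ being impossible because $1-\sigma\not\equiv0$). For $w$: since the KPP nonlinearity $\alpha(1)\,u(1-u)$ does not depend on $\sigma$, the comparison carried out in the proof of Lemma~\ref{CP-coro} shows that $\gamma_0\in(\ell_0,\psi_0(-n))$ and $w\geq\psi_{\ell_0}$, where $\psi_0$ and $\psi_{\ell_0}$ are the ($\sigma$-independent) classical KPP waves of speed $c$ normalized respectively by $\psi_0(x_0)=\ell_0$ and $\psi_{\ell_0}(-n)=\ell_0$; in particular $w$ admits a positive lower bound on every compact subset of $[-n,n]$, uniform in $\sigma$. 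Finally, the regularizing term $\tfrac1n$ keeps $\int_x^n z e^y w\,dy+\tfrac1n\geq\tfrac1n>0$, which makes $[\sigma]_n$ nonincreasing, equal to $1$ on a half-line $(-\infty,x_n]$, bounded above by a fixed function $\omega_n(x)\to0$ as $x\to+\infty$ (using $\alpha'(0^+)=+\infty$ and $zw\leq\tfrac1{\rho-1}$), and, on each half-line $(-\infty,x]$, bounded below by the positive constant $(\alpha')^{-1}(ne^x)$ and Lipschitz with a constant controlled through the differentiated characterization as in~\eqref{deriv} --- all uniformly in $\sigma$.

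These estimates show that $T_n$ maps $X$ into a convex set $\t X\subset X$ which, thanks to the conditions at $\pm\infty$ and the Ascoli-Arzela theorem, is compact in $L^\infty(\R)$ and invariant under $T_n$. Continuity of $T_n$ on $\t X$ follows as in Lemma~\ref{lem:truncated}, and is simpler here because $c$ is fixed: if $\sigma_j\to\sigma$ uniformly, then $A_j:=\alpha\circ\sigma_j\to A$ uniformly; the normalization point $x_0$ being fixed, the numbers $\gamma_0(\sigma_j)$ converge by the continuous dependence in Lemmas~\ref{lem:ivp} and~\ref{CP-coro}; the $w_j$ converge in $C^2_{loc}(\R)$ by interior elliptic estimates; the $z_j$ converge by linear stability of the boundary value problem; and therefore $T_n(\sigma_j)\to T_n(\sigma)$ in $L^\infty(\R)$ through the characterization~\eqref{s*n=}. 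Schauder's fixed point theorem then yields $\sigma_n\in\t X$ with $T_n(\sigma_n)=\sigma_n$; setting $A_n:=\alpha\circ\sigma_n$ and letting $(w_n,z_n)$ be the associated pair, the triple $(c,w_n,z_n)$ solves~\eqref{systemnk}, with $0<w_n<1$ and $w_n'<0$ on $[-n,n]$, and $z_n'>0$ on $(-n,n)$, as shown above.

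The main obstacle, exactly as in Lemma~\ref{lem:truncated}, is the bookkeeping behind the bounds uniform in $\sigma$ --- in particular keeping $[\sigma]_n$ away from $0$ and controlling its modulus of continuity on compacts, which is what makes $\t X$ compact --- together with the consistent extension of $w$, $z$ and $[\sigma]_n$ beyond the truncation points $\pm n$; none of this is conceptually new, being a streamlined version of the critical-wave construction with the speed held fixed.
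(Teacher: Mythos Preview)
Your proposal is correct and follows the same Schauder fixed-point argument as the paper. The only notable difference is that the paper works directly with $X\subset C^0([-n,n])$ (nonincreasing $\sigma$ with $(\alpha')^{-1}(ne^n)\leq\sigma\leq1$), which avoids your extension to $\R$ and the attendant bookkeeping at $\pm\infty$; in particular, the lower bound on $w$ via the KPP comparison wave $\psi_{\ell_0}$ that you derive is not actually needed, since the $\tfrac1n$ regularizer alone yields the uniform lower bound $[\sigma]_n\geq(\alpha')^{-1}(ne^n)$ on $[-n,n]$.
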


\begin{proof}  We consider the following subset of $C^0([-n,n])$:
	$$X:=\{\sigma\in W^{1,\infty}([-n,n]),\ :\ 
	\text{$\sigma$ is nonincreasing, $(\alpha')^{-1}(ne^n)\leq\sigma\leq1$}\}.$$
(observe that $(\alpha')^{-1}(ne^n)\leq(\alpha')^{-1}(n)<1$ by hypothesis).
We define a map $T$ on $X$ in the following way. Take $\sigma\in X$ and call $A:=\alpha\circ\sigma$.
First, we let $w$ be the unique solution of the problem
$$
\begin{cases}
\displaystyle
w''+cw'+w \left( A(-n)- Aw + \int_{a}^x A'(y) w(y)dy\right)=0,\quad x\in [-n,n]\\
\displaystyle w(x_0)=\ell_0\\
w'(-n)=0\,.
\end{cases}
$$
Existence and uniqueness of $w$ are given by Lemma \ref{CP-coro}, which additionally ensures that
$w$ is decreasing and satisfies $0<w<1$. Next, given $\sigma,A$ and $w$, we 
consider the unique solution of the linear elliptic problem in~\rife{systemnk},
with~$[\sigma]_n$ replaced by $\sigma$.
We have seen in the proof of Lemma~\ref{lem:truncated} that $z$ is increasing.
We finally define $T(\sigma):=[\sigma]_n$ from the last line of~\rife{systemnk}.
This function is nonincreasing and fulfills 
an analogous characterization to the one derived in the previous~section:
\be\label{tsigma}
[\sigma]_n(x)= (\alpha')^{-1}\left(\frac{e^x}{\int_x^{n} ze^y w \,dy+\frac1n}\right)
\qquad \forall x\in[-n,n]\,:\, [\sigma]_n(x)<1\,.
\ee
This yields~$[\sigma]_n(x)\!\geq\!(\alpha')^{-1}(ne^n)$ and
moreover, by analogous computation as in~\eqref{deriv},
\be\label{deriv2bis} 
[\sigma]_n'(x)   =   \frac{\alpha'([\sigma]_n(x))}{\alpha''([\sigma]_n(x))}\left( 1+   z(x)w(x)\alpha'([\sigma]_n(x)) \right) \quad \forall x\,:\, [\sigma]_n(x)<1\,.
\ee 
By the boundedness of $w,z$ and the regularity of $\alpha$, 
as well as the positive lower bound for $[\sigma]_n$,
we eventually deduce $|[\sigma]_n'|\leq C$ for some positive constant $C$ only depending on
$c,\rho,\alpha,n$.
We have thereby shown that $T(X)\subset X$.

Actually, we have shown that $T(X)\subset \t X$, with
$$\t X:=\{\sigma\in W^{1,\infty}([-n,n]),\ :\ 
\text{$\sigma$ is nonincreasing, $(\alpha')^{-1}(ne^n)\leq\sigma\leq1$, 
$|\sigma'|\leq C$}\}.$$
This is a compact, convex subset of $C^0([-n,n])$.

We now prove that $T$
admits a  fixed point in $\t X$. 
%
%
Let us check the continuity of $T$. 
Consider a sequence $(\sigma_j)_{j\in\N}$ in $\t X$ converging uniformly to some $\sigma$.
Call $A_j:=\alpha\circ\sigma_j$ and $w_j$, $z_j$ the associated functions used in the definition of $T$.
Integrating by parts the term in the equation for $w_j$ 
(in order to get rid of the term $A_j'$), using elliptic estimates, and then integrating back,
 we find a subsequence of $(w_j)_{j\in\N}$ converging uniformly 
 to a solution $w$ of the same equation, with $A:=\alpha\circ\sigma$, which satisfies in addition 
 $0\leq w\leq 1$ and $w'(-n)=0$, $w(x_0)=\ell$. By
 Lemma~\ref{CP-coro} there is a unique of such solutions, hence the whole sequence
 $(w_j)_{j\in\N}$ converges towards it. Likewise, $(z_j)_{j\in\N}$ converges uniformly to the unique solution of
 the corresponding equation with~$A$ and $\sigma$.
 Then, using the characterization~\eqref{tsigma}, we deduce that $(T(\sigma_j))_{j\in\N}$ converges uniformly
 to the function $[\sigma]_n$ defined as in~\rife{systemnk}. This is precisely $T(\sigma)$.

We can therefore invoke
Schauder's theorem and conclude that the map $T$ has a fixed point in $\t X$, 
which is by construction a solution of \rife{systemnk}. 
\end{proof}

We finally analyze the limit as  $n\to \infty$, in order to get the wave for the system~on the whole line. 
It is  here that we face the question whether $w\, e^x $ is integrable at~$+\infty$. 
\vskip1em

\begin{proof}[Proof of Theorem~\ref{main-zw}$(iii)$]
	  Fix $c\in [2\sqrt{\alpha(1)}, \alpha(1)+1)$. Let $w_{n},z_{n}$ be a solution of system \rife{systemnk},
	  provided by Lemma~\ref{nk},
	  and let $[\sigma]_n$ and $A_n:=\alpha\circ[\sigma]_n$ 
	  be the associated function from the last line of~\rife{systemnk}.
   	First of all, by elliptic estimates, both $w_n$ and $z_n$ are  locally bounded in~$C^2$ norm, hence 
   	they converge (up to subsequences) in $C^1_{loc}(\R)$ to some functions $w$, $z$. 
   	We claim~that
\be\label{pk}
\sup_{n\in\N}\,\int_{-n}^n z_n\,w_n\, e^y dy<+\infty.
\ee
To show this, assume first
by contradiction that there exists $x\in\R$ such that (up to subsequences) 
$$p_n(x):= \int_{x}^n z_n\,w_n\, e^y dy\to +\infty.$$ 
Then, since   $p_n'(x)$ is locally uniformly bounded, this must be true for all $x\in\R$.
Recalling the characterization~\ref{tsigma} for $[\sigma]_n$, 
we deduce that $[\sigma]_n(x)= 1$ for $n$ sufficiently large, depending on $x$, 
hence $A_n(x)=\alpha(1)$ for large $n$, and this actually holds uniformly in $(-\infty,x]$,
by monotonicity. 
It follows that
the limit $w$ of the $w_n$ is a solution of
$$ 
w''+c w'+\alpha(1) w(1-w)= 0,\quad  
$$
such that $w(x_0)=\ell_0\in(0,1)$, that is,
$w$ is a wave for the standard KPP equation. We deduce that $w$ tends to $0$
as $x\to +\infty$. In particular, for $\e\in(0,1)$ to be chosen later, we can find $x_1>0$ such that
$w(x_1)<\e/2$. Now, we come back to $w_n$; by pointwise convergence, we take  $n$   large enough so that $w_n(x_1)<\e$. For $x>x_1$, we estimate
\[\begin{split}
0 & = w''_n+c w'_n+w_n(x) \big( A_n(-n)- A_n w + \int_{-n}^x A_n'(y)w(y)dy\big)
\\ &\geq w''_n+c w'_n+w_n(x)  A_n (x)(1-w_n(x))  
\\
&\geq w''_n+c w'_n+ A_n(x)w_n(x)(1-\e).
\end{split}\]
Since we are assuming $A_n(x) \to \alpha(1)$, this implies,   for $n$ sufficiently large, 
$$
w''_n+c w'_n+(\alpha(1)-\e)(1-\e)w_n\leq 0.
$$
Then the  function $q:=-w_n'/w_n$ satisfies
\begin{align*}
q' \geq q^2-cq+(\alpha(1)-\e)(1-\e)=(q-\lambda_\e^-)(q-\lambda_\e^+),
\end{align*}
where
$$\lambda_\e^\pm:=\frac c2\pm\sqrt{\frac{c^2}4-(\alpha(1)-\e)(1-\e)}.$$
We infer that 
$$\liminf_{x\to+\infty}q(x)\geq \lambda_\e^-.$$
But the condition  $2<c<\alpha(1)+1$ implies 
$\lim_{\e\to0^+}\lambda_\e^->1$, hence we can choose $\e$ small enough so that
$\lambda_\e^->1$. 
Reverting to the function $w_n$, we derive
$$
w_n(x) \leq C\, e^{-\lambda x} \qquad \forall x>x_1\,,
$$
for some $C>0$ and $\lambda>1$.
This estimate and the bound on $z$ imply    
$$
p_n(x_1)= \int_{x_1}^{+\infty} z_nw_ne^ydy \leq \frac C{\rho-1} \int_{x_1}^{+\infty} e^{(1-\lambda)y}dy, 
$$
so $p_n(x_1)$ cannot blow-up. This contradicts the fact that $p_n\to+\infty$ pointwise.
We have thus shown that $p_n(x)$ remains bounded at any given $x$. 
We improve this to the bound~\rife{pk} by noticing that
$$
\int_{-n}^n z_n\,w_n\, e^y dy = \int_{-n}^0 z_nw_ne^ydy + p_n(0) \leq \frac1{\rho-1} + p_n(0).
$$

We now refine the above argument to show a uniform decay for $w_n$ at infinity. Observe that
\rife{pk}, together with \eqref{tsigma}, implies that the function $[\sigma]_n$ in~\eqref{systemnk}
associated with $(z_n,w_n)$ does not tend to $1$ as $n\to\infty$ at a sufficiently large point $x$.
Thus, by Lemma~\ref{lem:z>}, the limit $z$ of (a subsequence of)
$z_n$ is positive for~$x$ sufficiently large. 
We infer that $zwe^y \in L^1(\R)$ as a consequence of  \rife{pk} and 
Fatou's lemma.
Now consider the functions $q(x):= -\frac{w'( x)}{w( x)}$ and $q_n(x):= -\frac{w'_n }{w_n }$. Since
$we^y\in L^1$
  there must be  a point~$\bar x$ where  $q(\bar x)>1$ (otherwise, if $-\frac{w'} w\leq 1$ for every $x$, then $we^y $ is not integrable at infinity).  By pointwise convergence, we can assume that $q_n(\bar x) \geq (1+\vep)$ for some $\vep>0$, and for all $n\in \N$. But since
$$
q'_n = q^2_n-cq_n + \big( A_n(-n)- A_n w_n + \int_{-n}^x A_n'(y)w_n(y)dy\big)
$$ 
the same argument used to prove Proposition \ref{implicit-speed} shows that $q_n$ is increasing in $(0,n)$; hence we deduce that $q_n(x)\geq (1+\vep)$ for every $x\in (\bar x,n)$.  Recalling that $q_n= -\frac{w_n'}{w_n}$, integrating we get
$$
w_n(x) \leq w_n(\bar x) e^{-(1+\vep) (x-\bar x)} \leq C \, e^{-(1+\vep) x},\quad x\in (\bar x, n).
$$
Thanks to this estimate, we can use the dominated convergence theorem and we conclude that 
$$
\int_x^n z_n\,w_n\, e^y dy \ \mathop{\to}^{n\to \infty} \ \int_x^{+\infty} z \,w \, e^y dy\,.
$$
This implies that $[\sigma]_n$ characterized by~\eqref{tsigma}
pointwise converges to $\sigma$ characterized by~\eqref{s*=}, that is,
$$ \sigma(x):= \mathop{{\rm argmax}}_{s\in [0,1]} \,\left\{ (1-s)e^x+\alpha(s)\int_x^{+\infty} z(y) e^y\, w(y)dy\right\}.
$$
Call $A:=\alpha\circ \sigma$. 
We immediately deduce that $z$ solves the equation in~\rife{zw-waves}.
Next, the uniform positive lower bound on $z_n(x)$ for $x$ large, which is also true for
$w_n(x)$ due to $w_n(x_0)=\ell_0$ and Harnack inequality, implies that $A_n= A= \alpha(1)$ on some half line
$(-\infty,\bar x]$.
For $-n<\bar x$ we then find that 
\begin{align*}
-w_n''-c w_n' &= \alpha(1)- A_n w_n + \int_{\bar x}^x A_n'(y)w_n(y)dy\\
&= \alpha(1)(1-w_n(\bar x))- \int_{\bar x}^x A_n(y)w_n'(y)dy\\
&\mathop{\to}^{n\to \infty} \ 
\alpha(1)(1-w(\bar x))- \int_{\bar x}^x A(y)w'(y)dy\\
& = \alpha(1)(1-w(-\infty))- \int_{-\infty}^x A(y)w'(y)dy.
\end{align*}
Hence $w$ satisfies the differential inequality~\eqref{phiinfty} used in the proof of
Lemma~\ref{lem:phi'<0} to derive
$w(-\infty)=1$ and $w(+\infty)=0$. This shows that $w$ solves~\rife{nlKPP}.
In the end, $(z,w)$ are proved to be solutions of the problem~\rife{zw-waves}. 
\end{proof}

This concludes the proof of Theorem \ref{main-zw}.


\subsection{From traveling waves to BGP solutions}\label{sec:tw-BGP}

We now deduce Theorem \ref{main} from the results obtained in Theorem \ref{main-zw} on the traveling waves. We only need to establish first  a rigorous  connection between  solutions of~\rife{zw-waves} and BGP solutions of the mean field game system \rife{system}, which are defined as in Definition \ref{BGP}. 

\begin{proposition}\label{bgp-tw}
Assume conditions\eqref{alpha1},\eqref{alpha0},\eqref{alpha2},\eqref{alpha3} hold.
If $(f,v, s^*)$ is a BGP with growth $c$, then $w(x):= \int_x^{+\infty} \vfi(y)dy$ and $z(x):=  \nu'(x) e^{-x}$ are solutions of the traveling wave system~\rife{zw-waves}.

Conversely, let $c<\rho$ and $(w,z)$ be solutions of \rife{zw-waves}. Then the triple given by
\be\label{tobgp}
f= -w'(x-ct)\,;\qquad v= e^{ct} \left( \int_{-\infty}^{x-ct} z(y)e^ydy + K\right) \,; \qquad s^*= \sigma(x-ct)
\ee
is a BGP solution of system \rife{system}, with $K= \frac{\alpha(1)}{\rho-c}\int_{\R} e^y z(y) w(y)dy$.
\vskip0.3em
Finally, there are no possible BGPs with growth  $c\geq \rho$.
 \end{proposition}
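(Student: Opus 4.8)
The plan is to prove the three assertions in turn, with the second and third sharing a single computation (the identification of the Hamilton--Jacobi residual along a traveling wave). The first assertion I would treat as a rigorous version of the formal derivation of Section~\ref{sec:derivation}. Given a BGP $(f,v,s^*)$ with growth $c$, write $\xi:=x-ct$ and $w(x):=\int_x^{+\infty}\vfi(y)\,dy$, $z(x):=\nu'(x)e^{-x}$, so that $1-F(t,x)=w(\xi)$ and $v_x(t,x)e^{-x}=z(\xi)$. Integrating the Fokker--Planck equation for $f$ in~\eqref{system} — the boundary terms at $\pm\infty$ vanishing thanks to $\vfi(1+e^x)\in L^1(\R)$ — gives~\eqref{Feq}, i.e. that $1-F$ solves~\eqref{rd}; substituting $1-F(t,x)=w(\xi)$, $s^*=\sigma(\xi)$ and $A:=\alpha\circ\sigma$ yields the first equation of~\eqref{zw-waves} with $w(-\infty)=\int_\R\vfi=1$, $w(+\infty)=0$. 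Since $\sigma$ is nonincreasing and $\alpha$ is increasing with $\alpha(0)=0$, $A$ is bounded and nonincreasing and, by~\eqref{s*01} (Proposition~\ref{pro:NC}), $\bar A=\alpha(1)>0=\underline A$, so Proposition~\ref{Harnack} applies and $w'<0$. For the second equation, differentiating the Hamilton--Jacobi equation and using that the maximiser is unique (as $\alpha$ is strictly concave), so that the envelope theorem applies exactly as in the derivation of~\eqref{vxeq}, one gets that $v_x$ solves~\eqref{vxeq}, equivalently $\zeta:=v_xe^{-x}=z(\xi)$ solves~\eqref{exvxeq}; the traveling-wave ansatz turns this into the $z$-equation of~\eqref{zw-waves}. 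The side constraints on $z$ are then read off Definition~\ref{BGP} ($z\ge0$ since $\nu$ is increasing, $z$ bounded since $\nu'e^{-x}\in L^\infty$, and $ze^xw=\nu'(x)\int_x^{+\infty}\vfi\in L^1(\R)$ since $\nu'\le Ce^x$ and $e^x\int_x^{+\infty}\vfi\in L^1(\R)$), and the variational characterisation of $\sigma$ follows from~\eqref{strav} and the integration by parts~\eqref{hamvx}, whose boundary term vanishes because $w(\eta)\nu(\eta)\to0$ as $\eta\to+\infty$.

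For the converse, I would fix $(w,z)$ solving~\eqref{zw-waves} with $c<\rho$ and define $f,v,s^*$ by~\eqref{tobgp}; thus $v=e^{ct}\nu(\xi)$ with $\nu(\xi)=K+\int_{-\infty}^\xi z(y)e^y\,dy$ (the integral converges at $-\infty$ because $z$ is bounded), so $\nu'(\xi)=z(\xi)e^\xi$ and $v_x=z(\xi)e^x$. Reversing the computation above, $f=-w'(\xi)$ solves the Fokker--Planck equation (differentiate the first equation of~\eqref{zw-waves}), and $v_x$ solves~\eqref{vxeq} (this is precisely equivalent to the $z$-equation of~\eqref{zw-waves}). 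Now set $R(t,x):=-\partial_t v-\kappa^2\partial_{xx}v+\rho v-H(t,x;v)$. By the envelope theorem $\partial_x R$ equals the difference of the two sides of~\eqref{vxeq}, hence is $0$, so $R$ is $x$-independent; on the other hand $v$, $H(t,x;v)$ and every derivative entering $R$ equal $e^{ct}$ times a function of $\xi$, so $R(t,x)=e^{ct}g(\xi)$, and $\partial_x R=0$ forces $g\equiv C$ constant, i.e. $R\equiv Ce^{ct}$. To obtain $C=0$ I would let $\xi\to-\infty$: there $\sigma\equiv1$ by~\eqref{s*01}, so (using~\eqref{hamvx}) $H(t,x;v)/e^{ct}=\alpha(1)\int_\xi^{+\infty}z(y)e^yw(y)\,dy\to\alpha(1)\mc{J}$ with $\mc{J}:=\int_\R z e^y w\,dy$ (finite, as it is one of the constraints in~\eqref{zw-waves}), while $\nu(\xi)\to K$, $\nu'(\xi)=z(\xi)e^\xi\to0$ and $\nu''(\xi)=(z'(\xi)+z(\xi))e^\xi\to0$ (since $z'$ is bounded near $-\infty$ by elliptic estimates). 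Hence $C=(\rho-c)K-\alpha(1)\mc{J}$, which vanishes exactly for $K=\frac{\alpha(1)}{\rho-c}\mc{J}$, the value prescribed in~\eqref{tobgp}; so $v$ solves the Hamilton--Jacobi equation. Because $c<\rho$, this $K$ is strictly positive, whence $\nu\ge K>0$ and $v>0$, so $v$ is a genuine (positive) value function; the remaining regularity and integrability requirements of Definition~\ref{BGP} follow from $w\in C^3$, $z\in C^2$ (Lemma~\ref{lem:phi'<0} and elliptic estimates), Proposition~\ref{pro:NC}, and the decay $w(x)\le Cxe^{-\lambda x}$ with $\lambda>1$ provided by Propositions~\ref{implicit-speed}, \ref{pro:NC} and Lemma~\ref{pminfty}.

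Nonexistence for $c\ge\rho$ I would deduce from the very same residual identity. If a BGP with growth $c$ existed, the first part would supply $(w,z)$ solving~\eqref{zw-waves}, together with $v=e^{ct}\nu(\xi)$ for an increasing, nonnegative $\nu$; thus $K:=\nu(-\infty)\ge0$ exists and $\nu(\xi)=K+\int_{-\infty}^\xi z e^y\,dy$. Since $v$ solves the Hamilton--Jacobi equation, $R$ vanishes identically; being of the form $Ce^{ct}$, the $\xi\to-\infty$ computation of the previous paragraph gives $(\rho-c)K=\alpha(1)\mc{J}$. But $\alpha(1)>0$ and $\mc{J}=\int_\R z e^y w\,dy>0$ (as $z>0$ by Proposition~\ref{pro:NC} and $w>0$ by Proposition~\ref{Harnack}), so $(\rho-c)K>0$; combined with $K\ge0$ this forces $\rho-c>0$, contradicting $c\ge\rho$. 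This also re-confirms $K>0$ in the converse.

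The delicate point in this plan is precisely the identification $R\equiv Ce^{ct}$ and the evaluation of $C$ at $-\infty$: one has to check that the nonlocal Hamiltonian and $\partial_{xx}v$ — both of which grow like $e^x$ as $x\to+\infty$ — combine into a residual that is genuinely independent of $x$, and that the limit $\xi\to-\infty$ is fully legitimate. This rests on the traveling-wave structure, on the decay estimates of Section~\ref{nonlocalKPP} (in particular $w(x)\le Cxe^{-\lambda x}$ with $\lambda>1$, Lemma~\ref{pminfty}, Proposition~\ref{pro:NC}), and on $\sigma\equiv1$ near $-\infty$, which is what makes the limit at $-\infty$ transparent and simultaneously pins down both the constant $K$ and the necessity of the condition $c<\rho$.
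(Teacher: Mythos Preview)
Your proof is correct and follows essentially the same route as the paper: in both the converse and the nonexistence parts, the key computation is that the Hamilton--Jacobi residual $R=-\partial_t v-\kappa^2\partial_{xx}v+\rho v-H(t,x;v)$ has vanishing $x$-derivative (by the envelope theorem applied to $H$, which is exactly the $v_x$-equation~\eqref{vxeq}), hence reduces to $e^{ct}$ times a constant, and that constant is identified by letting $\xi\to-\infty$ where $\sigma\equiv1$. The paper phrases this as ``integrate $\partial_x R=0$ on $(-\infty,x)$ and compute the boundary terms'', but the content is identical.

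One small correction: your appeal to Lemma~\ref{pminfty} for the decay $w(x)\le Cxe^{-\lambda x}$ with $\lambda>1$ is not quite right, since~\eqref{decay_crit} is stated only for \emph{critical} waves. The integrability you need (namely $e^xw\in L^1$ and $e^xw'=\vfi e^x\in L^1$) follows more directly, as the paper does: the constraint $ze^xw\in L^1(\R)$ in~\eqref{zw-waves} together with $z(+\infty)=\frac{1}{\rho-\kappa^2}>0$ (Proposition~\ref{pro:NC}) gives $e^xw\in L^1$, and then $-w'/w\to\lambda>0$ (Proposition~\ref{implicit-speed}) transfers this to $e^xw'$.
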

 
 \proof  Suppose we are given a BGP solution $(f,v, s^*)$  of \rife{system}, hence 
 $f(t,x)=\varphi(x-ct)$, $v=e^{ct} \nu(x-ct)$, $s^*=\sigma(x-ct)$ for some~$c>0$.
 Thus, as shown in Section~\ref{sec:derivation}, the functions
  $w(x):= \int_x^{+\infty} \vfi(y)dy$, $z(x):=  \nu'(x) e^{-x}$
  are solutions to the traveling wave system~\eqref{zw-waves}.
Finally, according to Definition \ref{BGP}, we also have $we^x \in L^1(\R)$, 
while~$z$ is nonnegative and bounded. 
Hence $zwe^x \in L^1(\R)$ and therefore all conditions in~\rife{zw-waves} are fulfilled.

Conversely, assume that $(w,z,\sigma)$ is a solution of \rife{zw-waves} and define $(f,v,s^*)$ from~\rife{tobgp}.  Set $\vfi (r) := -w'(r) $ and  $\nu(r):= \int_{-\infty}^r z(y) e^y dy + K$, where 
\be\label{K}
K:= \frac{\alpha(1)}{\rho-c}\int_{\R} e^y z(y) w(y)dy .
\ee
It follows that $f= \vfi(x-ct)$, $v= e^{ct} \nu(x-ct)$ with 
$\vfi, \nu\in C^2(\R)$, as required in Definition~\ref{BGP}. 
We further know that $\sigma\in W^{1,\infty}_{loc}(\R)$ by 
Proposition~\ref{pro:NC}. Still from  Proposition \ref{pro:NC}, we know that $z>0$ and it has a bounded positive limit  as $r\to+\infty$; this implies that $\nu$ is increasing, nonnegative and  $\nu' e^{-x} \in  L^\infty(\R)$. Therefore, the condition $zwe^y\in L^1(\R)$ implies that $e^r w(r)= e^r \int_r^{+\infty} \vfi(y)dy\in L^1(\R)$. By  Proposition \ref{implicit-speed}, we also know that
$\frac{-w'(r)}w(r) \to \lambda>0$ as $r\to +\infty$; hence we deduce that $w' e^y\in L^1(\R)$ 
as well.

So far, we have checked that the first three conditions in 
Definition \ref{BGP} hold. We are left to show that $(f,v)$ solve the system \rife{system}. 
We preliminarily observe that the definition of $\sigma$ in \rife{zw-waves} yields through
the computations~\rife{strav}
and~\eqref{hamvx}, that $s^*=\sigma(x-ct)$ satisfies
\[
\begin{split}
	s^* & = \mathop{{\rm argmax}}_{s\in [0,1]} \left\{(1-s)e^x + \alpha(s) \int_x^{+\infty} v_x(t,y)  \int_y^{+\infty} f(t,r)dr dy\right\}
	\\
	& = \mathop{{\rm argmax}}_{s\in [0,1]} \left\{(1-s)e^x + \alpha(s) \int_x^{+\infty}  [v(t,y)-v(t,x)]   f(t,y) dy\right\}\,,
\end{split}
\]
that is, $s^*$ is given by the formula in~\eqref{system}.
As far as $v$ is concerned,
taking the derivative in the equation of $w$, and using that $A(x-ct)=\alpha(s^*(t,x))$,
it readily follows that $f=\vfi(x-ct)$ is a traveling wave solution of the Fokker-Planck equation.  
Finally, in order to derive the equation for $v$, 
we consider the Hamiltonian    
\[\begin{split}
H(t,x;v) := &  \max_{s\in[0,1]} \big[(1-s)e^x + \alpha(s) \int_x^{+\infty}  [v(t,y)-v(t,x)]   f(t,y) dy\big]\\
=  &  \max_{s\in[0,1]} \big[(1-s)e^x+ \alpha(s) \int_x^{+\infty} v_x(t,y)  \int_y^{+\infty} f(t,r)dr dy\big],
\end{split}\]
where the equality follows from~\eqref{hamvx}.
We know that the above maxima are attained at $s=s^*(t,x)$, 
and actually this is the unique maximizer because the expressions are concave in $s$,
because $v_x= e^x z(x-ct)>0$.
Applying the envelope theorem as in Section~\ref{sec:derivation}, we notice that $H$ is differentiable in $x$ with $\partial_x H = (1-s^*) e^x - \alpha(s^*)v_x  \int_x^{+\infty}    f(t,y) dy$. This means that 
$$
(1-\sigma(x-ct)) - \alpha(\sigma(x-ct)) w(x-ct) z(x-ct)= e^{-x }\partial_x H (t,x;v)\,.
$$
Inserting this information into the equation for $z$ in~\rife{zw-waves}, we conclude that $v_x= e^x z(x-ct)$ satisfies the differential equation
$$
-\partial_t v_x- \kappa^2\partial_{xx} v_x + \rho v_x =  \partial_x H (t,x;v)\,.
$$
Now we integrate this equation  in the interval  $(-\infty,x)$. We observe  that
\begin{align*}
H(t,-\infty;v) & = \alpha(1)   \int_{\R}  v_x(t,y)  \int_y^{+\infty} f(t,r)dr dy 
\\ & = e^{ct}\alpha(1) \int_{\R} e^y z(y) w(y)dy = K(\rho-c)e^{ct},
\end{align*}
with $K$ defined by~\eqref{K}, while, by definition of $v$, we have
$$
v(t,-\infty) =e^{ct} K\,, \qquad \partial_t v (t,-\infty)= c e^{ct} K \,. 
$$
Finally, using that $v_{xx}= e^x (z(x-ct) + z'(x-ct)) \to 0$ as $x\to -\infty$, we conclude that 
\begin{align*}
0 & = \int_{-\infty}^x \left\{ -\partial_t v_x- \kappa^2\partial_{xx} v_x + \rho v_x -  \partial_x H (t,x;v)\right\} 
\\
& = -\partial_t v - \kappa^2\partial_{xx} v  + \rho v -    H (t,x;v) 
+ \partial_t v (t,-\infty)- \rho v(t,-\infty)+ H(t,-\infty;v) 
\\ & = -\partial_t v - \kappa^2\partial_{xx} v  + \rho v -    H (t,x;v)
\end{align*}
which means that $v$ is a solution of the Bellman equation. Therefore, we have proved that $(f,v, s^*)$ is a  BGP
solution  of system \rife{system}.

We conclude by observing that $c<\rho$ is necessary for a BGP to exist. Indeed, if a BGP exists, we have established so far that  it is of the form \rife{tobgp} for some $(z,w,\sigma)$ solution of \rife{zw-waves} and for some constant $K\in \R$. By properties of $z$ and  $\sigma$, given in Proposition \ref{pro:NC}, we deduce that $v_{xx}\to 0$ as $x\to -\infty$, owing to elliptic estimates,
while $s^*=\sigma(x-ct)\to 1$ as $x\to -\infty$. 
Then, writing the equation for $z$ in~\rife{zw-waves}
in terms of $v_x= e^x z(x-ct)$, integrating it on $(-\infty,x)$ and using that $v$ is a solution
of~\rife{system}, with the same computation as before we get
$$
(\rho-c) K = \alpha(1) \int_{\R} e^y z(y) w(y)dy\,.
$$
Since the right-hand side is positive, and $K\geq 0$ because otherwise $v$ would be negative for $-x$ large,
we deduce that $\rho>c$.
\qed

\vskip0.5em

We can finally conclude with the proof of our main result. 
\vskip0.3em
{\bf Proof of Theorem \ref{main}.} \quad 

$(i)$ \,
 If there exists  a BGP solution, then we have the necessary condition $c<\rho$ from Proposition \ref{bgp-tw}. In addition,  a BGP solution yields a traveling wave $(z,w)$ solution of \rife{zw-waves}. Hence $c$ must satisfy the conditions in Theorem \ref{main-zw}-$(i)$.

$(ii)$ \,
By Theorem \ref{main-zw}-$(ii)$, there exists  a traveling wave with speed $c\in (2\kappa^2,  2\kappa\sqrt{\alpha(1)})$ which is also critical.  If $\rho \geq 2\kappa\sqrt{\alpha(1)}$, then $c<\rho$ so by Proposition \ref{bgp-tw} this yields  a BGP solution of  \rife{system}, which is critical as well,
 i.e.~fulfills~\rife{ccri}.

$(iii)$ \,
 Putting together Theorem \ref{main-zw}-$(iii)$ and Proposition \ref{bgp-tw}, for every $c\in [2\kappa\sqrt{\alpha(1)}, \alpha(1)+ \kappa^2)$ such that $c<\rho$, we have a BGP solution of  \rife{system} with growth $c$, and with arbitrary normalization at any given point.
\qed

\vskip2em

{\bf Acknowledgement.} We wish to thank Benjamin Moll for stimulating  our interest in the problem as well as for addressing to us interesting related references in the economic literature.

\end{document}